\newtheorem{thm}{Theorem }[section]
\newtheorem{lemma}[thm]{Lemma }
\newtheorem{prop}[thm]{Proposition }
\newtheorem{corollary}[thm]{Corollary }
\theoremstyle{definition}
\newtheorem{deff}[thm]{Definition }
\newtheorem{rem}[thm]{Remark }
\newtheorem{ex}[thm]{Example }
\newtheorem{question}[thm]{Question }
\numberwithin{equation}{section}
\def\PP{{\mathbb P}}
\def\RR{{\mathbb R}}
\def\CC{{\mathbb C}}
\def\QQ{{\mathbb Q}}
\def\ZZ{{\mathbb Z}}
\def\NN{{\mathbb N}}
\def\FF{{\mathbb F}}
\def\HH{{\mathbb H}}
\def\fq{{\mathbb{F}_q}}
\def\kk{{\bar{k}}}
\def \bra#1\ket {\mathop{\vphantom{#1}\left<\smash{#1}\right>}\nolimits}
\def\Die{Diedonn\'e }
\newcommand{\rig}{{\mathrm{rig}}}
\newcommand{\real}{{\mathrm{real}}}
 \DeclareMathOperator{\inv}{inv} 
\DeclareMathOperator{\Hom}{Hom}
\DeclareMathOperator{\End}{End} \DeclareMathOperator{\Aut}{Aut}
\DeclareMathOperator{\Gal}{Gal} \DeclareMathOperator{\Spec}{Spec}
 \DeclareMathOperator{\rk}{rk}
\DeclareMathOperator{\Tr}{Tr} 
\DeclareMathOperator{\GL}{GL}
\DeclareMathOperator{\SL}{SL}
\DeclareMathOperator{\CSU}{ESL}
\DeclareMathOperator{\ESL}{ESL}
 \DeclareMathOperator{\Br}{Br}
\DeclareMathOperator{\NS}{NS} 
\DeclareMathOperator{\Mat}{Mat}
\def\C{\mathcal{C}}
\def\D{\mathcal{D}}
\def\p{\mathfrak{p}}
\newcommand{\XX}{{\ensuremath{\overline{X}}}}
\def\ch{\mathop{\mathrm{char}}\nolimits}
\def\plim{\mathop{{\lim\limits_{\longleftarrow}}}\nolimits}
\newcommand \eps {\varepsilon}
\renewcommand \phi {\varphi}
\begin{document}
\author{Sergey Rybakov}
\thanks{Supported by the Israel Science Foundation,  grant No.  1405/22}
\address{Department of Mathematics, Ben-Gurion University of the Negev, Israel}

\email{rybakov.sergey@gmail.com}%
\title[Generalized Kummer surfaces]
{Generalized Kummer surfaces over finite fields}
\date{}
\keywords{Finite field; abelian variety; Kummer surface; generalized Kummer surface}

\subjclass{14G15, 14G05, 14K99}

\begin{abstract}
We prove a refinement of the Katsura theorem on finite group actions on abelian surfaces such that the quotient is birational to a $K3$ surface. 
As an application, we compute traces of Frobenius on the Neron--Severi groups of supersingular generalized Kummer surfaces over finite fields.
\end{abstract}

\maketitle

\section{Introduction}
Throughout this paper $k$ is a perfect field of characteristic $p$, and $\kk$ is an algebraic closure of $k$.
Let $A$ be an abelian surface over $k$ with an action of a finite group $G$. 
In this paper, we study only actions such that $G$ preserves the group law on $A$; in other words, there is a group homomorphism $G\to\End(A)^*$, where for a ring $R$ we denote by $R^*$ the multiplicative group of invertible elements. 
If a minimal smooth model $X(A,G)$ of the quotient $A/G$ is a $K3$ surface, we say that $X(A,G)$ is a \emph{generalized Kummer surface}. 
In~\cite{Ry12} we classified the zeta functions of Kummer surfaces $X(A,\ZZ/2\ZZ)$ over finite fields of characteristic $p>2$. 
This paper is a step towards such a classification for generalized Kummer surfaces. 

For an algebraic variety $X$ over $k$ we denote by $\Bar X$ the base change of $X$ to $\kk$.
A $K3$ surface $X$ is called \emph{Shioda supersingular} if the rank of $\NS(\Bar X)$ is $22$.
A Shioda supersingular K3 surface is supersingular, and a supersingular Kummer surface is Shioda supersingular~\cite{Ar}.
More generally, the Tate conjecture is proved for $K3$ surfaces over finitely generated fields of odd characteristic~\cite{MP}; therefore, a $K3$ surface over such a field is Shioda supersingular if and only if it is supersingular. 

For a Shioda supersingular $K3$ surface $X$, we have the natural isomorphism for any prime $\ell\neq p$: \[\NS(\Bar X)\otimes\QQ_\ell\to H^2(\Bar X,\QQ_\ell).\eqno{(*)}\]
 We see that if the base field $k$ is finite, the zeta function of a Shioda supersingular $K3$ surface $X$ is uniquely determined by the Frobenius action on the Neron-Severi group 
 $\NS(\Bar X)$. This observation is a link between the arithmetic and geometry of a supersingular $K3$ surface.  
 
Since odd cohomology groups of $X$ are trivial, the isomorphism $(*)$ leads to the formula for the number of points on $X$ over $k$:
\[|X(k)|=1+q\Tr_X+q^2,\eqno{(**)}\] where $\Tr_X$ is the trace of the Frobenius action on the Neron--Severi group of $X$. The main question is: what are the possible values of $\Tr_X$, where $X$ runs through all supersingular $K3$ surfaces over $\FF_q$?

Our main motivation comes from the analogy with the cubic surfaces in $\PP^3$.
For a cubic surface $X$ we have the same formula $(**)$ for the number of points. In~\cite{Ser12} Serre asked which values of $\Tr_X$ can arise for smooth cubic surfaces in $\PP^3$ over $\FF_q$. The answer was recently obtained in~\cite{BFL}. A more general question about the zeta functions of cubic surfaces was treated in~\cite{RT16,T20,LT}.
A direct generalization of Serre's question to quartics in $\PP^3$ seems very difficult, but we can try to generalize the question as follows. 
Since a quartic in $\PP^3$ with reasonable singularities is a $K3$ surface, we can study traces of Frobenius on Neron--Severi groups of $K3$ surfaces.
The isomorphism $(*)$ holds for any smooth cubic surface, but not for any $K3$ surface;
 therefore, we focus on Shioda supersingular $K3$ surfaces. In this paper we compute Frobenius traces for generalized Kummer surfaces only, but it is natural to pose the question for supersingular $K3$ surfaces as well.

The paper is organized as follows.
In Section~\ref{rigid_sec} we study rigid group actions on abelian varieties.

\begin{deff}
We say that the action of a finite group $G$ on an abelian variety $A$ is \emph{rigid} if the representation of $G$ in $V_\ell(A)$ is \emph{without fixed points}, i.e., for any $g\in G$ of order $r$ the eigenvalues of the action of $g$ on $V_\ell(A)$ are primitive roots of
unity of degree $r$.
\end{deff}

\begin{rem}\label{Rem1}
If an action of $G$ on $A$ is rigid, then it is faithful.
Moreover, if $g\in G$ is an element of order $2$, then $g$ acts on $A$ as $-1$; thus there is at most one element of order $2$ in $G$.
\end{rem}

The rational group algebra $\QQ[G]$ of $G$ is isomorphic to the sum of simple algebras corresponding to irreducible representations $V$ over $\QQ$:
\[\QQ[G]=\oplus_V \HH_V.\] Define \emph{the rigid group algebra} as the sum over representations without fixed points (WFP): 
\[\QQ[G]^\rig:=\oplus_{V\text{ is WFP}} \HH_V.\] 
The following result is an immediate consequence of Theorem~\ref{main_thm}.

\begin{thm}\label{main_cor}
There exists an abelian variety with a rigid action of $G$ in the isogeny class of an abelian variety $A$, if and only if there exists a homomorphism of $\QQ$-algebras $\QQ[G]^\rig\to\End^\circ(A).$
\end{thm}

In Section~\ref{rigid_surf} we use this result to obtain a classification of finite groups $G$ with a rigid action on an abelian surface. This result can be extracted from the results of Hwang~\cite{Hw}, but we feel that in this particular case it is easier to give an independent proof. Section~\ref{SecSing} is devoted to singularities of $A/G$.

In Section~\ref{main_sec} we heavily use results from the pioneering paper of Katsura on generalized Kummer surfaces over a field of positive characteristic~\cite{Ka}. Let us reformulate some of his results here in a more convenient way.

\begin{thm}~\cite[Theorem 2.4]{Ka}\label{Katsura_quotient}
Assume that $\ch k\neq 2$. A relatively minimal model of $A/G$ is a $K3$ surface if and only if $G$ satisfies the following conditions
\begin{itemize}
\item the action is rigid;
\item the action is symplectic;
\item $A/G$ is singular, and all the singular points of $A/G$ are rational double points.
\end{itemize}
\end{thm}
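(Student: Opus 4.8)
The plan is to work over $k=\kk$, since being a $K3$ surface is geometric and the action, the quotient $Y:=A/G$, and its minimal resolution $f\colon X\to Y$ all commute with base change to $\kk$. Set $W:=H^0(A,\Omega^1_A)$, a two-dimensional space on which $G$ acts, and fix a generator $\omega$ of $\det W\cong H^0(A,\Omega^2_A)$; $G$ acts on this line by a character $\chi\colon G\to\kk^*$, and the action is symplectic precisely when $\chi$ is trivial. Because $G\hookrightarrow\End(A)^*$ acts by group automorphisms, it fixes $0\in A$ and embeds faithfully into $\GL(V_\ell(A))$, and the eigenvalues of $g$ on $V_\ell(A)$ are those of $g$ on $W$ together with their conjugates, so rigidity can be read off from the two eigenvalues of $g$ on $W$. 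The singularities of $Y$ sit at the images of the finitely many $P$ with nontrivial stabiliser $G_P\subset\GL(T_PA)=\GL_2$, where $Y$ is locally $T_PA/G_P$. If the action is symplectic then $G_P\subset\SL_2$ and no nontrivial $g$ fixes a curve (fixing a curve forces a $1$-eigenvalue on $T_PA$, hence by $\det=1$ a second, so $g$ is unipotent, and a semisimple such $g$ is the identity, contradicting faithfulness); thus fixed loci are isolated, each $T_PA/G_P$ is a quotient of $\kk^2$ by a finite subgroup of $\SL_2$, i.e.\ a rational double point by the Klein--Du~Val correspondence, and its minimal resolution is crepant, $K_X=f^*K_Y$.

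For sufficiency I assemble these facts. Symplecticity makes $\omega$ descend to a nowhere-zero section of the dualizing sheaf of the Gorenstein surface $Y$, so $K_Y=\OO_Y$; crepancy then gives $K_X=\OO_X$, the descended form extending over the exceptional $(-2)$-curves. Rigidity gives $W^G=0$, since a nonzero $G$-fixed form would be a common $1$-eigenvector, impossible once every nontrivial $g$ has primitive (hence nontrivial) eigenvalues; therefore $q(X)=h^0(X,\Omega^1_X)=\dim W^G=0$. As $Y$ is singular, $X$ is not abelian, and a smooth surface with $K_X=\OO_X$ and $q(X)=0$ is a $K3$ surface---here $\ch k\neq 2$ separates the trivial-canonical $K3$ case from the $2$-torsion-canonical Enriques case.

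For necessity, assume $X$ is $K3$, so $K_X=\OO_X$ and $q(X)=0$. From $q(X)=\dim W^G$ we get $W^G=0$. The nowhere-zero $2$-form on $X$ pulls back along $A\dashrightarrow X$ to a nonzero $G$-invariant multiple of $\omega$, forcing $\chi=1$, so the action is symplectic; and $G\neq 1$ (equivalently $Y$ singular), for otherwise $X=A$ would have $\chi(\OO_X)=0\neq 2$. Symplecticity places us in the local situation above, so every singular point of $Y$ is a rational double point. It remains to recover rigidity, the one step using the group structure: for $g$ of order $r$, symplecticity makes its eigenvalues on $W$ a pair $\mu,\mu^{-1}$, and if $s:=\ord(\mu)<r$ then $g^s$ acts trivially on $V_\ell(A)$, contradicting faithfulness of $G\hookrightarrow\GL(V_\ell(A))$; hence $\mu$ is a primitive $r$-th root of unity and all four eigenvalues of $g$ on $V_\ell(A)$ are primitive $r$-th roots, which is rigidity.

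The main obstacle I anticipate is a characteristic-$p$ phenomenon: when $p\mid|G|$ the quotient $\kk^2/G_P$ need not be a rational double point, as wild ramification can produce non-rational or non-Gorenstein singularities, and the unipotent $g^s$ in the rigidity step need not be trivial. Showing that the $K3$ hypothesis forces tameness of the relevant stabilisers---so that the Klein--Du~Val dictionary, the crepant-resolution formula, and the identity $\chi(\OO_X)=\chi(\OO_Y)$ all remain valid---is where the real work lies, and it is exactly where $\ch k\neq 2$ enters, since $p=2$ is the source of the wild $\SL_2$-quotients that would otherwise break the equivalence.
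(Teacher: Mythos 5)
The paper itself offers no proof of this statement: it is quoted verbatim from Katsura, \cite[Theorem 2.4]{Ka}, so your attempt can only be judged on its own merits and against that source's method. On those terms there is a genuine gap, and it is precisely the one you flag in your last paragraph but do not fill. The theorem assumes only $\ch k\neq 2$, so $p$ may divide $|G|$ (Katsura's theorem, and this paper's applications of it, do include such wild actions). Your entire mechanism consists of transferring rigidity --- a condition on eigenvalues on $V_\ell(A)$ --- to the $G$-action on $W=H^0(A,\Omega^1_A)$, via the claim that ``the eigenvalues of $g$ on $V_\ell(A)$ are those of $g$ on $W$ together with their conjugates.'' In characteristic $p$ this dictionary is not even meaningful as stated (one set of eigenvalues lives in $\Bar\QQ_\ell$, the other in $\kk$), and the correct comparison, through the Dieudonn\'e module with $W\cong M(A)/FM(A)$, fails exactly for wild elements: an automorphism of order $p$ acts \emph{unipotently} on the $\kk$-vector space $W$, hence has eigenvalue $1$ and a nonzero fixed vector there, even though its eigenvalues on $V_\ell(A)$ are primitive $p$-th roots of unity. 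Consequently ``rigidity $\Rightarrow W^G=0$'' is false in general, so your computation $q(X)=0$ in the sufficiency direction is unproved; the Klein--Du~Val identification of stabilizer quotients does not apply to wild stabilizers; and the recovery of rigidity in the necessity direction collapses. What you have is, at best, an outline for the case $p\nmid |G|$, not for the theorem as stated.

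Even in the tame case one step is a non sequitur: from $s=\mathrm{ord}(\mu)<r$ you conclude that ``$g^s$ acts trivially on $V_\ell(A)$,'' but having both eigenvalues equal to $1$ on the two-dimensional space $W$ says nothing a priori about the four-dimensional $V_\ell(A)$. What you need is the implication ``order prime to $p$ and trivial on $T^*_0(A)$ implies trivial,'' which is exactly this paper's Lemma~\ref{Image_in_SL}, proved by a Dieudonn\'e-module norm argument ($N_{\QQ(\zeta_r)/\QQ}(\zeta_r-1)=r$ is a $p$-adic unit) --- and which again fails for wild elements. The robust route, and the reason rigidity is formulated $\ell$-adically in the first place, is to detect the $K3$ invariants $\ell$-adically rather than coherently: rigidity gives $H^1(\Bar A,\QQ_\ell)^G=0$ for $\ell\neq p$ in every characteristic, wild or tame, hence $b_1(X)=0$; and failure of rigidity produces a nontrivial element fixing a positive-dimensional abelian subvariety pointwise, hence a ramification divisor $R>0$ of $A\to A/G$ with $\pi^*K_{A/G}=K_A-R$, contradicting triviality of $K_X$ --- an argument requiring no tame/wild case division. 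Combined with the Bombieri--Mumford classification of surfaces with numerically trivial canonical class, this reduces the theorem to the analysis of wild symplectic quotient singularities (which ones are rational double points), and that analysis is the hard core of Katsura's proof, entirely absent from your sketch.
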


Katsura classified finite groups $G$ such that there exists a finite field $k$ of characteristic 
$p>5$ and an abelian surface $A$ over $k$ with an action of $G$ such that the quotient is birational to a $K3$ surface.

\noindent

\begin{thm}~\cite[Theorem 3.7]{Ka}\label{Katsura_list}
Let $k$ be a field of characteristic $p>5$.
Let $A$ be an abelian surface over $k$ with an action of a finite group $G$ such that the quotient is birational to a $K3$ surface.
If all elements of $G$ fix the group law of $A$, then $G$ is one of the following groups:
\begin{enumerate}
	\item a cyclic group of order $2,3,4,5,6,8, 10$, or $12$;
	\item a binary dihedral group $Q_{4n}$ of order $4n$, where $2\leq n\leq 6$;
	\item $\SL_2(\FF_3)$, $\ESL_2(\FF_3)$, or $\SL_2(\FF_5)$.
	\end{enumerate}
\end{thm}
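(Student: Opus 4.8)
The plan is to reduce, via Theorem~\ref{Katsura_quotient}, to the purely group-theoretic problem of classifying the finite groups that can act rigidly and symplectically on an abelian surface (with rational-double-point quotient), and then to match these against the classification of finite subgroups of $\SL_2$. First I would extract the order restriction that rigidity alone imposes. If $g\in G$ has order $r$, then $g$ acts on the integral Tate module, so its characteristic polynomial $P_g$ on $V_\ell(A)$ is monic of degree $4$ with integer coefficients; rigidity says every root of $P_g$ is a primitive $r$-th root of unity. Since $\Phi_r$ is irreducible over $\QQ$, the only such $P_g$ is a power $\Phi_r^m$, and comparing degrees gives $m\,\varphi(r)=4$, hence $\varphi(r)\mid 4$ and $r\in\{1,2,3,4,5,6,8,10,12\}$. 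In particular every prime dividing $|G|$ is the order of an element, hence lies in $\{2,3,5\}$, so the hypothesis $p>5$ guarantees $p\nmid|G|$.

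Next I would use the symplectic condition to realise $G$ as a subgroup of $\SL_2$. Symplectic means $G$ acts trivially on the line $H^0(A,\Omega^2_A)=\wedge^2\mathrm{Lie}(A)^\vee$, i.e.\ the two-dimensional representation of $G$ on $\mathrm{Lie}(A)$ has determinant $1$. Because $p\nmid|G|$, every $g$ acts semisimply on $\mathrm{Lie}(A)$, and an element acting trivially on the tangent space at the origin is trivial; combined with the faithfulness from Remark~\ref{Rem1} this yields an embedding $G\hookrightarrow\SL_2(\kk)$.

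Since $p\nmid|G|$ the action is tame, so the finite subgroups of $\SL_2(\kk)$ are exactly those of $\SL_2(\CC)$: the binary polyhedral groups, namely the cyclic groups $C_n$, the binary dihedral groups $Q_{4n}$, the binary tetrahedral group $\SL_2(\FF_3)$, the binary octahedral group $\ESL_2(\FF_3)$, and the binary icosahedral group $\SL_2(\FF_5)$. This is consistent with Remark~\ref{Rem1}: each such group has a unique element of order $2$, the central $-1$.

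It remains to intersect each family with the order restriction $r\in\{1,2,3,4,5,6,8,10,12\}$. A generator of $C_n$ has order $n$, so the cyclic case gives precisely the $n\le 12$ with $\varphi(n)\mid 4$, that is $n\in\{2,3,4,5,6,8,10,12\}$, which is item~(1). The group $Q_{4n}$ contains a cyclic subgroup of order $2n$, so $2n$ must be one of the admissible even values $\{4,6,8,10,12\}$, giving $2\le n\le 6$ (the degenerate case $n=1$ returns $C_4$); this is item~(2). Finally the three exceptional groups all pass the test, since their element orders are $\{1,2,3,4,6\}$ for $\SL_2(\FF_3)$, the same together with $8$ for $\ESL_2(\FF_3)$, and $\{1,2,3,4,5,6,10\}$ for $\SL_2(\FF_5)$; this is item~(3). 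I expect the main obstacle to be the step realising $G$ inside $\SL_2(\kk)$: in the supersingular positive-characteristic setting the relation between the action on $\mathrm{Lie}(A)$ (equivalently on crystalline cohomology) and the $\ell$-adic representation $V_\ell(A)$ is more delicate than the Hodge decomposition available over $\CC$, and one must also verify that imposing the rational-double-point condition excludes none of the groups surviving the order test.
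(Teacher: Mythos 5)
Your proposal is correct and follows essentially the same route as the paper: the theorem itself is quoted from Katsura, but the paper's own refinement (Theorem~\ref{rigid_actions}, $p>5$ case) proves it exactly as you do, by bounding element orders through the four-dimensional representation (Corollary~\ref{c240}), embedding $G$ into $\SL_2(\kk)$ via faithfulness of the tangent-space action and the symplectic condition (Lemmas~\ref{Image_in_SL} and~\ref{L1}), and then invoking Dickson's classification (Theorem~\ref{Dickson}). The only cosmetic differences are that you obtain the order bound $\varphi(r)\mid 4$ from the characteristic polynomial $\Phi_r^m$ on $V_\ell(A)$ rather than from the dimension of the commutative subalgebra $\QQ(\zeta_r)\subset\End^\circ(A)$, and you justify injectivity on the tangent space by tame linearization where the paper uses a Dieudonn\'e-module argument.
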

 
Katsura showed that any group from Theorem~\ref{Katsura_list} occurs over some finite field of characteristic $p>5$.
As far as the author knows, there is no classification of such groups over a given finite field.
In Section~\ref{main_sec} we prove the following statement.

\begin{thm}\label{main1}
Let $G$ be a finite group, and let $k$ be a field of characteristic $p>2$. 
Assume that $p$ does not divide the order of $G$. 
There exists an abelian surface with a faithful action of $G$ over $k$ such that $A/G$ is birational to a $K3$ surface if and only if $G$ belongs to the Katsura list from Theorem~\ref{Katsura_list}, and one of the following conditions hold:
\begin{enumerate}
	\item  $G$ is $Q_8,Q_{12},\SL_2(\FF_3)$, or a cyclic group of order $2,3,4$, or $6$;
	\item  if $G$ contains a cyclic subgroup of order $n\in\{5,8,12\}$, then 
 $A$ is supersingular, $\FF_{p^2}\subset k$, and $p\not\equiv\pm 1\bmod n$.
	\end{enumerate}
\end{thm}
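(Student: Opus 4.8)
The plan is to establish both implications through the two Katsura criteria of Section~2. For the forward direction, suppose $A/G$ admits a K3 minimal model over $k$. Theorem~\ref{Katsura_quotient} tells me the action is rigid and symplectic and that $A/G$ has only rational double points, while the group-theoretic classification underlying Theorem~\ref{Katsura_list} depends only on the existence of a rigid symplectic $4$-dimensional representation; since $p\nmid|G|$ this classification is insensitive to the characteristic, so $G$ must lie in the Katsura list. That list splits into two parts: the groups in item~(1) are exactly those whose cyclic subgroups all have order in $\{2,3,4,6\}$, i.e. of totient at most $2$, whereas every remaining group contains a cyclic subgroup of some order $n\in\{5,8,12\}$. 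All of the arithmetic content is concentrated in the behaviour of such an element, which I will analyze next.

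Fix $g\in G$ of order $n\in\{5,8,12\}$, so that $\phi(n)=4=\dim V_\ell(A)$. Rigidity forces the characteristic polynomial of $g$ on $V_\ell(A)$ to equal the cyclotomic polynomial $\Phi_n$, hence $\QQ(\zeta_n)$ embeds in $\End^0(A)$ and $A$ has complex multiplication by $\QQ(\zeta_n)$; the four eigenvalues of $g$ are exactly the four primitive $n$-th roots of unity, falling into two conjugate pairs $\{\zeta^c,\zeta^{-c}\}$. I will then read off the symplectic condition on the Hodge filtration $H^0(A,\Omega^1_A)\subset H^1_{dR}(A)$: since $g$ acts trivially on $\wedge^2H^0(A,\Omega^1_A)=H^0(A,\Omega^2_A)$, its two eigenvalues on $H^0(A,\Omega^1_A)$ have product $1$. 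Among four distinct primitive roots the only two-element subset with product $1$ is a full conjugate pair, so $H^0(A,\Omega^1_A)$ must be spanned by the eigenvectors for some pair $\{\zeta^c,\zeta^{-c}\}$.

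This conjugate-pair filtration is the crucial obstruction. For an ordinary abelian surface, equivalently one admitting a characteristic-zero CM lift, the eigenvalues of $g$ on the cotangent space constitute a $p$-adic CM type, namely one embedding from each conjugate pair; a full conjugate pair is never a CM type, so $A$ cannot be ordinary and must be supersingular. To pin down the congruence I will pass to the Dieudonn\'e module $M=H^1_{cris}(A)$: the $\sigma$-semilinear Frobenius $\bar F$ carries the $\zeta^c$-eigenspace of $g$ to the $\zeta^{cp}$-eigenspace, so $g$ together with $\bar F$ organizes the four lines into orbits of multiplication by $p$ on $(\ZZ/n)^*$. A short slope computation around each orbit then shows that the symplectic filtration $\ker\bar F$ is compatible with all slopes equal to $1/2$ precisely when $p\not\equiv\pm1\pmod n$: when $p\equiv\pm1$ the $p$-orbits are exactly the conjugate pairs (or singletons), so forcing $\ker\bar F$ to be a conjugate pair makes $\bar F$ vanish on an entire orbit, which produces slopes $0$ and $1$ and hence an ordinary surface, contradicting the previous step. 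Thus $p\not\equiv\pm1\pmod n$, and since $g$ and the supersingular endomorphism algebra $M_2(D_{p,\infty})$ become available only after base change to $\FF_{p^2}$, one also needs $\FF_{p^2}\subset k$.

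For the reverse direction I would construct the surfaces. For a cyclic factor of order $2,3,4$ or $6$ I will use the corresponding finite-order element of $\SL_2(\ZZ)$ acting on $E\times E$, which is symplectic and defined over every $k$, while $Q_8,Q_{12}$ and $\SL_2(\FF_3)$ are realized inside $\End^0$ of a product of elliptic curves with the appropriate quaternionic complex multiplication. When $G$ contains an element of order $n\in\{5,8,12\}$ and $p\not\equiv\pm1\pmod n$, the even order of $p$ modulo $n$ makes $\QQ(\zeta_n)$ embed in $M_2(D_{p,\infty})$ at $p$ (the condition at $\infty$ being automatic), and reversing the orbit picture above yields a supersingular surface over $\FF_{p^2}$ with the prescribed slope-$1/2$ Dieudonn\'e structure and conjugate-pair cotangent space, hence a rigid symplectic $G$-action. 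In every case the remaining hypothesis of Theorem~\ref{Katsura_quotient} holds because a symplectic automorphism of order prime to $p$ linearizes near its isolated fixed points with an $ADE$ action, so $A/G$ acquires only rational double points. I expect the main obstacle to be the slope computation of the third paragraph: transporting the symplectic, conjugate-pair condition on the cotangent space through the Frobenius orbit structure on the Dieudonn\'e module into the sharp congruence, and in particular disposing of the boundary case $p\equiv-1\pmod n$, where $\QQ(\zeta_n)$ still embeds in the endomorphism algebra yet the forced reduction is ordinary rather than supersingular.
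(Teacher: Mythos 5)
Your overall skeleton matches the paper's (Katsura's criterion, then the characteristic-independent group classification, then arithmetic conditions read off the Dieudonn\'e module, then explicit constructions), and your orbit analysis of the congruence is essentially the paper's Lemma~\ref{sympl_basis}: the semilinearity $FM(g)=M(g)F$ twists eigenvalues by $p$-th powers, and $p\equiv\pm1\bmod n$ collides the symplectic conjugate pair with rigidity. Your one genuinely different ingredient is proving non-ordinarity by a CM-type/duality argument, where the paper instead argues geometrically: by Proposition~\ref{Sing} the quotient $K3$ has N\'eron--Severi rank $22$, hence is Shioda supersingular, and supersingularity of $A$ follows from~\cite[Lemma 4.4]{Ka}. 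However, your step ``$A$ cannot be ordinary and must be supersingular'' rests on a false dichotomy: abelian surfaces over finite fields can be \emph{mixed}, with slopes $\{0,1/2,1\}$ (the paper defines these explicitly, and Theorem~\ref{th_surf} lists them), and excluding the ordinary case excludes nothing else. The gap is repairable --- a mixed surface carrying a rigid $C_n$-action would have $\End^\circ(A)=\QQ(\zeta_n)$ with at least three primes above $p$, forcing $p\equiv 1\bmod n$, which your orbit argument kills; alternatively, once $p\not\equiv\pm1\bmod n$ every $F$-orbit of eigenlines has length $\ge 2$, and a crystal whose lines are cyclically permuted by $F$ is isoclinic, which yields slope $1/2$ directly --- but as written the assertion is simply wrong, and the supersingularity claim is made before the congruence that would rescue it.

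Two further gaps are more serious. First, your justification of $\FF_{p^2}\subset k$ (``$g$ and $M(2,\HH_p)$ become available only after base change to $\FF_{p^2}$'') is not an argument: the $G$-action is defined over $k$ by hypothesis, and you never explain why $\QQ(\zeta_n)\subset\End^\circ(A)$, with endomorphisms over $k$, is impossible when $\FF_{p^2}\not\subset k$. The paper needs real input here (Corollary~\ref{Katsura}): when $\sqrt q\notin\ZZ$, the classification of supersingular Weil polynomials (Theorems~\ref{th_curv} and~\ref{th_surf}) limits $\End^\circ(A)$ to a quartic field handled by Lemma~\ref{NoSS}, or to $\HH_\infty(\QQ(\sqrt p))$ or $M(2,\QQ(\sqrt{-p}))$, which cannot contain $\QQ(\zeta_n)$ because that would put $\sqrt{\pm p}$ inside $\QQ(\zeta_n)$ while $p\nmid n$. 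Second, on the existence side, ``a product of elliptic curves with the appropriate quaternionic complex multiplication'' does not exist over the fields where you need it: the groups of item (1) must act over \emph{every} $k$ of characteristic $p$, in particular over $\FF_p$, where a supersingular curve has $\End^\circ(E)=\QQ(\sqrt{-p})$ and, for instance, $\HH_2\hookrightarrow M(2,\QQ(\sqrt{-p}))$ fails whenever $2$ splits in $\QQ(\sqrt{-p})$, i.e.\ for $p\equiv 7\bmod 8$. The paper instead produces, for each $p$, an \emph{ordinary} curve over $\FF_p$ whose CM field is non-split at $2$ (resp.\ $3$) by choosing explicit Weil polynomials (Theorem~\ref{small_groups}); some such device is indispensable to realize $Q_8$, $Q_{12}$ and $\SL_2(\FF_3)$ in case (1), and your sketch contains none of it.
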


Finally, in section~\ref{SecZeta} we study traces of Frobenius actions on Neron--Severi groups of supersingular generalized Kummer surfaces. A supersingular $K3$ surface $X_{21}$ over $\FF_p$ with the zeta function $(1^{21},2)$ was constructed by Schuett~\cite{Sch}. 
We give an independent construction on the assumption that $p\equiv 3\bmod 4$.
We summarize Theorems~\ref{SSZeta1} and~\ref{SSZeta2} as follows.

\begin{thm}\label{main2}
There exist supersingular $K3$ surfaces over $\FF_{q}$, where $q$ is an even power of $p$,
with the following traces: $22, 18, 14, 10, 8, 6, 4, 2, 0.$

There exist supersingular $K3$ surfaces over $\FF_{q}$, where $q$ is an odd power of $p$,
with the following traces: $20, 18, 14, 10, 8, 6, 2, 0.$
\end{thm}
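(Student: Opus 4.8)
The plan is to prove Theorem~\ref{main2} constructively, by exhibiting for each listed trace value an explicit supersingular generalized Kummer surface over a finite field of the appropriate type and computing the trace of Frobenius on its Neron--Severi group via the isomorphism $(*)$. By the earlier discussion, for a Shioda supersingular $K3$ surface the zeta function is entirely determined by the Frobenius action on $\NS(\Bar X)$, so I would reduce everything to a linear-algebra computation on a rank-$22$ lattice. The guiding principle is that $X(A,G)$ is obtained from the quotient $A/G$ by resolving the rational double points, and $\NS(\Bar X)$ splits (after tensoring with $\QQ_\ell$) into the $G$-invariant part of $H^2(\Bar A,\QQ_\ell)$ together with the classes of the exceptional curves coming from the resolution of the singular points. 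Thus the Frobenius trace on $\NS(\Bar X)$ decomposes as
\[
\Tr_X = \Tr\bigl(F\mid H^2(\Bar A,\QQ_\ell)^G\bigr) + \Tr\bigl(F\mid E\bigr),
\]
where $E$ is the span of the exceptional divisor classes, and I would compute each summand separately.

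For the first summand I would use that $H^2(\Bar A,\QQ_\ell)=\bigwedge^2 H^1(\Bar A,\QQ_\ell)=\bigwedge^2 V_\ell(A)^\vee$, so the $G$-invariants and the Frobenius trace on them are governed by the eigenvalues of $G$ and of Frobenius on $V_\ell(A)$. Here I would lean on Theorem~\ref{main1}, which tells me exactly which groups $G$ occur and, crucially, the supersingularity and congruence conditions on $p$; these pin down the Frobenius eigenvalues up to the structure of $\End(A)\otimes\QQ$. The second summand requires understanding how Frobenius permutes the singular points of $A/G$ and the exceptional curves over each one: each rational double point of type $A_n$, $D_n$, or $E_n$ contributes a configuration of $(-2)$-curves whose intersection form is the corresponding Cartan matrix, and Frobenius acts on these both by permuting $\Bar k$-points lying over a given $k$-point and by the Dynkin-diagram symmetry. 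So I would enumerate, for each group $G$ in the relevant part of the Katsura list, the singularity types on $A/G$ and the Galois action on the fixed-point configuration.

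The two-case structure of the theorem (even versus odd powers of $p$) I expect to reflect exactly whether $\FF_{p^2}\subset k$, i.e., whether Frobenius itself or only its square realizes the needed eigenvalue pattern on $V_\ell(A)$; this is where condition (2) of Theorem~\ref{main1} enters, forcing certain groups (those containing cyclic subgroups of order $5,8,12$) to appear only over even powers. The actual construction, then, is to choose for each target trace a specific $(A,G)$ — typically $A$ a product of supersingular elliptic curves or a self-product with a convenient CM or quaternionic endomorphism structure, and $G$ ranging over cyclic groups, binary dihedral groups $Q_{4n}$, and $\SL_2(\FF_3)$ — and to twist the base field or the Frobenius by composing with automorphisms so as to scale the invariant-part trace. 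By tabulating the contributions I would verify that the values $22,18,14,10,8,6,4,2,0$ are attained over even powers and $20,18,14,10,8,6,2,0$ over odd powers.

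The main obstacle I anticipate is the bookkeeping of the exceptional-divisor contribution: one must correctly count the singular points of $A/G$ over $\kk$, group them into Frobenius orbits defined over $k$, and track the induced permutation and Dynkin-symmetry action to get the trace on $E$ right. In particular the difference between the even-power list and the odd-power list — the appearance of the trace $4$ only in the even case and $20$ only in the odd case — must come out of a delicate interplay between the Galois orbit structure on the fixed points and the eigenvalue of Frobenius on $H^2(\Bar A,\QQ_\ell)^G$; getting these two parities to match the stated lists is the crux. Once the contribution of every rational double point is correctly attributed, assembling the final traces is routine arithmetic, and the existence of the surfaces realizing each value follows from the constructions guaranteed by Theorem~\ref{main1}.
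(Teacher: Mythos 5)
Your framework coincides with the paper's: decompose $\NS(\XX)\otimes\QQ$ into the $G$-invariant part of $H^2(\Bar A,\QQ_\ell)$ plus the span of the exceptional classes (Lemma~\ref{ZetaExCurve}), compute the Frobenius trace on the invariant part from the Weil polynomial (Lemma~\ref{zeta_lemma}), and get the trace on the exceptional part from the Frobenius action on singular points and their Dynkin graphs (Propositions~\ref{Sing} and~\ref{FActionOnSingGraph}). The gap is precisely at the step you yourself call the crux: how to actually produce, for a fixed group and a fixed isogeny class, the various Frobenius orbit structures on the singular locus. Your only proposed mechanism --- ``twist the base field or the Frobenius by composing with automorphisms'' --- is not adequate, and it is not what is needed. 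The point is that the trace is not determined by the pair (isogeny class, $G$): Proposition~\ref{C4} obtains traces $18$ and $8$ from the \emph{same} group $C_4$ and the \emph{same} Weil polynomial $(t^2-q)^2$, by exhibiting two non-isomorphic surfaces in that single isogeny class. The tool that makes this possible, and which your plan lacks entirely, is the lattice description of isogenies (Lemmata~\ref{lem_on_Tate_module} and~\ref{lem_on_Dmod}): one chooses an $F$-stable and $G$-stable $\ZZ_2$-lattice $T\subset V_2(A)$ with a prescribed Frobenius action on $T/2T$ (using \cite[Proposition 3.5]{Ry}) and passes to the $2$-isogenous surface $B$ with $T_2(B)\cong T$. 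This is what controls the rationality of the $2$-torsion, hence of the singular points of $B/G$, hence the permutation of the exceptional curves. The same device produces the trace-$20$ surface over odd powers: a $Q_8$-surface $B$ with $f_B(t)=(t^2-q)^2$ and $p\equiv 3\bmod 4$, chosen so that every Frobenius orbit on $B[2](\kk)$ lies inside a $Q_8$-orbit, whence all singular points are rational and Frobenius fixes the $D_4$ graph at the origin by Proposition~\ref{FActionOnSingGraph}(2). Without this construction your ``bookkeeping of the exceptional-divisor contribution'' has nothing to book-keep.

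Two further corrections of expectations. First, the bulk of both tables ($G=C_2$: traces $22,14,10,6,4,2,0$ over even powers, and $18,14,10,6,2,0$ over odd powers) is not re-derived in the paper at all; it is quoted from \cite[Theorem 7.1]{Ry}, so your plan silently includes re-proving a substantial prior result. Second, the even/odd asymmetry has nothing to do with groups containing cyclic subgroups of order $5,8,12$: no such group appears anywhere in the trace tables (only $C_2$, $C_4$ and $Q_8$ are used). Trace $4$ occurs only over even powers because it uses $f_A(t)=(t^2\pm\sqrt{q}\,t+q)^2$, which forces $\sqrt{q}\in\ZZ$; trace $20$ replaces $22$ over odd powers because $\Tr_X=22$ is forbidden there by Artin's theorem, and it is realized by the $Q_8$ construction above, which exists only for $p\equiv 3\bmod 4$ (Theorem~\ref{SSAVp}).
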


Note that in all our examples the traces are non-negative and even. 
It is natural to ask whether the trace is always non-negative and even for 
a supersingular $K3$ surface over $\FF_q$.  



\subsection*{Acknowledgements}
The work was supported by the Israel Science Foundation,  grant No.  1405/22
I would like to thank Yuri Zarhin for interesting discussions and Matthias Sch\" utt  for email exchanges. I also thank WonTae Hwang for his remarks on the paper. 
I am a Simons-IUM contest winner, and I am grateful to its sponsors and jury.

\section{Preliminaries}
\subsection{Notation}
\begin{align*}
	\zeta_n: & \text{ a primitive root of unity of order } n;\\
	\Phi_r: & \text{ the }r\text{-th cyclotomic polynomial};\\
	v_\p:& \text{ a normalized valuation: }v_\p(p)=1\text{, where }\p\text{ is a prime over }p\in\ZZ;\\
	M(r,\HH):&\text{ square matrices over an algebra }\HH;\\
	\HH_\infty(K):& \text{ the quaternion algebra over }K\text{ with non-trivial invariants at the real places of }K;\\
\HH_p: &\text{ the quaternion algebra over }\QQ\text{ with non-trivial invariants at the real place and at }p;\\
L^\real:&\text{ the totally real subfield of a CM number field }L; \\
Q_{4n}:&\text{ the binary dihedral group of order }4n\text{ (see Section~\ref{rigid_sec})};\\
\ESL_2(k):&\text{ the extended }\SL_2\text{ group over a finite field }k\text{ (see Section~\ref{rigid_surf})};\\
{\QQ[G]}^\rig:&\text{ the rigid quotient algebra of }\QQ[G]. 
\end{align*}

\subsection{Central simple algebras over number fields}
Recall some well-known facts on central simple algebras over number fields~\cite[Section 28]{MO}. Let $L/K$ be an extension of number fields, and let $\HH$ be a simple algebra with center $K$ such that $L\otimes\HH$ is a matrix algebra over $L$, i.e., $\HH$ represents an element of the Brauer group $\Br(L/K)$.
Let $v$ be a prime of $K$. We denote the local invariant of $\HH$ at $v$ by $\inv_v\HH\in\QQ/\ZZ$.
A simple algebra over a number field $K$ is uniquely determined by its local invariants at the places of $K$~\cite[Remark 32.12]{MO}. 

\begin{thm}\label{trivial_inv}
Let $\HH$ be a central simple algebra over $K$, and let $v$ be a prime of $K$.
Let $L$ be an extension of $K$. Choose a prime $w$ of $L$ over $v$.
Then $\HH_L$ is a central simple algebra over $L$ with local invariant $[L_w:K_v]\inv_v\HH$ at $w$.
\end{thm}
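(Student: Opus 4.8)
The plan is to reduce this global statement to a purely local computation at the single place $w$, and then invoke the standard behavior of local invariants under restriction in local class field theory. First, the assertion that $\HH_L = L\otimes_K\HH$ is again a central simple algebra with center $L$ is the general fact that scalar extension of a central simple algebra stays central simple; this involves no arithmetic, and I would simply cite it from~\cite{MO}.

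For the invariant itself, I would observe that $\inv_w(\HH_L)$ depends only on the class of the localization $(\HH_L)_w = L_w\otimes_L\HH_L$ in the local Brauer group $\Br(L_w)$. Using associativity of tensor products together with the fact that $w$ lies over $v$, one computes
\[(\HH_L)_w = L_w\otimes_L(L\otimes_K\HH) = L_w\otimes_K\HH = (K_v\otimes_K\HH)\otimes_{K_v}L_w = \HH_{K_v}\otimes_{K_v}L_w.\]
Thus the class of $(\HH_L)_w$ in $\Br(L_w)$ is exactly the image of the local class $\HH_{K_v}\in\Br(K_v)$ under the restriction (base-change) map $\Br(K_v)\to\Br(L_w)$ attached to the extension of local fields $L_w/K_v$. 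This converts the global question into a question about one local restriction map.

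The key input is then the compatibility of the canonical local invariant isomorphisms with restriction: for a finite extension of local fields $L_w/K_v$, the restriction map $\Br(K_v)\to\Br(L_w)$ acts, under the identifications $\inv_{K_v}\colon\Br(K_v)\to\QQ/\ZZ$ and $\inv_{L_w}\colon\Br(L_w)\to\QQ/\ZZ$, as multiplication by the local degree $[L_w:K_v]$. Applying this to the class of $\HH_{K_v}$ yields $\inv_w(\HH_L)=[L_w:K_v]\,\inv_v\HH$, which is the claimed formula. Note that the global reciprocity law (the sum of all invariants vanishing) is not needed, since the statement is place-by-place.

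The main obstacle is precisely this local class field theory input that restriction scales invariants by the local degree. If one prefers a self-contained argument to a citation, I would establish it by choosing an unramified cyclic splitting field and presenting $\HH_{K_v}$ as a cyclic algebra, then tracking how the Frobenius symbol and the valuation of the defining element transform upon extending scalars to $L_w$; the ramification index and residue degree recombine to produce exactly the factor $[L_w:K_v]=e\cdot f$.
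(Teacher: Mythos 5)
Your proof is correct, and it is essentially the argument underlying the paper's own treatment: the paper proves this theorem by citing \cite[Theorem 31.9]{MO}, which is precisely the statement in question, so your write-up simply supplies the standard proof that the citation delegates — base-change compatibility reducing everything to the local restriction map $\Br(K_v)\to\Br(L_w)$, followed by the local class field theory fact that this map multiplies invariants by the local degree $[L_w:K_v]$. No gap; your remark that global reciprocity is not needed is also accurate.
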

\begin{proof}
        It follows from~\cite[Theorem 31.9]{MO}.
\end{proof}
 
Let $K$ be a totally real extension of $\QQ$ of even degree. We denote by $\HH_\infty(K)$ the quaternion algebra with center $K$, and non-trivial invariants exactly at the real places of $K$. 
We denote by $\HH_p$ the quaternion algebra with center $\QQ$, and non-trivial invariants at the real place and at $p$.

\begin{thm}\label{trivial_H}
Let $\HH$ be a simple algebra of dimension $d^2$ over its center $K$. Let $L$ be an extension of $K$ of degree $d$.
Then there is a homomorphism of $L$ to $\HH$ if and only if $L\otimes_K\HH$ is a matrix algebra.
\end{thm}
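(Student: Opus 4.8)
The plan is to prove both implications by pure central-simple-algebra theory; the statement holds over an arbitrary base field, so the number-field hypothesis plays no role in the argument itself. Throughout I would fix a Wedderburn decomposition $\HH\cong M_r(D)$ with $D$ a central division $K$-algebra of degree $e$, so that $d=re$, the unique simple left $\HH$-module $S\cong D^r$ has $\dim_K S=re^2=de$, and $\HH\otimes_K L$ is again central simple over $L$ of degree $d$. Note also that a $K$-algebra homomorphism out of the field $L$ is nonzero, hence injective, so ``homomorphism'' and ``embedding'' coincide.

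For the ``only if'' direction, suppose we are given an embedding $L\hookrightarrow\HH$. I would make $\HH$ a left module over $\HH\otimes_K L$ by letting the first factor act by left multiplication and the second (through $L\subset\HH$) by right multiplication; these two actions commute because left and right multiplication always commute and $L$ is commutative. Viewing $\HH$ as an $L$-vector space of dimension $d^2/d=d$ via the right $L$-action, this module structure is an $L$-algebra map $\HH\otimes_K L\to\End_L(\HH)\cong M_d(L)$. Since $\HH\otimes_K L$ is simple the map is injective, and equality of $L$-dimensions ($d^2=d^2$) forces it to be an isomorphism, so $\HH\otimes_K L$ is a matrix algebra.

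For the ``if'' direction, assume $\HH\otimes_K L\cong M_d(L)$ and let $M$ be the simple module $L^{d}$ over this matrix algebra. Restriction of scalars along $h\mapsto h\otimes1$ makes $M$ a left $\HH$-module, while the central copy $1\otimes L$ gives a commuting $L$-action, i.e.\ an (injective) $K$-algebra homomorphism $L\to\End_\HH(M)$. Since $\dim_K M=d\cdot[L:K]=d^2$, decomposing $M$ into copies of $S$ shows it is the sum of exactly $r$ copies, whence $\End_\HH(M)\cong M_r(\End_\HH S)\cong M_r(D^{op})\cong\HH^{op}$ by Schur's lemma and transposition. Thus $L$ embeds into $\HH^{op}$; as $L$ is commutative its image is a commutative subset on which the two multiplications agree, so the same underlying map is an embedding $L\hookrightarrow\HH$.

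I expect the backward implication to be the crux: the forward direction is a single dimension count, whereas the reverse one needs the Wedderburn decomposition, the identification $\End_\HH(M)\cong\HH^{op}$, the counting that pins down the multiplicity as $r$, and the (harmless but necessary) passage from $\HH^{op}$ back to $\HH$. In the applications of this lemma over the number field $K$, the splitting hypothesis or conclusion can then be checked place-by-place by Theorem~\ref{trivial_inv}: $\HH\otimes_K L$ is a matrix algebra if and only if $[L_w:K_v]\inv_v\HH=0$ for every place $v$ of $K$ and a chosen place $w\mid v$ of $L$.
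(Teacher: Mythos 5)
Your proof is correct, but it is genuinely different from what the paper does: the paper's entire ``proof'' is a citation to Reiner's \emph{Maximal Orders} \cite[Theorem 28.5]{MO}, which is precisely this statement (a degree-$d$ extension $L/K$ splits a central simple $K$-algebra of degree $d$ if and only if $L$ embeds in it as a maximal subfield). You instead reprove that classical theorem from scratch, and both directions of your argument are sound. The forward direction --- making $\HH$ an $\HH\otimes_K L$-module by left multiplication and right multiplication through the embedded copy of $L$, then winning by simplicity plus the dimension count $\dim_L(\HH\otimes_K L)=d^2=\dim_L\End_L(\HH)$ --- is the standard one, and your backward direction correctly assembles Wedderburn ($\HH\cong M_r(D)$, $\dim_K S=de$), the multiplicity count $M\cong S^r$, Schur's lemma with transposition to get $\End_\HH(M)\cong M_r(D^{\mathrm{op}})\cong\HH^{\mathrm{op}}$, and the (often-omitted but genuinely needed) observation that an embedding of a \emph{commutative} field into $\HH^{\mathrm{op}}$ is the same map as an embedding into $\HH$. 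What your route buys is self-containedness and generality: the argument works over an arbitrary base field, making clear that the number-field hypothesis in the paper's statement is irrelevant to this lemma and only matters for its companions (such as Theorem~\ref{trivial_inv}, where local invariants enter). What the paper's route buys is brevity and a pointer to the literature where this sits inside the broader theory of splitting fields and maximal subfields; in a paper whose real content lies elsewhere, that is a reasonable trade.
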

\begin{proof}
    This follows from~\cite[Theorem 28.5]{MO}.
\end{proof}

\begin{ex} Let $L$ be a CM number field, and denote by $L^\real$ the totally real subfield of $L$.  
According to the previous theorem, there exists an injection $L\to \HH_\infty(L^\real)$.
\end{ex}

\begin{corollary}\label{Hp_hom}
\begin{enumerate}
\item Let $K$ be a real quadratic extension of $\QQ$.
There is a homomorphism of $\HH_p$ to $\HH_\infty(K)$ if and only if $p$ does not split in $K$.
\item Let $L$ be an imaginary quadratic extension of $\QQ$.
There is a homomorphism of $\HH_p$ to $M(2,L)$ if and only if $p$ does not split in $L$.
\end{enumerate}
\end{corollary}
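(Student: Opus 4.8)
The plan is to derive both statements from the local-invariant criterion of Theorem~\ref{trivial_H} combined with the base-change formula of Theorem~\ref{trivial_inv}. In each case we must check, for the relevant extension $L$ (of degree $2$ over $\QQ$) and the target algebra, exactly when the hypotheses of Theorem~\ref{trivial_H} are satisfied, i.e.\ when $L\otimes_\QQ\HH_p$ is a matrix algebra (respectively, when an injection into $M(2,L)$ exists). The only nontrivial local invariants of $\HH_p$ live at the real place $\infty$ and at $p$, each equal to $1/2$, so everything reduces to tracking these two invariants after tensoring with $L$.

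\medskip
\noindent
For part~(1), let $K$ be a real quadratic field. I would apply Theorem~\ref{trivial_H} with $d=2$: the homomorphism $\HH_p\to\HH_\infty(K)$ exists precisely when $K\otimes_\QQ\HH_p$ is a matrix algebra over $K$, which happens iff all local invariants of $K\otimes_\QQ\HH_p$ vanish. Since $K$ is real quadratic, $\infty$ splits into two real places of $K$, and by Theorem~\ref{trivial_inv} the local invariant at each is $[K_w:\QQ_\infty]\cdot\tfrac12=1\cdot\tfrac12=\tfrac12$, which is \emph{nonzero}. This looks like an obstruction, but the key point is that $\HH_\infty(K)$ is itself ramified exactly at the real places of $K$; so we should not ask for $K\otimes\HH_p$ to split, but rather compare $K\otimes_\QQ\HH_p$ with $\HH_\infty(K)$ place by place. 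The two algebras already agree at the real places (both have invariant $\tfrac12$ there), so a homomorphism exists iff they agree at the finite places as well, i.e.\ iff $K\otimes_\QQ\HH_p$ has trivial invariant at every prime of $K$ over $p$. If $p$ is inert or ramified in $K$, there is one prime $w$ over $p$ with $[K_w:\QQ_p]=2$, giving invariant $2\cdot\tfrac12=0$; if $p$ splits, there are two primes each with $[K_w:\QQ_p]=1$, giving invariant $\tfrac12\neq0$. Hence the condition is exactly that $p$ be non-split in $K$.

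\medskip
\noindent
For part~(2), let $L$ be imaginary quadratic. Here $M(2,L)$ is split everywhere, so by Theorem~\ref{trivial_H} an embedding $\HH_p\to M(2,L)$ exists iff $L\otimes_\QQ\HH_p$ is a matrix algebra, i.e.\ iff all its local invariants over $L$ vanish. The real place $\infty$ of $\QQ$ now extends to a single complex place of $L$ with $[L_w:\QQ_\infty]=2$, contributing invariant $2\cdot\tfrac12=0$, so the real ramification of $\HH_p$ is automatically killed by going up to $L$. The only remaining invariant is at the prime(s) over $p$, and by the same splitting-versus-inert/ramified dichotomy as above it vanishes iff $p$ is non-split in $L$. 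This yields the stated criterion.

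\medskip
\noindent
The main obstacle, if any, is purely bookkeeping: correctly counting the places above $\infty$ and above $p$ and applying the residue-degree factor $[L_w:K_v]$ in Theorem~\ref{trivial_inv} in each of the split, inert, and ramified cases. There is no deep input beyond the two cited theorems; the one conceptual subtlety is remembering that in part~(1) the target algebra $\HH_\infty(K)$ is itself ramified at the real places, so the correct condition is matching of invariants rather than full splitting, whereas in part~(2) the target $M(2,L)$ is split and one genuinely needs $L\otimes_\QQ\HH_p$ to be trivial.
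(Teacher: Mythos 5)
Your local computations are all correct: the invariants of $K\otimes_\QQ\HH_p$ and $L\otimes_\QQ\HH_p$ at the archimedean places and at the primes over $p$, in the split versus inert/ramified cases, are exactly as you say, and your final criteria agree with the paper's. The gap is in the bridge you use to get started. In both parts you claim that the existence of a homomorphism \emph{out of} $\HH_p$ (into $\HH_\infty(K)$, resp.\ into $M(2,L)$) is governed by Theorem~\ref{trivial_H}. That theorem says nothing of the sort: it characterizes embeddings of a \emph{field} $L$ of degree $d$ into a central simple algebra of dimension $d^2$ over its center, i.e.\ arrows $L\to\HH$, whereas you need arrows $\HH_p\to B$. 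Your pivot in part (1) --- ``a homomorphism exists iff the invariants of $K\otimes_\QQ\HH_p$ and $\HH_\infty(K)$ agree at every place'' --- is the true statement, but it is precisely the point that needs proof, and as written you assert it; the same unproved assertion underlies part (2) (``an embedding $\HH_p\to M(2,L)$ exists iff $L\otimes_\QQ\HH_p$ is a matrix algebra''). The ``if'' halves are harmless (matching invariants give an isomorphism $\HH_p\otimes_\QQ F\cong B$ by the classification of central simple algebras by their invariants, and one composes with $\HH_p\to\HH_p\otimes_\QQ F$), but the ``only if'' halves of both parts are exactly where the content of the corollary sits, and they are left unjustified.

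The missing argument is short but necessary. Given a $\QQ$-algebra homomorphism $\phi:\HH_p\to B$, where $B$ has center $F$ (here $F=K$ or $F=L$), the image of $\phi$ automatically commutes with $F$, so $\phi$ extends to an $F$-algebra homomorphism $\HH_p\otimes_\QQ F\to B$; this is injective because $\HH_p\otimes_\QQ F$ is simple, and it is an isomorphism because both sides have dimension $4$ over $F$. Hence the existence of $\phi$ forces $\HH_p\otimes_\QQ F\cong B$, i.e.\ equality of all local invariants, which is what your place-by-place comparison needs. The paper gets around this by different means: in part (2) it applies the double centralizer theorem inside $M(4,\QQ)$ to convert the sought map $\HH_p\to M(2,L)$ into an embedding $L\to\HH_p$, which \emph{is} the situation covered by Theorem~\ref{trivial_H}; and in part (1) it proves the ``only if'' direction by tensoring a hypothetical homomorphism with $\QQ_p$, producing a map from the division algebra $\HH_p\otimes\QQ_p$ to $M(2,\QQ_p)\oplus M(2,\QQ_p)$, which is impossible by the same simplicity-plus-dimension count. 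With either repair your proof is complete; without one, it does not establish the corollary.
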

\begin{proof}
\begin{enumerate}
\item If $p$ does not split in $K$, then, according to Theorems~\ref{trivial_inv} and~\ref{trivial_H}, we have a homomorphism
 \[\HH_p\to\HH_p\otimes K\cong \HH_\infty(K).\]
Assume that $p$ splits in $K$, and that there is a homomorphism $\HH_p\to \HH_\infty(K)$. Then we
get a homomorphism of the simple algebra $\HH_p\otimes\QQ_p$ to the sum of two matrix algebras
\[\HH_\infty(K)\otimes\QQ_p\cong M(2,\QQ_p)\oplus M(2,\QQ_p).\]
A contradiction.
\item There is a homomorphism of $\HH_p$ to $M(2,L)$ if and only if there is a homomorphism in the opposite direction of the centralizers of these algebras in $M(4,\QQ)$, that is, $L\to\HH_p$. 
According to Theorems~\ref{trivial_inv} and~\ref{trivial_H}, the last homomorphism exists if and only if $p$ does not split in $L$.
\end{enumerate}
\end{proof}

\subsection{Abelian varieties over finite fields}
Let $A$ be an abelian variety over $k$.
The endomorphism ring of an abelian variety $\End(A)$ is finitely generated and torsion-free as $\ZZ$-module.
Let \[\End^{\circ}(A)=\End(A)\otimes_{\ZZ}\QQ.\]
An element $\phi\in\End(A)$ is called an \emph{isogeny} if $\phi$ is finite and surjective.
For example, multiplication by $m\in\ZZ$ is an isogeny $[m]:A\to A$.
Denote the kernel of $[m]$ by $A[m]$.
This is a finite group scheme over $k$ of order $m^{2\dim A}$~\cite[Remark 7.3]{Milne}. 

An abelian variety $A$ is {\it simple} if it does not contain non-trivial abelian subvarieties.
Any abelian variety $A$ over $k$ is isogenous to a product of simple abelian varieties:
\[A\to \prod_iA_i^{r_i},\] where $A_i$ are simple.
This decomposition corresponds to a decomposition of $\End^{\circ}(A)$ into a product of simple algebras
$\Mat(r_i,\End^{\circ}(A_i))$. In particular, $\End^{\circ}(A)$ is a semi-simple $\QQ$-algebra~\cite[IV.19, Corollaries 1 and 2]{Mum}.

Fix a prime number $\ell\neq p$. Let
  \[T_\ell(A) = \plim A[\ell^r](\kk),\text { and } V_\ell(A)=T_\ell(A)\otimes_{\ZZ_\ell}\QQ_\ell\] 
	be the $\ell$-th Tate module of $A$, and the corresponding vector space over $\QQ_\ell$.
 It is known that $T_\ell(A)$ is a free $\ZZ_\ell$-module of rank $2\dim A$.

Let $k$ be a finite field $\FF_q$.
\emph{The Frobenius endomorphism} $F_A: A\to A$ over $k$ is the morphism that is trivial on the topological space and raises functions to their $q$-th powers. 
The endomorphism $F_A$ acts on the Tate module by a semi-simple linear
transformation, which we denote by $F: T_\ell(A)\to T_\ell(A)$. The characteristic polynomial
\[f_A(t) = \det(t-F)\]
is called {\it the Weil polynomial of $A$}. It is a monic polynomial of degree $2\dim A$ with rational integer coefficients independent of the choice of $\ell$ (see~\cite[Page 180, Theorem 4]{Mum}, and~\cite[Page 205]{Mum}). 

The Frobenius endomorphism generates the center $K_A\subset\End^\circ(A)$.
Tate proved that the isogeny class of an abelian variety over $k$ is determined by its characteristic polynomial, that is $f_A(t)=f_B(t)$ 
if and only if $A$ is isogenous to $B$~\cite{Ta66}. Moreover, for any $\ell\neq p$ we have
\[ \End(A)\otimes_{\ZZ}\ZZ_\ell\cong\End_F(T_\ell A),\]
 where $\End_F(T_\ell A)$ is the ring of endomorphisms of $T_\ell(A)$ that commute with $F$.
For any embedding $\sigma:K_A\to \CC$ we have  
$|\sigma(F_A)|=\sqrt{q}$~\cite[Page 206, Theorem 4]{Mum}.
The following lemma can be proved in the same way as~\cite[IV.2.3]{Milne}.

\begin{lemma}\label{lem_on_Tate_module} 
If $f:A\to B$ is an isogeny, then $T_\ell(f):T_\ell(A)\to T_\ell(B)$ is injective, and the preimage $T=T_\ell(f)^{-1}(T_\ell(B))$ is an 
$F$-invariant submodule of $V_\ell(A)$.
Conversely, if $T\subset V_\ell(A)$ is $F$-invariant $\ZZ_\ell$-submodule of finite rank such that $T_\ell(A)\subset T$, then there exists an abelian variety $B$ defined over $k$ and an $\ell$-isogeny $f:A\to B$ such that $T_\ell(f)$ induces an isomorphism $T\cong T_\ell(B)$.\qed
\end{lemma}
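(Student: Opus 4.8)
The plan is to establish the standard dictionary between isogenies out of $A$ and $F$-stable lattices in $V_\ell(A)$ containing $T_\ell(A)$, following~\cite[IV.2.3]{Milne} but with the Galois action replaced by the action of the Frobenius $F$, which topologically generates $\Gal(\kk/k)$ because $k$ is finite.

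For the first assertion I would use that an isogeny admits a quasi-inverse: there is an isogeny $g\colon B\to A$ with $g\circ f=[n]$ for a suitable positive integer $n$, so that $V_\ell(g)\circ V_\ell(f)=n\cdot\id$ is invertible and $V_\ell(f)$ is injective. Since $V_\ell(A)$ and $V_\ell(B)$ have the same dimension $2\dim A$, the injection $V_\ell(f)$ is in fact an isomorphism, and its restriction $T_\ell(f)$ is injective. As $f$ is defined over $k$ it commutes with the Frobenius, so $V_\ell(f)$ is $F$-equivariant; hence, identifying $V_\ell(A)$ with $V_\ell(B)$ through $V_\ell(f)$, the lattice $T=V_\ell(f)^{-1}(T_\ell(B))$ is an $F$-invariant $\ZZ_\ell$-submodule of $V_\ell(A)$, and it contains $T_\ell(A)$ because $T_\ell(f)(T_\ell(A))\subset T_\ell(B)$.

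For the converse, the key input is the canonical identification $V_\ell(A)/T_\ell(A)\cong A[\ell^\infty](\kk)$. Under it, the hypotheses force $T$ to be a full lattice of rank $2\dim A$ (it sits in $V_\ell(A)$ and contains $T_\ell(A)$), so the quotient $H=T/T_\ell(A)$ is a finite subgroup of $A[\ell^\infty](\kk)$. Since $T$ is $F$-invariant, $H$ is stable under $F$, hence under all of $\Gal(\kk/k)$; together with $\ell\neq p$ this shows $H$ is the group of $\kk$-points of a finite \'etale subgroup scheme $N\subset A$ defined over $k$. I would then take $B=A/N$ with its quotient isogeny $f\colon A\to B$, which is an $\ell$-isogeny since $N$ has $\ell$-power order.

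To finish, I would identify $T$ with $T_\ell(B)$ by passing to $\ell$-power torsion in $0\to N\to A\to B\to 0$, which gives $B[\ell^\infty](\kk)\cong A[\ell^\infty](\kk)/H$. Rewriting the left-hand side as $V_\ell(B)/T_\ell(B)$ and the right-hand side as $(V_\ell(A)/T_\ell(A))/(T/T_\ell(A))=V_\ell(A)/T$, and comparing through the isomorphism $V_\ell(f)$, yields $V_\ell(f)^{-1}(T_\ell(B))=T$, i.e. $T_\ell(f)$ carries $T$ isomorphically onto $T_\ell(B)$. I expect the main obstacle to be this descent step: one must argue carefully that an $F$-stable finite subgroup of $A(\kk)$ actually descends to a subgroup scheme over the finite field $k$, which is precisely where the finiteness of $k$ (so that $F$ generates the Galois group) and the assumption $\ell\neq p$ are used.
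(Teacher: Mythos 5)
Your proof is correct and follows exactly the route the paper intends: the paper gives no details, simply remarking that the lemma ``can be proved in the same way as \cite[IV.2.3]{Milne}'', and your argument is precisely that standard one, with the Galois action replaced by the Frobenius endomorphism (legitimate since $F$ acts on torsion points as the arithmetic Frobenius, which topologically generates $\Gal(\kk/k)$). Both directions, including the descent of the finite $F$-stable subgroup $H\subset A[\ell^\infty](\kk)$ to an \'etale subgroup scheme over $k$ and the identification $V_\ell(f)^{-1}(T_\ell(B))=T$, are handled correctly.
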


Assume that $A$ is simple. In this case $K_A$ is a field.
Let $L_A$ be the Galois envelope of $K_A$ over $\QQ$, and let $P$ be the set of primes of $L_A$ over $p$.
The slopes of $A$ are defined as the set \[\{\frac{v_\p(F_A)}{v_\p(q)}|\p\in P\},\] where $v_\p$ is the normalized valuation.
We say that $A$ is \emph{supersingular} if all slopes of $A$ are equal to $1/2$. 

\begin{lemma}\label{NoSS}
Let $L$ be a Galois extension of $\QQ$. 
Assume that $\sqrt{q}\not\in\ZZ$, and $p$ is unramified in $L$.
 Then there is no supersingular abelian variety $A$ such that $L_A=L$.
\end{lemma}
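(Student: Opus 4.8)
The plan is to argue by contradiction, exploiting the integrality of the normalized valuation at an unramified prime. Suppose such a supersingular $A$ exists. Since the invariant $L_A$ and the notion of slopes are only defined for simple $A$, I take $A$ to be simple, so that $K_A=\QQ(F_A)$ is a field and $L=L_A$ is its Galois closure over $\QQ$. The Frobenius $F_A$ lies in the center $K_A\subseteq L$, and it is nonzero (being an isogeny), so for every prime $\p$ of $L$ over $p$ the value $v_\p(F_A)$ is finite and well defined.

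First I would translate the two hypotheses into statements about the normalized valuation $v_\p$. Because $p$ is unramified in $L$, the ramification index at every $\p\mid p$ equals $1$, so the normalized valuation $v_\p$ (with $v_\p(p)=1$) coincides with the ordinary $\p$-adic valuation and hence takes integer values on $L^*$; in particular $v_\p(F_A)\in\ZZ$. Writing $q=p^a$, the hypothesis $\sqrt q\notin\ZZ$ means $q$ is not a perfect square, i.e. $a$ is odd, and unramifiedness gives $v_\p(q)=a\,v_\p(p)=a$.

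Finally I would combine these facts. Supersingularity means every slope equals $1/2$, that is $v_\p(F_A)/v_\p(q)=1/2$ for all $\p\in P$, and therefore $v_\p(F_A)=a/2$. Since $a$ is odd, $a/2\notin\ZZ$, contradicting $v_\p(F_A)\in\ZZ$; this rules out the existence of $A$. I do not anticipate any serious obstacle: the argument is essentially a parity check. The only point requiring care is the normalization of the valuation — one must verify that it is precisely the unramifiedness of $p$ in $L$ that forces $v_\p$ to be $\ZZ$-valued, so that the half-integer slope $a/2$ forced by $\sqrt q\notin\ZZ$ is genuinely impossible.
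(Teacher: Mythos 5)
Your proof is correct and follows essentially the same route as the paper's: the paper's (very terse) proof observes that unramifiedness of $p$ in $L$ makes $v_\p$ integer-valued while $\sqrt q\notin\ZZ$ makes $v_\p(q)$ odd, so every slope is a fraction with odd denominator and hence cannot equal $1/2$. Your version just phrases the same parity check as an explicit contradiction ($v_\p(F_A)=a/2\notin\ZZ$), with the details of the normalization spelled out.
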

\begin{proof}
Let $A$ be an abelian variety such that $L_A=L$. 
Then the slopes of $A$ are the fractions ${v_\p(F_A)}/{v_\p(q)}$ with odd denominator. 
\end{proof}

The abelian variety is \emph{ordinary} if the set of slopes of $A$ is $\{0,1\}$. It is known that $A$ is ordinary if and only if the order of the group $A[p](\kk)$ is equal to $p^{\dim A}$. A simple abelian surface $A$ is called \emph{mixed} if the set of slopes of $A$ is $\{0,1/2,1\}$.

In dimensions $1$ and $2$ Weil polynomials can be explicitly classified. We present this classification in part. For more details, see~\cite{MN} and~\cite{Wa}.

\begin{thm}\cite{Wa}\label{th_curv}
Let $E$ be an elliptic curve over $\FF_q$.
Then $f_E(t)=t^2 - bt + q$, where $|b|\le 2\sqrt{q}$. 
\begin{enumerate}
\item $E$ is ordinary if and only if $v_p(b)=0$. In this case \[\End^\circ(E)\cong\QQ[t]/f_E(t)\QQ[t].\]
\item If $E$ is supersingular and $f_E$ is separable, then \[\End^\circ(E)\cong\QQ[t]/f_E(t)\QQ[t],\] and $q$ and $b$ satisfy the following conditions:
\begin{enumerate}
\item $\sqrt{q}\not\in\ZZ$, and $b=0$;
\item $\sqrt{q}\in\ZZ$, $b=0$, and $p\not\equiv 1\bmod 4$;
\item $\sqrt{q}\in\ZZ$, $b=\pm\sqrt{q}$, and $p\not\equiv 1 \bmod 3$;
\item $p=2$ or $p=3$, $\sqrt{q}\not\in\ZZ$, and $b=\pm \sqrt{pq}$.
\end{enumerate}
\item If $E$ is supersingular and $f_E$ is not separable, then $\sqrt{q}\in\ZZ$ is an integer, 
$f_E(t)=(t\pm\sqrt{q})^2$, and $\End^\circ(E)\cong\HH_p$.
\end{enumerate}
In all these cases, there exists an elliptic curve over $\FF_q$ with a given Weil polynomial. 
\end{thm}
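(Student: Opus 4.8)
The plan is to read off the shape of $f_E$ from the general theory, split the analysis according to the Newton polygon of $F$ at $p$, and then pin down $\End^\circ(E)$ in each case by combining Tate's description of $\End(E)\otimes\ZZ_\ell$ with the global constraint on Brauer invariants. First I would record the normal form of $f_E$. Since $\dim E=1$, the Weil polynomial is monic of degree $2$ with integer coefficients, say $f_E(t)=t^2-bt+c$. Its roots $\alpha,\beta$ are the Frobenius eigenvalues, so $\alpha\beta=c$ and $\alpha+\beta=b$; because $|\sigma(F_A)|=\sqrt q$ for every embedding, $|\alpha|=|\beta|=\sqrt q$, whence $c=\alpha\beta=q$ and $|b|=|\alpha+\beta|\le|\alpha|+|\beta|=2\sqrt q$. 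This yields $f_E(t)=t^2-bt+q$ with $|b|\le 2\sqrt q$.

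Next I would settle the ordinary/supersingular dichotomy. For a dimension-one abelian variety the Newton polygon of $F$ has only two possibilities, $\{0,1\}$ and $\{\tfrac12,\tfrac12\}$. From $\alpha\beta=q$ we get $v_\p(\alpha)+v_\p(\beta)=v_\p(q)>0$ for a prime $\p$ over $p$. If $v_p(b)=0$, then $v_p(\alpha+\beta)=0$ forces one eigenvalue to be a $\p$-adic unit and the other to carry all the valuation, so the slopes are $\{0,1\}$ and $E$ is ordinary; if $v_p(b)>0$, then $E$ cannot be ordinary, so it is supersingular. In the ordinary case the slopes differ, hence $\alpha\neq\beta$, $f_E$ is separable, $K=\QQ(F)=\QQ[t]/f_E(t)$ is an imaginary quadratic field, and $\End^\circ(E)$ is split at $p$ (one unit, one uniformizer-power eigenvalue) and at all $\ell\neq p$ via $\End(E)\otimes\QQ_\ell\cong\End_F(V_\ell E)$; being a division algebra of dimension $2$ over $\QQ$ containing $K$, it equals $K$. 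This proves part $(1)$.

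For the supersingular separable case I would use the slope condition $v_\p(\alpha)=\tfrac12 v_\p(q)$ at every $\p\mid p$ together with the Weil bound. Here $K=\QQ(\alpha)$ is imaginary quadratic, and requiring $\End^\circ(E)$ to equal $K$ (rather than a quaternion algebra over $K$, which would force $\dim E=2$) is equivalent to $[K_\p:\QQ_p]\cdot\tfrac12$ being integral at each $\p\mid p$, i.e.\ to $p$ being non-split in $K$; this is the source of the congruence conditions. Running through the admissible $b$ with $b^2\le 4q$: if $\sqrt q\notin\ZZ$ the slope $1/2$ cannot be integral so ramification is forced (compare Lemma~\ref{NoSS}), leaving $b=0$ (case (a)) and, when $b^2=pq$ is a square so that $p\le 4$, the boundary values $b=\pm\sqrt{pq}$ for $p\in\{2,3\}$ (case (d)); if $\sqrt q\in\ZZ$, the non-split condition on $K=\QQ(i)$ for $b=0$ reads $p\not\equiv 1\bmod 4$ (case (b)), and on $K=\QQ(\sqrt{-3})$ for $b=\pm\sqrt q$ reads $p\not\equiv 1\bmod 3$ (case (c)), exactly the non-split criteria of the type recorded in Corollary~\ref{Hp_hom}.

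Finally, in the non-separable case $\alpha=\beta$ forces $b^2=4q$, hence $\sqrt q\in\ZZ$ and $f_E(t)=(t\mp\sqrt q)^2$, with $F=\pm\sqrt q$ acting as a scalar. Then $F$ commutes with all of $\End(E)$, so $\End(E)\otimes\ZZ_\ell\cong\End_F(T_\ell E)=M(2,\ZZ_\ell)$ for every $\ell\neq p$; thus $\End^\circ(E)$ is a quaternion algebra over $\QQ$, split away from $p$ and $\infty$, ramified at $\infty$ because the Rosati involution is positive definite, and therefore ramified at $p$ by vanishing of the sum of local invariants, which identifies it with $\HH_p$. The existence assertion is the converse: I would invoke the Honda--Tate theorem \cite{Wa} to realize each admissible pair $(q,b)$ by an actual elliptic curve, the listed congruences being precisely what forces the associated division algebra to have the dimension of an elliptic curve. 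I expect this existence and invariant bookkeeping—verifying that the recorded congruences are exactly the non-split conditions keeping $\dim_\QQ\End^\circ(E)\le 4$—to be the main obstacle, since everything else is forced by the Newton polygon and the Weil bound.
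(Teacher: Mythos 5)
Your proposal cannot be compared against an argument in the paper, because the paper gives no proof of this statement: it is quoted verbatim from Waterhouse \cite{Wa}. The route you describe --- the Weil bound and degree-2 normal form, the Newton-polygon dichotomy at $p$, Tate's theorem $\End(E)\otimes\ZZ_\ell\cong\End_F(T_\ell E)$ away from $p$, and the Honda--Tate invariant formula $\inv_\p=\frac{v_\p(\alpha)}{v_\p(q)}[K_\p:\QQ_p]$ to decide which Weil numbers belong to one-dimensional varieties --- is essentially the argument of that cited source, so your skeleton is the right one, including the identification of the congruence conditions in (2)(b),(c) with the requirement that $p$ be non-split in $K$ and the identification of $\End^\circ(E)$ with $\HH_p$ in case (3).

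Two steps are genuinely incomplete as written. First, the constant term: from $|\alpha|=|\beta|=\sqrt q$ alone you only get $\alpha\beta=\pm q$; when both roots are real the value $-q$ is not excluded, and $t^2-q$ is in fact a perfectly good Weil polynomial --- of a simple abelian \emph{surface} (Theorem~\ref{th_surf}, case (2)(b)), not of an elliptic curve. You need $f_E(0)=\det(F)=\deg(F_E)=q$ to pin down the sign. Second, the enumeration of supersingular traces is asserted rather than carried out: supersingularity means both roots have valuation $\tfrac12 v_p(q)$, hence $v_p(b)\ge\tfrac12 v_p(q)$. When $q=p^{2k+1}$ this forces $p^{k+1}\mid b$ (as $v_p(b)$ is an integer), and then $b\neq 0$ together with $b^2\le 4q$ gives $p^{2k+2}\le 4p^{2k+1}$, i.e.\ $p\le 3$ and $|b|=p^{k+1}=\sqrt{pq}$; when $q=p^{2k}$ one gets $p^k\mid b$, hence $b\in\{0,\pm\sqrt q,\pm2\sqrt q\}$. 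Without this computation the list (a)--(d) is not established: nothing in your sketch rules out, say, $b=\pm 3$ for $q=27$, which satisfies the Weil bound and $p\mid b$ but has slopes $\tfrac13,\tfrac23$ and so belongs to neither the ordinary nor the supersingular case. With these two patches, and the invariant bookkeeping you already describe for the Honda--Tate existence step, your argument is complete and agrees with \cite{Wa}.
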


\begin{thm}\cite{Wa,Ru,MN}\label{th_surf}
Let $A$ be a simple abelian surface over $\FF_q$, where $q=p^n$.
Then $f_A(t)=P_A(t)^e$, where $P_A\in\ZZ[t]$ is irreducible.  
\begin{enumerate}
\item If $A$ is ordinary or mixed, then $e=1$, and \[\End^\circ(A)\cong\QQ[t]/f_A(t)\QQ[t].\]
\item If $A$ is supersingular, then we have the following possibilities:
\begin{enumerate}
\item $e=1$, and $\End^\circ(A)\cong\QQ[t]/f_A(t)\QQ[t]$. In this case
$f_A(t) = t^4 + a_1 t^3 + a_2 t^2 + a_1 q t + q^2$ is irreducible, and the pair $(a_1,a_2)$ belongs to the following list:
     \begin{enumerate}
     \item $(0,0)$, $n$ is odd, and $p\neq 2$;
     \item $(0,0)$, $n$ is even, and $p\not\equiv 1\bmod 8$;
     \item $(0,q)$, and $n$ is odd;
     \item $(0,-q)$, $n$ is odd, and $p\neq 3$;
     \item $(0,-q)$, $n$ is even, and $p\not\equiv 1\bmod 12$;
     \item $(\pm\sqrt{q},q)$, $n$ is even, and $p\not\equiv 1\bmod 5$;
    \item $(\pm\sqrt{5q},3q)$, $n$ is odd, and $p=5$;
     \item $(\pm\sqrt{2q},q)$, $n$ is odd, and $p=2$.
     \end{enumerate}
\item $e=2$, $n$ is odd, $P_A(t)=(t^2-q)^2$, and $\End^\circ(A)\cong\HH_\infty(\QQ(\sqrt{p}))$;
\item $e=2$, $n$ is even, $P_A(t)=t^2-bt+q$, and one of the following conditions holds
\begin{enumerate}
\item $b=0$, and $p\equiv 1 \bmod 4$;
\item $b=\pm\sqrt{q}$, and $p\equiv 1 \bmod 3$.
\end{enumerate}
In both cases $\End^\circ(A)$ is a quaternion algebra over $\QQ[t]/P_A(t)\QQ[t]$.
\end{enumerate}
\end{enumerate}
In all these cases, there exists a simple abelian surface over $\FF_q$ with a given Weil polynomial. 
\end{thm}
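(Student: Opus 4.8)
The plan is to derive everything from Honda--Tate theory, treating the Frobenius $\pi=F_A$ as a Weil $q$-number. Since $A$ is simple, the center $K_A=\QQ(\pi)$ is a field and, by Tate's theorem quoted above, $\End^\circ(A)$ is a division algebra $D$ with center $K=\QQ(\pi)$; the minimal polynomial of $\pi$ is $P_A$, so $f_A=P_A^e$ with $e=\sqrt{[D:K]}$ the index of $D$, and the dimension formula reads $2\dim A=e\,[K:\QQ]=4$. The local invariants of $D$ are given by the standard recipe: $\inv_v D=0$ at finite $v\nmid p$, $\inv_v D=\tfrac12$ at each real place of $K$, and $\inv_v D=\tfrac{v(\pi)}{v(q)}[K_v:\QQ_p]$ at each $v\mid p$, and $e$ is the least common multiple of the denominators of these invariants. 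First I would enumerate the admissible pairs $(e,[K:\QQ])$ subject to $e\,[K:\QQ]=4$. The pair $(4,1)$ is impossible: $[K:\QQ]=1$ forces $\pi=\pm\sqrt q\in\QQ$, which by Theorem~\ref{th_curv} is the Weil number of a supersingular \emph{elliptic curve}, where $e=2$ and $\dim A=1$. Hence only $(e,[K:\QQ])\in\{(1,4),(2,2)\}$ survive, matching the dichotomy $e=1$ versus $e=2$ in the statement.

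For the case $e=1$ I would record first the functional equation $\pi\mapsto q/\pi$ coming from $\pi\bar\pi=q$, which forces $f_A(t)=t^4+a_1t^3+a_2t^2+a_1qt+q^2$ and $\End^\circ(A)=\QQ[t]/f_A(t)\QQ[t]=K$. When $A$ is ordinary or mixed I would read off $e=1$ from the Newton polygon. The ordinary polygon $(0,0,1,1)$ has its primes over $p$ split into a conjugate pair of local degree $1$ with slopes $0$ and $1$, so both $p$-adic invariants vanish and $D=K$; ordinariness itself is detected by $|A[p](\kk)|=p^{\dim A}$. The mixed polygon $(0,\tfrac12,\tfrac12,1)$ has, in addition, a self-conjugate slope-$\tfrac12$ prime, which is therefore of local degree $2$, so its invariant $\tfrac12\cdot 2$ is again integral and $D=K$. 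The remaining $e=1$ surfaces are supersingular, and here the real work lies in pinning down which quartic Weil polynomials occur.

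The supersingular case is the heart of the argument. All slopes equal $\tfrac12$, which forces $\pi/\sqrt q$ to be a root of unity, so I would organize the analysis by whether $\sqrt q\in\ZZ$ (that is, $n$ even) or not. The quaternionic cases $e=2$ come out as follows. For $n$ odd one has $\pi=\pm\sqrt q$, $P_A=t^2-q$ is irreducible, $K=\QQ(\sqrt p)$ is real quadratic, the two real places carry invariant $\tfrac12$, and the unique prime over the ramified $p$ (local degree $2$, slope $\tfrac12$) contributes a trivial invariant; hence $D\cong\HH_\infty(\QQ(\sqrt p))$ as in the Example following Theorem~\ref{trivial_H}. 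For $n$ even the totally imaginary Weil numbers $\pi=\sqrt q\,\zeta_4$ and $\pi=\sqrt q\,\zeta_6$ give $K=\QQ(\sqrt{-1})$ and $K=\QQ(\sqrt{-3})$, with $P_A=t^2+q$ and $P_A=t^2\mp\sqrt q\,t+q$; here $D$ is a nonsplit quaternion algebra exactly when $p$ splits in $K$, i.e. when $p\equiv1\bmod4$ respectively $p\equiv1\bmod3$, which is precisely Corollary~\ref{Hp_hom}(ii). For the supersingular $e=1$ surfaces I would run through the roots of unity $\zeta$ of order $r$ with $[\QQ(\sqrt q\,\zeta):\QQ]=4$ and all local degrees $[K_v:\QQ_p]$ even (so that every invariant $\tfrac12[K_v:\QQ_p]$ is integral and $D=K$); computing the decomposition of $p$ in $\QQ(\zeta_8)$, $\QQ(\zeta_{12})$, $\QQ(\zeta_5)$, together with the two ramified exceptions $p=2$ and $p=5$, produces the explicit pairs $(a_1,a_2)$ with the stated restrictions on the parity of $n$ and on $p\bmod r$.

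Finally, for existence it suffices to invoke Honda's theorem, by which every Weil $q$-number is the Frobenius of a simple abelian variety; one then checks that each polynomial on the list is $P_A^e$ for a Weil number of the computed degree and that the variety has dimension $2$ and is simple, both of which follow from $e\,[K:\QQ]=4$ and the irreducibility of $P_A$. I expect the main obstacle to be exactly this supersingular enumeration: separating $e=1$ from $e=2$ and extracting the precise congruences $p\bmod r$ and the parity conditions on $n$ requires a careful local analysis at $p$ --- determining how $p$ decomposes in the relevant cyclotomic and quadratic fields and matching the resulting local degrees and slopes against the invariant formula --- with the small characteristics $p=2,3,5$ demanding separate treatment because of ramification.
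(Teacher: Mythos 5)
The paper does not prove this theorem at all: it is imported from the literature, which is why the statement carries the citations \cite{Wa,Ru,MN} (Waterhouse, R\"uck, Maisner--Nart). Your Honda--Tate plan --- treating $\pi=F_A$ as a Weil $q$-number, the dimension formula $2\dim A=e\,[K:\QQ]$, the local invariant recipe $\inv_v D\equiv \frac{v(\pi)}{v(q)}[K_v:\QQ_p]$ with $e$ the lcm of the denominators, and the resulting case enumeration --- is exactly the method of those references, and your outline is correct: the exclusion of $(e,[K:\QQ])=(4,1)$, the integrality of the $p$-adic invariants in the ordinary and mixed cases, the identification of $\HH_\infty(\QQ(\sqrt{p}))$ for $n$ odd, and the split/non-split dichotomy producing the congruences $p\equiv 1\bmod 4$ and $p\equiv 1\bmod 3$ in the $e=2$, $n$ even case all check out. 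Two small remarks. First, your description of the supersingular $e=1$ enumeration (``decomposition of $p$ in $\QQ(\zeta_8)$, $\QQ(\zeta_{12})$, $\QQ(\zeta_5)$'') really covers the $n$ even cases, where $K$ is cyclotomic; for $n$ odd the fields are quartic fields of the form $\QQ\bigl(\sqrt{q\zeta}\bigr)$ with $\zeta$ of order $3,4,6$ (plus the ramified exceptions $p=2,5$), and the conditions $p\neq 2$, $p\neq 3$ in items (i) and (iv) arise because for those $p$ the would-be generator $\sqrt{q\zeta}$ already lies in an imaginary quadratic field, collapsing the case to a supersingular elliptic curve --- you flag this enumeration as the main work, which is fair, but that collapse mechanism is the idea needed to finish it. Second, in part 2(b) the paper's ``$P_A(t)=(t^2-q)^2$'' is a typo for $f_A(t)=(t^2-q)^2$; your reading $P_A(t)=t^2-q$ is the correct one.
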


\subsection{Diedonn\'e modules of abelian varieties}
Let $W(k)$ be the ring of Witt vectors over $k$. We denote by $\sigma: W(k)\to W(k)$ the lift of the Frobenius automorphism of $k$.
The Diedonn\'e ring $D_k$ is the ring generated over $W(k)$ by formal variables $F$ and $V$ such that $FV=VF=p$, and for any
$a\in W(k)$ we have:
\[ Fa=\sigma(a)F,\text{ and } aV=V\sigma(a). \]
There is a contravariant equivalence of categories $M(-)$ between finite group schemes over $k$ of $p$-power order and left $D_k$-modules of finite length~\cite[Theorem 28.3]{Pink}. 

The $p^r$-torsion $A[p^r]$ of an abelian variety $A$ over $k$ is a finite group scheme, and the limit
\[M(A)=\varprojlim_r M(A[p^r])\]
 is called \emph{the Diedonn\'e module of $A$}. The module $M(A)$ is free over $W(k)$ of rank $2\dim A$, and the cotangent space $T^*_0(A)$ of $A$ at zero is canonically isomorphic to $M(A)/FM(A)$~\cite[Proposition 28.4]{Pink}. 
We have the following analog of Lemma~\ref{lem_on_Tate_module}.

\begin{lemma}
\label{lem_on_Dmod}
\label{Diedonne_module} If $f:B\to A$ is an isogeny, then $M(f)$ is injective, and
$M=M(f)^{-1}(M(B))$ is a Diedonn\'e submodule of $M(A)\otimes\QQ$. 

Conversely, if $M\subset M(A)\otimes\QQ$ is a Diedonn\'e submodule of finite $\ZZ_p$-rank such that $M(A)\subset M$,
then there exists an abelian variety $B$ over $k$ and a $p$-isogeny $f:B\to A$ such that $M(f)$
induces an isomorphism $M(B)\cong M$.\qed
\end{lemma}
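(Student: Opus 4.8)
The plan is to establish Lemma~\ref{lem_on_Dmod} as the $p$-adic Dieudonn\'e analog of Lemma~\ref{lem_on_Tate_module}, exploiting the contravariant equivalence $M(-)$ between finite group schemes of $p$-power order and finite-length $D_k$-modules. The contravariance is the crucial bookkeeping point: since $M(-)$ reverses arrows, an isogeny $f\colon B\to A$ induces $M(f)\colon M(A)\to M(B)$, and the roles of source and target of submodule containments will be interchanged relative to the Tate-module statement. I should first verify that I am reading the arrows consistently with the statement as written, where $f\colon B\to A$ yields an injective $M(f)$ and the pulled-back module $M=M(f)^{-1}(M(B))$ sits inside $M(A)\otimes\QQ$.

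First I would prove injectivity of $M(f)$. Because $f$ is an isogeny, its kernel is a finite group scheme, and after inverting $p$ the induced map on rational Dieudonn\'e modules $M(A)\otimes\QQ\to M(B)\otimes\QQ$ is an isomorphism of $D_k\otimes\QQ$-modules (both have the same $W(k)\otimes\QQ$-rank $2\dim A$, and the cokernel of $M(f)$ corresponds to the $p$-power kernel of $f$, which is torsion). From this isomorphism after $\otimes\QQ$, the integral map $M(f)\colon M(A)\to M(B)$ has trivial kernel, so it is injective; I then regard $M(A)$ as a full-rank $W(k)$-lattice inside $M(B)\otimes\QQ$ via $M(f)$. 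The preimage $M=M(f)^{-1}(M(B))$ is then visibly a $D_k$-submodule of $M(A)\otimes\QQ$ containing $M(A)$, since $M(f)$ is $D_k$-linear and $M(B)$ is a $D_k$-module; this settles the forward direction.

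For the converse, given a Dieudonn\'e submodule $M\subset M(A)\otimes\QQ$ of finite $\ZZ_p$-rank with $M(A)\subset M$, I would produce the abelian variety $B$ by realizing $M/M(A)$ as coming from a finite subgroup scheme. The containment $M(A)\subset M$ with $M/M(A)$ of finite length corresponds, under the equivalence $M(-)$, to a quotient finite group scheme, equivalently to a finite subgroup scheme $G\subset A$ killed by some $p^r$; one sets $B=A/G$ and lets $f\colon B\to A$ be, after suitable identification, the dual isogeny (or uses that $A\to A/G$ together with multiplication by $p^r$ gives the $p$-isogeny in the required direction). One must then check that $M(B)$ is carried isomorphically onto $M$ by $M(f)$. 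The key input is that $M(-)$ is an equivalence on finite-length modules, which lets me turn the finite-length quotient $M/M(A)$ into an honest finite group scheme; passing to the inverse limit over $r$ recovers $M(B)=\varprojlim_r M(B[p^r])$ and identifies it with $M$.

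The main obstacle I anticipate is the direction-of-arrows bookkeeping forced by contravariance, together with making the correspondence ``finite-length $D_k$-module quotient $\longleftrightarrow$ finite subgroup scheme'' precise enough that the resulting isogeny points the way the statement demands and that $M(f)$ really induces $M(B)\cong M$ rather than its dual. I expect this to parallel the Tate-module argument of Lemma~\ref{lem_on_Tate_module} (which in turn follows \cite[IV.2.3]{Milne}) closely enough that, once the contravariance is accounted for, each step is routine; this is presumably why the statement is marked with \qed and the proof omitted. I would keep the write-up brief, emphasizing the equivalence of categories and the rational isomorphism after inverting $p$ as the two load-bearing facts.
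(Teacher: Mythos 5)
Your proposal is correct and follows exactly the route the paper intends: the lemma is stated with its proof omitted (the \qed marks it as the Dieudonn\'e analog of Lemma~\ref{lem_on_Tate_module}, itself proved ``in the same way as'' \cite[IV.2.3]{Milne}), and your argument is precisely that analogy made explicit --- injectivity of $M(f)$ from the rational isomorphism, and the converse via the contravariant equivalence, rescaling $M/M(A)\cong p^rM/p^rM(A)\subset M(A[p^r])$ to get the subgroup scheme $G$, setting $B=A/G$, and taking $f\colon B\to A$ with $f\circ\pi=[p^r]$. The contravariance bookkeeping you flag works out exactly as you describe, so the write-up you outline would be a faithful completion of the omitted proof.
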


Let $r,s\in\NN$ be natural numbers. We say that a Diedonn\'e module $M$ is \emph{pure of slope $r/s$}, if there exists a submodule $M'\subset M$ such that $M'\otimes\QQ\cong M\otimes\QQ$, and $F^s(M')=p^rM'$.  

\begin{thm}\label{Manin_th}\cite[\S 4]{Manin63}\cite[Theorem 1.3]{DM}
Let $M$ be a Diedonn\'e module. Then 
\[M\otimes\QQ\cong \oplus_\lambda M_\lambda\otimes\QQ,\]
where each $M_\lambda\subset M$ is pure of slope $\lambda\in\QQ$.
\end{thm}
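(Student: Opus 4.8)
The plan is to recognize this as the Dieudonn\'e--Manin classification theorem and to prove it by reducing to the classification of simple isocrystals over an algebraically closed field. After inverting $p$, the relation $FV=VF=p$ forces $V=pF^{-1}$, so $M\otimes\QQ$ becomes a finite-dimensional vector space over the field $K_0=W(k)[1/p]$ equipped with a bijective $\sigma$-semilinear operator $F$; the entire structure is thus encoded in the pair $(M\otimes\QQ,F)$, and purity of slope $r/s$ says exactly that there is a $W(k)$-lattice $M'$ with $F^s(M')=p^rM'$. First I would base-change along $W(k)\to W(\kk)$ and prove the statement over $\kk$. Since the resulting isoclinic decomposition is canonical -- the slopes and the slope subspaces are intrinsic to $(M\otimes\QQ,F)$ -- it is automatically stable under $\Gal(\kk/k)$ and descends to $k$ by semilinear Galois descent.

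Over $\kk$ I would first isolate the simple objects. For $\lambda=r/s$ in lowest terms with $s\ge 1$, set $N_\lambda=D_k[1/p]/D_k[1/p](F^s-p^r)$; concretely this is $\bigoplus_{i=1}^s K_0 e_i$ with $F(e_i)=e_{i+1}$ for $i<s$ and $F(e_s)=p^r e_1$. The lattice $\bigoplus_i W(\kk)e_i$ satisfies $F^s$ of it $=p^r$ times it, so $N_\lambda$ is pure of slope $\lambda$; a short computation shows the $N_\lambda$ are simple and pairwise non-isomorphic, and that $\End(N_\lambda)$ is the division algebra over $\QQ_p$ of invariant $-\lambda$. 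These are exactly the simple isocrystals.

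The heart of the argument is to show that every isocrystal over $\kk$ is a direct sum of copies of the $N_\lambda$. I would extract the smallest slope $\lambda_0$ by choosing a lattice $L\subset M\otimes\QQ$ and studying the growth of the $p$-adic valuation on the iterates $F^nL$, and then split off an isoclinic sub-isocrystal of slope $\lambda_0$ as a direct summand. The key technical device is a successive-approximation (Newton/Hensel) argument solving the $\sigma$-semilinear equations $Fx=y$ and their iterates inside $W(\kk)$; this is where algebraic closedness of $\kk$ is essential, as it makes the relevant additive and multiplicative equations over $W(\kk)$ solvable. One then induces on $\dim_{K_0}(M\otimes\QQ)$.

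\textbf{Main obstacle.} The difficult step is the last one: proving that the canonical slope filtration actually \emph{splits}, i.e. that the category of isocrystals over $\kk$ is semisimple. Identifying the simple objects $N_\lambda$ and computing their slopes is formal, and the Galois descent is routine once canonicity is established; the real work is the successive-approximation argument producing, over the algebraically closed field $\kk$, an $F$-stable splitting of each isoclinic piece.
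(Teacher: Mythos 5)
The paper itself offers no proof of this statement---it is quoted as the classical Dieudonn\'e--Manin theorem with pointers to Manin~\cite[\S 4]{Manin63} and Ding--Ouyang~\cite[Theorem 1.3]{DM}---so the only comparison available is with those cited sources, and your outline reproduces exactly their strategy: pass to $\kk$, exhibit the simple isocrystals $N_\lambda$, prove semisimplicity of the category of isocrystals over $\kk$ by a successive-approximation argument (correctly identified as the real work, where algebraic closedness enters), and descend to $k$ using that the isoclinic decomposition is canonical and hence Galois-stable. One small point you should make explicit: since the paper's definition of purity demands a $W(k)$-lattice $M'$ with $F^s(M')=p^rM'$, descending the decomposition is not quite enough---you must also descend purity, which follows by taking any $W(k)$-lattice $L_0$ in the slope-$\lambda$ piece and replacing it by $\sum_{n\in\ZZ}\left(p^{-r}F^s\right)^n L_0$, whose boundedness (hence finite generation) can be checked after base change to $\kk$, where a pure lattice is already known to exist.
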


We say that the set $\{\lambda\in\QQ|M_\lambda\neq 0\}$ is \emph{the set of slopes of $M$}. 
If $k$ is a finite field and $M=M(A)$ is the Diedonn\'e module of an abelian variety $A$ over $k$, then by~\cite[Theorem 4.1]{Manin63} the slopes of $M$ and slopes of $A$ coincide. 
This result motivates a more general definition.
An abelian variety $A$ over a perfect field $k$ is \emph{supersingular}, if all slopes of $M(A)$ are equal to $1/2$, and $A$ is \emph{ordinary}, if the set of slopes of $M(A)$ is $\{0,1\}$. 
The following result is due to Tate, Shioda, Deligne, and Oort.

\begin{thm}\cite[Theorem 4.2]{Oort}\label{SSIsogeny}
Let $k$ be a perfect field of characteristic $p$, and let $E$ be a supersingular elliptic curve over a finite field $\FF_q\subset k$.
Then any supersingular abelian surface $A$ over $k$ is isogenous to $E^2$ over $\kk$.
 In particular, $\End^\circ(A)$ is a subalgebra of $M(2,\HH_p)$.
\end{thm}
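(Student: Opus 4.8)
The plan is to base-change everything to $\kk$ and reduce the statement to two classical inputs: that the slope-$\tfrac12$ isocrystal over an algebraically closed field is isotypic, and that all supersingular elliptic curves over $\kk$ lie in a single isogeny class. First I would note that supersingularity is preserved under the base change $\overline A$ of $A$ to $\kk$, since $M(\overline A)=M(A)\otimes_{W(k)}W(\kk)$ has the same slopes as $M(A)$; hence $\overline A$ is again supersingular of dimension $2$, and it suffices to construct an isogeny $\overline A\sim\overline E^2$ over $\kk$.

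The core of the argument would run as follows. By Theorem~\ref{Manin_th} the isocrystal $M(\overline A)\otimes\QQ$ is pure of slope $\tfrac12$ and has rank $4$; over the algebraically closed field $\kk$ the Dieudonn\'e--Manin classification identifies it with $(M(\overline E)\otimes\QQ)^{\oplus 2}$, where $M(\overline E)\otimes\QQ$ is the unique simple slope-$\tfrac12$ isocrystal, of rank $2$. In particular this simple summand occurs in $M(\overline A)\otimes\QQ$, so there is a nonzero morphism of isocrystals between them. I would then pass from this to a genuine nonzero $\phi\in\Hom(\overline E,\overline A)\otimes\QQ$; its image is an elliptic curve $B\subset\overline A$ isogenous to $\overline E$, because $\overline E$ is simple. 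Applying the Poincar\'e complete reducibility theorem \cite{Mum} gives $\overline A\sim B\times(\overline A/B)$, and $\overline A/B$ is again a supersingular elliptic curve, since slopes are preserved under isogeny and under passage to quotients. As all supersingular elliptic curves over $\kk$ are isogenous, $\overline A/B\sim\overline E$, and therefore $\overline A\sim B\times(\overline A/B)\sim\overline E^2$.

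For the final assertion I would observe that an endomorphism of $A$ defined over $k$ is in particular defined over $\kk$, so $\End^\circ(A)$ embeds into $\End^\circ(\overline A)$. Combining $\overline A\sim\overline E^2$ with $\End^\circ(\overline E)\cong\HH_p$ (the endomorphism algebra of a supersingular elliptic curve over an algebraically closed field is the quaternion algebra ramified exactly at $p$ and $\infty$) yields $\End^\circ(\overline A)\cong\End^\circ(\overline E^2)\cong M(2,\HH_p)$, whence $\End^\circ(A)\subset M(2,\HH_p)$. The step I expect to be the main obstacle is the lifting of the nonzero morphism of isocrystals to a genuine homomorphism $\overline E\to\overline A$: this requires that the functor $B\mapsto B[p^\infty]$ be faithful up to isogeny on abelian varieties over $\kk$, i.e. the full-faithfulness content of Dieudonn\'e theory beyond the equivalence on $p$-torsion recorded in \cite[Theorem 28.3]{Pink} and the isogeny lifting of Lemma~\ref{lem_on_Dmod}. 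This is precisely the input supplied by \cite[Theorem 4.2]{Oort}; everything else is standard slope bookkeeping together with Poincar\'e reducibility.
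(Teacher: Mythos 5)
Your reduction steps are sound as far as they go: base change to $\kk$ preserves slopes, Dieudonn\'e--Manin over the algebraically closed field $\kk$ identifies $M(\overline{A})\otimes\QQ$ with $(M(\overline{E})\otimes\QQ)^{\oplus 2}$, and once a nonzero $\phi\in\Hom(\overline{E},\overline{A})\otimes\QQ$ is in hand, the rest (the image of $\phi$ is an elliptic curve, Poincar\'e reducibility, the slope argument for $\overline{A}/B$, and $\End^\circ(\overline{E})\cong\HH_p$) is correct and standard. Note, however, that the paper contains no proof to compare against --- Theorem~\ref{SSIsogeny} is quoted verbatim from Oort --- so your argument has to stand on its own, and it does not, precisely at the step you flag. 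The principle you invoke to produce $\phi$, namely that over an algebraically closed field a nonzero morphism of isocrystals (equivalently of $p$-divisible groups up to isogeny) lifts to a nonzero homomorphism of abelian varieties, is false in general: over $\bar{\FF}_p$ every ordinary elliptic curve has $p$-divisible group isomorphic to $\QQ_p/\ZZ_p\oplus\mu_{p^\infty}$, yet ordinary elliptic curves fall into infinitely many isogeny classes, distinguished by their CM fields. So the functor $B\mapsto B[p^\infty]$ is very far from full up to isogeny. Fullness statements of this kind (Tate's theorem) hold over \emph{finite} fields, where one additionally keeps track of the Frobenius action --- exactly the structure that disappears after passing to $\kk$, and exactly why the paper records Tate's theorem only in that setting.

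Moreover, attributing this lifting step to \cite[Theorem 4.2]{Oort} is circular: that theorem \emph{is} the statement being proved, not a full-faithfulness lemma one can quote. For supersingular varieties the surjectivity of $\Hom(\overline{E},\overline{A})\otimes\QQ_p\to\Hom(M(\overline{A})\otimes\QQ,\,M(\overline{E})\otimes\QQ)$ is a consequence of the theorem, not an available input. A genuine proof must build the bridge from $p$-divisible groups back to abelian varieties by a different mechanism. The known routes are: (i) stay inside the isogeny class of $\overline{A}$ and take quotients by subgroup schemes $\alpha_p$ to raise the $a$-number until the surface becomes superspecial, then use the theorem of Deligne and Shioda that a superspecial abelian surface is isomorphic (not merely isogenous) to a product of supersingular elliptic curves; or (ii) show that a supersingular abelian variety over $\kk$ is isogenous to one defined over a finite field, where Tate's isogeny theorem and the computation of Weil numbers identify its isogeny class with that of $E^2$. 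Without one of these arguments, your proposal reduces to citing the theorem itself.
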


The structure of the Diedonn\'e module of an ordinary abelian variety is well known.

\begin{prop}\label{SSDmod}
Let $A$ be an ordinary abelian variety over $k$. 
	Then $M(A)\cong M\oplus M^*$, where $M$ is a Diedonn\'e module of slope $1$ and rank $\dim(A)$ over $W(k)$, and $T^*_0(A)\cong M/pM.$
\end{prop}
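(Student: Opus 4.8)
The plan is to realize the slope decomposition of $M(A)$ \emph{integrally} via the connected--\'etale splitting, identify the two summands with the multiplicative and \'etale parts of $A[p^\infty]$, and then read off both the cotangent space and the duality.

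First, since $A$ is ordinary, the set of slopes of $M(A)$ is $\{0,1\}$, so Theorem~\ref{Manin_th} gives a rational decomposition $M(A)\otimes\QQ\cong M_0\otimes\QQ\oplus M_1\otimes\QQ$ with $M_\lambda$ pure of slope $\lambda$. To upgrade this to an integral direct sum I would use that over a perfect field the connected--\'etale sequence
\[0\to A[p^r]^0\to A[p^r]\to A[p^r]^{\et}\to 0\]
splits canonically and functorially in $r$; passing to the limit and applying the contravariant equivalence $M(-)$ (which sends finite products to direct sums) yields a $D_k$-module decomposition $M(A)\cong M^0\oplus M^{\et}$, where $M^0=\varprojlim M(A[p^r]^0)$ and $M^{\et}=\varprojlim M(A[p^r]^{\et})$.

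Next I would pin down the two summands. On the \'etale part $F$ is bijective, so $M^{\et}$ is pure of slope $0$ with $FM^{\et}=M^{\et}$; its rank over $W(k)$ equals the height of $A[p^\infty]^{\et}$, which is $\dim_{\FF_p}A[p](\kk)=\dim A$ by the ordinariness criterion recalled above (the connected part contributes no geometric $p$-torsion in characteristic $p$). Consequently $M^0$ is free of rank $\dim A$, and since ordinariness forces the connected part $A[p^\infty]^0$ to be of multiplicative type, $V$ is bijective on $M^0$; hence $M^0$ is pure of slope $1$ and $FM^0=pM^0$. Setting $M:=M^0$, the identity $T^*_0(A)\cong M(A)/FM(A)$ together with $FM^{\et}=M^{\et}$ and $FM=pM$ gives
\[T^*_0(A)\cong M(A)/FM(A)\cong M/FM=M/pM,\]
which is exactly the cotangent statement.

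It remains to identify $M^{\et}$ with the Dieudonn\'e dual $M^*$ of the slope $1$ module $M$. Here I would invoke Cartier duality: a polarization $A\to A^\vee$ exhibits $A[p^\infty]$ as isogenous to its own Cartier dual $A[p^\infty]^\vee\cong A^\vee[p^\infty]$, and Cartier duality interchanges the multiplicative and \'etale parts, matching $M^0$ with $(M^{\et})^*$ and so giving $M^{\et}\cong M^*$. I expect this last step to be the main obstacle: the slope decomposition and the cotangent computation are essentially formal once the connected--\'etale splitting is in place, but turning the duality between the multiplicative and \'etale parts into an honest isomorphism $M^{\et}\cong M^*$ of Dieudonn\'e modules over a possibly non-algebraically-closed perfect field $k$---rather than merely an isogeny, or an isomorphism only after base change to $\kk$---is the point that requires care.
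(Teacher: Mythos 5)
Your first three paragraphs reproduce the paper's argument almost exactly: the paper likewise splits $A[p^n]$ into its local part $X_n$ and \'etale part $X^*_n$ (citing Pink's Proposition 15.4), counts orders using ordinariness to conclude both $M=\varprojlim_n M(X_n)$ and $M^*=\varprojlim_n M(X^*_n)$ are free of rank $\dim A$, and then uses invertibility of $F$ on the \'etale summand to get $T^*_0(A)\cong M(A)/FM(A)\cong M/pM$. Where you diverge is the final paragraph, and there you have misread the statement: in the paper, $M^*$ is not the Dieudonn\'e (or $W(k)$-linear) dual of $M$; it is simply the notation for the \'etale summand, mirroring the notation $X^*_n$ for the \'etale part of $A[p^n]$, and no duality between the two summands is asserted or ever used later (Proposition~\ref{A_H} only needs the direct sum decomposition and the identification of the cotangent space). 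Your instinct about the difficulty is nevertheless sound: identifying the \'etale summand with the genuine Dieudonn\'e dual of the slope-$1$ summand would, by Serre--Cartier duality, amount to an isomorphism $M(A[p^\infty]^{\et})\cong M(A^\vee[p^\infty]^{\et})$, i.e.\ an isomorphism of Galois modules $T_pA\cong T_pA^\vee$; a polarization only provides an isogeny, so over a non-algebraically-closed perfect field this is not automatic. So the obstacle you flag is real for the stronger statement you imagined, but it is not part of the proposition, and your proof of what is actually claimed is complete and coincides with the paper's.
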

\begin{proof}
According to~\cite[Proposition 15.4]{Pink}, the group scheme $A[p^n]$ is a direct sum of its \'etale part $X^*_n$  and the local part $X_n$. Since $A$ is ordinary, the order of $X^*_n$ is $p^{n\dim A}$; therefore, the order of $X_n$ is also $p^{n\dim A}$.
It follows that \[M=\varprojlim_n M(X_n),\text{ and }M^*=\varprojlim_n M(X^*_n)\] are free $W(k)$-modules of rank $\dim A$, and $M(A)\cong M\oplus M^*$. The Frobenius endomorphism is invertible on the \'etale part $M^*$; therefore, according to~\cite[Proposition 28.4]{Pink}, \[T^*_0(A)\cong M(A)/FM(A)\cong M/pM.\] The proposition is proved.
\end{proof}

\subsection{Finite group actions on abelian varieties}\label{GonAV}
Let $G$ be a finite group, and an action of $G$ on an abelian variety $A$ is given by a homomorphism $G\to\End(A)^*$.
The induced action on $T^*_0(A)$ gives a homomorphism \[G\to \GL_d(k),\] where $d$ is the dimension of $A$.

\begin{lemma}\label{Image_in_SL}
Let $g\in G$ be an element of order $r$. If the image of $g$ in $\GL_d(k)$ is trivial, then $r$ is a power of $p$.
\end{lemma}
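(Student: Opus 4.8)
The plan is to reduce to the case where $g$ has order prime to $p$ and then show that such an element acting trivially on the cotangent space must be the identity. Write the order as $r=p^am$ with $p\nmid m$, and set $h=g^{p^a}$, an element of order $m$. Since $G\to\GL_d(k)$ is a homomorphism and $g$ maps to the identity, so does every power of $g$; in particular $h$ acts trivially on $T^*_0(A)$. Passing to the dual tangent space $T_0(A)=T^*_0(A)^\vee$, the hypothesis says that the differential $dh$ of $h$ at the origin is the identity on $T_0(A)$. It then suffices to prove $m=1$, for then $r=p^a$ is a power of $p$.

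Next I would split off the fixed part of $h$. Because $p\nmid m$, the element $e=\frac1m\sum_{i=0}^{m-1}h^i\in\End^{\circ}(A)$ is an idempotent commuting with $h$, and $1-e$ cuts out an $h$-stable abelian subvariety $C\subset A$, namely the image of $m-\sum_i h^i$, on which $h$ has no eigenvalue equal to $1$; equivalently, $1-h$ is invertible on $V_\ell(C)$. The complementary subvariety is the fixed part $A^h$, and the addition map $A^h\times C\to A$ is an isogeny. The key point is that this isogeny has degree prime to $p$: since $m$ is a unit in $\ZZ_p$, the idempotent $e$ splits the Dieudonn\'e module $M(A)$ integrally, so $A^h$ and $C$ meet in a group scheme of order prime to $p$. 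Consequently the isogeny is \'etale and induces an $h$-equivariant isomorphism $T^*_0(A^h)\oplus T^*_0(C)\cong T^*_0(A)$. In particular $h$ acts trivially on $T^*_0(C)$, that is, $dh=\id$ on $T_0(C)$.

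Finally I would derive a contradiction unless $C=0$ by a degree computation. On $C$ the endomorphism $1-h$ is an isogeny, and its degree equals $|\det(1-h\mid V_\ell(C))|=\prod_i(1-\zeta_i)$, where the $\zeta_i$ are the eigenvalues of $h$, all roots of unity of order dividing $m$ and different from $1$. Grouping by order, this product is $\prod_{d\mid m,\;d>1}\Phi_d(1)^{a_d}$. Since $\Phi_d(1)$ equals a prime $\ell$ when $d$ is a power of $\ell$ and equals $1$ otherwise, and every such $\ell$ divides $m$ and hence differs from $p$, the degree of $1-h$ on $C$ is prime to $p$. Therefore $1-h$ is \'etale on $C$, so its differential $\id-dh$ at the origin is an isomorphism of $T_0(C)$. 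But $dh=\id$ forces this differential to vanish, whence $T_0(C)=0$ and $C=0$. Thus $h$ has no eigenvalue different from $1$, so $h=\id$ and $m=1$, completing the reduction.

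The main obstacle is the second step: making the fixed/moving decomposition genuinely $p$-integral, so that $T^*_0(A)$ splits $h$-equivariantly. This is exactly where the hypothesis $p\nmid m$ enters, through the integral idempotent $e$ on the Dieudonn\'e module; the degree computation in the last step is then elementary once the values $\Phi_d(1)$ are recalled.
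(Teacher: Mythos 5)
Your proof is correct, but it takes a genuinely different route from the paper's. The paper makes the same initial reduction (to an element of order prime to $p$, in fact of prime order $r\neq p$, still acting trivially on $T^*_0(A)$), but then works entirely on the Dieudonn\'e module: triviality on $T^*_0(A)\cong M(A)/FM(A)$ forces $(g-1)M(A)\subset FM(A)$, so $\det(M(g)-1)$ is not a $p$-adic unit, while any eigenvalue $\zeta-1$ of $M(g)-1$ with $\zeta$ a \emph{nontrivial} $r$-th root of unity is a $p$-adic unit because $N_{\QQ(\zeta_r)/\QQ}(\zeta_r-1)=r$; contradiction. Note that this contradiction only bites if $1$ is not an eigenvalue of $M(g)$ (if $\zeta=1$ then $\zeta-1=0$ is also a non-unit), a case the paper's proof passes over silently even though no rigidity is assumed in that section. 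Your argument handles exactly this point: the averaging idempotent $e$ splits off the fixed part integrally at $p$, and on the complement $C$ there is by construction no eigenvalue $1$, so $1-h$ is an isogeny; you then conclude not via $p$-adic valuations of Dieudonn\'e eigenvalues but via separability, since $\deg(1-h)=\prod_{d\mid m,\,d>1}\Phi_d(1)^{a_d}$ is prime to $p$, so the differential $\id-dh$ must be bijective on $T_0(C)$ yet vanishes, forcing $C=0$. So your proof is both more careful (it covers the possibility of a positive-dimensional fixed locus) and more elementary in its final step (degrees and differentials rather than Dieudonn\'e theory), at the price of being longer than the paper's. Two cosmetic points: the complementary factor should be the identity component $(A^h)^0$ of the fixed-point scheme, so the isogeny is $(A^h)^0\times C\to A$; and the prime-to-$p$ intersection needs no Dieudonn\'e theory at all, since $\sum_i h^i$ equals multiplication by $m$ on $(A^h)^0$ and equals $0$ on $C$, whence $(A^h)^0\cap C\subset A[m]$, which is \'etale because $p\nmid m$.
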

\begin{proof}
We may assume that $r\neq p$ is the prime and that the action of $g$ on $T^*_0(A)$ is trivial. 
The group scheme corresponding to the Diedonn\'e module $M(A)/FM(A)$ is a subscheme of $\ker(g-1)$;
 therefore, the eigenvalues of $M(g)-1$ are not $p$-adic units.
Since the norm \[N_{\QQ(\zeta_r)/\QQ}(\zeta_r-1)=r,\] is a $p$-adic unit, we get a contradiction with the fact that the eigenvalues of $M(g)$ are roots of unity of order $r\neq p$.
\end{proof}

Define a decreasing filtration on $G$ as follows: $G=G_0$, and for $s>0$ $$G_s=\ker(G\to\Aut(M(A)/F^sM(A))).$$
By definition, $G_0/G_1$ is a subgroup of $\GL_2(k)$. 

\begin{lemma} Let $s>0$.
We have an inclusion $G_s/G_{s+1}\to \Hom(T^*_0A,T^*_0A)$.
In particular, the quotient $G_s/G_{s+1}$ is abelian. 
\end{lemma}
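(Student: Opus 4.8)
The plan is to analyze the map $G_s/G_{s+1}\to\Aut(M(A)/F^{s+1}M(A))$ by studying how elements of $G_s$ act on the successive quotient $F^sM(A)/F^{s+1}M(A)$. By definition an element $g\in G_s$ acts trivially on $M(A)/F^sM(A)$, so $M(g)-\id$ maps $M(A)$ into $F^sM(A)$. I would like to produce from this a well-defined homomorphism to $\Hom(T^*_0A,T^*_0A)$ whose kernel is exactly $G_{s+1}$, which would give the injection $G_s/G_{s+1}\hookrightarrow\Hom(T^*_0A,T^*_0A)$ and, since the target is an additive (abelian) group, would immediately force $G_s/G_{s+1}$ to be abelian.

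First I would set up the linear-algebra framework on the Diedonn\'e module. Recall from Proposition~\ref{SSDmod} and~\cite[Proposition 28.4]{Pink} that $T^*_0(A)\cong M(A)/FM(A)$; more generally I would use the filtration by the submodules $F^sM(A)$ and note that $F$ induces, for each $s$, an identification between the graded piece $F^sM(A)/F^{s+1}M(A)$ and $M(A)/FM(A)=T^*_0A$ (up to the semilinear twist coming from $\sigma$, which does not affect the additive group structure). Then for $g\in G_s$ I would define $\delta(g)$ to be the composite
\[
M(A)/FM(A)\xrightarrow{\;M(g)-\id\;}F^sM(A)/F^{s+1}M(A)\xrightarrow{\;\sim\;}M(A)/FM(A),
\]
which is a well-defined element of $\Hom(T^*_0A,T^*_0A)$ because $M(g)-\id$ lands in $F^sM(A)$ by hypothesis.

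The main verification — and the step I expect to be the genuine obstacle — is showing that $\delta$ is a \emph{homomorphism}, i.e.\ that $\delta(gh)=\delta(g)+\delta(h)$ for $g,h\in G_s$. This is the standard commutator-lower-central-series phenomenon: writing $M(g)=\id+X$ and $M(h)=\id+Y$ with $X,Y$ carrying $M(A)$ into $F^sM(A)$, one computes $M(gh)=\id+X+Y+XY$, and the cross term $XY$ carries $M(A)$ into $F^{2s}M(A)\subset F^{s+1}M(A)$ (using $s\ge 1$, so $2s\ge s+1$), hence vanishes modulo $F^{s+1}M(A)$. Thus $\delta(gh)\equiv\delta(g)+\delta(h)$. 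The care needed here is to keep track of the $\sigma$-semilinearity of $F$ when passing through the identification of graded pieces, and to confirm that the inclusion $F^sM(A)\cdot F^sM(A)\subset F^{2s}M(A)$ is what the definition of the filtration actually provides; once that is in hand the computation is formal.

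Finally, the kernel of $\delta$ consists of those $g\in G_s$ for which $M(g)-\id$ already carries $M(A)$ into $F^{s+1}M(A)$, which is precisely the condition that $g$ act trivially on $M(A)/F^{s+1}M(A)$, i.e.\ $g\in G_{s+1}$. Therefore $\delta$ induces an injection $G_s/G_{s+1}\to\Hom(T^*_0A,T^*_0A)$, and since the codomain is abelian under addition the quotient $G_s/G_{s+1}$ is abelian, completing the proof.
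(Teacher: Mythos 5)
Your proof is correct and follows essentially the same route as the paper: your $\delta$ is exactly the paper's map $\alpha_s(g)(\bar v)=v_g$, where $g(v)=v+F^s v_g$, built from the same identification $F^s\colon M(A)/FM(A)\xrightarrow{\ \sim\ }F^sM(A)/F^{s+1}M(A)$ and with the same identification of the kernel as $G_{s+1}$. The two points you flag as needing care (descent of $M(g)-\id$ to the quotient $M(A)/FM(A)$, and the cross-term bound $YX(M(A))\subset F^{2s}M(A)$) are both discharged by the fact that $M(g)$ commutes with $F$; indeed your explicit verification of additivity of $\delta$ is a step the paper's proof leaves implicit.
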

\begin{proof}
If $g$ is an element of $G_s$, then for any $v\in M(A)/F^{s+1}M(A)$ we have $g(v)= v+F^sv_g$ for some unique $v_g\in M(A)/FM(A)$. Moreover, if $v\in FM(A)$, then $v_g=0$.
Since $F$ is injective on $M(A)$, the morphism $F^s$ induces an isomorphism 
\[M(A)/FM(A)\to F^sM(A)/F^{s+1}M(A).\]
Define the morphism \[\alpha_s:G_s\to \Hom(M(A)/FM(A),M(A)/FM(A))\]  as follows: lift $\Bar v\in M(A)/FM(A)$ to some
$v\in M(A)/F^{s+1}M(A)$, and put $\alpha_s(g)(\Bar v)=v_g$.
Clearly, $\alpha_s(g)=0$ if and only if $g(v)= v$ for all $v\in M(A)/F^{s+1}M(A)$, that is, $g\in G_{s+1}$. 
\end{proof}

\begin{lemma}
Let $Q$ be a $p$-Sylow subgroup of $G$. Then $G_1$ is normal in $Q$, and $Q/G_1$ is annihilated by $p$. 
\end{lemma}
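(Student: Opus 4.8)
The plan is to reduce everything to the single observation that $G_1$ is a $p$-group. By definition $G_1=\ker\bigl(G\to\Aut(M(A)/FM(A))\bigr)$, and since $M(A)/FM(A)\cong T^*_0(A)$ is a two-dimensional $k$-vector space this is exactly the kernel of the linear representation $G\to\GL_2(k)$; in particular $G_1$ is a normal subgroup of $G$. An element $g\in G_1$ is precisely one whose image in $\GL_2(k)$ is trivial, so by Lemma~\ref{Image_in_SL} its order is a power of $p$. A finite group all of whose elements have $p$-power order is a $p$-group (if a prime $\ell\neq p$ divided its order, Cauchy's theorem would produce an element of order $\ell$), so $G_1$ is a $p$-group.

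For the first assertion I would use the standard fact that a normal $p$-subgroup is contained in every Sylow $p$-subgroup. Concretely, since $G_1\trianglelefteq G$ and $Q$ is a Sylow $p$-subgroup, the product $G_1Q$ is a subgroup of $p$-power order containing $Q$; maximality of $Q$ forces $G_1Q=Q$, hence $G_1\subseteq Q$. As $G_1$ is normal in all of $G$, it is in particular normal in $Q$.

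For the second assertion I would observe that the quotient $Q/G_1$ injects into $G/G_1\hookrightarrow\GL_2(k)$, so it is realized as a finite $p$-subgroup of $\GL_2(k)$. Any element $g$ of $p$-power order there is unipotent: if $g^{p^m}=1$ then its minimal polynomial divides $t^{p^m}-1=(t-1)^{p^m}$, so its only eigenvalue is $1$, and the Cayley--Hamilton theorem in rank $2$ gives $N^2=0$ for $N:=g-1$. Since $1$ and $N$ commute, the Frobenius identity in characteristic $p$ yields $g^p=(1+N)^p=1+N^p=1$, as $N^2=0$. Therefore every element of $Q/G_1$ has order dividing $p$, i.e.\ $Q/G_1$ is annihilated by $p$.

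I expect the argument to be almost entirely formal once $G_1$ has been recognized as a $p$-group, the only geometric input being Lemma~\ref{Image_in_SL}. The one genuinely $\GL_2$-specific step is the unipotency computation $N^2=0$, which relies on the ambient hypothesis $\dim A=2$ (it would fail for higher $d$, where unipotent subgroups such as the Heisenberg group in $\GL_3$ need not have exponent $p$); so the main point to keep track of is that the representation really lands in $\GL_2$.
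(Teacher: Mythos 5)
Your proof is correct and takes essentially the same route as the paper: normality of $G_1$ in $Q$ comes from normality of $G_1$ in $G$, and the exponent-$p$ claim comes from realizing $Q/G_1$ as a $p$-subgroup of $\GL_2(k)$. The only difference is in the level of detail: you make explicit that $G_1$ is a $p$-group contained in $Q$ (via Lemma~\ref{Image_in_SL} and Cauchy's theorem), a point the paper's two-line proof leaves implicit, and you verify the exponent-$p$ property by the direct unipotency computation $N^2=0$, $g^p=1$, where the paper simply cites that the $p$-Sylow subgroup of $\GL_2(\FF_q)$ is annihilated by $p$.
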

\begin{proof}
The group $G_1$ is normal in $G$; therefore, it is normal in $Q$ as well. 
The quotient $Q/G_1$ is isomorphic to a subgroup of $\GL_2(\FF_q)$, and the 
$p$-Sylow subgroup of $\GL_2(\FF_q)$ is annihilated by $p$. 
\end{proof}

\begin{corollary}
If the $p$-th Sylow subgroup $Q$ of $G$ is not annihilated by $p$, then $G_1$ is not trivial.
\end{corollary}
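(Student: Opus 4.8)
The plan is to obtain this as the immediate contrapositive of the preceding lemma, which states that $Q/G_1$ is annihilated by $p$. I would argue by contradiction. Suppose that $G_1$ is trivial; then the quotient map $Q\to Q/G_1$ is an isomorphism, so $Q\cong Q/G_1$. By the preceding lemma the group $Q/G_1$ is annihilated by $p$, and transporting this along the isomorphism shows that $Q$ itself is annihilated by $p$. This contradicts the hypothesis that $Q$ is not annihilated by $p$, so $G_1$ must be non-trivial.

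I do not expect any genuine obstacle here, since all the content is already packaged in the preceding lemma. That lemma in turn rests on the fact that the $p$-Sylow subgroup of $\GL_2(\FF_q)$ — the group of unipotent upper-triangular matrices, isomorphic to the additive group $(\FF_q,+)$ — has exponent $p$. The only point worth making explicit is the convention that a group is \emph{annihilated by} $p$ when every one of its elements has order dividing $p$; with this reading, a trivial $G_1=\{1\}$ would force $Q$ to inherit exponent $p$ from $Q/G_1$, which is precisely the situation the hypothesis rules out.
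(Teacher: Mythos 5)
Your proof is correct and is exactly the intended argument: the corollary is the immediate contrapositive of the preceding lemma (the paper gives no separate proof, precisely because of this), and your identification of the $p$-Sylow subgroup of $\GL_2(\FF_q)$ with the unipotent group $(\FF_q,+)$ matches the lemma's justification. Nothing to add.
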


\begin{ex}
Let $p=2$, and let $G=\SL_2(\FF_5)$. The $2$-nd Sylow subgroup of $G$ is $Q_8$; therefore,
 the filtration given by the subgroups $G_i$ is nontrivial.
\end{ex}

\section{Rigid actions on abelian varieties}\label{rigid_sec}
\subsection{An equivalence of categories.}
Let $\C$ be an isogeny class of abelian varieties over $k$, and let $G$ be a finite group.
Denote by $\C_G$ the category of abelian varieties from $\C$ with an action of $G$ that fixes the group law. Recall that such an action on an abelian variety $A$ is given by a homomorphism $G\to\End(A)^*$.
We define the $\Hom$ group as follows: \[\Hom_{\C_G}(A,B)=\Hom^\circ(A,B)^G.\] 

Let $\D_G$ be the following category. Objects of $\D_G$ are group homomorphisms $G\to\End^\circ(A)^*$,  where $A$ in $\C$, and a morphism from $i_A:G\to \End^\circ(A)^*$ to $i_B:G\to \End^\circ(B)^*$ is a $\psi\in\Hom^\circ(A,B)$ 
that fits the diagram:
\[\xymatrix{
&\End^\circ(A)^*\ar@{-->}[dd]_{\psi}\\
G\ar[ur]^{i_A}  \ar[dr]_{i_B}\\
&\End^\circ(B)^*
}\]

There is a natural functor from $\C_G$ to $\D_G$ sending an abelian variety $A$ with a $G$-action to the induced homomorphism $G\to\End^\circ(A)^*$.

\begin{prop}\label{P1}
The natural functor from $\C_G$ to $\D_G$ is an equivalence of categories.
\end{prop}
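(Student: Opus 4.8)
The plan is to show that the natural functor $\Phi:\C_G\to\D_G$ is fully faithful and essentially surjective. I would first unwind the definitions so that the claim becomes transparent. An object of $\C_G$ is an abelian variety $A$ in the isogeny class $\C$ together with a group homomorphism $\rho_A:G\to\End(A)^*$ fixing the group law; the functor sends it to the composite homomorphism $i_A:G\to\End(A)^*\hookrightarrow\End^\circ(A)^*$, which is precisely an object of $\D_G$. So essential surjectivity is the content that every homomorphism $i:G\to\End^\circ(A)^*$ with $A\in\C$ arises, up to the isomorphisms of $\D_G$, from an \emph{integral} action on some isogenous abelian variety.

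For \textbf{faithfulness and fullness}, I would argue as follows. Given objects $A,B\in\C_G$, a morphism in $\C_G$ is an element $\psi\in\Hom^\circ(A,B)^G$, i.e. a $\QQ$-isogeny commuting with the $G$-actions, and this is exactly the condition that makes the square in the definition of $\D_G$ commute (the dashed arrow $\psi$ intertwines $i_A$ and $i_B$ means $\psi\circ i_A(g)=i_B(g)\circ\psi$ for all $g$, which is $G$-equivariance). Thus $\Hom_{\C_G}(A,B)=\Hom^\circ(A,B)^G=\Hom_{\D_G}(i_A,i_B)$ literally by definition, and $\Phi$ induces a bijection on Hom-sets. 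This part is essentially formal once the definitions are laid side by side; the only thing to verify is that the $G$-equivariance condition coincides with the commutativity of the triangle diagram, which is immediate.

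The real work is \textbf{essential surjectivity}, and I expect this to be the main obstacle. Given an arbitrary homomorphism $i:G\to\End^\circ(A)^*$, the images $i(g)$ are invertible elements of $\End^\circ(A)=\End(A)\otimes\QQ$ but need not lie in the integral endomorphism ring $\End(A)$, so $i$ need not define an action on $A$ itself. The idea is to replace $A$ by an isogenous abelian variety $B$ on which $G$ acts integrally. Concretely, consider the lattice $T_\ell(A)\subset V_\ell(A)$ and the $\ZZ[G]$-submodule it generates; since $G$ is finite and each $i(g)$ acts invertibly on $V_\ell(A)$, the sum $\sum_{g\in G} i(g)\bigl(T_\ell(A)\bigr)$ is a $G$-stable, $F$-invariant $\ZZ_\ell$-lattice of finite rank containing $T_\ell(A)$. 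Doing the analogous construction with the Diedonn\'e module $M(A)$ using Lemma~\ref{Diedonne_module}, and applying Lemma~\ref{lem_on_Tate_module} at the primes $\ell\neq p$, produces a finite set of lattice conditions that by Tate's theorem correspond to an abelian variety $B$ with an isogeny $f:A\to B$ (or $B\to A$) realizing these lattices as $T_\ell(B)$ and $M(B)$. By construction $G$ acts on all these lattices, hence $i$ lands in $\End(B)^*$, giving an object of $\C_G$ mapping to an object of $\D_G$ isomorphic to $i_A$ via $f$.

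I would organize essential surjectivity as: (i) construct the $G$-stable lattices at every prime simultaneously, being careful that they agree with the standard lattices at almost all primes so that they glue to a genuine abelian variety; (ii) invoke Lemmas~\ref{lem_on_Tate_module} and~\ref{Diedonne_module} to pass from lattices back to an abelian variety $B$ isogenous to $A$; (iii) check that the resulting $G$-action on $B$ is integral and that the isogeny $f$ is an isomorphism $i_A\cong i_B$ in $\D_G$. The delicate point is handling the prime $p$ via Diedonn\'e theory in tandem with the $\ell$-adic Tate modules, so that the various local integral structures are compatible and define a single abelian variety; once the averaging construction $\sum_{g}i(g)(\text{lattice})$ is seen to give a finitely generated $F$-stable (and $D_k$-stable at $p$) lattice, the cited lemmas finish the argument.
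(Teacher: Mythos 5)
Your overall strategy is the same as the paper's: full faithfulness is a matter of unwinding definitions (morphisms in $\C_G$ are by definition the $G$-invariant elements of $\Hom^\circ(A,B)$, which is exactly the intertwining condition in $\D_G$), and essential surjectivity is obtained by averaging, $T_\ell=\sum_{g\in G} g\,T_\ell(A')$ at each $\ell\neq p$ and $M_p=\sum_{g\in G} g\,M(A')$ at $p$, then passing back to an abelian variety via Lemmas~\ref{lem_on_Tate_module} and~\ref{lem_on_Dmod}. Those formal parts are correct.

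The gap is precisely the point you defer to ``being careful'' in step (i): why $T_\ell = T_\ell(A')$ for all but finitely many $\ell$. This is not bookkeeping; it is the substance of the proof. Lemma~\ref{lem_on_Tate_module} modifies one prime at a time (it produces an $\ell$-isogeny), so unless the set of primes where $T_\ell\neq T_\ell(A')$ is known to be finite, the construction never terminates in a single abelian variety, and nothing in your proposal establishes this finiteness. The paper's argument is: for almost all $\ell$ the order $\End(A')\otimes\ZZ_\ell$ is maximal in $\End^\circ(A')\otimes\QQ_\ell$ \cite[Theorem 12.8]{MO}; a maximal order contains every integral element of the algebra; and each $i(g)$ is integral, being of finite order (a root of $x^{|G|}-1$); hence $g\,T_\ell(A')\subset T_\ell(A')$ for all such $\ell$. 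A second, smaller inaccuracy: ``by Tate's theorem'' is not what closes step (iii), and indeed Tate's theorem is not available here since $k$ is only assumed perfect. To conclude $i(G)\subset\End(B)$ rather than merely $i(G)\subset\End(B)\otimes\ZZ_\ell$ for every $\ell$, the paper compares the lattice generated by $\End(B)$ and $i(G)$ with $\End(B)$ itself: their localizations at every prime coincide, hence they are equal. Both arguments are short, but they are exactly what your plan records as needed without proving.
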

\begin{proof}
Clearly, the functor is full and faithful. 
We have to prove that the functor is essentially surjective.
Let $A'$ be an abelian variety in $\C$, and let $i_{A'}:G\to \End^\circ(A')^*$ be a homomorphism.
For any $\ell\neq p$ we define a $G$-invariant submodule $T_\ell$ by the formula \[T_\ell=\sum_{g\in G}gT_\ell(A')\subset V_\ell(A').\] 
We claim that $T_\ell=T_\ell(A')$ for almost all $\ell\neq p$. Indeed, the order 
$\End(A')\otimes\ZZ_\ell $ is maximal at almost all $\ell$, and
is equal to the order of integral elements in $\End^\circ(A')\otimes\QQ_\ell$~\cite[Theorem 12.8]{MO}.
Since the image of $G$ in $\End^\circ(A')$ consists of integral elements, for such $\ell$ we have \[GT_\ell(A')\subset T_\ell(A').\]
We have a finite number of primes $\ell\neq p$ such that $T_\ell\neq T_\ell(A')$.
Now we apply Lemma~\ref{lem_on_Tate_module} to all such $T_\ell$ and get an isogeny $A'\to A''$
such that $T_\ell\cong T_\ell(A'')$ for all $\ell\neq p$.
Finally, we put \[M_p=\sum_{g\in G}gM(A')\subset M(A')\otimes\QQ_p.\] According to Lemma~\ref{lem_on_Dmod}, 
there exists an isogeny $A\to A''$ such that
$T_\ell\cong T_\ell(A)$, and $M_p\cong M(A)$.
In particular, \[i_A(G)\subset\End(A)\otimes\ZZ_\ell\] for all $\ell$.
Consider the natural inclusion $\iota:\End(A)\to T_G$ to the lattice generated by $\End(A)$ and the image of $i_A(G)$. For all primes $\ell$ the localization $\iota\otimes\ZZ_\ell$ is an isomorphism;
this implies that $T_G=\End(A)$, and $G\subset\End(A)$. We showed that there exists an abelian variety $A$ in $\C_G$, and the diagram $A'\to A''\leftarrow A$ induces an isomorphism in $\D_G$.
\end{proof}

\begin{corollary}\label{finite_field}
If there exists an abelian variety $A$ with a $G$-action over $k$, then there exists an 
abelian variety $B$ of the same dimension over a finite field endowed with a $G$-action and 
an injective homomorphism $\End^\circ(A)\to\End^\circ(B)$ such that the $G$-action is given by the composition
\[G\to\End^\circ(A)^*\to\End^\circ(B)^*.\] 

If $A$ is supersingular, then $B$ is also supersingular.
\end{corollary}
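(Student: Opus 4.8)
The plan is to reduce to the case of a finite field by spreading out and specializing. Since $A$ is cut out by finitely many equations, since $\End(A)$ is finitely generated as a $\ZZ$-module, and since $G$ is finite, there is a finitely generated subfield $k_0\subset k$ over which $A$ descends to an abelian variety $A_0$ with $A\cong(A_0)_k$, such that base change induces an isomorphism $\End^\circ(A_0)\cong\End^\circ(A)$ and the homomorphism $G\to\End^\circ(A)^*$ factors through $\End^\circ(A_0)^*$. Shrinking $k_0$ if necessary, I may assume $k_0$ is the fraction field of a domain $R$ of finite type over $\FF_p$.

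Next I would spread out. Over a non-empty affine open $S=\Spec R'\subset\Spec R$ there is an abelian scheme $\mathcal A\to S$ with generic fibre $A_0$, to which the finitely many generators of $\End(A_0)$ and the action of $G$ extend as $S$-endomorphisms; this is routine, since properness, smoothness, the group law, and each of the finitely many endomorphisms are defined over a dense open of $\Spec R$. I then choose any closed point $s\in S$. Its residue field $\kappa(s)$ is a finite field, and $B:=\mathcal A_s$ is an abelian variety over $\kappa(s)$ of dimension $\dim A$, equipped with the $G$-action restricted from $\mathcal A$.

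The desired map is the reduction homomorphism $\End^\circ(A)\cong\End^\circ(A_0)\cong\End^\circ(\mathcal A/S)\to\End^\circ(B)$, and I would check it is injective as follows. Under good reduction the reduction map on $\ell$-torsion ($\ell\neq p$) is an isomorphism $A_0[\ell^n]\to B[\ell^n]$ commuting with every $S$-endomorphism; hence if $\phi$ reduces to $0$ on $B$ then $\phi$ kills all prime-to-$p$ torsion of $A_0$, so $\phi=0$. Because the $G$-action on $B$ is by construction the restriction of the $G$-action on $\mathcal A$, it coincides with the composite $G\to\End^\circ(A)^*\to\End^\circ(B)^*$, as required.

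It remains to treat supersingularity. By construction $A_0$, and hence the generic fibre of $\mathcal A$, has all slopes equal to $1/2$ whenever $A$ does, since slopes are unchanged by base change to the perfect closure. By Grothendieck's specialization theorem the Newton polygon of the fibres of an abelian scheme can only rise under specialization; as the slope-$1/2$ polygon is the maximal Newton polygon for abelian varieties of dimension $\dim A$, the polygon of $B$ must again be this maximal one, so $B$ is supersingular. The two points most in need of justification are the injectivity of reduction on $\End^\circ$ together with its compatibility with the $G$-action, and the preservation of supersingularity; both rest on standard but non-trivial inputs (injective specialization of Tate modules under good reduction and the Grothendieck--Katz specialization of Newton polygons), and isolating these as the genuine content is the main point of the argument.
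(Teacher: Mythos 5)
Your proof is correct and follows essentially the same route as the paper: spread $A$ with its $G$-action out to an abelian scheme over a finite-type base, specialize at a closed point to get $B$ over a finite field, and invoke Grothendieck--Katz specialization of Newton polygons (plus maximality of the supersingular polygon) for the last claim. The only differences are cosmetic: you prove injectivity of $\End^\circ(A)\to\End^\circ(B)$ directly via prime-to-$p$ torsion where the paper cites Chai--Conrad--Oort, and you make explicit the preliminary descent to a finitely generated subfield that the paper leaves implicit.
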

\begin{proof}
Let $\FF_q$ be the algebraic closure of $\FF_p$ in $k$. 
There exists a smooth connected affine variety $U$ over $\FF_q$ such that $k$ is a field of functions on $U$.
If we make $U$ smaller, then there exists a smooth abelian scheme $\mathcal{A}$ over $U$ such that $A$ is the general fiber of $\mathcal{A}$.
Let $B$ be a special fiber of $\mathcal{A}$ over a finite extension of $\FF_q$. 
Then there exists a natural injection~\cite[Section 1.8.4.1]{LAV}:
\[\End(A)\to\End(B).\]
Therefore, $G$ acts on $B$.

The Newton polygon of $A$ is not higher than the Newton polygon of $B$ according to the Grothendieck--Katz specialization theorem~\cite{Ka78}; therefore, if $A$ is supersingular, then $B$ is also supersingular. 
\end{proof}

\begin{prop}
The action of $G$ on $A$ is rigid if and only if any non-trivial $g\in G$ has only finite number of fixed points.
\end{prop}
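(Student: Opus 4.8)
The plan is to translate both conditions into statements about the eigenvalues of $g$ acting on $V_\ell(A)$, and then to compare them by an elementary argument with roots of unity. Since $g$ fixes the group law, it lies in $\Aut(A)$, and its fixed locus is the kernel of the endomorphism $g-1\in\End(A)$; hence the set of fixed points is finite if and only if $g-1$ is an isogeny. By the standard correspondence between isogenies and the Tate module recalled in Lemma~\ref{lem_on_Tate_module}, an endomorphism is an isogeny precisely when it acts invertibly on $V_\ell(A)$. First I would record this key translation: a non-trivial $g$ has only finitely many fixed points if and only if $V_\ell(g)-\id$ is invertible, i.e. if and only if $1$ is not an eigenvalue of $g$ on $V_\ell(A)$.

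Next I would use that $g$ has finite order $r$, so $V_\ell(g)^r=\id$; therefore $V_\ell(g)$ is diagonalizable with eigenvalues among the $r$-th roots of unity, and each eigenvalue $\zeta$ has some order $d$ dividing $r$. With this in hand the forward implication is immediate: if the action is rigid, then every non-trivial $g$ of order $r\geq 2$ has all its eigenvalues equal to primitive $r$-th roots of unity, none of which equals $1$, so $g$ has finitely many fixed points.

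For the converse I would argue by contradiction. Suppose every non-trivial element has finitely many fixed points, yet some $g$ of order $r$ has an eigenvalue $\zeta$ whose order $d$ is a proper divisor of $r$. Then $\zeta^d=1$ is an eigenvalue of $g^d$, while $g^d\neq 1$ because $0<d<r$. Thus the non-trivial element $g^d$ has $1$ among its eigenvalues and so, by the translation above, has infinitely many fixed points, contradicting the hypothesis. Hence every eigenvalue of $g$ is a primitive $r$-th root of unity, which is exactly the definition of a rigid action.

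The argument is short, and I do not expect a serious obstacle; the only point that requires care is the first step, namely the precise equivalence between finiteness of the fixed locus and invertibility of $V_\ell(g)-\id$. Here one must check that this isogeny criterion does not overlook fixed points concentrated in $p$-power torsion. This is handled by the observation that finiteness of $\ker(g-1)$ is equivalent to $g-1$ being an isogeny, a property that is detected by any single $V_\ell$ with $\ell\neq p$ and is independent of the choice of $\ell$.
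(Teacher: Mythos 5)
Your proposal is correct and takes essentially the same approach as the paper: both reduce fixed-point finiteness of each non-trivial element (and its powers) to the triviality of $\ker(1-V_\ell(g)^r)$, and then run the identical primitive-root-of-unity argument over divisors of the order. The only difference is one of packaging: where you invoke the standard equivalence ``$\phi$ is an isogeny $\Leftrightarrow$ $V_\ell(\phi)$ is invertible'' (of which Lemma~\ref{lem_on_Tate_module} literally supplies only the forward direction; the converse still needs the observation that an infinite kernel contains a positive-dimensional abelian subvariety $B$ with $V_\ell(B)\neq 0$), the paper proves this translation from scratch by establishing the equality $V_\ell(B)=\ker V_\ell(1-g^r)$ for the identity component $B$ of the fixed-point scheme, via the limit argument with torsion points landing in $B$.
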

\begin{proof}
The eigenvalues of the action of $g\in G$ on $V_\ell(A)$ are primitive roots of unity if and only if $\ker(1-V_\ell(g)^r)\subset V_\ell(A)$ is trivial for any divisor $r$ of the order of $g$.
Let $B$ be the connected component of identity in $\ker(1-g^r)$. Then $B$ is an abelian subvariety, and $g^r$ has only a finite number of fixed points if and only if $\dim B=0$, and if and only if $V_\ell(B)=0$.
We have to prove that $V_\ell(B)=\ker(V_\ell(1-g^r))$. 
Clearly, $V_\ell(B)\subset\ker(V_\ell(1-g^r))$. 
We will prove that any $v\in \ker(V_\ell(1-g^r))$ belongs to $V_\ell(B)$.
Since $V_\ell(A)= T_\ell(A)\otimes\QQ_\ell$, we have $v=v'\otimes\frac{1}{\ell^a}$ for some natural $a$. The vector $v'$ is represented by a sequence $v_j\in A[\ell^j]$ such that $v_j=\ell v_{j+1}$, and $g^r(v_j)=v_j$. Since the quotient $\ker(1-g^r)/B$ is finite, $v_j\in B$ for large $j$; therefore, $v\in V_\ell(B)$.
\end{proof}

\begin{corollary}
If $A$ is simple over $k$, then any faithful action of a finite group on $A$ is rigid.
\end{corollary}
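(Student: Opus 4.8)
The plan is to deduce the corollary directly from the preceding proposition, which already reduces rigidity to a geometric condition: the action of $G$ on $A$ is rigid if and only if every non-trivial $g\in G$ has only finitely many fixed points. Thus it suffices to verify this fixed-point condition under the two standing hypotheses, namely that $A$ is simple over $k$ and that the action is faithful.

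First I would fix an arbitrary non-trivial element $g\in G$. Since the action $G\to\End(A)^*$ is faithful, $g$ is sent to an automorphism distinct from the identity, so $1-g\in\End(A)$ is a \emph{non-zero} endomorphism of $A$, defined over $k$. I would then look at $\Ker(1-g)$, a closed subgroup scheme of $A$ over $k$, and let $B$ be its identity component. As in the proof of the proposition, $B$ is an abelian subvariety of $A$, and it is defined over $k$ because $g$ and hence $1-g$ are. The cleanest way to bound its dimension is via the image: $\mathrm{im}(1-g)$ is a non-zero abelian subvariety of $A$ (non-zero precisely because $1-g\neq 0$), and $\dim B=\dim A-\dim\,\mathrm{im}(1-g)<\dim A$. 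Now simplicity enters: since $A$ is simple over $k$, the subvariety $B$ is either $0$ or all of $A$, and the strict inequality rules out $B=A$. Hence $B=0$, which means $\Ker(1-g)$ is finite, i.e. $g$ has only finitely many fixed points. As $g$ was an arbitrary non-trivial element, the proposition yields that the action is rigid.

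There is essentially no hard step here; the entire content is supplied by the preceding proposition together with the simplicity of $A$. The only points that deserve care are that faithfulness is exactly what guarantees $g\neq\mathrm{id}$ as an endomorphism (so that $1-g\neq 0$ and its image is non-trivial), and that simplicity \emph{over $k$} suffices rather than simplicity over $\kk$, which is fine because $\Ker(1-g)$ and its identity component $B$ are defined over $k$. One might also wonder about the powers $g^d$ implicit in the definition of rigidity, but these are themselves non-trivial elements of $G$ whenever they are non-trivial, so they are already covered by the proposition's uniform statement and require no separate treatment.
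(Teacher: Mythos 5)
Your proof is correct and takes exactly the route the paper intends: the corollary is stated there without proof as an immediate consequence of the preceding proposition, and your argument (faithfulness gives $1-g\neq 0$ for non-trivial $g$, the dimension count via $\mathrm{im}(1-g)$ together with simplicity over $k$ forces the identity component of $\Ker(1-g)$ to be trivial, so the proposition applies) is precisely the omitted verification. Your two side remarks — that simplicity over $k$ suffices because $\Ker(1-g)$ is defined over $k$, and that the powers $g^d$ occurring in the definition of rigidity are themselves non-trivial elements covered by the proposition — correctly address the only points where care is needed.
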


Denote by $\C_G^\rig$ the full subcategory in $\C_G$ of abelian varieties with a rigid action of $G$.
The homomorphism $i_A: G\to\End^\circ(A)^*$ induces a representation on $V_\ell(A)$.
We say that $i_A$ is \emph{rigid} if this representation is without fixed points.
Let $\D_G^\rig$ be the full subcategory of rigid objects in $\D_G$.
 According to Proposition~\ref{P1}, $\C_G^\rig$ and $\D_G^\rig$ are equivalent. 

Let $\HH$ be a semisimple algebra over $\QQ$. Denote by $\C_\HH$ the following category. Objects of $\C_\HH$ are homomorphisms of $\QQ$-algebras $\HH\to\End^\circ(A)$, where $A$ is in $\C$, and 
a morphism from $h_A:\HH\to \End^\circ(A)$ to $h_B:\HH\to \End^\circ(B)$
is an element $\psi\in\Hom^\circ(A,B)$ that fits the diagram:
\[\xymatrix{
&\End^\circ(A)\ar@{-->}[dd]_{\psi}\\
\HH\ar[ur]^{h_A}  \ar[dr]_{h_B}\\
&\End^\circ(B)
}\]

A homomorphism $i_A$ from $G$ to $\End^\circ(A)^*$ induces the natural homomorphism of $\QQ$-algebras $h_A:\QQ[G]\to\End^\circ(A)$. 
If $i_A$ is rigid, then $h_A$ is the composition of the natural projection $\QQ[G]\to\QQ[G]^\rig$ and a homomorphism \[h_A^\rig:\QQ[G]^\rig\to\End^\circ(A).\]
For $\HH=\QQ[G]^\rig$ this gives an equivalence of $\D_G^\rig$ and $\C_\HH$. 
We proved the main result of this section.

\begin{thm}\label{main_thm}
Let $G$ be a group with a representation without fixed points, and let $\HH=\QQ[G]^\rig$.
Then the categories $\C_G^\rig$ and $\C_\HH$ are equivalent.\qed
\end{thm}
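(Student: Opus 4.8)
The plan is to obtain the stated equivalence by composing two equivalences: the equivalence $\C_G^\rig\simeq\D_G^\rig$ supplied by Proposition~\ref{P1} (restricted to the full subcategories of rigid objects, as already noted), and a direct equivalence $\D_G^\rig\simeq\C_\HH$ for $\HH=\QQ[G]^\rig$. The substance of the proof lies entirely in the second equivalence, which makes precise the discussion preceding the statement.

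First I would fix the functors on objects. A rigid homomorphism $i_A\colon G\to\End^\circ(A)^*$ extends uniquely, by the universal property of the group algebra, to a $\QQ$-algebra homomorphism $h_A\colon\QQ[G]\to\End^\circ(A)$. In the opposite direction, a $\QQ$-algebra homomorphism $h\colon\QQ[G]^\rig\to\End^\circ(A)$ is composed with the canonical projection $\pi\colon\QQ[G]\to\QQ[G]^\rig$ and restricted to $G$; as $\pi$ and $h$ are unital they carry units to units, so $g\mapsto h(\pi(g))$ indeed lands in $\End^\circ(A)^*$. On morphisms both assignments act as the identity on the underlying $\psi\in\Hom^\circ(A,B)$.

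The crux is the factorization $h_A=h_A^\rig\circ\pi$ for rigid $i_A$, together with its converse. Here I would invoke the faithfulness of the $\ell$-adic representation, i.e. the injectivity of the natural map $\End^\circ(A)\to\End_{\QQ_\ell}(V_\ell(A))$, which is classical (and over finite fields is contained in Tate's theorem quoted above). Through it $V_\ell(A)$ becomes a module over $\QQ[G]\otimes\QQ_\ell$, and the decomposition $\QQ[G]=\bigoplus_V\HH_V$ into simple factors indexed by the irreducible $\QQ$-representations $V$ cuts $V_\ell(A)$ into pieces, the factor $\HH_V$ acting nontrivially exactly on the constituents of type $V$. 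Rigidity asserts that each $g\in G$ of order $r$ acts on $V_\ell(A)$ with all eigenvalues primitive $r$-th roots of unity; since the eigenvalues of $g$ on an absolutely irreducible constituent form a subset of those on $V_\ell(A)$, this property is inherited by every occurring constituent, and conversely its failure for some $V$ would force a non-primitive eigenvalue on $V_\ell(A)$. Hence a constituent of type $V$ occurs if and only if $V$ is without fixed points. Therefore the central idempotent $e_V\in\QQ[G]$ attached to a non-WFP factor acts as zero on $V_\ell(A)$, so by faithfulness $h_A(e_V)=0$ in $\End^\circ(A)$; thus $h_A$ annihilates $\Ker\pi=\bigoplus_{V\text{ not WFP}}\HH_V$ and factors as $h_A^\rig\circ\pi$. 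Reading the same chain backwards shows that if $h_A$ factors through $\QQ[G]^\rig$ then only WFP constituents occur in $V_\ell(A)$, so $i_A$ is rigid. This makes the two object assignments well defined and mutually inverse.

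The remaining verification is formal: a morphism $\psi\in\Hom^\circ(A,B)$ intertwines $i_A$ and $i_B$ if and only if it intertwines their $\QQ$-linear extensions $h_A$ and $h_B$, hence if and only if it intertwines the factored maps $h_A^\rig$ and $h_B^\rig$, since these determine one another through $\pi$ and restriction to $G$. Composing $\C_G^\rig\simeq\D_G^\rig\simeq\C_\HH$ then proves the theorem. I expect the only real obstacle to be the bookkeeping of the fixed-point-free condition across the base changes from $\QQ$ to $\QQ_\ell$ and to $\bar\QQ_\ell$: one must check that ``$V$ is WFP'' as a $\QQ$-representation is equivalent to the eigenvalue condition on each absolutely irreducible constituent, so that it matches exactly the rigidity condition imposed on $V_\ell(A)$.
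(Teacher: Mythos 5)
Your proposal is correct and follows essentially the same route as the paper: the paper also composes the equivalence $\C_G^\rig\simeq\D_G^\rig$ from Proposition~\ref{P1} with the equivalence $\D_G^\rig\simeq\C_\HH$ obtained from the factorization $h_A=h_A^\rig\circ\pi$ through the rigid quotient $\QQ[G]\to\QQ[G]^\rig$. The only difference is that you supply the justification (faithfulness of $\End^\circ(A)$ on $V_\ell(A)$ plus the Galois-conjugacy argument showing that rigidity is equivalent to all occurring constituents being WFP) for the factorization step, which the paper asserts without proof in the discussion preceding the theorem.
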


We now consider the case of a cyclic group $G$.

\begin{lemma}\label{eigenvalues}
Let $G=C_n$ be a cyclic group of order $n$. 
    \begin{enumerate}
        \item We have $\QQ[G]^\rig\cong \QQ(\zeta_n)$.
        \item Let $g$ be a generator of $G$. If $A$ is an abelian variety with a rigid action of $G$, then all primitive roots of unity of order $n$ are eigenvalues of the action of $g$ on $M(A)$ and on $V_\ell(A)$ for all $\ell\neq p$.
    \end{enumerate}
\end{lemma}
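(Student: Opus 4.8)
The plan is to treat the two parts separately: part (1) is a direct computation inside the rational group algebra, while part (2) reduces to the rationality and realization-independence of the characteristic polynomial of $g$ viewed as an endomorphism of $A$.

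For part (1), I would begin from the standard decomposition
\[\QQ[C_n]\cong\QQ[t]/(t^n-1)\cong\prod_{d\mid n}\QQ[t]/\Phi_d(t)\cong\prod_{d\mid n}\QQ(\zeta_d),\]
in which the factor indexed by $d$ is the simple algebra $\HH_{V_d}$ where a fixed generator $g$ acts as a primitive $d$-th root of unity. It then remains only to decide which $V_d$ are without fixed points. For $d=n$, the element $g^k$ has order $n/\gcd(n,k)$ and acts on $V_n\otimes\CC$ with eigenvalues the Galois conjugates of $\zeta_n^k$, all primitive roots of unity of exactly that order, so $V_n$ is WFP. For $d<n$ the generator $g$ itself has order $n$ but acts by $\zeta_d$, a root of unity of order $d<n$, so $V_d$ is not WFP. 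Summing only the WFP factor yields $\QQ[C_n]^\rig\cong\QQ(\zeta_n)$. This part is routine.

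For part (2), the starting point is Theorem~\ref{main_thm}: a rigid action of $C_n$ on $A$ corresponds to a $\QQ$-algebra homomorphism $\QQ(\zeta_n)\to\End^\circ(A)$ under which $g$ maps to the image of $\zeta_n$. In particular $\Phi_n(g)=0$ in $\End^\circ(A)$; since this map is a field embedding and $\Phi_n$ is irreducible, the minimal polynomial of $g$ is exactly $\Phi_n$ (consistent with faithfulness, Remark~\ref{Rem1}). The characteristic polynomial $P_g(t)=\det(t-g\mid V_\ell(A))$ lies in $\QQ[t]$ and is independent of $\ell$, by the same standard fact quoted for the Frobenius. A rational polynomial all of whose roots are roots of the irreducible $\Phi_n$, and which is divisible by $\Phi_n$, must be a positive power $\Phi_n(t)^m$ with $m\ge 1$. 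Hence every primitive $n$-th root of unity occurs among the eigenvalues of $g$ on $V_\ell(A)$.

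To transfer the conclusion to $M(A)$, I would invoke the compatibility of the characteristic polynomial of an endomorphism across realizations: the characteristic polynomial of $g$ acting on $M(A)\otimes\text{Frac}(W(k))$ equals the same rational polynomial $P_g=\Phi_n^m$. Consequently every primitive $n$-th root of unity is an eigenvalue of $g$ on $M(A)$ as well. The main obstacle is precisely this last compatibility: one must be sure that the characteristic polynomial computed in the crystalline (Dieudonn\'e) realization agrees with the $\ell$-adic one, i.e., that $P_g$ is realization-independent rather than merely a polynomial with coefficients in $W(k)$. Granting this, the factorization $P_g=\Phi_n^m$ makes the statement for $M(A)$ immediate.
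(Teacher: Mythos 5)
Your proposal is correct in substance, and for part (2) it takes a genuinely different route from the paper. Part (1) is the computation the paper intends; in fact your full cyclotomic decomposition $\QQ[C_n]\cong\prod_{d\mid n}\QQ(\zeta_d)$ is more precise than the paper's one-line isomorphism $\QQ[G]\cong\QQ\oplus\QQ(\zeta_n)$, which is literally correct only for $n$ prime. For the $\ell$-adic half of part (2), the paper quotes Ribet's theorem~\cite[Theorem 2.1.1]{Ri76} that $V_\ell(A)$ is a \emph{free} module over $\QQ(\zeta_n)\otimes\QQ_\ell$, so that after extension to $\Bar\QQ_\ell$ every embedding of $\QQ(\zeta_n)$ contributes a nonzero eigenspace. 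You instead use that the characteristic polynomial of an arbitrary endomorphism (not just Frobenius) lies in $\ZZ[t]$ and is independent of $\ell$~\cite{Mum}, plus irreducibility of $\Phi_n$, to force $\det(t-g\mid V_\ell(A))=\Phi_n(t)^m$. Both are valid; yours is more self-contained, since it only upgrades a fact the paper already cites, and Ribet's freeness theorem is itself proved by exactly this kind of rationality argument.

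The Dieudonn\'e step is the real crux, and there you and the paper are in the same position. You invoke crystalline--\'etale compatibility of characteristic polynomials and honestly flag it as an unproved input; the paper writes that ``the same argument can be applied to the Diedonn\'e module,'' but its citation covers only $V_\ell$ with $\ell\neq p$, so the freeness of $M(A)\otimes\QQ$ over $\QQ(\zeta_n)\otimes_\QQ\mathrm{Frac}(W(k))$ is equally unjustified there --- and that freeness is equivalent to your compatibility statement. The fact is true and can be closed with tools already in the paper: (i) since $M(g)$ commutes with the bijective $\sigma$-semilinear operator $F$, its characteristic polynomial $Q$ satisfies $Q^\sigma=Q$, hence $Q\in\QQ_p[t]$, i.e. $Q$ is a product of powers of the $\QQ_p$-irreducible factors of $\Phi_n$, which correspond to the primes of $\QQ(\zeta_n)$ above $p$; (ii) for $h\in\ZZ[t]$ with $\Phi_n\nmid h$, the isogeny $h(g)$ has degree $|N_{\QQ(\zeta_n)/\QQ}(h(\zeta_n))|^m$ by the $\ell$-adic part already proved, while its $p$-part equals $p^{\mathrm{length}_{W(k)}(M(A)/h(g)M(A))}=p^{v_p(Q\text{-resultant with }h)}$ by Dieudonn\'e theory of finite group schemes~\cite{Pink}; choosing $h$ by CRT to isolate one prime above $p$ at a time then forces each $\QQ_p$-irreducible factor to occur in $Q$ with multiplicity exactly $m$. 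So your proof is correct once this (standard, but not free) input is supplied; as written it has the same-sized hole as the paper's own proof, only more explicitly labeled.
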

\begin{proof}
    Part $(1)$ follows from the isomorphism of algebras \[\QQ[G]\cong \QQ\oplus\QQ(\zeta_n).\]
    According to Theorem~\ref{main_thm}, if the action of $G$ on $A$ is rigid, then
    $\QQ(\zeta_n)$ is a subalgebra of $\End^\circ(A)$.
    By~\cite[Theorem 2.1.1]{Ri76}, $V_\ell(A)$ is a free module over $\QQ(\zeta_n)\otimes\QQ_\ell$, where $\zeta_n$ acts $g$. Hence, $V_\ell(A)\otimes\Bar\QQ_\ell$ is a free module over 
    \[\QQ(\zeta_n)\otimes\Bar\QQ_\ell\cong\oplus_\zeta\Bar\QQ_\ell,\] 
    where the sum is over all immersions $\phi_\zeta:\QQ(\zeta_n)\to\Bar\QQ_\ell$.
    Any such immersion is uniquely determined by the primitive root of unity of order $n$: $\zeta=\phi_\zeta(\zeta_n)$. It follows that
    \[V_\ell(A)\otimes\Bar\QQ_\ell\cong\oplus_\zeta V_\zeta,\] where $g$ acts on $V_\zeta$ with eigenvalue $\zeta$. The same argument can be applied to the \Die module $M(A)$.    
    The lemma is proved.
\end{proof}

\begin{corollary}\label{cor2}
Let $G$ be a cyclic group of order $n$, and let $\HH=\QQ(\zeta_n)$ be the cyclotomic field.
Then there is an equivalence of categories $\C_G$ and $\C_\HH$.
\end{corollary}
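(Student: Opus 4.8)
The plan is to prove the equivalence by hand, exhibiting a restriction functor $R\colon\C_\HH\to\C_G$ together with a quasi-inverse, using Lemma~\ref{eigenvalues}(1) to identify $\HH=\QQ[C_n]^\rig=\QQ(\zeta_n)$. Fix a generator $g$ of $G=C_n$. Unwinding the definitions, an object of $\C_G$ is the datum of a single unit $\alpha=i_A(g)\in\End^\circ(A)^*$ with $\alpha^n=1$ (equivalently a $\QQ$-algebra map $h_A\colon\QQ[C_n]\to\End^\circ(A)$), while an object of $\C_\HH$ is a unital embedding $\QQ(\zeta_n)\hookrightarrow\End^\circ(A)$, that is, an element $\beta\in\End^\circ(A)$ with $\Phi_n(\beta)=0$. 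I define $R$ on objects by restricting an embedding $\QQ(\zeta_n)\hookrightarrow\End^\circ(A)$ along $C_n=\langle\zeta_n\rangle\subset\QQ(\zeta_n)^*$, so that $g$ acts by $\beta$, and on morphisms as the identity on the underlying element of $\Hom^\circ(A,B)$.

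Full faithfulness of $R$ I expect to be routine, and it is the first thing to record. Because $\zeta_n$ generates $\QQ(\zeta_n)$ as a $\QQ$-algebra, an element $\psi\in\Hom^\circ(A,B)$ commutes with the whole image of $\QQ(\zeta_n)$ if and only if it commutes with $\beta$; since commuting with $\beta=\alpha$ is precisely $G$-equivariance for the restricted action, the two Hom-groups $\Hom_{\C_\HH}(A,B)$ and $\Hom_{\C_G}(RA,RB)=\Hom^\circ(A,B)^G$ coincide. Hence $R$ is fully faithful, and to finish it suffices to prove it is essentially surjective and to produce the quasi-inverse $S$ by factoring $h_A$ through the rigid quotient $\QQ[C_n]\twoheadrightarrow\QQ(\zeta_n)$ to a unital embedding $h_A^\rig$, setting $S(A,i_A)=h_A^\rig$; one then checks $RS=\id$ and $SR=\id$ strictly.

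The step I expect to be the main obstacle is exactly essential surjectivity, equivalently the existence of the factorization $h_A^\rig$ for every object. Since an isomorphism in $\C_G$ is an isogeny conjugating $\alpha$ to another automorphism, and conjugation preserves the minimal polynomial, $(A,i_A)$ lies in the essential image of $R$ if and only if $\alpha=i_A(g)$ has minimal polynomial $\Phi_n$, i.e. the action is rigid; so the corollary reduces to showing that every $C_n$-action occurring on the class $\C$ is rigid. To handle this I would decompose $\QQ[C_n]=\prod_{d\mid n}\QQ(\zeta_d)$: the central idempotents split $A$, up to isogeny, into isotypic summands indexed by $d\mid n$, with $g$ acting on the $d$-th summand through primitive $d$-th roots of unity, and the action is rigid precisely when only the $d=n$ summand survives. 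Controlling the lower summands is the delicate point, and it is here that one must invoke the structure of the fixed isogeny class together with the earlier rigidity results (in particular that a faithful action on a simple abelian variety is rigid), after which the equivalence $\C_G\simeq\C_\HH$ follows formally from the quasi-inverse pair $(R,S)$.
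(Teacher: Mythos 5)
Your reduction of essential surjectivity is where the attempt breaks down, and it breaks down because the statement you try to prove at that point is false. You correctly observe that $(A,i_A)$ lies in the essential image of your restriction functor $R$ if and only if $\alpha=i_A(g)$ has minimal polynomial $\Phi_n$, i.e. if and only if the action is rigid; but it is not true that every $C_n$-action on an isogeny class $\C$ is rigid, so no amount of work on the lower summands of $\QQ[C_n]=\prod_{d\mid n}\QQ(\zeta_d)$ can close the gap. The trivial action of $C_n$ ($n>1$) is an object of $\C_G$, and since an isomorphism in $\C_G$ conjugates $i_A(g)$ by an invertible element of $\Hom^\circ(A,B)$, the minimal polynomial $t-1\neq\Phi_n$ is preserved, so the trivial action is isomorphic to nothing in the image of $R$; faithful non-rigid actions exist as well, e.g. $g\mapsto(\beta,-1)$ on $E\times E$ with $\beta\in\End(E)$ of order $4$, where the eigenvalue $-1$ on the second factor is not a primitive fourth root of unity. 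The fallback you invoke --- that a faithful action on a simple abelian variety is rigid --- does not apply, since $\C$ is an arbitrary isogeny class and faithfulness is not part of the definition of $\C_G$. What your analysis actually shows is that the corollary must be read with the rigid subcategory $\C_G^\rig$ in place of $\C_G$; so read, it is the immediate specialization of Theorem~\ref{main_thm} to $G=C_n$ via Lemma~\ref{eigenvalues}(1), which gives $\QQ[C_n]^\rig\cong\QQ(\zeta_n)$, and that two-line derivation is the paper's intended proof. Having detected that the essential image consists exactly of the rigid objects, you should have restricted the source category accordingly rather than set out to prove rigidity of all actions.

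There is a second, independent gap: your functor $R$ is not well defined as stated. An object of $\C_G$ is a homomorphism $G\to\End(A)^*$ into the integral endomorphism ring, whereas restricting an algebra embedding $\QQ(\zeta_n)\hookrightarrow\End^\circ(A)$ along $\langle\zeta_n\rangle$ only produces a unit $\beta\in\End^\circ(A)^*$; since $\End(A)$ is merely an order in $\End^\circ(A)$, the integral element $\beta$ need not lie in $\End(A)$. Thus ``unwinding the definitions'' does not identify objects of $\C_G$ with units $\alpha\in\End^\circ(A)^*$ satisfying $\alpha^n=1$ --- that identification is precisely the non-formal equivalence $\C_G\simeq\D_G$ of Proposition~\ref{P1}, whose proof replaces $A$ by an isogenous variety using the lattice arguments of Lemmas~\ref{lem_on_Tate_module} and~\ref{lem_on_Dmod}. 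If you insist on a by-hand proof rather than quoting Theorem~\ref{main_thm}, you must either cite Proposition~\ref{P1} explicitly or reproduce that isogeny argument; the full-faithfulness computation, the one part of your write-up that is complete and correct, is also the only part that is genuinely routine.
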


\begin{rem}
 The equivalences of categories $\C_G$ and $\C_\HH$ from the previous corollary are parametrized by identifications of $G$ and the group of $n$-th roots of unity.
\end{rem}


\begin{corollary}\label{GisSLsubgroup}
If there exists a supersingular abelian surface $A$ with a rigid action of $G$ over a perfect field $k$, then there exists a supersingular abelian surface with a rigid action of $G$ over $\FF_{p^2}$. In particular, $G/G_1$ is a subgroup of $\GL_2(\FF_{p^2})$.
\end{corollary}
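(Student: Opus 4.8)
The plan is to run the equivalence of Theorem~\ref{main_thm} backwards, exploiting the fact that the full geometric endomorphism algebra of a supersingular surface is already realized over $\FF_{p^2}$. First, since the action of $G$ on $A$ is rigid, the "only if" direction of Corollary~\ref{main_cor} furnishes a homomorphism of $\QQ$-algebras $h\colon\QQ[G]^\rig\to\End^\circ(A)$. Second, because $A$ is supersingular, Theorem~\ref{SSIsogeny} provides an embedding of $\QQ$-algebras $\End^\circ(A)\hookrightarrow M(2,\HH_p)$.

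The key observation is that $M(2,\HH_p)$ is itself the endomorphism algebra of a supersingular surface defined over $\FF_{p^2}$. Indeed, applying Theorem~\ref{th_curv}(3) with $q=p^2$, so that $\sqrt q=p\in\ZZ$ and $f_E(t)=(t\pm p)^2$, there is a supersingular elliptic curve $E_0$ over $\FF_{p^2}$ with $\End^\circ(E_0)\cong\HH_p$; that is, all geometric endomorphisms of $E_0$ are already rational over $\FF_{p^2}$. Setting $A_0=E_0^2$ yields a supersingular abelian surface over $\FF_{p^2}$ with $\End^\circ(A_0)\cong M(2,\HH_p)$. Composing with $h$ produces a homomorphism $\QQ[G]^\rig\to\End^\circ(A_0)$, and now the "if" direction of Corollary~\ref{main_cor}, applied in the isogeny class of $A_0$ over $\FF_{p^2}$, produces an abelian surface $A'$ isogenous to $A_0$—hence supersingular and defined over $\FF_{p^2}$—carrying a rigid action of $G$. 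This proves the first assertion.

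For the final claim I would apply the construction of Section~\ref{GonAV} to $A'$: the induced action on the cotangent space $T^*_0(A')\cong M(A')/FM(A')$, a two-dimensional $\FF_{p^2}$-vector space, is a homomorphism $G\to\GL_2(\FF_{p^2})$ whose kernel is by definition $G_1$, so $G/G_1$ embeds into $\GL_2(\FF_{p^2})$. Beyond the formal use of Corollary~\ref{main_cor} in both directions, the only nontrivial input—and the step I expect to carry the real content—is the realization of $M(2,\HH_p)$ over $\FF_{p^2}$ \emph{specifically}, namely choosing a supersingular elliptic curve with Frobenius $\pm p$ so that its entire quaternion order of endomorphisms descends to $\FF_{p^2}$; this is what lets us land over $\FF_{p^2}$ rather than over some uncontrolled larger finite field.
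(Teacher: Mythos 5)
Your proof is correct and follows essentially the same route as the paper: both pass through Theorem~\ref{SSIsogeny} to embed $\End^\circ(A)$ into $M(2,\HH_p)\cong\End^\circ(E^2)$ for a supersingular elliptic curve $E$ over $\FF_{p^2}$ with $\End^\circ(E)\cong\HH_p$, and then invoke the category equivalence (the paper cites Theorem~\ref{main_thm} directly, you cite its Corollary~\ref{main_cor}, which is the same content) to produce a surface isogenous to $E^2$ over $\FF_{p^2}$ with a rigid $G$-action. Your extra details---justifying the existence of $E_0$ via Theorem~\ref{th_curv}(3) and spelling out the cotangent-space argument for $G/G_1\subset\GL_2(\FF_{p^2})$---are correct and merely make explicit what the paper leaves implicit.
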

\begin{proof}
 Let $E$ be a supersingular elliptic curve over $\FF_{p^2}$ such that $\End^\circ(E)\cong\HH_p$.
According to Theorem~\ref{SSIsogeny}, $\End^\circ(A)$ is a subalgebra of $\End^\circ(E^2)$. 
 We get a homomorphism $G\to \End^\circ(E^2)^*$. 
By Theorem~\ref{main_thm}, there exists an isogeny from $E^2$ to an abelian surface over $\FF_{p^2}$ with a rigid action of $G$.
\end{proof}

\subsection{Finite subgroups of endomorphism algebras.}
The binary dihedral group $Q_{4n}$ of order $4n$ is the group generated by two elements $a$ and $b$ with the following relations:
$$a^n=b^2,\quad b^4=1,\quad aba=b.$$ 

\begin{lemma} \label{Q_rig}
We have
 $\QQ[Q_8]^\rig\cong\HH_2$, $\QQ[Q_{12}]^\rig\cong\HH_3$, and for $n>3$ 
\[\QQ[Q_{4n}]^\rig\cong\HH_\infty(\QQ(\zeta_{2n})^\real).\]
\end{lemma}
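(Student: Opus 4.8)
The plan is to read off $\QQ[Q_{4n}]^\rig$ from the representation theory of $Q_{4n}$: list the irreducible representations, keep the ones without fixed points, and identify the surviving simple factor as a quaternion algebra whose local invariants can be computed place by place.

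First I would recall the irreducible representations. Writing $a$ for an element of order $2n$ and $b$ for the element with $b^2=a^n$ and $bab^{-1}=a^{-1}$, the commutator subgroup is $\langle a^2\rangle$, so $Q_{4n}$ has four one-dimensional representations together with $n-1$ two-dimensional irreducible representations $\rho_j$ ($1\le j\le n-1$), given by $a\mapsto\diag(\zeta_{2n}^{\,j},\zeta_{2n}^{-j})$ and $b\mapsto\left(\begin{smallmatrix}0&1\\(-1)^{j}&0\end{smallmatrix}\right)$. The central element $a^n=b^2$ has order $2$, so in a representation without fixed points it acts as $-1$; since $a^2$ acts trivially in every one-dimensional representation, $a$ has order at most $2<2n$ there, and all four one-dimensional representations are excluded. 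For $\rho_j$ the element $a$ acts with eigenvalues $\zeta_{2n}^{\pm j}$, which are primitive $2n$-th roots of unity exactly when $\gcd(j,2n)=1$; as $2n$ is even this forces $j$ odd, and then, because $(a^ib)^2=a^n$ makes every $a^ib$ of order $4$, each $a^ib$ acts with eigenvalues the primitive fourth roots of unity. Hence $\rho_j$ is WFP if and only if $\gcd(j,2n)=1$.

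Next I would run Galois descent. The group $(\ZZ/2n)^\ast\cong\Gal(\QQ(\zeta_{2n})/\QQ)$ permutes the $\rho_j$ by $j\mapsto tj$, and it acts transitively on the residues coprime to $2n$ taken modulo $\pm1$; therefore the WFP representations form a single Galois orbit and, by the definition of the rigid group algebra, $\QQ[Q_{4n}]^\rig=\HH_V$ is a single simple $\QQ$-algebra. Its center is the character field $\QQ(\zeta_{2n}+\zeta_{2n}^{-1})=\QQ(\zeta_{2n})^\real=:K$, and since each complex factor of $\HH_V\otimes\CC$ is $M_2(\CC)$ we get $\dim_K\HH_V=4$, so $\HH_V$ is a quaternion algebra over $K$. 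To fix its structure I would denote the images of $a,b$ again by $a,b$: the eigenvalues of $a$ exhaust the primitive $2n$-th roots of unity, so $\QQ[a]\cong\QQ(\zeta_{2n})=:L$ is the quadratic CM extension of $K$; moreover $b^2=a^n=-1\in K^\ast$ and conjugation by $b$ induces the nontrivial element of $\Gal(L/K)$. Comparing dimensions gives $\HH_V=L\oplus Lb$, the cyclic quaternion algebra $(L/K,-1)$.

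Finally I would compute the local invariants of $(L/K,-1)$. At every real place $v$ of the totally real field $K$ one has $L_w=\CC$, $K_v=\RR$, and $-1$ is not a norm from $\CC^\ast$, so the algebra ramifies at all real places. For $n=2,3$ we have $K=\QQ$ with $L=\QQ(\sqrt{-1})$ and $\QQ(\sqrt{-3})$, so $\HH_V=(-1,-1)_\QQ$ and $(-3,-1)_\QQ$; a direct Hilbert-symbol computation, together with the parity constraint that a quaternion algebra over $\QQ$ ramifies at an even number of places, shows these ramify exactly at $\{2,\infty\}$ and $\{3,\infty\}$, giving $\HH_2$ and $\HH_3$. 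For larger $n$ the assertion $\HH_V\cong\HH_\infty(K)$ is equivalent to the vanishing of all finite invariants, and this is the main obstacle. Away from the primes dividing $2n$ the extension $L/K$ is unramified and $-1$ is a unit, hence a local norm, so the algebra splits there automatically. At the primes above $2$ I would invoke Theorem~\ref{trivial_inv}: passing from $\QQ_2$ to the local field $K_v$ multiplies the invariant of the Hamilton quaternions by $[K_v:\QQ_2]=2$, which annihilates it; at the odd primes dividing $n$ one checks through the residue field that $-1$ is a local norm. I expect the delicate bookkeeping at the primes above $2$ to be the real work. Note also that $\HH_\infty(K)$ is defined only when $[K:\QQ]=\phi(2n)/2$ is even, so the statement is to be read over that range, which in particular covers every $n$ occurring in the Katsura list.
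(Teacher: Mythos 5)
Your structural identification of $\QQ[Q_{4n}]^\rig$ is correct, and you reach it by a different route than the paper. The paper decomposes $\QQ[Q_{4n}]$ as $\QQ[D_{2n}]$ plus explicit subalgebras $\HH(n,d)\subset\HH_\RR$ generated by $\QQ(\zeta_{2n/d})$ and $j$ (one for each odd divisor $d$, matched by a dimension count over the quotients $Q_{4n/d}$), and observes that the unique faithful summand $\HH(n,1)$ is the rigid one; you instead classify the complex irreducibles, isolate the WFP ones by the eigenvalue criterion (including the check on the order-$4$ elements $a^ib$), and use the Galois orbit to see that they span a single simple summand with center $K=\QQ(\zeta_{2n})^\real$. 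Both arguments land on the same algebra: your cyclic algebra $(L/K,-1)$, $L=\QQ(\zeta_{2n})$, is exactly the paper's $\HH(n,1)$. Note, however, that the paper stops there and simply \emph{asserts} the isomorphisms with $\HH_2$, $\HH_3$, and $\HH_\infty(K)$; the local-invariant computation you attempt is precisely the step the paper leaves unproved.

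That computation is where your proposal has a genuine gap. At the primes above $2$ you cannot invoke Theorem~\ref{trivial_inv}: that theorem computes invariants of a base change $L\otimes_K\HH$, whereas $(L/K,-1)\otimes_KK_v$ is not given as $K_v\otimes_{\QQ_2}(-1,-1)_{\QQ_2}$ --- that it is a base change is exactly what would need proof. (It is true when $\zeta_4\in L$, i.e.\ when $n$ is even: then $\zeta_4$ and $b$ generate a copy of $(-1,-1)_\QQ$ inside $\HH_V$ and a dimension count gives $\HH_V\cong K\otimes_\QQ(-1,-1)_\QQ$; when $n$ is odd this fails, but then $2$ is unramified in $L$ and your ``unramified plus unit'' argument applies instead. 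Also $[K_v:\QQ_2]$ can be $4$ or $6$, not always $2$.) More seriously, your claim that at odd primes dividing $n$ one always finds $-1$ to be a local norm is false: for $n=7$ the extension $L/K$ is ramified at the unique prime of $K$ above $7$, and $-1$ is not a square in $\FF_7$, so the invariant there is $1/2$; hence $\QQ[Q_{28}]^\rig$ is ramified at the three real places and one finite place, and is not of the form $\HH_\infty(K)$ (which indeed does not exist here, $[K:\QQ]=3$ being odd --- so the paper's statement also tacitly requires your restriction). Thus your closing parity remark is not a side comment but a necessary hypothesis doing real work: $L/K$ ramifies at an odd $p\mid n$ only when $n=p^a$, and then $-1$ is a local norm iff $p\equiv 1\bmod 4$, which is exactly the condition that $[K:\QQ]=\phi(2n)/2$ be even. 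For the range the paper actually uses, $2\le n\le 6$, everything can be settled by hand; e.g.\ for $n=4$ the only finite prime that could ramify is the unique prime of $\QQ(\sqrt2)$ above $2$, and parity of the number of ramified places forces it to split.
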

\begin{proof}
Let $4n=2^rm$, where $m$ is odd. For any divisor $d$ of $m$ the element $a^{2n/d}$ generates a subgroup of order $d$, 
and the quotient of $Q_{4n}$ by this subgroup is isomorphic to $Q_{4n/d}$. This quotient corresponds to the following representation $\HH(n,d)$.
Let $\HH_\RR$ be the usual real quaternion algebra generated by $i$ and $j$ over its center $\RR$. 
We generate a subring $\HH(n,d)$ of $\HH_\RR$ by $\QQ({\zeta_{2n/d}})\subset\CC\cong\RR[i]$, and $j$.
 In this representation, $a$ acts as a multiplication of $\zeta_{2n/d}$, and $b$ acts as $j$.

The quotient of $Q_{4n}$ by the subgroup generated by $b^2$ is $D_{2n}$.
We find that the group algebra $\QQ[Q_{2n}]$ has the following decomposition:
\[\QQ[Q_{2n}]=\oplus_{d|m} \HH(n,d)\oplus \QQ[D_{2n}],\] due to the formula
\[\dim\QQ[Q_{2n}]=4n=2n+2\sum_{d|m}\phi(2n/d)=\dim\QQ[D_{2n}]+\sum_{d|m}\dim\HH(n,d).\]
The only summand corresponding to a faithful representation is $\HH(n,1)$. 
This representation is rigid because $b^2$ acts as $-1$, and $a$ acts as $\zeta_{2n}$. 
Finally, $\HH(2,1)\cong \HH_2$, $\HH(3,1)\cong \HH_3$, and if $n>3$, then \[\HH(n,1)\cong \HH_\infty(\QQ(\zeta_{2n})^\real).\]
The lemma is proved.
\end{proof}

We need the following classic result of Burnside (see~\cite[Section 5.3]{Wolf}). 

\begin{thm}\cite{Bu}
Let $G$ be a group with a representation without fixed points. Then all odd Sylow subgroups of $G$ are cyclic, and the even Sylow subgroup is cyclic or isomorphic to $Q_{2^r}$.
\end{thm}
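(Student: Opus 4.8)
The plan is to reduce the statement to a question about the Sylow subgroups of $G$ and then to classify the $p$-groups that admit a representation without fixed points. First I would observe that if $\rho\colon G\to\GL(V)$ is a representation without fixed points, then its restriction to any subgroup $H\le G$ is again without fixed points, since the defining condition concerns individual non-trivial elements (an order-$r$ element of $H$ is an order-$r$ element of $G$). Consequently every Sylow subgroup $P$ of $G$ inherits a representation without fixed points, and it suffices to prove that a $p$-group admitting such a representation is cyclic when $p$ is odd, and cyclic or isomorphic to $Q_{2^r}$ when $p=2$.

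The key step is to show that every abelian subgroup of such a $p$-group $P$ is cyclic. I would first settle the abelian case directly: a finite abelian group $B$ with a representation without fixed points must be cyclic. Indeed, after base change to an algebraic closure the representation is semisimple and, $B$ being abelian, splits as a direct sum of characters $\chi_i\colon B\to\overline{\QQ}_\ell^{\,*}$. The fixed-point-free condition says precisely that $\chi_i(g)\neq 1$ for every non-trivial $g\in B$ and every $i$, so each $\chi_i$ is injective; hence $B$ embeds into the group of roots of unity and is therefore cyclic. Applying this to every abelian subgroup of $P$ shows each of them is cyclic; in particular $P$ contains no subgroup isomorphic to $(\ZZ/p\ZZ)^2$.

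Next I would deduce that $P$ has a unique subgroup of order $p$. Since $P$ is a non-trivial $p$-group, its center contains an element $z$ of order $p$. If $\langle a\rangle$ were a subgroup of order $p$ with $a\notin\langle z\rangle$, then $a$ would commute with the central element $z$, so $\langle a,z\rangle$ would be an abelian non-cyclic subgroup, contradicting the previous step. Hence every subgroup of order $p$ in $P$ coincides with $\langle z\rangle$.

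Finally, I would invoke the classical structure theorem for $p$-groups possessing a unique subgroup of order $p$: such a group is cyclic for odd $p$, and cyclic or generalized quaternion $Q_{2^r}$ for $p=2$. This last structure theorem is the main obstacle, being a genuine piece of $p$-group theory, typically proved by induction on the order (passing to a maximal subgroup, which is cyclic or generalized quaternion by induction, and then controlling the possible extensions and the action on it). Since it is entirely standard I would cite it, as in~\cite[Section 5.3]{Wolf}, rather than reproduce the inductive argument. Combining the four steps yields the stated dichotomy for the Sylow subgroups of $G$.
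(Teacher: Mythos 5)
Your proof is correct, but there is nothing in the paper to compare it against: the paper does not prove this statement, it simply quotes it as a classical theorem of Burnside with a pointer to \cite{Bu} and \cite[Section 5.3]{Wolf}. Your argument is the standard route to the result and each step is sound: the fixed-point-free condition is inherited by subgroups; an abelian group with such a representation is cyclic, because each character occurring in the (semisimple) representation must be injective (a non-trivial element in the kernel of some character would have $1$ as an eigenvalue, contradicting primitivity); hence every abelian subgroup of a Sylow subgroup $P$ is cyclic, and combining this with a central element $z$ of order $p$ forces every subgroup of order $p$ to equal $\langle z\rangle$, so $P$ has a unique subgroup of order $p$; the classical structure theorem then gives cyclic for odd $p$ and cyclic or generalized quaternion for $p=2$. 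The one caveat is that your write-up is a proof only modulo that last cited structure theorem, which plays the same role as the paper's wholesale citation; but since that theorem is pure $p$-group theory, with no reference to fixed-point-free representations, there is no circularity in citing it (even from the same source \cite{Wolf}). What your version buys over the paper's bare citation is that it isolates exactly how the representation-theoretic hypothesis enters, namely only through the statement that every abelian subgroup is cyclic; everything after that is group theory.
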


We say that a cyclic group of order $n$ is \emph{small} if $n\in\{2,3,4,5,6,8,10,12\}$.

\begin{corollary}\label{c240}
Let $A$ be an abelian surface with a rigid action by a finite group $G$. Then the order of $G$ divides $240$, and any cyclic subgroup of $G$ is small.
\end{corollary}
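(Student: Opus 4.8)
The plan is to combine the dimension constraint $\dim_{\QQ_\ell}V_\ell(A)=2\dim A=4$ with two ingredients already at hand: the eigenvalue computation of Lemma~\ref{eigenvalues} for cyclic subgroups, and Burnside's classification~\cite{Bu} of the Sylow subgroups of a group admitting a representation without fixed points.

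First I would note that the restriction of a rigid $G$-action to any subgroup $H\le G$ is again rigid: the representation on $V_\ell(A)$ is without fixed points for $G$, so for every nontrivial $h\in H$ the eigenvalues of $h$ are primitive roots of unity of order $\ord(h)$. Applying this to a cyclic subgroup $C_n=\langle g\rangle\le G$, Lemma~\ref{eigenvalues} shows that all $\phi(n)$ primitive $n$-th roots of unity occur among the eigenvalues of $g$ on $V_\ell(A)$. Since this space is $4$-dimensional, we obtain $\phi(n)\le 4$. A short inspection of the Euler totient forces $n\in\{1,2,3,4,5,6,8,10,12\}$, the cases $n=7,9,11$ and all $n\ge 13$ being excluded by $\phi(n)>4$. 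Hence every nontrivial cyclic subgroup of $G$ is small.

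To bound $|G|$ I would feed the small-order conclusion into Burnside's theorem. For an odd prime $p\mid|G|$, the $p$-Sylow subgroup is cyclic, hence a small cyclic subgroup; the only odd prime powers in the small list are $3$ and $5$, so $p\in\{3,5\}$ and the $p$-Sylow has order exactly $p$. For $p=2$, a cyclic $2$-Sylow is small and so has order in $\{2,4,8\}$, while a generalized quaternion $2$-Sylow $Q_{2^r}$ contains the cyclic subgroup $\langle a\rangle$ of order $2^{r-1}$ (in the presentation of Section~\ref{rigid_sec}, where $4n=2^r$), which must be small; thus $2^{r-1}\le 8$ and $|Q_{2^r}|=2^r\le 16$. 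In every case the $2$-part of $|G|$ divides $16$, and combining the primes gives $|G|\mid 2^4\cdot 3\cdot 5=240$.

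The only delicate point is the last one: correctly reading off the order of a quaternion Sylow subgroup from its maximal cyclic subgroup $\langle a\rangle$ and matching it against the small-order list. Everything else is a direct application of Lemma~\ref{eigenvalues} and Burnside's theorem together with the numerical identity $2\dim A=4$.
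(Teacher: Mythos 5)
Your proof is correct, and its skeleton is the same as the paper's: bound cyclic subgroups by a dimension count, then use Burnside's theorem to control the Sylow subgroups. The differences are in how the two dimension counts are executed. For the cyclic bound, the paper notes that a cyclic subgroup of order $n$ generates a commutative subalgebra of $\End^\circ(A)$ (namely $\QQ(\zeta_n)=\QQ[C_n]^\rig$, via Corollary~\ref{main_cor}) of dimension at most $4$, while you extract the bound $\phi(n)\le 4$ directly from Lemma~\ref{eigenvalues} applied on the $4$-dimensional space $V_\ell(A)$; these are two dressings of the same totient inequality, and your observation that rigidity restricts to subgroups is immediate from the element-wise definition. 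The genuine divergence is in the non-commutative $2$-Sylow case: the paper invokes Lemma~\ref{Q_rig}, i.e.\ the computation $\QQ[Q_{2^r}]^\rig\cong\HH_\infty(\QQ(\zeta_{2^{r-1}})^\real)$, and bounds $r$ by embedding this algebra into $\End^\circ(A)$, whereas you reduce to the already-established cyclic bound via the cyclic subgroup $\langle a\rangle$ of order $2^{r-1}$ in $Q_{2^r}$. Your route is more elementary and self-contained (it never needs Lemma~\ref{Q_rig}); the paper's route costs nothing extra since Lemma~\ref{Q_rig} is needed later anyway, and it keeps the argument uniformly phrased in terms of the rigid group algebra. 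Both yield $2^r\le 16$, $p\in\{3,5\}$ with Sylow of order $p$ for odd $p$, and hence $|G|$ dividing $240$.
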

\begin{proof}
A cyclic subgroup of $G$ generates a commutative subalgebra of $\End^\circ(A)$ of dimension not greater than $4$; therefore, any cyclic subgroup is small.
Moreover, since for an odd prime $p$ the $p$-Sylow subgroup $G^{(p)}$ is cyclic,  
we have $p\leq 5$, and $G^{(p)}\cong C_p$.
We showed that the order of $G$ is $2^rm$, where $m$ divides $15$.
If the $2$-Sylow subgroup is commutative, then, as before, $r\leq 3$.
Otherwise, the subalgebra generated by the $2$-Sylow subgroup of $G$ contains $\QQ[Q_{2^r}]^\rig$ as a direct summand. According to Lemma~\ref{Q_rig}, $r\leq 4$.
\end{proof}

\section{Rigid and symplectic actions on abelian surfaces}\label{rigid_surf}
In this section we classify finite groups with a rigid and symplectic action on an abelian surface over $k$.
Let $r$ be a power of an odd prime $p$. Denote by $\ESL_2(\FF_r)$ the subgroup of
$\SL_2(\FF_{r^2})$ generated by $\SL_2(\FF_r)$ and an element given by the diagonal matrix 
\[\left (\begin{array}{c c}
x & 0\\
0 & x^{-1}\\	
\end{array}\right),\] where $x\in\FF_{r^2}\setminus \FF_r$, and $x^2$ generates $\FF_r^*$.

\begin{rem}
The group $\ESL_2(\FF_3)$ is the binary octahedral group.
\end{rem}

\begin{thm}[Dickson]\cite[Chapter 3, Theorem 6.17]{FG}\label{Dickson}
Let $G$ be a finite subgroup of $\SL_2(\kk)$. If the order of $G$ is relatively prime to $p$, then $G$ is isomorphic to one of the following groups:
\begin{enumerate}
	\item a cyclic group $C_n$;
	\item a binary dihedral group $Q_{2n}$;
	\item $\SL_2(\FF_3)$, $\ESL_2(\FF_3)$, or $\SL_2(\FF_5)$.
\end{enumerate}
 
Suppose that $p$ divides the order of $G$, and $Q$ is a Sylow $p$--subgroup of $G$. Then one of the following cases occurs:

\begin{enumerate}
\item $G/Q$ is cyclic, and $Q$ is commutative and annihilated by $p$;
	\item $G\cong\SL_2(\FF_q)$, where $q$ is a power of $p$;
	\item $p=2$, and $G$ is a dihedral group $D_{n}$ of order $2n$;
	\item $p=3$, and $G\cong\SL_2(\FF_5)$;
	\item $\ESL_2(\FF_q)$, where $q$ is a power of $p$.
\end{enumerate}
\end{thm}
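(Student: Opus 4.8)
The plan is to prove this classical theorem of Dickson geometrically, by analyzing the action of $G$ on the projective line $\PP^1=\PP(\kk^2)$ through the surjection $\SL_2(\kk)\to\mathrm{PGL}_2(\kk)$, which over the algebraically closed field $\kk$ has kernel the central subgroup $\{\pm 1\}$. Write $\Bar G$ for the image of $G$ in $\mathrm{PGL}_2(\kk)$.

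\emph{The tame case $p\nmid|G|$.} Here every element of $G$ is semisimple, and a nontrivial semisimple element of $\mathrm{PGL}_2(\kk)$ fixes exactly two points of $\PP^1$. First I would run the classical fixed-point (orbit-counting) argument on the set of such fixed points: partitioning $\PP^1$ into $\Bar G$-orbits and counting stabilizers exactly as in the classification of finite rotation groups, one finds that $\Bar G$ is cyclic $C_n$, dihedral $D_n$, or one of $A_4,S_4,A_5$. Next I would lift through the central extension $1\to\{\pm 1\}\to\SL_2(\kk)\to\mathrm{PGL}_2(\kk)\to 1$: the preimages of these five families are the cyclic groups, the binary dihedral groups, the binary tetrahedral group $\SL_2(\FF_3)$, the binary octahedral group $\ESL_2(\FF_3)$, and the binary icosahedral group $\SL_2(\FF_5)$, which is exactly the list in items (1)--(3). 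The only subtlety is that a given projective image can have either a split or a nonsplit preimage (for instance a cyclic $\Bar G$ lifts to a cyclic group of order $n$ or $2n$ according to whether $-1\in G$), so each case must be checked, and the three exceptional binary groups must be identified with the named matrix groups.

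\emph{The wild case $p\mid|G|$.} I would first pin down the Sylow $p$-subgroup $Q$. Every $p$-element of $\SL_2(\kk)$ has eigenvalues that are $p$-power roots of unity, hence equal to $1$, so it is unipotent and fixes a unique point of $\PP^1$. Taking a central element of $Q$ and using the uniqueness of its fixed point shows that $Q$ fixes a common point $P$, hence lies in the Borel subgroup $B$ stabilizing $P$; being unipotent then forces $Q$ into the unipotent radical $U\cong(\kk,+)$, so $Q$ is elementary abelian of exponent $p$. This already yields the assertion that $Q$ is commutative and annihilated by $p$ in case (1).

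Finally I would split the wild case according to how $G$ permutes the points of $\PP^1$ fixed by nontrivial $p$-elements. If $G$ fixes a single such point, then $G\subseteq B$, so $Q$ is normal in $G$ and $G/Q$ embeds in the diagonal torus, which is cyclic; this is case (1). Otherwise $G$ moves these points around, and analyzing the stabilizer of a fixed point together with a ``Weyl'' element interchanging two of them lets one reconstruct a subfield $\FF_q\subset\kk$ and recognize $G$, via its $(B,N)$-pair structure and Steinberg generators, as $\SL_2(\FF_q)$ or its index-$2$ overgroup $\ESL_2(\FF_q)$; the residual small configurations account for the dihedral groups when $p=2$ and for $\SL_2(\FF_5)$ reappearing when $p=3$. \textbf{The main obstacle is exactly this last identification:} reconstructing the ground field from the geometry of unipotent fixed points and certifying that the abstract group is the Chevalley group $\SL_2(\FF_q)$ (or $\ESL_2(\FF_q)$) rather than some other configuration is the delicate combinatorial heart of Dickson's argument, which is why the statement is most conveniently quoted from~\cite{FG}.
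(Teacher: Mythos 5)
The paper does not prove this statement at all: it is quoted from Suzuki \cite{FG}, so there is no internal argument to compare yours against; what follows judges your sketch on its own. The tame half of your plan is the standard argument and is essentially sound: prime-to-$p$ elements are semisimple, a nontrivial semisimple element of $\mathrm{PGL}_2(\kk)$ fixes exactly two points of $\PP^1$, point stabilizers are cyclic (a finite prime-to-$p$ subgroup of a Borel injects into the torus $\kk^*$ because the unipotent radical is a $p$-group), so the classical orbit count gives $\Bar G\in\{C_n,D_n,A_4,S_4,A_5\}$; and since for odd $p$ the unique involution of $\SL_2(\kk)$ is $-1$, any $G$ of even order is the full preimage of $\Bar G$, while a $G$ of odd order maps isomorphically onto a cyclic $\Bar G$. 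That recovers cases (1)--(3).

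The genuine gap is in the wild case. Your reduction is correct up to the dichotomy: a central element of $Q$ has a unique fixed point, so $Q$ lies in the unipotent radical of a Borel and is elementary abelian; and if the nontrivial $p$-elements of $G$ share a common fixed point, then $G$ lies in that Borel, $Q=G\cap U$ is normal, and $G/Q$ embeds in $\kk^*$, giving case (1). But everything after ``Otherwise'' --- reconstructing the subfield $\FF_q$ from the fixed-point configuration, certifying via a $(B,N)$-pair or Steinberg-generator argument that $G$ is $\SL_2(\FF_q)$ or $\ESL_2(\FF_q)$, and isolating the exceptional configurations $D_n$ for $p=2$ and $\SL_2(\FF_5)$ for $p=3$ --- is asserted rather than proved, as you yourself acknowledge. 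That recognition step is not a routine verification; it is the bulk of Dickson's theorem (a multi-page case analysis in Chapter 3, Section 6 of Suzuki), and your proposal offers no mechanism for it beyond naming the expected answer. As written, the wild half of the proposal bottoms out in exactly the citation the paper itself makes, so it is a proof plan rather than a proof; to stand on its own it would need that analysis supplied.
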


We now state the main classification result of the section.

\begin{thm}\label{rigid_actions}
Let $G$ be a finite group with a rigid and symplectic action on an abelian surface over $k$.
Then $G$ is one of the following groups.

\begin{enumerate}
\item $G$ is a small cyclic group of order $n$, and $\QQ[G]^\rig\cong \QQ(\zeta_n)$;
\item $G$ is a binary dihedral group $Q_{4n}$ of order $4n$, where $2\leq n\leq 6$ 

\begin{center}
\begin{longtable}{|c|c|c|c|c|c|}
\hline 
$G$  & $Q_8$ & $Q_{12}$ & $Q_{16}$                &  $Q_{20}$                  & $Q_{24}$ \\
\hline 
$\QQ[G]^\rig$ &  $\HH_2$  & $\HH_3$ & $\HH_\infty(\QQ(\sqrt{2}))$ & $\HH_\infty(\QQ(\sqrt{5}))$ & $\HH_\infty(\QQ(\sqrt{3}))$ \\
\hline
\end{longtable}
\end{center}

\item $G$ is $\SL_2(\FF_3)$, $\CSU_2(\FF_3)$, or $\SL_2(\FF_5)$

\begin{center}
\begin{longtable}{|c|c|c|c|}
\hline 
$G$  & $\SL_2(\FF_3)$             & $\CSU_2(\FF_3)$ & $\SL_2(\FF_5)$\\
\hline 
$\QQ[G]^\rig$ & $\HH_2$  &$\HH_\infty(\QQ(\sqrt{2}))$ & $\HH_\infty(\QQ(\sqrt{5}))$\\
\hline
\end{longtable}
\end{center}

\item $p=5$, and $G$ is $\CSU_2(\FF_5)$, or $C_5\rtimes C_8$

\begin{center}
\begin{longtable}{|c|c|c|c|}
\hline 
$G$  &             $\CSU_2(\FF_5)$ & $C_5\rtimes C_8$\\
\hline 
$\QQ[G]^\rig$ & $M(2,\HH_5)$  & $M(2,\HH_5)$  \\
\hline
\end{longtable}
\end{center}

\item $p=3$, and $G$ is $C_3\rtimes C_8$, $C_3\times Q_8$, or $C_3\rtimes Q_{16}$

\begin{center}
\begin{longtable}{|c|c|c|c|}
\hline 
$G$  & $C_3\rtimes C_8$ & $C_3\times Q_8$ &  $C_3\rtimes Q_{16}$\\
\hline 
$\QQ[G]^\rig$  & $M(2,\QQ[\zeta_4])$ & $M(2,\QQ[\zeta_3])$ &  $M(2,\HH_3)$  \\
\hline
\end{longtable}
\end{center}

\item $p=2$, and $G$ is $C_3\rtimes C_8$, or $C_3\times Q_8$

\begin{center}
\begin{longtable}{|c|c|c|}
\hline 
$G$  &  $C_3\rtimes C_8$ & $C_3\times Q_8$   \\
\hline 
$\QQ[G]^\rig$ & $M(2,\QQ[\zeta_4])$ & $M(2,\QQ[\zeta_3])$  \\
\hline
\end{longtable}
\end{center}
\end{enumerate}
\end{thm}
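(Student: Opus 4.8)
The plan is to reduce the classification to the structural constraints already established in Sections~2 and~3 together with Dickson's Theorem~\ref{Dickson}, and then to compute the rigid quotient $\QQ[G]^\rig$ for each surviving group. By Corollary~\ref{c240} we know at the outset that $|G|$ divides $240$ and that every cyclic subgroup of $G$ is small; in particular the only primes dividing $|G|$ are $2,3,5$. The symplectic hypothesis means that the induced action on $\wedge^2 T^*_0A$ is trivial, equivalently that the representation $G\to\GL(T^*_0A)=\GL_2(k)$ on the cotangent space lands in $\SL_2(k)\subset\SL_2(\kk)$. Writing $G_1$ for its kernel, Lemma~\ref{Image_in_SL} shows that $G_1$ is a $p$-group (hence $G_1=1$ whenever $p\nmid|G|$) and that $\Bar G:=G/G_1$ is a finite subgroup of $\SL_2(\kk)$ to which Dickson's theorem applies.

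First I would treat the tame case $p\nmid|G|$, where $G=\Bar G\hookrightarrow\SL_2(\kk)$ has order prime to $p$. The first list of Dickson gives that $G$ is cyclic, binary dihedral, $\SL_2(\FF_3)$, $\ESL_2(\FF_3)$, or $\SL_2(\FF_5)$. The smallness of cyclic subgroups restricts the cyclic groups to orders in $\{2,3,4,5,6,8,10,12\}$ and, since the generator $a$ of $Q_{4n}$ has order $2n$, forces $n\le 6$, i.e.\ $Q_8,\dots,Q_{24}$. For each group I would compute $\QQ[G]^\rig$: Lemma~\ref{eigenvalues} handles the cyclic groups, Lemma~\ref{Q_rig} the binary dihedral ones, and a short character-theoretic computation (isolating the unique faithful representation without fixed points and identifying the Schur index and local invariants of its simple algebra) produces the entries for $\SL_2(\FF_3),\ESL_2(\FF_3),\SL_2(\FF_5)$. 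This yields cases (1), (2), (3).

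Next I would treat the wild case, where $p\in\{2,3,5\}$ divides $|G|$ and $G_1$ may be a nontrivial normal $p$-subgroup. The new phenomenon is that an element of order $p$, which by rigidity acts on $V_\ell(A)$ and on $M(A)$ with eigenvalues the primitive $p$-th roots of unity (Lemma~\ref{eigenvalues}), reduces to a \emph{unipotent} transformation of $T^*_0A$, so it may act nontrivially on the cotangent space even though its order equals $p$. Applying the second list of Dickson to $\Bar G=G/G_1$ and combining it with the bound $|G|\mid240$, the fact that $G_1$ is a normal $p$-group, and the smallness of cyclic subgroups, I would reconstruct the admissible extensions $1\to G_1\to G\to\Bar G\to1$, obtaining for $p=5$ the groups $\ESL_2(\FF_5)$ and $C_5\rtimes C_8$, for $p=3$ the groups $C_3\rtimes C_8$, $C_3\times Q_8$, $C_3\rtimes Q_{16}$, and for $p=2$ the groups $C_3\rtimes C_8$ and $C_3\times Q_8$; in each case $\QQ[G]^\rig$ is read off from the character table, the quaternionic factors being identified through their local invariants exactly as in Lemma~\ref{Q_rig}.

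Finally, to obtain the characteristic-dependent tables I would compare, for each candidate $G$, the computed algebra $\QQ[G]^\rig$ with the possible endomorphism algebras of abelian surfaces in characteristic $p$. By Corollary~\ref{main_cor} a rigid action on some $A$ exists if and only if $\QQ[G]^\rig$ admits a homomorphism into $\End^\circ(A)$, and $\End^\circ(A)$ is governed by Theorems~\ref{th_curv} and~\ref{th_surf}, with Theorem~\ref{SSIsogeny} and Corollary~\ref{GisSLsubgroup} in the supersingular case where $\End^\circ(A)\subseteq M(2,\HH_p)$; in particular $\dim_\QQ\QQ[G]^\rig\le16$ since $\QQ[G]^\rig$ acts faithfully on the four-dimensional space $V_\ell(A)$. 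Matching the quaternionic factors via their local invariants — using Corollary~\ref{Hp_hom} to decide when $\HH_p$ embeds into an algebra of type $\HH_\infty(K)$ or $M(2,L)$ — shows which candidates survive for each $p$ and reproduces cases (4)--(6). The main obstacle I anticipate is precisely this wild regime: disentangling the extensions $1\to G_1\to G\to\Bar G\to1$ with $G_1\neq1$, verifying that exactly the listed extensions support a rigid symplectic action, and then pinning down the local invariants of the resulting $M(2,\HH_p)$- and $M(2,\QQ(\zeta_n))$-type algebras finely enough to match them against $\End^\circ(A)$ in the prescribed characteristic.
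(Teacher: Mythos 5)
Your core reduction coincides with the paper's proof: Corollary~\ref{c240} bounds $|G|$ and its cyclic subgroups, the symplectic hypothesis places the cotangent representation in $\SL_2(k)$ with kernel $G_1$ a normal $p$-group by Lemma~\ref{Image_in_SL}, Dickson's Theorem~\ref{Dickson} classifies the image, the wild primes $p\in\{2,3,5\}$ are handled by enumerating extensions $1\to G_1\to G\to G/G_1\to 1$ subject to smallness and the existence of fixed-point-free representations, and $\QQ[G]^\rig$ is computed group by group (the paper does this via Lemma~\ref{Q_rig} and WEDDERGA, and needs Lemmas~\ref{L2} and~\ref{L3} to carry out the extension analysis that your sketch leaves implicit). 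One real difference: the paper proves in Lemma~\ref{L1} that for $p\geq 5$ the cotangent representation is faithful, the point $p=5$ requiring a Dieudonn\'e-module argument (the action of $1-g$ on $M(A)/5M(A)$ has one-dimensional kernel, hence is nonzero on the two-dimensional quotient $T_0^*(A)$); this kills the possibility $G_1\cong C_5$ at once. Your tame/wild split by $p\nmid|G|$ versus $p\mid|G|$ does not give you this for free, so at $p=5$ you must also run the extension analysis with $G_1\cong C_5$; it does terminate correctly (the survivors are $C_5$, $C_{10}$, $Q_{20}$, $C_5\rtimes C_8$, all already in the list), but that verification is work your sketch does not acknowledge. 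Also note that when $G_1\neq 1$ the quotient $G/G_1$ has order prime to $p$, so it is Dickson's \emph{first} list, not the second, that applies to it.

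The genuine error is your final paragraph. The characteristic conditions in cases (4)--(6) cannot be obtained by matching $\QQ[G]^\rig$ against endomorphism algebras of abelian surfaces: that criterion (Corollary~\ref{main_cor}) governs the existence of a \emph{rigid} action and is blind to the \emph{symplectic} condition. Concretely, $\QQ[C_3\rtimes C_8]^\rig\cong M(2,\QQ[\zeta_4])$ admits a homomorphism to $M(2,\HH_p)$ whenever $p$ is not split in $\QQ(\zeta_4)$, i.e.\ for every $p\equiv 3\bmod 4$ (this is exactly the duality-of-centralizers argument behind Corollary~\ref{Hp_hom}), and similarly $M(2,\QQ[\zeta_3])$ embeds whenever $p\not\equiv 1\bmod 3$; so algebra matching would admit $C_3\rtimes C_8$ and $C_3\times Q_8$ acting rigidly on supersingular surfaces in characteristic $7$, $11$, $17,\dots$ — and indeed such rigid actions exist by Corollary~\ref{main_cor} — whereas the theorem asserts these groups occur \emph{symplectically} only for $p\in\{2,3\}$. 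What excludes them for $p>3$ is group theory, not arithmetic: by Lemma~\ref{L1} the group then embeds into $\SL_2(\kk)$, and Dickson's prime-to-$p$ list contains neither of them. In other words, the conditions $p=5,3,2$ in cases (4)--(6) come precisely from the cotangent-space/Dickson analysis of your wild-case paragraph; your last paragraph is redundant where it is correct and incorrect as a method, and should be removed rather than relied upon.
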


\begin{rem}
In cases $(4),(5)$, and $(6)$ the algebra $\QQ[G]^\rig$ is a subalgebra of $\Mat_2(\HH_p)$.
According to Theorems~\ref{trivial_H} and~\ref{main_cor},
there is a rigid action of the group $G$ on some supersingular abelian surface.
In Section~\ref{main_sec} we study rigid and symplectic actions in other cases.
\end{rem}

\begin{lemma}\label{L1} 
If $p\geq 5$, then $G$ is a subgroup of $\SL_2(k)$.
\end{lemma}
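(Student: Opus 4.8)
\emph{The plan} is to embed $G$ into $\SL_2(k)$ via its action on the two-dimensional cotangent space $T^*_0(A)\cong M(A)/FM(A)$. The $G$-action on $A$ induces the representation $\rho\colon G\to\GL_2(k)$ of Section~\ref{GonAV}, whose kernel is exactly $G_1$. First I would record that the symplectic hypothesis forces $\det\rho\equiv 1$: since $T^*_0(A)\cong H^0(A,\Omega^1_A)$, the induced action on $\wedge^2 T^*_0(A)\cong H^0(A,\Omega^2_A)$ is precisely $\det\rho$, and this action is trivial because $G$ acts symplectically. Hence $\rho(G)\subseteq\SL_2(k)$, and everything reduces to showing that $\rho$ is injective, i.e.\ that $G_1=1$.

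To control $G_1$, I would first note that by Lemma~\ref{Image_in_SL} every nontrivial element of $G_1$ has order a power of $p$, so $G_1$ is a $p$-group. By Corollary~\ref{c240} the order of $G$ divides $240=2^4\cdot 3\cdot 5$. Therefore, when $p\geq 7$ we have $p\nmid|G|$, the $p$-group $G_1$ is trivial for order reasons, and $\rho$ embeds $G$ into $\SL_2(k)$, as desired.

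The remaining, and main, case is $p=5$. By Burnside's theorem the $5$-Sylow subgroup of $G$ is cyclic of order $5$, so $G_1$ is either trivial or generated by a single element $g$ of order $5$; it suffices to prove that such a $g$ acts nontrivially on $T^*_0(A)$. Restricting the rigid action to $\langle g\rangle\cong C_5$ and applying Lemma~\ref{eigenvalues}, the four primitive fifth roots of unity are exactly the eigenvalues of $g$ on the rank-four module $M(A)$, so $g$ has characteristic polynomial $\Phi_5$ and $\ZZ_5[g]\cong\ZZ_5[\zeta_5]$, the ring of integers of the totally ramified extension $\QQ_5(\zeta_5)/\QQ_5$, with uniformizer $\pi=g-1$ satisfying $\pi^4=pu$ for a unit $u$. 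Since $F$ commutes with $g$ and $g$ is invertible, the $W(k)$-submodule $FM(A)$ is stable under $\OO:=W(k)[\zeta_5]$, a discrete valuation ring with uniformizer $\pi$ and residue field $k$. As $M(A)$ is $\OO$-torsion-free of the correct rank, it is free of rank one over $\OO$, whence $M(A)/\pi M(A)\cong k$ and $FM(A)=\pi^{\,j}M(A)$ for some $j\geq 0$; comparing $k$-dimensions forces $j=2$, because $\dim_k M(A)/FM(A)=2$. Consequently $g-1=\pi$ acts as a nonzero nilpotent operator on $M(A)/FM(A)\cong\OO/\pi^2\OO$, so $g\notin G_1$. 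Thus $G_1=1$ and $\rho$ is injective.

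I expect the $p=5$ step to be the crux. The bound from Lemma~\ref{Image_in_SL} does \emph{not} exclude an element of order $5$ from $G_1$ when $p=5$, so one must genuinely inspect the Dieudonn\'e module and exploit that $\zeta_5-1$ is a uniformizer of a totally ramified prime. The delicate point to verify is that $FM(A)$ is an $\OO$-submodule of the rank-one module $M(A)$: this is what pins down $FM(A)=\pi^2M(A)$ and shows that $g$ acts as a nontrivial unipotent element, rather than the identity, on the cotangent space.
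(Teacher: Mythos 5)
Your proof is correct and follows essentially the same route as the paper: reduce (via Lemma~\ref{Image_in_SL} and the order bound of Corollary~\ref{c240}) to an element $g$ of order $5$ when $p=5$, then use that $M(A)$ is free of rank one over the DVR $W(k)\otimes\ZZ_5[\zeta_5]$ with $g-1$ a uniformizer to see that $g$ cannot act trivially on $T^*_0(A)$. The only cosmetic difference is the endgame: the paper argues by a dimension count on $M(A)/5M(A)$ (where $1-g$ is nilpotent with one-dimensional kernel, while the cotangent space is a two-dimensional quotient), whereas you identify $FM(A)=\pi^2M(A)$ exactly, using that $FM(A)$ is an $\OO$-submodule; both steps are valid.
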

\begin{proof}
According to Lemma~\ref{Image_in_SL}, any element of order prime to $p$ has a non-trivial image in $\SL_2(k)$. 
We have to prove the lemma for an element $g$ of order $5$ and $p=5$. The action of $g$ on the Diedonn\'e module $M(A)$, induces an action of $\ZZ[\zeta_5]$ on $M(A)$.
Note that $\QQ_5[\zeta_5]$ is totally ramified over $\QQ_5$; thus $M(A)$ is a free module of rank $1$ over the DVR $W(k)\otimes\ZZ_5[\zeta_5]$, and $1-g$ acts as a uniformiser.
It follows that the action of $1-g$ on $M(A)/5M(A)$ is nilpotent with one-dimensional kernel.
The cotangent space $T_0^*(A)$ is a two-dimensional quotient of $M(A)/5M(A)$; therefore, the action of $g$ on $T_0^*(A)$ is non-trivial.
\end{proof}

We now recall some basic facts on group extensions~\cite{FG}. Let $C$ be a normal subgroup of a group $E$.
 We say that $E$ is an extension of $C$ by $S=E/C$. If $C$ is abelian, then there is a natural action of $S$ on $C$, and equivalence classes of extensions with such an action correspond to elements of $H^2(S,C)$. In particular, if $H^2(S,C)$ is trivial, then
$E\cong C\rtimes S$ is a semidirect product.

\begin{lemma}\label{L2} 
Assume that $G_1$ is abelian and that $\Bar G=G/G_1$ has an abelian normal subgroup $H$ of prime to $p$ order.
Then $H^2(\Bar G, G_1)=H^2(\Bar G/H, G_1^H)$. 
\end{lemma}
\begin{proof}
Since $G_1$ is a $p$-group, $H^1(H,G_1)=H^2(H,G_1)=0$.
The lemma follows from the inflation-restriction exact sequence:
\[ 0\to H^2(\Bar G/H, G_1^H)\to H^2(\Bar G, G_1)\to H^2(H, G_1).\qedhere\]
\end{proof}

\begin{lemma}\label{L3}
The following groups do not have rigid representations: $D_3$, $D_5$, $C_5\rtimes C_4$, $D_3\times Q_8$, $C_4.A_4$, and $C_4.A_5$.
\end{lemma}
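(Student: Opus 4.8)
The plan is to use three necessary conditions on a finite group $G$ that admits a representation without fixed points (equivalently, a rigid representation). First, by Remark~\ref{Rem1} every element of order $2$ acts as $-1$, so $G$ contains at most one involution. Second, the restriction of a fixed-point-free representation to any subgroup is again fixed-point-free, so every subgroup of $G$ must itself admit a rigid representation. Third, by Burnside's theorem (stated above) every Sylow subgroup of $G$ is cyclic or generalized quaternion. I would show that each of the six groups violates one of these conditions.

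The dihedral groups $D_3$ and $D_5$ (of orders $6$ and $10$ in the paper's convention) contain, respectively, three and five involutions, so they are excluded directly by Remark~\ref{Rem1}. The group $C_5\rtimes C_4$ is understood to be the Frobenius group of order $20$, in which $C_4$ acts faithfully on $C_5$; the unique involution $t^2$ of the complement then acts by inversion, so $\langle C_5, t^2\rangle\cong D_5$ is a subgroup, and the restriction principle rules out rigidity. (Note that the non-faithful semidirect product is $Q_{20}$, which is rigid, so the faithful action is the intended one.) Likewise $D_3\times Q_8$ contains $D_3$ as a subgroup and is excluded in the same way.

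For $C_4.A_4$ and $C_4.A_5$ I would argue through the Sylow $2$-subgroup. Since $\Aut(C_4)\cong C_2$, and neither $A_4$ (whose abelianization is $C_3$) nor the simple group $A_5$ admits a nontrivial homomorphism to $C_2$, the action of the quotient on the normal subgroup $C_4$ is trivial; hence in both cases the extension is central and $C_4$ lies in $Z(G)$. The Sylow $2$-subgroup of $A_4$ and of $A_5$ is the Klein four-group $V_4$, so the preimage $P$ of $V_4$ is a $2$-group of order $16$ with $C_4\subset Z(P)$ and $P/C_4\cong V_4$. Such a $P$ is not cyclic, because its quotient $V_4$ is not cyclic, and it is not generalized quaternion, because a generalized quaternion group $Q_{16}$ has center $C_2$, which cannot contain the central $C_4$. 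Thus $P$ is neither cyclic nor generalized quaternion, and Burnside's theorem shows that $C_4.A_4$ and $C_4.A_5$ have no rigid representation.

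The involution counts and subgroup inclusions in the first two paragraphs are routine. The only step requiring care is the treatment of $C_4.A_4$ and $C_4.A_5$: the key observations are that the extension is forced to be central (from $\Aut(C_4)\cong C_2$ together with the structure of $A_4$ and $A_5$) and that the generalized quaternion case is ruled out by comparing the center of $P$ with that of $Q_{16}$. This is the part I expect to be the main obstacle, though it reduces to elementary facts about $\Aut(C_4)$ and the center of $Q_{16}$.
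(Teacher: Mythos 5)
Your proof is correct, but it takes a genuinely different route from the paper's, most notably for the two extension groups. The paper handles $D_3$, $D_5$, and $C_5\rtimes C_4$ character-theoretically: their faithful irreducible representations are two-dimensional with trace $0$ on an involution, whereas in a representation without fixed points an involution acts as $-1$ and so has trace $-2$; your involution-counting argument (three involutions in $D_3$, five in $D_5$, and the $D_5$ subgroup inside the Frobenius group $C_5\rtimes C_4$) reaches the same conclusion via Remark~\ref{Rem1} and the restriction principle, without needing character tables. The larger divergence is at $C_4.A_4$ and $C_4.A_5$: the paper dispatches these (together with $D_3\times Q_8$) by exhibiting a subgroup $C_2^2$ and citing Remark~\ref{Rem1}, while you show the extension is forced to be central (via $\Aut(C_4)\cong C_2$ and the abelianizations of $A_4$, $A_5$), identify the Sylow $2$-subgroup $P$ of order $16$ as non-cyclic with $C_4\subset Z(P)$, rule out $Q_{16}$ by comparing centers, and invoke Burnside's theorem. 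Both are valid; the paper's route is shorter once one actually produces commuting involutions inside the central extension (which itself requires a small computation with the central product structure), whereas your route needs no explicit elements at all, only the forced centrality and the fact that $Z(Q_{16})\cong C_2$. The two arguments are in fact two faces of the same obstruction, since a $2$-group has at most one involution exactly when it is cyclic or generalized quaternion, so a $C_2^2$ inside the Sylow subgroup is precisely what Burnside's theorem forbids; your version makes this usable without locating the $C_2^2$ explicitly.
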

\begin{proof}
The groups $D_3$, $D_5$, and $C_5\rtimes C_4$ have only one faithful representation, but the trace of an element of order $2$ is zero; therefore, the representations are not rigid. The groups $D_3\times Q_8$, $C_4.A_4$, and $C_4.A_5$ contain a subgroup $C_2^2$, and do not have rigid representations according to Remark~\ref{Rem1}.
\end{proof}

\begin{proof}[Proof of the Theorem~\ref{rigid_actions}]
According to Lemma~\ref{c240}, the order of $G$ divides $240$, and any cyclic subgroup is small.
If $p> 5$, Lemma~\ref{L1} and the Dickson Theorem~\ref{Dickson} give first three cases. 
If $p=5$, then according to Lemma~\ref{L1} and Dickson Theorem~\ref{Dickson} we get that $G$ could be a group from cases $(1-3)$, $\CSU_2(\FF_5)$, or an extension of $C_5$ by a cyclic group of order dividing $48$. It is straightforward to check that such extensions without non-small cyclic subgroups are either cyclic or belong to the following list: $D_5=C_5\rtimes C_2$, $Q_{20}$, $C_5\rtimes C_4$, and $C_5\rtimes C_8$.  The groups $D_5$, and $C_5\rtimes C_4$ have no rigid representations according to Lemma~\ref{L3}.

Let $p=3$. Let $G_1$ be a normal $p$-subgroup of $G$ from Section~\ref{GonAV}; in particular, $G/G_1$ is isomorphic to a subgroup of $\SL_2(k)$.
We have two cases: either $G_1$ is trivial, or $G_1\cong C_3$. In the first case it follows from the Dickson Theorem that $G$ is either a group from cases $(1-3)$,
or is an extension of $C_3$ by a cyclic group of order dividing $80$. 
Since such an extension does not contain $C_15$, its order divides $48$.
As before, we get the following list of non-cyclic extensions:  $D_3=C_3\rtimes C_2$, $Q_{12}=C_3\rtimes C_4$, and $C_3\rtimes C_8$. 
The group $D_3$ has no rigid representations according to Lemma~\ref{L3}.

In the second case, we have to consider extensions of $G_1\cong C_3$ by non-cyclic subgroups of $\SL_2(k)$ of order dividing $80$, i.e, $Q_8$, $Q_{16}$, $Q_{20}$, $Q_{40}$, or $Q_{80}$.
Since $3$ is coprime to the orders of these groups, $H^2(G/G_1,G_1)$ is trivial; therefore, any extension is a semidirect product. The groups without non-small cyclic subgroups are 
$\SL_2(\FF_3)\cong C_3\rtimes Q_{8}$, $C_3\times Q_{8}$, and $C_3\rtimes Q_{16}$.

Let $p=2$. The quotient group $\Bar G=G/G_1$ is a subgroup of $\SL_2(k)$ without non-small cyclic subgroups; thus we have the following possibilities for $\Bar G$: 
\[C_2,C_3,C_5, C_6, C_{10}, C_2\times C_2,C_6\times C_2,C_{10}\times C_2, S_3=D_3=\SL_2(\FF_2), D_5, A_4=C_2^2\rtimes C_3, A_5=\SL_2(\FF_4).\]
Assume first that $G_1$ is cyclic. If $\Bar G$ is abelian, $D_3$, or $D_5$, then 
there is a normal subgroup $H$ of odd order such that $\Bar G/H$ is a $2$-group. 
According to Lemma~\ref{L2}, any extension of $G_1$ by $\Bar G$ is uniquely determined by the
 $2$-Sylow subgroup $Q$ of this extension.
 Since $Q$ is either cyclic or isomorphic to $Q_8$ or $Q_{16}$ we get the following groups without non-small cyclic subgroups:
\begin{enumerate}
\item $C_4$, $C_8$, or $Q_{16}$, if $\Bar G=C_2$;
\item $C_6$, or $C_{12}$, if $\Bar G=C_3$;
\item $C_{10}$, if $\Bar G=C_5$;
\item $C_{12}$, if $\Bar G=C_6$;
\item no groups, if $\Bar G=C_{10}$;
\item $Q_8$, or $Q_{16}$, if $\Bar G=C_2^2$;
\item $C_3\times Q_8$, if $\Bar G=C_6\times C_2$;
\item no groups, if $\Bar G=C_{10}\times C_2$;
\item $Q_{12}$, $Q_{24}$, or $C_3\rtimes C_8$, if $\Bar G=D_3$, 
\item $Q_{20}$, if $\Bar G=D_5$.
\end{enumerate}

If $\Bar G$ is $A_4$ or $A_5$, then $G_1$ is $C_2$ or $C_4$. 
The only non-trivial extensions of $A_4$ and $A_5$ by $C_2$ are $\SL_2(\FF_3)$ and $\SL_2(\FF_5)$, and by $C_4$ are $C_4.A_4$ and $C_4.A_5$ respectfully.
The groups $C_4.A_4$ and $C_4.A_5$ has no rigid representations according to Lemma~\ref{L3}.

If $G_1=Q_{16}$, then $\Bar G$ is $C_3$ or $C_5$, and any such extension is trivial, because $\Aut(Q_{16})$ is a $2$-group; in this way we get two groups without non-small cyclic subgroups:
$C_3\times Q_{16}$, and $C_5\times Q_{16}$.
If $G_1=Q_{8}$, and $\Bar G$ is $C_3$ or $C_5$, then either extension is trivial, or $G=Q_8\rtimes C_3\cong\SL_2(\FF_3)$,
because $\Aut(Q_{8})\cong S_4$.

Finally, assume that $G_1$ is $Q_8$, and the $2$-Sylow subgroup of $G$ is $Q_{16}$.
If $\Bar G$ is cyclic of even order, then any homomorphism from $\Bar G$ to $\Aut(Q_{8})\cong S_4$ factors through $C_2$; therefore, either $G$ is $Q_{16}$, or contains a non-small cyclic subgroup.
If $\Bar G=C_2\times C_2$, then the order of the $2$-Sylow subgroup is greater than $16$.
If $\Bar G$ is $D_3$ or $D_5$, then any extension that contain $Q_{16}$ has a non-small cyclic subgroup.

We use the WEDDERGA package of GAP~\cite{WE} to compute $\QQ$-algebras for suitable representations without fixed points. 
\end{proof}

\section{Singularities of $A/G$}\label{SecSing}
In this section we study resolutions of singularities of the quotients $A/G$.
The next proposition is a generalization of the table from Katsura's paper~\cite{Ka} for cyclic groups and of results due to Fujiki~\cite{Fu} for $Q_8$, $Q_{12}$, and $\SL_2(\FF_3)$. 

\begin{prop}\label{Sing} 
Let $A$ be an abelian surface with a rigid and symplectic action of a group $G$ over $k$. 
Assume that $p$ does not divide the order of $G$. Then the singularities of the quotient surface $A/G$, and the rank of the Neron--Severi group of the minimal resolution of singularities $X$ are as follows.

\begin{center}
\begin{longtable}{|c|c|c|}
\hline 
$G$  & Singularities  &  $\rk\NS(X)$  \\
\hline 
$C_2$ & $16A_1$ & $\geq 17$ \\    
$C_3$ & $9A_2$ & $\geq 19$ \\    
$C_4$  &  $4A_3+6A_1$ &  $\geq 19$ \\
$C_5$ & $5A_4$  & $22$ \\
$C_6$ & $A_5+4A_2+5A_1$ & $\geq 19$ \\    
$C_8$ & $2A_7+A_3+3A_1$ & $22$ \\    
$C_{10}$ & $A_9+2A_4+3A_1$ & $22$\\
$C_{12}$ & $A_{11}+A_3+2A_2+2A_1$ & $22$ \\
$Q_8$  & $2D_4+3A_3+2A_1$  & $\geq 20$      \\  
         &  $4D_4+3A_1$ &  $\geq 20$        \\  
$Q_{12}$ & $D_5+3A_3+2A_2+A_1$ & $\geq 20$ \\
$Q_{16}$ & $2D_6+D_4+A_3+A_1$ & $22$ \\
$Q_{20}$ & $D_7+A_4+3A_3$ & $22$ \\
$Q_{24}$ & $D_8+D_4+2A_3+A_2$ & $22$ \\
$\SL_2(\FF_3)$  &  $E_6+D_4+4A_2+A_1$  &  $\geq 20$      \\ 
$\ESL_2(\FF_3)$  &  $E_7+D_6+A_3+2A_2$  &  $22$      \\ 
$\SL_2(\FF_5)$  &  $E_8+D_4+A_4+2A_2$  &  $ 22$      \\ 
\hline
\end{longtable}
\end{center}
\end{prop}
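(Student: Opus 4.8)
The plan is to compute everything over $\kk$ and to organise the singular points of $A/G$ according to the stabilisers of the corresponding points of $\Bar A$. Since $p\nmid|G|$ the action is tame and, by rigidity, every nontrivial $g\in G$ has a finite fixed locus $\mathrm{Fix}(g)=\ker(1-g)$, which is an \'etale subgroup scheme of order $\deg(1-g)$. Because the action is symplectic, the stabiliser $G_x$ of a fixed point $x$ embeds into $\SL(T_x\Bar A)\cong\SL_2(\kk)$; so by tameness the singularity of $A/G$ at the image of $x$ is formally isomorphic to the Kleinian singularity $\AA^2/G_x$, and by the Dickson classification (Theorem~\ref{Dickson}) together with the classical McKay correspondence (as used by Katsura~\cite{Ka}) its minimal resolution is the rational double point whose Dynkin type is read off from $G_x$: a cyclic $C_m$ gives $A_{m-1}$, a binary dihedral $Q_{4m}$ gives $D_{m+2}$, and $\SL_2(\FF_3),\ESL_2(\FF_3),\SL_2(\FF_5)$ give $E_6,E_7,E_8$. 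This fixes the \emph{types} in the table; it remains to count them.

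For the counting I would first record $|\mathrm{Fix}(g)|$ for each $g$ of order $r$. By rigidity and the symplectic condition the characteristic polynomial of $g$ on $V_\ell(\Bar A)$ is $\Phi_r^{4/\phi(r)}$, whence $|\mathrm{Fix}(g)|=|\det(1-g)|=\Phi_r(1)^{4/\phi(r)}$, giving the values $16,9,4,5,1,2,1,1$ for $r=2,3,4,5,6,8,10,12$. I would then run through the subgroup lattice of $G$ by inclusion--exclusion: a point lying in $\mathrm{Fix}(H)$ but in no larger $\mathrm{Fix}(H')$ has stabiliser exactly $H$, and such points fall into $G$-orbits of size $|G|/|H|$, using that $\mathrm{Fix}(H)$ is preserved by $N_G(H)$. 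Each orbit contributes one singular point of the type attached to $H$. One checks that this bookkeeping reproduces the cyclic rows; for instance for $C_{12}$ the point $0$ has stabiliser $C_{12}$ and gives $A_{11}$, the three further order-$4$ fixed points give one $A_3$, the eight further order-$3$ fixed points give $2A_2$, and the remaining twelve $2$-torsion points give $2A_1$.

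The genuinely delicate input is the size of $\mathrm{Fix}(H)$ when $H$ is noncyclic, since this is governed by the module structure of $\Bar A[2]$ rather than by a determinant. For $H=Q_8$ the element $-1$ acts trivially on $\Bar A[2]$, so $\mathrm{Fix}(Q_8)$ is the common fixed space of the induced action of $Q_8/\{\pm1\}\cong C_2\times C_2$ on the four-dimensional $\FF_2$-space $\Bar A[2]$, and this space can have dimension $1$ or $2$; the two possibilities give $|\mathrm{Fix}(Q_8)|=2$ or $4$ and hence the two rows $2D_4+3A_3+2A_1$ and $4D_4+3A_1$. I expect this $\FF_2[C_2\times C_2]$-module analysis to be the main obstacle. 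The same analysis pins down $\mathrm{Fix}(Q_8)$ inside the larger groups: in $\SL_2(\FF_3)$, say, an order-$6$ element fixes only $0$, so $\mathrm{Fix}(\SL_2(\FF_3))=\{0\}$; were $|\mathrm{Fix}(Q_8)|=2$, the normalising $C_3$ would fix the unique nonzero point of $\mathrm{Fix}(Q_8)$ and force it into $\mathrm{Fix}(\SL_2(\FF_3))=\{0\}$, a contradiction, so $|\mathrm{Fix}(Q_8)|=4$ and the configuration $E_6+D_4+4A_2+A_1$ is forced. An analogous argument fixes the $D_m$-loci inside $Q_{16},Q_{20},Q_{24}$ and the remaining exceptional groups, and a global check that the orbits exhaust every fixed point confirms each row.

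Finally, for the N\'eron--Severi rank I would use that the minimal resolution $X\to A/G$ replaces a rational double point of type $T$ by $\rk T$ mutually independent $(-2)$-curves, which span a negative-definite sublattice orthogonal to the pullback of an ample class; hence $\rk\NS(\XX)\ge\bigl(\sum_T\rk T\bigr)+1$, which yields the stated lower bounds $17,19,19,19,20,20,20$. For every group containing a cyclic subgroup of order $5$, $8$, or $12$, Theorem~\ref{main1} forces $A$ to be supersingular, so $A/G$ and therefore $X$ are supersingular; then $X$ is a supersingular $K3$ surface, hence Shioda supersingular by~\cite{Ar}, and $\rk\NS(\XX)=22$ exactly. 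In these cases the exceptional curves already number $20$, which is consistent with the value $22$.
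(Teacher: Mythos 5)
Your computation of the singularities is essentially the paper's own argument in different clothing: the determinant formula $|\mathrm{Fix}(g)|=|\det(1-g)|=\Phi_r(1)^{4/\phi(r)}$ gives exactly the counts that the paper extracts from the $\ZZ_\ell[\zeta_n]$-module structure of $T_\ell(A)$ (Lemma~\ref{sing_lemma}), and your orbit--stabiliser bookkeeping, the two-case dichotomy for $Q_8$, and the normal-subgroup argument for $\SL_2(\FF_3)$ all match the paper's proof. Two points are thinner than they should be. First, to know that $\mathrm{Fix}(Q_8)$ has order $2$ or $4$ you must rule out order $1$: this needs the observation that a $2$-group acting linearly on the nonzero $\FF_2$-vector space $\Bar{A}[2]$ has a nonzero fixed vector (or the paper's parity argument for the action of $G/C_4$ on the four points of $\mathrm{Fix}(C_4)$); you assert the dichotomy without this. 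Second, ``an analogous argument'' does not dispose of $Q_{16}$, $Q_{24}$, $\ESL_2(\FF_3)$ and $\SL_2(\FF_5)$: each of these requires specific input (which of the two subgroups $Q_8\subset Q_{16}$ has four fixed points, the number of Sylow subgroups, how quotient groups act on fixed sets), which is precisely what the paper supplies case by case. Since the method you demonstrate on $Q_8$ and $\SL_2(\FF_3)$ does extend, I regard this as incompleteness rather than error.

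The genuine flaw is in your last paragraph, where you justify the entries $\rk\NS(X)=22$ by invoking Theorem~\ref{main1} to conclude that $A$ is supersingular. That is circular: Theorem~\ref{main1} is proved using Corollary~\ref{Katsura}, whose proof rests on Proposition~\ref{Sing} --- in fact precisely on the rank column you are trying to establish. The logical order in the paper is the opposite of yours: it is the fact that these quotients already carry $20$ exceptional classes, hence $\rk\NS(\XX)\geq 21$, which is used to \emph{prove} that $A$ must be supersingular, not the other way around. The non-circular argument is: in each ``$22$'' row the exceptional curves together with an ample class give $\rk\NS(\XX)\geq 21$, and a K3 surface cannot have Picard number exactly $21$; indeed by Artin~\cite{Ar} one has $\rho\leq 22-2h$ when the formal Brauer group has finite height $h$, so $\rho\geq 21$ forces $h=\infty$, and a supersingular K3 surface with $\rho\geq 5$ admits an elliptic fibration and therefore has $\rho=22$. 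Thus the value $22$ comes out of the singularity configuration itself, and supersingularity of $A$ is a consequence (Corollary~\ref{Katsura}), not an admissible input.
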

\begin{lemma}\label{sing_lemma}
Let $G$ be a finite group of order $n$ with a rigid action on an abelian variety $A$.
Denote by $N(H)$ the number of points with the stabilizer isomorphic to $H$. 
\begin{enumerate}
        \item If $x$ is a fixed point of $G$, then there exists a prime $\ell$ 
        such that $x\in A[\ell](\kk)$, and $n=\ell^r$ is a power of $\ell$.
        \item If $G$ is cyclic and $n=\ell^r$, then the set of fixed points of $G$ 
        is a subgroup of $A[\ell](\kk)$ of order $\ell^{r_n}$, where \[r_n=\frac{2\dim A}{(\ell-1)\ell^{r-1}}\].
        \item If $H$ is a subgroup of $G$ such that $\ell$ does not divide the order of $H$, then the action of $H$ on $A[\ell](\kk)$ is free.
        \item Let $G^{(\ell)}$ be the $\ell$-th Sylow subgroup of $G$. Suppose that there are $s_\ell$ congujacy classes of $\ell$-th Sylow subgroups. If $N_\ell$ is the number of points with stabilizer equal to $G^{(\ell)}$, then $N(G^{(\ell)})=s_\ell N_\ell$.
         \item Let $G=C_4$, and let $A$ be a surface. Then $N(C_4)=4$, and there are $6$ orbits of length $2$.
       \end{enumerate}
\end{lemma}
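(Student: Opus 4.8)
The common thread is to analyze the endomorphism $g-1$ for each $g\in G$ by combining rigidity (which pins down the eigenvalues of $g$) with the elementary values of cyclotomic polynomials at $1$. The plan is as follows. For a nonidentity $g\in G$ of order $r$, rigidity forces the eigenvalues of $g$ on $V_\ell(A)$ to be exactly the primitive $r$-th roots of unity; since $g$ has finite order it is semisimple, its minimal polynomial is the irreducible $\Phi_r$, and hence $\Phi_r(g)=0$ in $\End^\circ(A)$ while the characteristic polynomial of $g$ is $\Phi_r(t)^m$ with $m=2\dim A/\phi(r)$. Evaluating at $t=1$ gives $\det(g-1)=\Phi_r(1)^m$, so $g-1$ is an isogeny of degree $\Phi_r(1)^m$. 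I will use the classical values $\Phi_r(1)=\ell$ when $r=\ell^k$ is a prime power and $\Phi_r(1)=1$ when $r$ has at least two distinct prime divisors.

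Parts (1)--(3) then follow directly. If a nonzero $x$ is fixed by $g$, then $x\in\ker(g-1)$, which is trivial unless $\Phi_r(1)\neq 1$; hence $r=\ell^k$ is a prime power, and evaluating $\Phi_r(g)=0$ on $x$ (on which $g$ acts trivially) gives $\Phi_r(1)x=\ell x=0$, so $x\in A[\ell](\kk)$. For part (1) I would apply this to every $g\in G$ fixing the common nonzero point $x$: each such $g$ has order a power of a single prime $\ell$, since two elements attached to distinct primes would force $x\in A[\ell_1]\cap A[\ell_2]=0$; by Cauchy's theorem $G$ is then an $\ell$-group, so $n=\ell^r$. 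Part (3) is the same computation read backwards: if $h\in H$ fixed a nonzero point of $A[\ell](\kk)$, then $\ord(h)$ would be a power of $\ell$, forcing $\ell\mid|H|$ against the hypothesis, so $H$ acts freely on the nonzero $\ell$-torsion points.

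For part (2) I would take $G=\langle g\rangle$ cyclic of order $n=\ell^r$. By Lemma~\ref{eigenvalues} the eigenvalues of $g$ are all the primitive $n$-th roots of unity, each with multiplicity $m=2\dim A/\phi(\ell^r)=2\dim A/((\ell-1)\ell^{r-1})=r_n$, whence $\deg(g-1)=\Phi_n(1)^{r_n}=\ell^{r_n}$. Since $\ell\neq p$ in the intended range of application (where $p\nmid|G|$), the kernel $\ker(g-1)$ is étale, hence a subgroup of $A(\kk)$ with exactly $\ell^{r_n}$ geometric points, and $\Phi_n(g)x=\ell x=0$ places it inside $A[\ell](\kk)$. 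For part (4), any subgroup isomorphic to $G^{(\ell)}$ is an $\ell$-group of maximal $\ell$-power order, hence an $\ell$-Sylow subgroup; these form a single conjugacy class, and conjugation by $\sigma$ carries the points with stabilizer $P$ bijectively onto those with stabilizer $\sigma P\sigma^{-1}$, preserving "stabilizer equal to". Summing over the $s_\ell$ conjugates of $G^{(\ell)}$, each contributing $N_\ell$ points, yields $N(G^{(\ell)})=s_\ell N_\ell$.

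Finally, part (5) is an explicit instance. With $G=C_4=\langle g\rangle$ and $\dim A=2$ we have $\ell=2$, $r=2$ and $r_n=4/((2-1)\cdot 2)=2$, so part (2) gives $N(C_4)=2^{2}=4$ (the whole group fixes these four points). The element $g^2$ has order $2$, so by Remark~\ref{Rem1} it acts as $-1$, and its fixed locus is $A[2](\kk)$, of order $2^{4}=16$. Since $\langle g^2\rangle$ is the unique subgroup of order $2$ in $C_4$, the $16-4=12$ points of $A[2]$ not fixed by $g$ have stabilizer exactly $\langle g^2\rangle\cong C_2$ and lie in orbits of length $|C_4|/|\langle g^2\rangle|=2$, giving $12/2=6$ orbits of length $2$. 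The main technical point throughout is the passage from the eigenvalue data to the order of $\ker(g-1)$ as a group scheme and to its number of $\kk$-points; this is where one must distinguish the prime $\ell$ from $p$, using that the relevant kernels are étale precisely when $\ell\neq p$, which holds in every case where the lemma is invoked.
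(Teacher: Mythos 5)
Your proof is correct, and its core is the same as the paper's: rigidity pins the eigenvalues of $g$ to primitive roots of unity, and the value $\Phi_{\ell^k}(1)=N_{\QQ(\zeta_{\ell^k})/\QQ}(\zeta_{\ell^k}-1)=\ell$ (versus $\Phi_r(1)=1$ for $r$ with two prime factors) forces any nonzero fixed point into $A[\ell](\kk)$; parts (1), (3), (4) and (5) then proceed exactly as in the paper (which dismisses (3) and (4) as ``easy'' and handles (5) by the same $16-4=12$ count). The one place where your mechanism genuinely differs is part (2): the paper observes that $T_\ell(A)$ is a free module of rank $r_n$ over the discrete valuation ring $\ZZ_\ell[\zeta_n]$, with $g$ acting as $\zeta_n$, so the fixed-point group is $T_\ell(A)/(g-1)T_\ell(A)\cong(\ZZ_\ell[\zeta_n]/(\zeta_n-1))^{r_n}\cong\FF_\ell^{r_n}$, which yields the order \emph{and} the elementary abelian group structure in one stroke; you instead compute $\deg(g-1)=\Phi_n(1)^{r_n}=\ell^{r_n}$ via the determinant on $V_\ell(A)$ and then invoke \'etaleness of $\ker(g-1)$ (its order being prime to $p$) to convert the degree into a point count. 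Both routes silently require $\ell\neq p$ — the paper because the Tate-module quotient only captures the fixed points away from $p$, you because of the \'etaleness step — and you deserve credit for flagging this hypothesis explicitly, since the lemma as stated does not impose it and it is exactly what the intended applications (where $p\nmid |G|$) supply. One small reading note: the paper's ``$s_\ell$ conjugacy classes of Sylow subgroups'' must be read as the number of Sylow $\ell$-subgroups (conjugates), as your argument via $x\mapsto\sigma x$ carrying stabilizer $P$ to $\sigma P\sigma^{-1}$ correctly assumes; this matches the paper's own use of the lemma (e.g.\ $N(C_4)=9=3\cdot 3$ for $Q_{12}$).
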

\begin{proof}
Let $g\in G$ be an element of order $d$. Since the action is rigid, the Tate module $T_\ell(A)$ is a free module over the regular local ring $\ZZ_\ell[\zeta_d]$, where $\zeta_d$ acts as $g$.
If $d=\ell$ is a prime and $x$ is a fixed point of $G$, then $(\zeta_\ell-1)x=0$.
Therefore, the norm \[N_{\QQ(\zeta_\ell)/\QQ}(\zeta_\ell-1)=\ell\] also annihilates $x$.
This proves $(1)$.

Let $G$ be cyclic of order $n=\ell^r$, and let $g\in G$ be a generator. 
Then $T_\ell(A)$ is a free module over $\ZZ_\ell[\zeta_n]$ of rank $r_n$.
The set of fixed points of $g$ is isomorphic to 
\[T_\ell(A)/(g-1)T_\ell(A)\cong (\ZZ_\ell[\zeta_n]/(\zeta_n-1)\ZZ_\ell[\zeta_n])^{r_n}\cong\FF_\ell^{r_n}.\]

We proved $(2)$. The parts $(3)$ and $(4)$ are easy.

We now prove $(5)$. Clearly, $A[2](\kk)$ is fixed by $C_2\subset C_4$; therefore, according to $(2)$, we have $N(C_4)=4$, and $12$ points form $6$ orbits of length $2$.
\end{proof}

\begin{proof}[Proof of Proposition~\ref{Sing}]
We use the notation of Lemma~\ref{sing_lemma}.
The table for cyclic groups can be easily computed using $(1), (2)$, and $(3)$ of Lemma~\ref{sing_lemma}.

Let $G=Q_8$, and let $H=C_4\subset G$ be a normal subgroup. 
According to Lemma~\ref{sing_lemma}.(1), we have to compute the orbits of $G$ on $A[2](\kk)$. 
The group $H$ has $4$ fixed points by Lemma~\ref{sing_lemma}.(5).
Since the length of any non-trivial orbit is even, and the origin is fixed by $G$, 
either $N(G)=4$, or $N(G)=2$.
Note that $Q_8$ contains $3$ cyclic subgroups of order $4$, and if $2$ such subgroups fix a point, then the point is fixed by $G$. In other words, either $N(C_4)=0$, or $N(C_4)=6$.
In the first case, we have $N(C_2)=12$, and in the second case $N(C_2)=8$. 

Let $G=Q_{16}$, and let $H=C_8\subset G$ be the only subgroup of order $8$. 
The action of $G/H$ on $A[2](\kk)/H$ is very simple, and we have $N(G)=N(C_8)=2$, and $N(Q_8)=2$. Moreover, since there are two conjugacy classes of $C_4$ in $G$, we have
$N(C_4)=4$, and $N(C_2)=8$.

From the previous calculations we can easily compute the orbits in many cases
using $(3)$ and $(4)$ of Lemma~\ref{sing_lemma} and the following information: 
\begin{itemize}
    \item $Q_{12}$ has $3$ cyclic $4$-Sylow subgroups, thus $N(G)=1$, $N(C_4)=9$, $N(C_3)=8$, and $N(C_2)=6$;
    \item $Q_{20}$ has $5$ cyclic $4$-Sylow subgroups, thus  $N(G)=1$, $N(C_5)=4$, and $N(C_4)=15$;
    \item $\SL_2(\FF_3)$ has $4$ cyclic $3$-Sylow subgroups, and a normal subgroup isomorphic to $Q_8$, thus $N(G)=1$, $N(Q_8)=3$, $N(C_3)=32$, and $N(C_2)=12$;
    \item $\SL_2(\FF_5)$ has $10$ cyclic $3$-Sylow subgroups, $6$ cyclic $5$-Sylow subgroups, and $5$ Sylow subgroups isomorphic to $Q_8$, thus $N(G)=1$, $N(Q_8)=15$, $N(C_5)=24$, and $N(C_3)=80$.
\end{itemize}

Let $G=Q_{24}$. The action of $C_2\cong G/Q_{12}$ on $A[2](\kk)/Q_{12}$ clearly fixes the image of the origin and the image of a point with the stabilizer $C_2$; therefore, $N(G)=1$, and $N(C_4)>1$.
On the other hand, the action of $C_2\cong G/C_{12}$ on $A[2](\kk)/C_{12}$ fixes the image of a point with the stabilizer $C_4$; therefore, $N(Q_8)=3$, and $N(C_4)=12$.

Finally, let $G=\ESL(\FF_3)$, and let $H=\SL_2(\FF_3)$. Since the action of $C_2\cong G/H$ on
$A[2](\kk)/H$ is trivial, $N(G)=1$, $N(Q_{16})=3$, and $N(C_4)=12$. 
\end{proof}

\begin{rem}
There is a misprint for the cyclic group of order $8$ in~\cite{Ka}.
\end{rem}


\begin{corollary}\label{Katsura} 
Let $A$ be an abelian surface over a perfect field $k$ of characteristic $p>2$. 
Assume that there exists a rigid and symplectic action on $A$ of a cyclic group $G$
of order $5,8$, or $12$. Then $A$ is supersingular.

If $p$ does not divide $n$, then $\FF_{p^2}\subset k$.
\end{corollary}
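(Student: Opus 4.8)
The plan is to use that for each $n\in\{5,8,12\}$ one has $\phi(n)=4=2\dim A$, so a rigid $C_n$-action equips $A$ with a full CM structure by $\QQ(\zeta_n)$ and thereby determines its isocrystal completely; the symplectic hypothesis then forces the Newton slopes to be balanced.

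First I would set up the CM data. By Lemma~\ref{eigenvalues} and Corollary~\ref{main_cor} the rigid action gives an embedding $\QQ(\zeta_n)\hookrightarrow\End^\circ(A)$, and since $[\QQ(\zeta_n):\QQ]=2\dim A$ the generator $g$ of $C_n$ acts on the Diedonn\'e module $M(A)$ (free of rank $4$ over $W(k)$) with characteristic polynomial $\Phi_n$. Hence the two eigenvalues of $g$ on the cotangent space $T^*_0(A)=M(A)/FM(A)$ are primitive $n$-th roots of unity; the symplectic condition says $g$ acts trivially on $\wedge^2T^*_0(A)$, i.e. their product is $1$, so they form a conjugate pair $\{\zeta_n^{a},\zeta_n^{-a}\}$ with $a\not\equiv-a\pmod n$. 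Equivalently, $\{a,-a\}\subset(\ZZ/n\ZZ)^*$ is stable under $c\mapsto-c$.

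Next comes the slope computation, which I carry out over $\kk$ and assume $p\nmid n$ (the case $p\mid n$ is disposed of at the end). Over $\kk$ we have $\zeta_n\in W(\kk)$, so $M(A_\kk)\otimes\QQ$ splits into $g$-eigenlines $N_c$, $c\in(\ZZ/n\ZZ)^*$, and since $F$ is $\sigma$-semilinear with $\sigma(\zeta_n)=\zeta_n^{p}$ one gets $F(N_c)\subseteq N_{pc}$. Thus the orbits $O$ of multiplication by $p$ on $(\ZZ/n\ZZ)^*$ are exactly the isoclinic pieces of Theorem~\ref{Manin_th}: the piece attached to $O$ has rank $|O|$ and some slope $\lambda(O)$, and exactly $\lambda(O)\,|O|$ of its eigenvalues survive in $M/FM$, namely those lying in $O\cap\{a,-a\}$, so
\[\lambda(O)=\frac{|O\cap\{a,-a\}|}{|O|}.\]
A polarization induces complex conjugation $\zeta_n\mapsto\zeta_n^{-1}$ on $\QQ(\zeta_n)$, hence the duality $\lambda(O)+\lambda(-O)=1$. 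Because $c\mapsto -c$ is a bijection $O\to-O$ preserving $\{a,-a\}$, we get $|O\cap\{a,-a\}|=|(-O)\cap\{a,-a\}|$, so $\lambda(O)=\lambda(-O)$; with the duality this forces $\lambda(O)=\tfrac12$ for every orbit. Therefore all slopes of $A$ equal $\tfrac12$ and $A$ is supersingular. If instead $p\mid n$, i.e. $(n,p)\in\{(5,5),(12,3)\}$, then $p$ is ramified in $\QQ(\zeta_n)$ with a single prime above it, which is automatically fixed by conjugation and hence of slope $\tfrac12$; so $A$ is supersingular in this case too.

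Finally I would deduce $\FF_{p^2}\subset k$ when $p\nmid n$. Supersingularity forces the orbit $O$ through $a$ to have slope $\tfrac12$, so $|O\cap\{a,-a\}|=|O|/2$; in particular $|O|\ge2$ (so $p\not\equiv1$), and $O$ cannot equal $\{a,-a\}$, which would give slope $1$ (so $p\not\equiv-1$), whence $p\not\equiv\pm1\pmod n$. The trace $\zeta_n^{a}+\zeta_n^{-a}=\Tr\bigl(g\mid T^*_0(A)\bigr)$ lies in $k$ and is algebraic over $\FF_p$, hence lies in the algebraic closure of $\FF_p$ in $k$ and generates $\FF_{p^{f_+}}$, where $f_+$ is the order of $p$ in $(\ZZ/n\ZZ)^*/\{\pm1\}$. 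For $n\in\{5,8,12\}$ this quotient has order $2$, so $p\not\equiv\pm1\pmod n$ gives $f_+=2$, whence $\FF_{p^2}\subseteq k$.

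The step I expect to be the main obstacle is the slope formula together with its compatibility across base change: one must justify that over $\kk$ the $g$-eigenline decomposition is exactly the isoclinic decomposition, that the surviving cotangent eigenvalues are precisely $\zeta_n^{\pm a}$, and that the polarization pairing yields $\lambda(O)+\lambda(-O)=1$ — these are the Shimura--Taniyama relations in Diedonn\'e form. A more computational alternative, closer to the tools already assembled, would be to reduce to a finite field via Corollary~\ref{finite_field} and test the embedding $\QQ(\zeta_n)\hookrightarrow\End^\circ$ against each entry of Theorem~\ref{th_surf}, using the symplectic obstruction to eliminate the ordinary and mixed cases; but the conjugate-pair argument above is the most economical.
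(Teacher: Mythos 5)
Your proposal is correct in substance, but it proves the corollary by a genuinely different route than the paper. The paper's proof of supersingularity is geometric when $p\nmid n$: by Theorem~\ref{Katsura_quotient} the minimal resolution $X$ of $A/G$ is a $K3$ surface, by Proposition~\ref{Sing} its Neron--Severi rank is $22$ for $G=C_5,C_8,C_{12}$, so $X$ is Shioda supersingular, and Katsura's Lemma 4.4 transfers supersingularity back to $A$; only the case $p\mid n$ is handled arithmetically (there is a single prime of $\ZZ[\zeta_n]$ above $p$, so the slopes cannot be distinct). For $\FF_{p^2}\subset k$ the paper assumes the contrary and eliminates each possible endomorphism algebra using Theorems~\ref{th_curv},~\ref{th_surf} and Lemma~\ref{NoSS}. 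You instead argue purely arithmetically throughout: the Shimura--Taniyama slope formula for the $\QQ(\zeta_n)$-eigenline decomposition of $M(A_\kk)\otimes\QQ$, combined with (a) the negation-symmetry of the cotangent characters $\{a,-a\}$ forced by the symplectic hypothesis, and (b) the duality $\lambda(O)+\lambda(-O)=1$; and you obtain $\FF_{p^2}\subset k$ from the observation that $\Tr\bigl(g\mid T^*_0(A)\bigr)=\Bar\zeta_n^{a}+\Bar\zeta_n^{-a}$ lies in $k$ and generates $\FF_{p^2}$ once $p\not\equiv\pm1\bmod n$. Your route needs no geometric input at all (no $K3$ quotient, no singularity table), treats $p\mid n$ and $p\nmid n$ on one footing, and yields $p\not\equiv\pm1\bmod n$ as a byproduct --- a fact the paper only extracts later, via Lemma~\ref{sympl_basis} inside Theorem~\ref{SSAVp2}. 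The cost is exactly the two CM-theoretic facts you flagged, which the paper never develops.

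Two caveats. First, your duality step is stated too loosely: the Rosati involution of an \emph{arbitrary} polarization need not preserve the subfield $\QQ(\zeta_n)\subset\End^\circ(A)$, let alone induce conjugation on it. The standard repair is averaging: if $\phi$ is any polarization, then $\psi=\sum_{g\in G}g^\vee\circ\phi\circ g$ is again a polarization and satisfies $g^\dagger=g^{-1}$; then $M(\psi)$ identifies the orbit-$O$ piece of $M(A_\kk)\otimes\QQ$ with the dual of the orbit-$(-O)$ piece, giving $\lambda(O)+\lambda(-O)=1$. (Your isoclinicity claim is sound: $F$ permutes the eigenlines of an orbit cyclically, so each orbit piece is isoclinic, with $\lambda(O)\,|O|$ equal to the length of $L_O/FL_O$, i.e.\ to $|O\cap\{a,-a\}|$.) Second, your ``computational alternative'' would not work as stated: Corollary~\ref{finite_field} rests on Grothendieck--Katz specialization, which only says the Newton polygon of the special fibre $B$ lies on or \emph{above} that of $A$, so supersingularity of $B$ does not descend to $A$; the argument must be run over $\kk$ directly, as your main proof in fact does.
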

\begin{proof}
Let $X$ be the minimal resolution of singularities of $A/G$.
   If $p$ does not divide the order of $G$, then, according to the Katsura Theorem~\ref{Katsura_quotient}, and Proposition~\ref{Sing}, $X$ is a supersingular $K3$ surface. By~\cite[Lemma 4.4]{Ka}, $A$ is also supersingular. 

    If $p$ divides the order of $G$, and $A$ is mixed or ordinary, then either $A$ is simple and $\End^\circ(A)\cong\QQ(\zeta_n)$, or $A$ is isogenous to the square of an ordinary elliptic curve $B$ and \[\End^\circ(B)\subset\QQ(\zeta_n).\] Furthermore, the slopes of $A$ are different; therefore, in the decomposition of $p\ZZ[\zeta_n]$ to the product of prime ideals there are at least two different primes. Since $\ZZ[\zeta_n]$ has only one prime ideal over $p$, when $p$ divides $n\in\{5,8,12\}$, the surface $A$ is supersingular.

    Let $\HH=\End^\circ(A)$. Assume that $\FF_{p^2}\not\subset k$. Since $\QQ[G]^\rig\cong\QQ(\zeta_n)$ is a subalgebra of $\HH$, according to Theorems~\ref{th_curv} and~\ref{th_surf}, we have the following possibilities: 
\begin{enumerate}
	\item $\HH\cong\QQ(\zeta_n)$ is a field;
	\item $\HH\cong \HH_\infty(\QQ(\sqrt{p}))$;
	\item $\HH\cong \Mat(2,\QQ(\sqrt{-p}))$.
\end{enumerate}
In the first case, according to Lemma~\ref{NoSS}, $\FF_{p^2}\subset k$. A contradiction.
In cases $(2)$ and $(3)$ the central simple algebra $\HH$ of degree $8$ over $\QQ$ contains  $\QQ(\zeta_n)$ as a subfield of degree $4$ over $\QQ$; therefore, the center of $\HH$ is a subfield  of $\QQ(\zeta_n)$, and $\sqrt{\pm p}\in\QQ(\zeta_n)$.
This is impossible if $p$ does not divide $n$.  
    \end{proof}

Now we compute the Frobenius action on the graphs of exceptional curves on $X(A,G)$.
Let $x\in A(\FF_{q^r})$ be a point of degree $r$, i.e., a morphism \[x:\Spec(\FF_{q^r})\to A.\] 
Let $S(x)$ be the stabilizer of $x$. For any $g\in S(x)$ there exists an element $\Bar g\in\Gal(\FF_{q^r}/\FF_q)$
 such that \[g\circ x=x\circ \Bar g.\]
We obtain a homomorphism \[\phi_x:S(x)\to \Gal(\FF_{q^r}/\FF_q)\] given by the formula: $\phi_x(g)=\Bar g$.

\begin{prop}\label{FActionOnSingGraph}
\begin{enumerate} 
\item  Let $A$ be an abelian surface over $\FF_q$ with a rigid and symplectic action of a cyclic group $G$ of order $n$ such that $(n,p)=1$.
 Let $x\in A(\FF_{q^r})$ be a point of degree $r$ fixed by $G$ such that $\phi_x$ is surjective.
Then the image of $x$ is an $\FF_q$-point,
the quotient singularity at $x$ is of type $A_{n/r-1}$,
and the Frobenius action on the graph of exceptional curves
 is non-trivial if and only if $\zeta_{n/r}\not\in\FF_{q^r}$.
\item Assume that $p>2$. Let $G=Q_8$, and let $A$ be an abelian surface over $\FF_q$ with a 
rigid and symplectic action of $G$.
 Then the Frobenius action on the graph $D_4$ of the quotient singularity at the origin of $A$ is trivial.
\end{enumerate}
\end{prop}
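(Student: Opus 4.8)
The plan is to treat the two parts separately, since they involve different singularity types. For part (1), the key observation is that a point $x$ fixed by the cyclic group $G$ of order $n$ coprime to $p$ gives rise to a cyclic quotient singularity, whose exceptional locus is a chain of $\PP^1$'s — the $A_{m-1}$ configuration — where $m$ is the order of the \emph{effective} stabilizer acting on the tangent space. First I would use the hypothesis that $x$ has degree $r$ and that $\phi_x:S(x)\to\Gal(\FF_{q^r}/\FF_q)$ is surjective. Since $S(x)=G$ is cyclic of order $n$ and the Galois group is cyclic of order $r$, surjectivity means $\ker\phi_x$ has order $n/r$, and this kernel is precisely the subgroup of $G$ that fixes $x$ together with its residue field, i.e., that acts $\FF_q$-linearly at $x$. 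This identifies the local singularity type as $A_{n/r-1}$. The fact that the image of $x$ in $A/G$ is an $\FF_q$-point follows because the full $G$-orbit of $x$ maps to a single point, and the relation $g\circ x = x\circ\Bar g$ shows the orbit is Galois-stable, so its image is fixed by Frobenius.

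The heart of part (1) is the Frobenius action on the chain of exceptional curves. Here I would use the standard description of the $A_{m-1}$ singularity with $m=n/r$: the $m-1$ exceptional curves $E_1,\dots,E_{m-1}$ form a linear chain, and they are parametrized by the nontrivial characters of the local cyclic group, equivalently by the primitive and non-primitive $m$-th roots of unity. The residual Frobenius acts on this chain by raising roots of unity to the $q^r$-th power (the order-$r$ quotient already being absorbed into the residue field of $x$). Concretely, Frobenius permutes $E_i$ and $E_{m-i}$ via the involution $\zeta\mapsto\zeta^{-1}$ precisely when $\zeta_m^{q^r}=\zeta_m^{-1}$, and acts trivially exactly when $\zeta_m\in\FF_{q^r}$, i.e., when $\zeta_{n/r}\in\FF_{q^r}$. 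I expect the main obstacle to be pinning down rigorously that the Frobenius action on the graph is governed by the Galois action on $\mu_{n/r}$ rather than $\mu_n$; this requires carefully separating the part of the stabilizer that acts through the residue field extension from the part acting linearly, which is exactly what surjectivity of $\phi_x$ provides.

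For part (2), with $G=Q_8$ and $p>2$, the singularity at the origin is of type $D_4$ by Proposition~\ref{Sing}, and the origin is an $\FF_q$-rational fixed point of the entire group. Since $Q_8$ fixes the origin and acts $\FF_q$-linearly there (the origin is defined over the prime field, and the action fixes the group law), the whole resolution graph is defined over $\FF_q$ with its $G$-structure intact. The $D_4$ graph has a central node and three outer nodes, and its automorphism group (as the Dynkin diagram $D_4$) is $S_3$, permuting the three outer curves. The plan is to show that Frobenius acts trivially on this graph. The three outer curves correspond to the three cyclic subgroups of order $4$ in $Q_8$ (equivalently to the three nontrivial elements of $Q_8/\{\pm1\}\cong C_2\times C_2$); any permutation of them induced by Frobenius would have to come from a Galois-induced permutation of these subgroups, but since the origin is $\FF_q$-rational and the $G$-action is already $\FF_q$-rational, Frobenius commutes with the $G$-action and hence fixes each subgroup's associated exceptional curve. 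I expect this part to be more straightforward, the only care needed being to confirm that no nontrivial graph automorphism can arise, which follows from the $\FF_q$-rationality of the entire configuration.
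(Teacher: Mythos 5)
Your treatment of part (1) follows essentially the same route as the paper: surjectivity of $\phi_x$ identifies the geometric stabilizer as $\ker\phi_x$ of order $n/r$, the symplectic hypothesis makes the quotient singularity a du Val $A_{n/r-1}$ point, and the Frobenius action on the chain is read off from the Galois action on the two eigenlines of a generator of $\ker\phi_x$. The paper makes this concrete by computing invariants over $k=\FF_{q^r}(\zeta_{n/r})$: with an eigenbasis $u,v$, the $\ker\phi_x$-invariants are generated by $X=v^{n/r}$, $Y=u^{n/r}$, $Z=uv$ subject to $XY=Z^{n/r}$, and the residual Frobenius reverses the chain exactly when it swaps $u$ and $v$, i.e.\ when $\zeta_{n/r}\notin\FF_{q^r}$. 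One point you should make explicit: the dichotomy ``trivial or chain-reversal'' is not automatic from ``Frobenius raises roots of unity to the $q^r$-th power''; the map $i\mapsto q^r i \bmod n/r$ on your index set is a graph automorphism of $A_{n/r-1}$ only if $q^r\equiv\pm 1\bmod n/r$, and it is the symplectic hypothesis (the eigenvalues of the generator form the pair $\{\zeta,\zeta^{-1}\}$, which must be stable under $\Gal(\kk/\FF_{q^r})$ because the generator acts $\FF_{q^r}$-linearly on $T_x^*(A)$) that forces this congruence. Also note that without symplecticity a cyclic quotient singularity is Hirzebruch--Jung, not $A_{m-1}$, so this hypothesis enters twice.

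Part (2) has a genuine gap. The inference ``Frobenius commutes with the $G$-action, hence fixes each subgroup's associated exceptional curve'' is not valid, and you can see this because the identical reasoning applies in part (1): there too the cyclic group acts by endomorphisms defined over $\FF_q$, so Frobenius commutes with the action, and yet the proposition asserts that the Frobenius action on the $A_{n/r-1}$ graph is often nontrivial. The hidden issue is the equivariance of the bijection between exceptional curves and group-theoretic data: under the arithmetic McKay correspondence, Galois acts on irreducible representations through their character values, which are roots of unity --- a genuinely nontrivial action in the cyclic case --- so ``Galois acts trivially on the abstract group $G$'' does not translate into ``Galois fixes each curve.'' What saves $Q_8$ is not commutation but rationality: the three nontrivial one-dimensional characters of $Q_8$ take values in $\{\pm 1\}$ and hence are Galois-fixed; equivalently, the three outer nodes correspond to the three branches of the discriminant of $A\to A/G$ at the origin, i.e.\ to the images of the three pairs of eigenlines of $i,j,k$, and each such pair is Galois-stable as a pair even when its two lines are interchanged. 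You would need to prove one of these equivariance statements. The paper avoids the issue by direct computation: using the basis of Lemma~\ref{sympl_basis3} it finds the invariants $x=v^4+u^4$, $y=v^2u^2$, $z=uv(v^4-u^4)$, obtains the $D_4$ equation $z^2=y(x^2-4y^2)=y(x-2y)(x+2y)$ over $\FF_q$, and observes that after one blowup the three $A_1$ points lie at $[x:y]=[1:0],[2:1],[-2:1]$ on the exceptional $\PP^1$, all $\FF_q$-rational, so the graph action is trivial. As written, your part (2) rests on a principle that part (1) itself refutes; it needs either the explicit computation or a proved Galois-equivariant indexing of the outer curves.
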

\begin{proof}
\begin{enumerate} 
\item The algebra of the quotient singularity at $x$ is given by the formula
\[S=(\oplus_s (T_x^*(A))^{\otimes s})^G.\] 
Let $k=\FF_{q^r}(\zeta_{n/r})$. We will compute $S_k=S\otimes_{\FF_q}k$.

Let $H=\Gal(\FF_{q^r}/\FF_q)$. By assumption, the homomorphism $\phi_x:G\to H$ is surjective, and we can choose a generator $g\in G$ such that $\phi_x(g)\in H$ is the Frobenius automorphism. There is an isomorphism of algebras \[\FF_{q^r}\otimes_{\FF_q}k\cong\oplus_{h\in H} k_h\] such that the Galois group $\Gal(k/\FF_q)$ acts through the natural surjection
$\pi:\Gal(k/\FF_q)\to H$: 
namely, if $\lambda\in \Gal(k/\FF_q)$, then $\lambda(k_h)=k_{\pi(\lambda) h}$.
We obtain a corresponding decomposition for 
\[T_x^*(A)\otimes_{\FF_q}k\cong \oplus_{h\in H}V_h,\]
where $V_h$ is a two-dimensional $k_h$-vector space with a semilinear action of $\ker\pi$.
The action of $G$ is given by the formula $g(V_h)=V_{\phi_x(g)h}$
 and the action of $\ker\phi_x$ on $V_h$ is $k$-linear. 
 
 Let $V=V_e$, where $e\in H$ is the trivial element. The natural injection 
\[(\oplus_sV^{\otimes s})^{\ker\phi_x}\to S_k\] is a $\ker\pi$-equivariant isomorphism. 

Since $g^r\in\ker\phi_x$,
 there exists a basis $u,v$ of $V$ such that $g^rv=\zeta v$, and $g^ru=\zeta^{-1}u$,
  where $\zeta\in k$ is a primitive root of unity of order $n/r$. 
Let $\lambda$ be a generator of $\ker\pi$. Then $\lambda(v)=\alpha u$ for some $\alpha\in k$ if and only if $\lambda(\zeta)\neq\zeta$, i.e, if and only if $\zeta\not\in\FF_{q^r}$.
In what follows, we will assume that $\alpha=1$.

It is straightforward to check that \[x=v^{n/r},\; y=u^{n/r}\text{, and }z=uv\]
generate $\ker\phi_x$-invariants of $\oplus_sV^{\otimes s}$ over $k$, 
and the equation of the singularity is $xy-z^{n/r}=0$.
It follows that the Galois action on the exceptional graph becomes trivial over $k$.
Moreover, the Galois action on the exceptional graph of the blow up is non-trivial
if and only if $\lambda(x)=y$, i.e, if and only if $\zeta_{n/r}\not\in\FF_{q^r}$.

\item 
Let $k=\FF_q(\zeta_4)$. According to Lemma~\ref{sympl_basis3}, there exists a basis $u,v$ of $T_x^*(A)\otimes_{\FF_q}k$ such that $i$ and $j$ act as matrices
\[\left (\begin{array}{c c}
\zeta_{4} & 0\\
0 & -\zeta_{4}\\	
\end{array}\right)\quad\text{ and }\quad
\left (\begin{array}{c c}
0 & 1\\
-1 & 0\\	
\end{array}\right).\]

As before, the Galois group of $k$ over $\FF_q$ interchanges $v$ and $\alpha u$ for some $\alpha\in \FF_q^*$, and in what follows we may assume that $\alpha=1$.
The $G$-invariants are generated by $x=v^4+u^4$, $y=v^2u^2$, and $z=vu(v^4-u^4)$. 
Clearly, these functions are defined over $\FF_q$.
In these coordinates, the equation of the singularity is given by $z^2-y(x^2-4y^2)=0$. The exceptional divisor of the blow up of the singular point is a projective line with three singular points of type $A_1$ defined over $\FF_q$. 
Therefore, the Galois action on the resolution is trivial.
\end{enumerate}
\end{proof}

\section{A refinement of the Katsura theorem.}\label{main_sec}
In this section we use Theorem~\ref{main_cor} to obtain a classification of finite groups that act on abelian varieties over a given finite field $k$ of characteristic $p$. 

\begin{prop}\label{alg_inj}
An algebra $\HH$ from Theorem~\ref{rigid_actions} admits a homomorphism to $M(2,\HH_p)$ if and only if $p$ satisfies the conditions in the table below.
\begin{center}
\begin{longtable}{|c|c|c|c|}
\hline 
$\HH$  & $p$ & $\HH$ & $p$  \\
\hline 
$\QQ[\zeta_4]$  &  $p>0$ & $\HH_2$  &  $p>0$  \\
$\QQ[\zeta_3]=\QQ[\zeta_6]$ & $p>0$  & $\HH_3$    &  $p>0$ \\
$\QQ[\zeta_5]=\QQ[\zeta_{10}]$ &  $p\not\equiv 1\bmod 5$ & $\HH_\infty(\QQ(\sqrt{5}))$ & $p\not\equiv \pm 1\bmod 5$\\
$\QQ[\zeta_8]$ & $p\not\equiv 1\bmod 8$ &   $\HH_\infty(\QQ(\sqrt{2}))$ & $p\not\equiv \pm 1\bmod 8$\\
$\QQ[\zeta_{12}]$   & $p\not\equiv 1\bmod 12$  & $\HH_\infty(\QQ(\sqrt{3}))$ &  $p\not\equiv \pm 1\bmod 12$\\
\hline
\end{longtable}
\end{center}
\end{prop}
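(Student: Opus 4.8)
The plan is to use that $\mathcal{B}:=M(2,\HH_p)$ is a central simple $\QQ$-algebra of degree $4$ whose Brauer class $[\HH_p]$ is nontrivial exactly at the real place and at $p$. Each algebra $\HH$ listed in Theorem~\ref{rigid_actions} is simple, so every unital $\QQ$-algebra homomorphism $\HH\to\mathcal{B}$ is injective; the question is therefore one of embeddings, and I would treat the cyclotomic (commutative) entries and the quaternionic entries separately, computing local invariants via Theorem~\ref{trivial_inv} throughout.

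For the fields of degree $2$, namely $\QQ[\zeta_4]$ and $\QQ[\zeta_3]$, I would simply note that every quadratic field embeds into $M(2,\QQ)$, and $M(2,\QQ)\hookrightarrow M(2,\HH_p)$ through $\QQ\subset\HH_p$; hence these embed for all $p$. For a field $L$ of degree $4$, namely $\QQ[\zeta_5],\QQ[\zeta_8],\QQ[\zeta_{12}]$, the algebra $\mathcal{B}$ has dimension $16=4^2$ over its center $\QQ$, so Theorem~\ref{trivial_H} applies directly: $L\hookrightarrow\mathcal{B}$ if and only if $L\otimes_\QQ\mathcal{B}\cong M(2,\HH_p\otimes_\QQ L)$ is a matrix algebra over $L$, i.e.\ if and only if the quaternion algebra $\HH_p\otimes_\QQ L$ splits. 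By Theorem~\ref{trivial_inv} its invariant at a place $w$ of $L$ over $v$ is $[L_w:\QQ_v]\cdot\inv_v\HH_p$; at the complex infinite places this is $2\cdot\frac12\equiv0$, so splitting is equivalent to $[L_w:\QQ_p]$ being even for every $w\mid p$. When $p\nmid n$ this local degree is the residue degree, which equals the multiplicative order of $p$ modulo $n$, and for $n\in\{5,8,12\}$ that order is even precisely when $p\not\equiv1\bmod n$; when $p\mid n$ (so $p\in\{5\},\{2\},\{2,3\}$ respectively) there is a unique prime over $p$ of local degree $4$, again even and consistent with $p\not\equiv1\bmod n$. This yields the three congruences in the left column.

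For the quaternionic entries I would argue as follows. The algebras $\HH_2,\HH_3$ are central simple of degree $2$ over $\QQ$, and by the double centralizer theorem (see~\cite{MO}) such an algebra embeds into $\mathcal{B}$ exactly when $\mathcal{B}\cong\HH_m\otimes_\QQ C$ for a central simple $C$ of degree $4/2=2$; one takes $[C]=[\HH_p]+[\HH_m]$, a sum of quaternion classes, which over $\QQ$ has index equal to its exponent and hence dividing $2$, so a degree-$2$ algebra $C$ of this class exists and the embedding always exists. The substantive case is $\HH_\infty(K)$ with $K=\QQ(\sqrt2),\QQ(\sqrt3),\QQ(\sqrt5)$, which is central simple of degree $2$ over its center $K$. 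Any homomorphism must send $K$ to a subfield of $\mathcal{B}$ isomorphic to $K$, and by Skolem--Noether all such subfields are conjugate; therefore a homomorphism $\HH_\infty(K)\to\mathcal{B}$ exists if and only if $K\hookrightarrow\mathcal{B}$ (automatic, since $2\mid4$) and $\HH_\infty(K)$ is $K$-isomorphic to the centralizer $C:=C_{\mathcal{B}}(K)$. This centralizer is central simple of degree $2$ over $K$ with class the restriction of $[\mathcal{B}]=[\HH_p]$, that is $[C]=[\HH_p\otimes_\QQ K]$; its invariants computed by Theorem~\ref{trivial_inv} show that $C$ is ramified at both real places of $K$ for every $p$, and ramified above $p$ if and only if $p$ splits in $K$. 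Hence $C\cong\HH_\infty(K)$ precisely when $p$ is non-split in $K$, and the standard splitting criteria for $\QQ(\sqrt2),\QQ(\sqrt3),\QQ(\sqrt5)$ (including the ramified primes $p\in\{2,3,5\}$, which are non-split) turn this into $p\not\equiv\pm1\bmod 8,12,5$, matching the right column.

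The main obstacle is the $\HH_\infty(K)$ case: one must correctly identify the Brauer class of the centralizer $C_{\mathcal{B}}(K)$ as $[\HH_p\otimes_\QQ K]$ and then track its local invariants at the real places and above $p$, with the ramified primes $p\in\{2,3,5\}$ furnishing the delicate boundary cases. This step parallels Corollary~\ref{Hp_hom} and reduces, as there, to whether $\HH_p\otimes_\QQ K$ is unramified above $p$, i.e.\ to $p$ being non-split in $K$; the degree-$4$ field computations, while routine, hinge on the same parity-of-local-degree phenomenon.
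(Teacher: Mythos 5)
Your proof is correct, and for the cyclotomic entries it follows essentially the paper's own argument: Theorem~\ref{trivial_H} combined with the local-invariant computation of Theorem~\ref{trivial_inv} (you are in fact more careful than the paper about the ramified primes $p\mid n$ and about why the quadratic fields embed unconditionally via $M(2,\QQ)$). Where you genuinely diverge is the quaternionic column. For $\HH_2$ and $\HH_3$ the paper chooses the auxiliary imaginary quadratic field $K=\QQ(\sqrt{-rp})$, which splits both $\HH_r$ and $\HH_p$, and writes down the explicit chain $\HH_r\to\HH_r\otimes K\cong M(2,K)\to M(2,\HH_p)$; you instead use the double centralizer theorem to reduce to exhibiting $M(2,\HH_p)\cong\HH_r\otimes_\QQ C$ with $[C]=[\HH_p]+[\HH_r]$, realizing this class by a degree-$2$ algebra via the equality of index and exponent over number fields. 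Your argument is more structural and avoids choosing a splitting field, at the cost of invoking the Albert--Brauer--Hasse--Noether index-equals-exponent theorem, which the paper never needs. For $\HH_\infty(K)$ with $K=\QQ(\sqrt r)$, the paper dualizes: it passes to centralizers inside $M(8,\QQ)$ to convert the question into the existence of a homomorphism in the opposite direction, $\HH_p\to\HH_\infty(K)$, which is exactly Corollary~\ref{Hp_hom}; you stay inside $M(2,\HH_p)$, normalize the copy of $K$ by Skolem--Noether, and identify the centralizer of $K$ as a quaternion $K$-algebra of Brauer class $[\HH_p\otimes_\QQ K]$, comparing invariants directly. The two routes reduce to the identical computation --- whether $\HH_p\otimes_\QQ K$ is ramified above $p$, i.e., whether $p$ splits in $K$, including the boundary cases $p\in\{2,3,5\}$ --- so they prove the same thing; yours is self-contained (it does not route through Corollary~\ref{Hp_hom}), while the paper's centralizer duality is the quicker reduction given that Corollary~\ref{Hp_hom} has already been established.
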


\begin{proof}
The first column of the table follows from Theorem~\ref{trivial_H} and the fact that $\QQ(\zeta_n)\otimes_\QQ\HH_p$ represents the trivial element of the Brauer group if and only if $\QQ(\zeta_n)$ does not split at $p$.

Let $r\in \{2,3\}$, but $r\neq p$. Put $K=\QQ(\sqrt{-rp})$. Clearly, both $K\otimes \HH_r$, and $K\otimes \HH_p$ are trivial.
According to Theorem~\ref{trivial_H}, there is a homomorphism $K\to \HH_p$, and we have a sequence of homomorphisms of $\QQ$-algebras:
\[\HH_r\to \HH_r\otimes K\cong M(2,K)\to M(2,\HH_p).\]

Finally, let $r\in \{2,3,5\}$. We prove that there is a homomorphism of $\HH_\infty(\QQ(\sqrt{r}))$ to $M(2,\HH_p)$ if and only if $p$ does not split in $\QQ(\sqrt{r})$.
Indeed, there exists such a homomorphism if and only if there is a homomorphism in the opposite direction of the centralizers of these algebras in $M(8,\QQ)$, i.e., $\HH_p\to \HH_\infty(\QQ(\sqrt{r}))$.
The result follows from Corollary~\ref{Hp_hom}.
\end{proof}

\begin{rem}
There could be non-conjugate homomorphisms from $\QQ^\rig[G]$ to $M(2,L)$, where $L$ is quadratic over $\QQ$, but 
by the Skolem--Noether theorem, all homomorphisms from $\QQ^\rig[G]$ to $M(2,\HH_p)$ are conjugate.
\end{rem}

\begin{thm}\label{SSAVp2}
Assume that $\FF_{p^2}$ is a subfield of a perfect field $k$, and that the order of $G$ is prime to $p$ and is greater than $2$. 
Let $A$ be a supersingular abelian surface over $k$ with a rigid action of a finite group $G$. 
Then $G$ and $p$ satisfy the conditions in the column $I$.
If the action is symplectic, then $G$ and $p$ satisfy the conditions of the column $II$.

If $p$ and $G$ satisfy a condition in the column $I$ of the table below, then there exists a supersingular abelian surface over $\FF_{p^2}$ with a rigid action of a finite group $G$.
If $p$ does not divide the order of $G$, then there exists a supersingular abelian surface over $\FF_{p^2}$ with a rigid and symplectic action of a group $G$ if $p$ and $G$ satisfy the conditions of the column $II$.

\begin{center}
\begin{longtable}{|c|c|c|}
\hline 
$G$  & $I$ & $II$   \\
\hline 
$C_3$, $C_6$ & $p>0$ & $p>0$ \\    
$C_4$  &  $p>0$ &  $p>0$ \\
$C_8$ & $p\not\equiv 1\bmod 8$ & $p\not\equiv \pm 1\bmod 8$ \\
$C_5$ $C_{10}$ &  $p\not\equiv 1\bmod 5$ &  $p\not\equiv \pm 1\bmod 5$ \\
$C_{12}$   & $p\not\equiv 1\bmod 12$ & $p\not\equiv \pm 1\bmod 12$, \\
$Q_8$  &  $p>0$  &  $p>0$       \\  
$Q_{12}$ & $p>0$ & $p>0$ \\
$Q_{16}$ & $p\not\equiv \pm 1\bmod 8$ & $p\not\equiv \pm 1\bmod 8$  \\
$Q_{20}$ & $p\not\equiv \pm 1\bmod 5$ & $p\not\equiv \pm 1\bmod 5$ \\
$Q_{24}$ &   $p\not\equiv \pm 1\bmod 12$  & $p\not\equiv \pm 1\bmod 12$  \\
$\SL_2(\FF_3)$  &  $p>0$  &  $p>0$        \\  
$\CSU_2(\FF_3)$ & $p\not\equiv \pm 1\bmod 8$ & $p\not\equiv \pm 1\bmod 8$ \\
$\SL_2(\FF_5)$ & $p\not\equiv \pm 1\bmod 5$ & $p\not\equiv \pm 1\bmod 5$ \\
\hline
\end{longtable}
\end{center}
\end{thm}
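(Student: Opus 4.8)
The plan is to reduce the entire theorem to the algebraic criterion of Corollary~\ref{main_cor}, which says that a supersingular abelian surface with a rigid $G$-action exists in a given isogeny class if and only if there is a $\QQ$-algebra homomorphism $\QQ[G]^\rig\to\End^\circ(A)$. Combined with Corollary~\ref{GisSLsubgroup}, every such surface can be taken over $\FF_{p^2}$ and, by Theorem~\ref{SSIsogeny}, satisfies $\End^\circ(A)\subset M(2,\HH_p)$, with equality achievable (take $A=E^2$ for a supersingular $E/\FF_{p^2}$ with $\End^\circ(E)\cong\HH_p$). Thus existence of a rigid $G$-action on a supersingular surface over $\FF_{p^2}$ is \emph{equivalent} to the existence of a homomorphism $\QQ[G]^\rig\to M(2,\HH_p)$. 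This is precisely the content computed in Proposition~\ref{alg_inj}, once we read off $\QQ[G]^\rig$ for each $G$ from the tables in Theorem~\ref{rigid_actions}.

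First I would treat column $I$. For each group $G$ in the list I look up $\QQ[G]^\rig$ from Theorem~\ref{rigid_actions}: the cyclic groups give cyclotomic fields $\QQ(\zeta_n)$, the binary dihedral groups give $\HH_2,\HH_3$, or $\HH_\infty(\QQ(\sqrt{m}))$, and the exceptional groups $\SL_2(\FF_3),\CSU_2(\FF_3),\SL_2(\FF_5)$ give $\HH_2,\HH_\infty(\QQ(\sqrt{2})),\HH_\infty(\QQ(\sqrt{5}))$ respectively. Applying Proposition~\ref{alg_inj} verbatim to each algebra yields exactly the congruence conditions on $p$ in column $I$: the fields and $\HH_2,\HH_3$ impose no condition (beyond $p>0$), while $\QQ(\zeta_n)$ for $n\in\{5,8,12\}$ imposes $p\not\equiv 1\bmod n$ and $\HH_\infty(\QQ(\sqrt m))$ imposes $p\not\equiv\pm1$ modulo the relevant modulus. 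This establishes both the necessity (any supersingular $A$ gives a homomorphism $\QQ[G]^\rig\to M(2,\HH_p)$, forcing the congruence) and the sufficiency (the congruence yields a homomorphism, hence by Corollary~\ref{main_cor} a surface over $\FF_{p^2}$) for column $I$, under the standing assumption $(|G|,p)=1$.

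Next I would handle column $II$, where the action is moreover symplectic and $p\nmid|G|$. The extra symplectic constraint is what upgrades the cyclic cases $C_8,C_{10},C_{12}$ from $p\not\equiv1$ to $p\not\equiv\pm1$: a symplectic action of $C_n$ forces the two eigenvalues of a generator on $T_0^*(A)$ to be $\zeta,\zeta^{-1}$, so that the relevant quaternionic summand controlling the Diedonn\'e module is governed by the totally real field $\QQ(\zeta_n)^\real$ rather than by $\QQ(\zeta_n)$ itself; concretely the obstruction becomes splitting of $p$ in $\QQ(\sqrt m)$ with $m\in\{2,5,3\}$, giving $p\not\equiv\pm1\bmod n$. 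For the binary dihedral and exceptional groups the rigid representation is automatically symplectic (the central element of order $2$ already acts as $-1$), so columns $I$ and $II$ coincide there. For the sufficiency direction in column $II$ I would invoke Proposition~\ref{Sing} together with Theorem~\ref{Katsura_quotient}: once a symplectic rigid action exists and $p\nmid|G|$, the quotient singularities are rational double points and the minimal resolution is a $K3$ surface, and the explicit construction over $\FF_{p^2}$ proceeds as in Corollary~\ref{GisSLsubgroup}.

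\textbf{The main obstacle} I anticipate is the symplectic refinement in column $II$, namely pinning down \emph{why} symplecticity replaces the condition $p\not\equiv1$ by $p\not\equiv\pm1$ for the cyclic groups. The subtlety is that rigidity alone only requires $\QQ(\zeta_n)\hookrightarrow\End^\circ(A)$, whereas symplecticity additionally constrains how this field sits relative to the Rosati involution and hence how $p$ must behave in the maximal totally real subfield. I would make this precise by combining Lemma~\ref{eigenvalues}—which identifies the eigenvalues of a generator on $M(A)$—with the slope computation for supersingular $A$, checking that the Diedonn\'e module is free over $W(k)\otimes\ZZ_p[\zeta_n]$ only when $p$ is inert (not merely nonsplit) in $\QQ(\zeta_n)^\real$. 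The remaining verifications—reading off $\QQ[G]^\rig$ from the tables and applying Proposition~\ref{alg_inj}—are bookkeeping, but the symplectic eigenvalue analysis is where the real argument lies.
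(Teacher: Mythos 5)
Your treatment of column $I$ is essentially the paper's own argument: reduce via Theorem~\ref{SSIsogeny} and Corollary~\ref{GisSLsubgroup} to the question of whether $\QQ[G]^\rig$ embeds in $\End^\circ(E^2)\cong M(2,\HH_p)$, then read off $\QQ[G]^\rig$ from Theorem~\ref{rigid_actions} and apply Proposition~\ref{alg_inj} together with Corollary~\ref{main_cor}. Your sketch of the \emph{necessity} of the column $II$ conditions (eigenvalue analysis of a generator on the Dieudonn\'e module, combined with the supersingularity/slope constraint) is also in the right direction; this is what the paper's Lemmas~\ref{sympl_basis} and~\ref{sympl_basis2} make precise, where one needs both the case $\Bar\zeta_n\in k$ and the case $\Bar\zeta_n\not\in k$.

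The genuine gap is in the \emph{existence} half of column $II$. Corollary~\ref{main_cor} only produces \emph{some} surface in the isogeny class with a rigid action; it gives no control over the induced action on $T_0^*(A)$, and for cyclic $G$ a rigid action need not be symplectic (a generator of $C_5$ may act on the cotangent space with eigenvalues $\zeta_5,\zeta_5^2$). Your sentence ``once a symplectic rigid action exists and $p\nmid|G|$, \dots the explicit construction over $\FF_{p^2}$ proceeds as in Corollary~\ref{GisSLsubgroup}'' therefore assumes exactly what must be proved, and the results you invoke there (Proposition~\ref{Sing}, Theorem~\ref{Katsura_quotient}) concern the geometry of the quotient $A/G$ once a symplectic action is given; they cannot manufacture one. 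What the paper actually does is modify the surface \emph{within its isogeny class by a $p$-isogeny}: starting from $A=E^2$, it constructs a Dieudonn\'e submodule $M\subset M(A)\otimes\QQ$ on which the induced $G$-action on $M/FM$ is symplectic --- using Lemma~\ref{sympl_basis}(3) and Lemma~\ref{sympl_basis2} for cyclic $G$, and Lemmas~\ref{sympl_basis3} and~\ref{sympl_basis4} for the binary dihedral and exceptional groups, where rigidity does force symplecticity (note that the correct reason is the relation $jgj^{-1}=g^{-1}$ pairing each eigenvalue with its inverse, not merely that the central involution acts as $-1$) --- and then invokes Lemma~\ref{lem_on_Dmod} to realize $M$ as $M(B)$ for a surface $B$ that is $p$-isogenous to $A$. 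Since the prime-to-$p$ Tate modules are untouched, $G$ still acts on $B$, now rigidly and symplectically. This passage from a rigid action to a symplectic one, via the choice of a symplectic Dieudonn\'e lattice, is the key constructive step missing from your proposal.
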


First, we prove several lemmas in a slightly more general situation. Let $k$ be a perfect field, and let $A$ be an abelian surface over $k$ with a rigid action of a finite group $G$. If $p$ does not divide $n$, we denote by $\Bar\zeta_n\in \kk$ a primitive root of unity of degree $n$ such that $\Bar\zeta_n$ lifts to $\zeta_n\in W(\kk)$.

\begin{lemma}\label{sympl_action}
Assume that $k$ is finite.
Let $g\in G$ be an element of order $p$. Then the image of $g$ in $\GL(T_0^*(A))\cong \GL_2(k)$ belongs to $\SL_2(k)$. In particular, the action of $g$ is symplectic.
\end{lemma}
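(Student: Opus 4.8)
The plan is to reduce the entire statement to a single determinant computation. Write $\bar g$ for the image of $g$ under the representation $G\to\GL(T_0^*(A))\cong\GL_2(k)$ introduced in Section~\ref{GonAV}. First I would observe that, since $g^p=1$ in $G$, the image satisfies $\bar g^p=1$, so $\det\bar g$ is an element of $k^*$ whose order divides $p$; that is, $(\det\bar g)^p=1$.

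Next I would invoke the hypothesis that $k$ is finite, say $k=\FF_q$ with $q=p^n$. Then $|k^*|=q-1$ is coprime to $p$, so the only element of $k^*$ killed by $p$ is $1$. Hence $\det\bar g=1$ and $\bar g\in\SL_2(k)$, as claimed. There is no difficulty in the degenerate case where $\bar g$ is the identity matrix (by Lemma~\ref{Image_in_SL} this is the only way the representation can fail to see $g$, since $p$ is a power of $p$): there $\det\bar g=1$ automatically.

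For the concluding assertion I would identify the symplectic condition with the triviality of this determinant. For an abelian surface $T_0^*(A)$ is canonically the space of translation-invariant global $1$-forms, so $\wedge^2 T_0^*(A)\cong H^0(A,\Omega_A^2)$ is one-dimensional and $g$ acts on it through multiplication by $\det\bar g$. Since $\det\bar g=1$, the element $g$ fixes the global holomorphic $2$-form, which is exactly the statement that its action is symplectic.

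The argument presents no serious obstacle: the only point requiring care is the identification of the top exterior power of the cotangent representation with the action on $H^0(A,\Omega_A^2)$, which makes ``$\det\bar g=1$'' and ``$g$ acts symplectically'' literally the same condition. Everything else rests on the elementary coprimality $\gcd(p,q-1)=1$, which is where the finiteness of $k$ is essential.
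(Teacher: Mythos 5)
Your proof is correct and is essentially the paper's own argument: the paper's one-line proof (``the index of $\SL_2(k)$ in $\GL_2(k)$ is prime to $p$'') is exactly your coprimality computation $\gcd(p,q-1)=1$ applied to $\det\bar g$, which forces $\det\bar g=1$. Your closing identification of the symplectic condition with triviality of the action on $\wedge^2 T_0^*(A)\cong H^0(A,\Omega_A^2)$ is the same (essentially definitional) step the paper leaves implicit.
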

\begin{proof}
The index of $\SL_2(k)$ in $\GL_2(k)$ is prime to $p$.
\end{proof}

\begin{lemma}\label{sympl_basis2}
Let $g\in G$ be an element of order $n\in \{3,4,5,6,8,10,12\}$ prime to $p$. Assume that $\Bar\zeta_n\not\in k$.
Let $\FF_q$ be the algebraic closure of $\FF_p$ in $k$.
Then the action on $T_0^*(A)$ is symplectic if and only if $q\equiv -1\bmod n$.
\end{lemma}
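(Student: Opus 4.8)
The plan is to reduce the symplectic condition to a numerical relation between the two eigenvalues of $g$ on the cotangent space. Since the action on the two-dimensional space $T_0^*(A)$ is symplectic exactly when $g$ acts with determinant $1$, I would first identify its eigenvalues. As $g^n=1$ with $n$ prime to $p=\ch k$, the minimal polynomial of $g$ on $T_0^*(A)$ divides the separable polynomial $t^n-1$, so $g$ is diagonalizable over $\kk$ with eigenvalues $n$-th roots of unity; call them $\Bar\alpha,\Bar\beta\in\overline{\FF}_p$. Using the canonical identification $T_0^*(A)\cong M(A)/FM(A)$ and the fact that $g\in\End(A)$ commutes with $F$, the cotangent space is a $g$-stable quotient of the \Die module $M(A)$, so its eigenvalues form a sub-multiset of those of $g$ on $M(A)$. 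By the eigenvalue description for the rigid action of the cyclic subgroup $\langle g\rangle$ (Lemma~\ref{eigenvalues}), the eigenvalues of $g$ on $M(A)$ are primitive $n$-th roots of unity; hence $\Bar\alpha$ and $\Bar\beta$ are primitive $n$-th roots of unity as well.

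Next I would invoke Galois descent over the base field. The characteristic polynomial of $g$ on $T_0^*(A)$ has coefficients in $k$, but these coefficients are the elementary symmetric functions of $\Bar\alpha,\Bar\beta$, which are roots of unity of order prime to $p$ and therefore lie in $\overline{\FF}_p$. Consequently they lie in $k\cap\overline{\FF}_p=\FF_q$, the characteristic polynomial lies in $\FF_q[t]$, and the unordered pair $\{\Bar\alpha,\Bar\beta\}$ is stable under the $q$-power Frobenius $x\mapsto x^q$. Thus either both eigenvalues are fixed by this Frobenius, or they are interchanged by it.

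Finally I would run the case analysis using the hypothesis $\Bar\zeta_n\notin k$, which means precisely $q\not\equiv 1\bmod n$. If both eigenvalues were fixed by the $q$-power Frobenius (in particular if $\Bar\alpha=\Bar\beta$), then a primitive $n$-th root of unity would satisfy $\alpha^{q-1}=1$, forcing $n\mid q-1$, contrary to hypothesis. Hence $\Bar\alpha\neq\Bar\beta$ and they are genuinely interchanged, so $\Bar\beta=\Bar\alpha^{\,q}$. Since reduction is a bijection on $n$-th roots of unity (as $p\nmid n$), I may lift to characteristic zero and compute the determinant $\Bar\alpha\Bar\beta$: it equals $1$ if and only if $\alpha^{1+q}=1$, i.e. if and only if $n\mid q+1$, which is the asserted condition $q\equiv -1\bmod n$.

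The main obstacle is the first step: establishing that the eigenvalues on $T_0^*(A)$ are \emph{primitive} $n$-th roots, rather than arbitrary $n$-th roots. This is exactly what makes the equivalences $\alpha^{q\mp 1}=1\iff n\mid q\mp 1$ valid, and it is precisely where rigidity is used, through the passage to $M(A)$ and Lemma~\ref{eigenvalues}. The remaining steps, namely the descent to $\FF_q[t]$ and the determinant computation, are then formal.
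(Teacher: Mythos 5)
Your proof is correct and follows essentially the same route as the paper: both identify the two eigenvalues of $g$ on $T_0^*(A)$ as a Galois-conjugate pair $\{\Bar\zeta_n,\Bar\zeta_n^q\}$ of primitive $n$-th roots of unity and read off the symplectic condition from the determinant $\Bar\zeta_n^{1+q}=1\iff q\equiv -1\bmod n$. The only difference is presentational: you justify the primitivity of the cotangent eigenvalues explicitly via $T_0^*(A)\cong M(A)/FM(A)$ and Lemma~\ref{eigenvalues}, which the paper compresses into the assertion that $T_0^*(A)$ is a one-dimensional vector space over $k(\Bar\zeta_n)$.
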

\begin{proof}
The cotangent space is a one-dimensional vector space over 
$k(\Bar\zeta_n)$. The Galois group of $k(\Bar\zeta_n)$ over $k$ is of order $2$ and the action on $\Bar\zeta_n$ is given by the formula:
\[\Bar\zeta_n\mapsto\Bar\zeta_n^q.\]
Therefore, the eigenvalues of a generator of $g$ are $\Bar\zeta_n$ and $\Bar\zeta_n^q$. 
In other words, the action is symplectic if and only if $q\equiv -1\bmod n$.
\end{proof}

\begin{lemma}\label{sympl_basis}
Let $A$ be a non-ordinary surface, and let $g\in G$ be an element of order $n\in \{3,4,5,6,8,10,12\}$ prime to $p$. Assume that $\Bar\zeta=\Bar\zeta_n\in k$. 
\begin{enumerate}
    \item If $p^2\equiv 1\mod n$, then there exists a basis $v_1,v_2, u_1, u_2$ of $M(A)\otimes\QQ$ such that
\[g(v_1)=\zeta v_1,\quad g(v_2)=\zeta^p v_2,\quad g(u_1)=\zeta'u_1,\quad g(u_2)=\zeta'^p u_2, \eqno{(*)}\]
 where $\zeta'$ is a prime to $n$ power of $\zeta$. 
If the action on $T_0^*(A)$ is symplectic, then there exists a basis such that $(*)$ holds with $\zeta'=\zeta^{-1}$.
\item If $n\in \{5,8,10,12\}$, then $p\not\equiv \pm 1\bmod n$.
\item  If $n\in \{3,4,6\}$, or $n\in \{8,12\}$, and $p\not\equiv \pm 1\bmod n$, then we 
can choose a basis of $M(A)\otimes\QQ$ such that $(*)$ holds with $\zeta'=\zeta^{-1}$, and
\[F(v_1)=v_2,\text{ and } F(u_1)= u_2.\]
\end{enumerate}
\end{lemma}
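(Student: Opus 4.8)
The plan is to analyze the semisimple action of a cyclic group element $g$ of order $n$ on the Diedonn\'e module $M(A)\otimes\QQ$, exploiting the Frobenius-twisted decomposition of the eigenspaces. Since $\Bar\zeta_n\in k$ lifts to $\zeta_n\in W(\kk)$ and, in fact, the whole group $\mu_n$ of $n$-th roots of unity lifts, the eigenvalues of $g$ on $M(A)\otimes\QQ$ are powers of $\zeta:=\zeta_n$. By Lemma~\ref{eigenvalues}, because the action is rigid, \emph{all} primitive $n$-th roots of unity occur as eigenvalues of $g$ on $M(A)$. The module $M(A)\otimes\QQ$ is four-dimensional over $W(\kk)\otimes\QQ$, so there are exactly four eigenvalues counted with multiplicity, each a primitive $n$-th root of unity.

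For part $(1)$, the key structural input is the semilinearity relation $Fa=\sigma(a)F$ from the Diedonn\'e ring, where $\sigma$ lifts the $p$-power Frobenius. First I would note that $F$ carries the $\zeta^a$-eigenspace of $g$ into the $\sigma(\zeta^a)=\zeta^{ap}$-eigenspace, since $F(gv)=F(\zeta^a v)=\sigma(\zeta^a)F(v)=\zeta^{ap}F(v)$ and $gF(v)=F(gv)$ as $g$ is defined over $W(\kk)\otimes\QQ$ and commutes with $F$ (the action of $G$ comes from $\End(A)$, which commutes with Frobenius). Thus $F$ permutes the eigenspaces according to multiplication by $p$ on the exponents modulo $n$. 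When $p^2\equiv 1\bmod n$, this permutation has order dividing $2$, so the four eigenvalues pair up as $\{\zeta,\zeta^p\}$ and $\{\zeta',\zeta'^p\}$ for some primitive root $\zeta'$; choosing eigenvectors $v_1,v_2,u_1,u_2$ in the respective eigenspaces gives $(*)$. For the symplectic refinement, I would use that a symplectic action preserves a nondegenerate alternating form on $T_0^*(A)=M(A)/FM(A)$, forcing the two eigenvalues on the cotangent space to be inverse to each other; since the cotangent eigenvalues are $\zeta$ and $\zeta'$ (one from each $F$-orbit), this yields $\zeta'=\zeta^{-1}$.

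For part $(2)$, I would argue by contradiction: if $n\in\{5,8,10,12\}$ and $p\equiv\pm 1\bmod n$, then $\sigma$ fixes each $n$-th root of unity, so $F$ preserves every $g$-eigenspace and cannot permute them nontrivially. The slope structure then becomes the obstruction. Since $A$ is non-ordinary and $n$ is prime to $p$, I would examine the action of $F$ and $V$ on the one-dimensional eigenspaces and show that each eigenspace must be pure of slope $0$ or $1$ (because $F$ is invertible up to $p$-powers on a one-dimensional $\sigma$-fixed piece), contradicting the non-ordinary hypothesis for these particular $n$; concretely one checks that $\QQ(\zeta_n)$ for $n\in\{5,8,10,12\}$ has a unique prime over $p$ precisely when $p\equiv\pm 1\bmod n$ fails, so the $\pm 1$ case forces incompatible slopes.

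For part $(3)$, I would construct the basis explicitly realizing $F(v_1)=v_2$ and $F(u_1)=u_2$. When $n\in\{3,4,6\}$ we always have $p^2\equiv 1\bmod n$ (as $(\ZZ/n)^*$ has exponent $2$ for these $n$), and when $n\in\{8,12\}$ with $p\not\equiv\pm 1\bmod n$ the element $p$ has order exactly $2$ in $(\ZZ/n)^*$, so in all listed cases $F$ swaps the two eigenvalues within each orbit. Starting from any eigenvector $v_1$ with $g v_1=\zeta v_1$, I would set $v_2:=F(v_1)$, which automatically lies in the $\zeta^p$-eigenspace by the semilinearity computation above; similarly $u_2:=F(u_1)$. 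The symplectic hypothesis again pins down $\zeta'=\zeta^{-1}$ via the alternating form on the cotangent space, exactly as in part $(1)$. I expect the main obstacle to be the slope argument in part $(2)$: one must rule out the existence of \emph{any} supersingular (or merely non-ordinary) Diedonn\'e module whose $F$-action is compatible with an $F$-fixed eigenspace decomposition, and this requires carefully tracking the Newton polygon forced by the constraint that $F$ stabilizes each one-dimensional eigenline, invoking the slope decomposition of Theorem~\ref{Manin_th} together with the arithmetic of how $p$ splits in $\QQ(\zeta_n)$.
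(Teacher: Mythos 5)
Your part (2) breaks on a false key claim. You assert that $p\equiv\pm 1\bmod n$ makes $\sigma$ fix every $n$-th root of unity, so that $F$ preserves each $g$-eigenspace; this is only true for $p\equiv 1\bmod n$. When $p\equiv -1\bmod n$ we have $\sigma(\zeta)=\zeta^p=\zeta^{-1}$, so $F$ \emph{swaps} the eigenlines $M_{\zeta^a}\leftrightarrow M_{\zeta^{-a}}$ in pairs, and then there is no slope obstruction whatsoever: a module in which $F$ interchanges two eigenlines is isoclinic, and taking the valuations appropriately gives slope $1/2$ on each pair, i.e.\ a supersingular (hence non-ordinary) module. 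Such surfaces really exist --- for instance, for $p\equiv -1\bmod 8$ there is a supersingular abelian surface with a rigid $C_8$-action (Theorem~\ref{SSAVp2}, column $I$, excludes only $p\equiv 1\bmod 8$). So non-ordinarity alone cannot rule out $p\equiv-1\bmod n$; the missing ingredient is the symplectic hypothesis, which the paper's own proof uses (its proof of $(2)$ ends by noting it has proved the claim \emph{for symplectic actions}): symplecticness forces the cotangent eigenvalues, hence $\zeta'$, to equal $\zeta^{-1}$, and then the eigenvalues of $g$ on $M(A)\otimes\QQ$ all lie in $\{\zeta,\zeta^{-1}\}$, contradicting Lemma~\ref{eigenvalues}, which says all $\phi(n)=4$ primitive roots must occur. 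Your proposal never invokes symplecticness in part (2), so it cannot close this case. (Your subsidiary arithmetic claim is also wrong: for $n\in\{8,12\}$ the group $(\ZZ/n\ZZ)^*$ is not cyclic, so an unramified $p$ never has a unique prime above it in $\QQ(\zeta_n)$.)

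Part (3) has the converse problem: you pin down $\zeta'=\zeta^{-1}$ ``via the alternating form on the cotangent space,'' i.e.\ you assume the action on $T_0^*(A)$ is symplectic. Part (3) carries no such hypothesis, and it must not: in the proof of Theorem~\ref{SSAVp2} this part is applied to a \emph{merely rigid} action precisely in order to build, via Lemma~\ref{lem_on_Dmod}, an isogenous surface on which the action becomes symplectic; assuming symplecticness there would be circular. The paper instead argues from rigidity: by Lemma~\ref{eigenvalues} the only options are $\zeta'=\zeta^{-1}$ or $\zeta'=\zeta^{-p}$, and in the second case one repairs the basis by replacing $(u_1,u_2)$ with $(u_2,F(u_2))$. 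Two smaller gaps of the same flavor occur in your part (1): your symplectic refinement silently assumes the two cotangent eigenvalues come one from each $F$-orbit, which you never prove (the paper gets this for free by building the basis in the opposite direction, lifting a $g$-eigenbasis of $T_0^*(A)=M(A)/FM(A)$ and applying $F$, and it kills the degenerate coincidence $\zeta^p=\zeta'$ by showing it would force $A$ to be ordinary); and you never address why your chosen eigenvectors are linearly independent when eigenvalues repeat, e.g.\ why $v_1$ and $v_2=F(v_1)$ are independent when $\zeta^p=\zeta$, which is exactly the point of the paper's slope-$1$ submodule argument.
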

\begin{proof}
There exists a basis $\bar v_1, \bar u_1$ of the cotangent space such that
\[g(\bar v_1)=\Bar\zeta \bar v_1\text{, and } g(\bar u_1)=\Bar\zeta'\bar u_1,\]
where $\Bar\zeta'\in k$ is a power of $\Bar\zeta$.
We can lift this basis to elements $v_1$ and $u_1$ of $M(A)$ such that
\[g(v_1)=\zeta v_1\text{, and } g(u_1)=\zeta'u_1,\] where $\zeta'\in W(k)$ lifts $\Bar\zeta'$.
Put $v_2=F(v_1)$, and $u_2=F(u_1)$. Then \[g(v_2)=\zeta^p v_2\text{, and }g(u_2)=(\zeta')^p u_2.\]

Assume that $p^2\equiv 1\mod n$. Then the equality $\zeta^p=\zeta'$ is equivalent to $(\zeta')^p=\zeta$, because
  $(\zeta')^p=\zeta^{p^2}=\zeta$.
It follows that, if $\zeta^p=\zeta'$, then $v_1,u_1$ generate a \Die submodule of $M(A)$ of slope $1$. By duality, slopes of $A$ are
$0$ and $1$, i.e., $A$ is ordinary. A contradiction.
Therefore, $\zeta^p\neq\zeta'$, and $v_1, v_2, u_1,u_2$ is a basis of $M(A)\otimes\QQ$.
Clearly, the action of $g$ on $T_0^*(A)$ is symplectic if and only if $\zeta'=\zeta^{-1}$. 
Part $(1)$ is proved.

We now prove $(2)$. If $n\in \{5,8,10,12\}$, and $v_1, v_2, u_1,u_2$ is a basis of $M(A)\otimes\QQ$,
 then the eigenvalues of $g$ are different by rigidity, and $p\not\equiv \pm 1\bmod n$.
Suppose that $v_1, v_2, u_1,u_2$ is not a basis of $M(A)\otimes\QQ$. 
According to part $(1)$, we have $p^2\not\equiv 1\mod n$, i.e., $n\in\{5,10\}$, and $p\not\equiv \pm 1\mod 5$. We proved that if the action of $g$ on $T_0^*(A)$ is symplectic, then $p\not\equiv \pm 1\bmod n$.

Now, we construct a basis of $M(A)\otimes\QQ$ that satisfy conditions of $(3)$;
 we call such a basis \emph{symplectic}. Our assumptions imply that $p^2\equiv 1\mod n$. We already proved that in this case a basis $v_1, v_2, u_1,u_2$ with property $(*)$ exists.

Assume that $n\in \{3,4,6\}$. According to Lemma~\ref{eigenvalues}, either $\zeta'=\zeta^{-1}$ and the basis is symplectic, or $\zeta'=\zeta^{-p}$. In the second case, the basis $v_1,v_2, u_2,F(u_2)$ is symplectic.

Assume that $n\in \{5,8,10,12\}$, and $p\not\equiv \pm 1\bmod n$. According to Lemma~\ref{eigenvalues}, all eigenvalues of $g$ are different; 
therefore, either $\zeta'=\zeta^{-1}$ and the basis is symplectic, or $\zeta'=\zeta^{-p}$.  In the second case the basis $v_1,v_2, u_2,F(u_2)$ is symplectic. The lemma is proved.
\end{proof}

\begin{lemma}\label{sympl_basis3}
Let $G=Q_{4n}$, and let $g\in G$ be a element of order $2n$. Assume that $2n$ is prime to $p$. 
Then over $k(\Bar\zeta_{2n})$ there is a basis of $T_0^*(A)$ such that generators of $G$ act as matrices
$$\left (\begin{array}{c c}
\Bar\zeta_{2n} & 0\\
0 & \Bar\zeta_{2n}^{-1}\\	
\end{array}\right)\quad\text{ and }\quad
\left (\begin{array}{c c}
0 & 1\\
-1 & 0\\	
\end{array}\right).$$

In particular, the action of $G$ is symplectic.
\end{lemma}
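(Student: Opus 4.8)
The plan is to diagonalize the cyclic generator over $k(\Bar\zeta_{2n})$ and then pin down the remaining generator using the defining relation of the binary dihedral group. Write $a=g$ for the given element of order $2n$ and $b$ for the order-$4$ generator, so that $a^n=b^2$ and $bab^{-1}=a^{-1}$. Since the action of $G$ is rigid, its restriction to the cyclic subgroup $\langle a\rangle\cong C_{2n}$ is rigid, so Lemma~\ref{eigenvalues}.(2) applies: the eigenvalues of $a$ on the Diedonn\'e module $M(A)$ are exactly the primitive $2n$-th roots of unity. Because $2n$ is prime to $p$, the element $a$ acts semisimply and $W(\kk)$-linearly on $M(A)$, hence on the quotient $T_0^*(A)\cong M(A)/FM(A)$; after base change to $k(\Bar\zeta_{2n})$ the two eigenvalues of $a$ on the $2$-dimensional space $T_0^*(A)\otimes_k k(\Bar\zeta_{2n})$ are therefore among the reductions of primitive $2n$-th roots of unity.

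Next I would use $bab^{-1}=a^{-1}$ to control this pair of eigenvalues. If $v$ is an eigenvector of $a$ with eigenvalue $\zeta$, then $a(bv)=b(b^{-1}ab)v=b(a^{-1}v)=\zeta^{-1}(bv)$, so $b$ carries the $\zeta$-eigenline to the $\zeta^{-1}$-eigenline. As $b$ is invertible and $\zeta$ is a \emph{primitive} $2n$-th root with $2n\ge 4$, we have $\zeta\neq\zeta^{-1}$; hence the two eigenvalues form the distinct pair $\zeta,\zeta^{-1}$, each of multiplicity one, and $b$ interchanges the two eigenlines. Taking $\Bar\zeta_{2n}$ to be the eigenvalue $\zeta$ of $a$ on $v$ (a legitimate choice of primitive root, which lifts to $\zeta_{2n}\in W(\kk)$), I set $u=-bv$, an eigenvector with $au=\Bar\zeta_{2n}^{-1}u$. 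In the basis $(v,u)$ the matrix of $a$ is $\diag(\Bar\zeta_{2n},\Bar\zeta_{2n}^{-1})$.

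To normalize $b$, note that $bv=-u$ by the definition of $u$, while $bu=-b^2v=-a^nv$. Here $a^n$ acts as the scalar $\Bar\zeta_{2n}^{\,n}=-1$ on all of $T_0^*(A)$, so $bu=v$; thus in the basis $(v,u)$ the matrix of $b$ is precisely the antidiagonal matrix displayed in the statement, and its square is $-\id$, consistent with $b^2=a^n$. This produces the two required matrices.

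Finally, the symplectic assertion is immediate: both generator matrices have determinant $1$, so the image of $G$ in $\GL(T_0^*(A))$ lies in $\SL_2$, i.e. $G$ acts trivially on the line $\wedge^2 T_0^*(A)\cong H^0(A,\Omega^2)$, which is the definition of a symplectic action. The step needing the most care is the first one: justifying that the eigenvalues of $a$ on the cotangent space --- a reduction mod $p$ of a quotient of the Diedonn\'e module --- are genuinely \emph{primitive} $2n$-th roots of unity and not merely $2n$-th roots, together with the compatibility of $\Bar\zeta_{2n}$ with its Witt lift; this is exactly what Lemma~\ref{eigenvalues}.(2) supplies.
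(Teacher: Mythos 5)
Your proof is correct and follows essentially the same route as the paper: diagonalize the cyclic generator over $k(\Bar\zeta_{2n})$ using the relation $bab^{-1}=a^{-1}$ to see that the two eigenvalues are $\Bar\zeta_{2n},\Bar\zeta_{2n}^{-1}$ and that $b$ swaps the eigenlines, then rescale one basis vector so that $b$ becomes the antidiagonal matrix. The only difference is that you spell out the details the paper leaves as ``straightforward'' (primitivity of the eigenvalues on $T_0^*(A)$ via Lemma~\ref{eigenvalues}, and the explicit normalization $u=-bv$), which is a welcome but not essentially different elaboration.
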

\begin{proof}
There exists an element $j\in G$ of order $4$ such that $jgj^{-1}=g^{-1}$.
Therefore, $\Bar\zeta_{2n}^{-1}$ is also an eigenvalue of $g$, and there exists a basis 
$v,u\in T^*(A)\otimes k(\Bar\zeta_{2n})$ such that the matrix of $g$ is diagonal.
It is straightforward to show that $j(v)=\gamma u$ for some $\gamma\in k(\Bar\zeta_{2n})$, and $j(u)=-\gamma^{-1}v$. 
Therefore, the basis $v,\gamma u$ is a desired basis.
\end{proof}

The following lemma is well known.

\begin{lemma}\label{sympl_basis4}
If $p>3$, then there exists a basis of $T_0^*(A)$ such that $\SL_2(\FF_3)$ is generated by $Q_8$ and the matrix 

$$\frac{1}{2}\left (\begin{array}{c c}
1+\zeta_4 & 1+\zeta_4\\
-1+\zeta_4 & 1-\zeta_4\\	
\end{array}\right);$$

and $\CSU_2(\FF_3)$ is generated by $\SL_2(\FF_3)$ and the matrix 

$$\left (\begin{array}{c c}
\zeta_8 & 0\\
0 & \zeta_8^{-1}\\	
\end{array}\right).$$

If $p>5$, then there exists a basis of $T_0^*(A)$ such that $\SL_2(\FF_5)$ is generated by $\SL_2(\FF_3)$ and the matrix

$$\left (\begin{array}{c c}
\zeta_5 & 0\\
0 & \zeta_5^{-1}\\	
\end{array}\right).$$


These matrices are symplectic; therefore, a symplectic basis for $Q_8$ is also a symplectic basis for $\SL_2(\FF_3)$, $\CSU_2(\FF_3)$ and $\SL_2(\FF_5)$. \qed
\end{lemma}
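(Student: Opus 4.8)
The plan is to reduce the statement to the elementary fact that on the two–dimensional space $T_0^*(A)$ a linear action is symplectic precisely when it preserves the (up to scalar unique) area form $\omega=du\wedge dv$, i.e.\ precisely when the action on $\wedge^2 T_0^*(A)$ is trivial, which for a two–dimensional space means that every group element acts with determinant $1$. Thus a symplectic action is nothing but a homomorphism into $\SL_2$, and the whole lemma amounts to exhibiting the listed matrices as generators of the binary polyhedral groups inside $\SL_2$ and checking that each has determinant $1$. The hypotheses $p>3$ (resp.\ $p>5$) guarantee that the order of the group is prime to $p$, so that $\zeta_4,\zeta_8$ (resp.\ $\zeta_5$) exist in $\kk$ and the resulting two–dimensional representation is faithful, i.e.\ the matrices genuinely live in $\SL_2(\kk)$.

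I would start from the realization of $Q_8$ already fixed in Lemma~\ref{Q_rig} and Proposition~\ref{FActionOnSingGraph}(2): in a suitable basis $u,v$ of $T_0^*(A)$ the generators $i,j$ act by
\[
i\mapsto\begin{pmatrix}\zeta_4&0\\0&-\zeta_4\end{pmatrix},\qquad
j\mapsto\begin{pmatrix}0&1\\-1&0\end{pmatrix},
\]
both of determinant $1$, so that $Q_8\subset\SL_2$ and the chosen basis is symplectic for $Q_8$.

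Next I would adjoin the matrix $\tau=\tfrac12\begin{pmatrix}1+\zeta_4&1+\zeta_4\\-1+\zeta_4&1-\zeta_4\end{pmatrix}$. A direct computation gives $\det\tau=1$ and $\tr\tau=1$, so $\tau$ satisfies $\tau^2-\tau+1=0$; hence $\tau$ has order $6$ with $\tau^3=-1$, and its image in $\mathrm{PSL}_2$ has order $3$. One checks that conjugation by $\tau$ cyclically permutes $i,j,k$, so $\langle Q_8,\tau\rangle\cong Q_8\rtimes C_3$ has order $24$ and is the binary tetrahedral group $\SL_2(\FF_3)$. Adjoining the diagonal matrix $\diag(\zeta_8,\zeta_8^{-1})$, of determinant $1$ and order $8$, enlarges this to a group of order $48$, the binary octahedral group $\CSU_2(\FF_3)$ (cf.\ the Remark preceding Theorem~\ref{Dickson}); adjoining instead $\diag(\zeta_5,\zeta_5^{-1})$, of determinant $1$ and order $5$, gives a group of order $120$, the binary icosahedral group $\SL_2(\FF_5)$.

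Finally, since every generator listed above lies in $\SL_2$, the whole group acts with determinant $1$ on $T_0^*(A)$, i.e.\ symplectically; in particular a basis that is symplectic for $Q_8$ remains symplectic for $\SL_2(\FF_3)$, $\CSU_2(\FF_3)$, and $\SL_2(\FF_5)$. I expect the only nonroutine point to be the identification of the abstractly defined groups $\SL_2(\FF_3),\CSU_2(\FF_3),\SL_2(\FF_5)$ with the matrix groups just generated; this is the classical description of the finite subgroups of $\SU(2)$ as binary polyhedral groups, which can be settled either by matching orders and the standard presentations or by citing~\cite{Wolf}, and which is why the lemma is stated as well known.
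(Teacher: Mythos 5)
Your reduction of ``symplectic'' to ``determinant $1$'' on the two-dimensional space $T_0^*(A)$ is the right framing, and your verifications for $Q_8$, $\SL_2(\FF_3)$ and $\ESL_2(\FF_3)$ are correct: $\det\tau=\tr\tau=1$, so $\tau^3=-1$; as a quaternion $\tau=(1+i+j+k)/2$, and conjugation by it permutes $i,j,k$ cyclically, so $\langle Q_8,\tau\rangle$ is the binary tetrahedral group of order $24$; and $\diag(\zeta_8,\zeta_8^{-1})$ normalizes this group with square equal to $i$, giving the binary octahedral group of order $48$. Note that the paper itself offers no argument at all (the lemma is declared well known and closed with a \qed), so there is nothing to compare on the level of method; the issue is with the content of your final step.

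The $\SL_2(\FF_5)$ step contains a genuine error, and it is exactly the point you deferred to ``matching orders'': the listed matrices do not generate a group of order $120$. Indeed, $\diag(\zeta_5,\zeta_5^{-1})$ commutes with the $Q_8$-generator $i=\diag(\zeta_4,\zeta_4^{-1})$, so the group they generate contains $\diag(\zeta_4\zeta_5,(\zeta_4\zeta_5)^{-1})$, an element of order $20$; but the element orders in $\SL_2(\FF_5)$ are $1,2,3,4,5,6,10$ only (the centralizer of an element of order $5$ is cyclic of order $10$), so no group containing $C_{20}$ is isomorphic to $\SL_2(\FF_5)$. In fact, over $\CC$ the group $\langle Q_8,\tau,\diag(\zeta_5,\zeta_5^{-1})\rangle$ is an infinite subgroup of $\mathrm{SU}(2)$, since the only finite subgroups properly containing the binary tetrahedral group are the binary octahedral and icosahedral groups, neither of which contains $C_{20}$; over $\kk$ with $p>5$ the same argument combined with Dickson's Theorem~\ref{Dickson} shows the generated group must be $\SL_2(\FF_q)$ or $\ESL_2(\FF_q)$ for some power $q$ of $p$, which is far larger than $120$. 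This defect is actually in the lemma as printed: by the same order-$20$ obstruction, no basis can simultaneously put $Q_8$ in the standard form and an element of order $5$ of $\SL_2(\FF_5)$ in diagonal form. What is true --- and is all that the paper uses later --- is that a rigid two-dimensional action of $\SL_2(\FF_5)$ is symplectic, and this follows at once from your own determinant criterion: $\SL_2(\FF_5)$ is perfect, so the character $\det\colon \SL_2(\FF_5)\to\kk^*$ is trivial. Alternatively, one can diagonalize the order-$5$ element at the price of taking a non-standard copy of the binary tetrahedral group, or keep $Q_8$ standard and use a non-diagonal generator of order $5$; either repair should replace the generation claim you made.
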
 

\begin{proof}[Proof of theorem~\ref{SSAVp2}.]
If there exists a supersingular abelian surface with a rigid action of a group $G$, then,
according to Corollary~\ref{GisSLsubgroup}, there exists a supersingular abelian surface $B$ over $\FF_{p^2}$ with a rigid action of $G$. It follows that $G$ and $p$ satisfy the conditions in column $I$ by Theorem~\ref{rigid_actions}, and Proposition~\ref{alg_inj}.
Let $A$ be an abelian surface over $k$ with a rigid and symplectic action of $G$. Then the conditions in column $II$ follow from Lemmata~\ref{sympl_basis} and~\ref{sympl_basis2}.

Let us prove the existence part. 
According to Theorem~\ref{th_curv}, there exists a supersingular elliptic curve $E$ over $\FF_{p^2}$ such that $\End^\circ(E)\cong\HH_p$;
 it follows that $\End^\circ(E^2)\cong M(2,\HH_p).$  
By Theorem~\ref{main_cor}, there exists a supersingular abelian surface with an action of a group $G$ if and only if there exists a homomorphism $\QQ[G]^\rig\to M(2,\HH_p)$. 
According to Theorem~\ref{rigid_actions}, and Proposition~\ref{alg_inj}, the conditions in the first column of the table imply that such a homomorphism exists.

Let $A=E^2$, and let $\HH=\QQ[G]^\rig$. 
We are going to prove that under the conditions of column $II$ there exists a submodule $M$ of $M(A)\otimes\QQ$ such that the action on $M/FM$ is symplectic and $M\otimes\QQ\cong M(A)\otimes\QQ$; we will call such a submodule \emph{symplectic}. 
Assume that a symplectic submodule exists. Then, according to Lemma~\ref{lem_on_Dmod}, 
there exist an abelian variety $B$ and a $p$-isogeny $\phi:A\to B$ such that $M(B)\cong M$;
in particular, for all $\ell\neq p$ we have $T_\ell(A)\cong T_\ell(B)$. Therefore, the natural homomorphism $G\to\End^\circ(B)$ factors through $\End(B)$, and $G$ acts on $B$.
By construction, the isogeny $\phi$ is $G$--equivariant, and the action of $G$ on $B$ is rigid and symplectic.

Now, we construct a symplectic submodule. Firstly, assume that $G$ is cyclic of order $n$. 
Let $\HH=\QQ(\zeta_n)$ be a quadratic field extension of $\QQ$. 
If $\Bar\zeta_n\not\in k$, then the action is always symplectic according to Lemma~\ref{sympl_basis2}.
If $\Bar\zeta_n\in k$, then, according to Lemma~\ref{sympl_basis}, we can choose a symplectic submodule $M$ of $M(A)\otimes\QQ$.

Assume that $\HH=\QQ(\zeta_n)$ is a quartic field extension of $\QQ$, that is, $n\in \{5,8,10, 12\}$. 
Then either $n$ divides $p^2-1$, or $\HH\cong\QQ(\zeta_5)$. 
In the first case, Lemma~\ref{sympl_basis} implies that there exists a symplectic submodule $M$ in $M(A)$. If $\HH\cong\QQ(\zeta_5)$, then according to Lemma~\ref{sympl_basis2} the action is symplectic if and only if $p^2\equiv -1\bmod 5$, that is, $p\not\equiv \pm 1\bmod 5$.

The remaining cases follow from Lemmata~\ref{sympl_basis3} and~\ref{sympl_basis4}.
\end{proof}

\begin{prop}\label{A_H}
Let $\HH$ be a central simple algebra over $\QQ$, and let $A$ be an ordinary abelian variety over $k$ with a rigid action of a finite group $G$ such that the action factors through $\HH:$
\[\QQ[G]^\rig\to\HH\to\End^\circ(A).\]
Then the characteristic polynomial of the action of any $g\in G$ on $T_0^*(A)$ is congruent modulo $p$ to a power of the cyclotomic polynomial $\Phi_n$, where $n$ is the order of $g$.
\end{prop}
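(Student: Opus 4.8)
The plan is to reduce everything to the slope one part of the Diedonné module and then exploit that $\HH$ is central simple over $\QQ$.

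First I would reduce to a statement over the Witt ring. By Proposition~\ref{SSDmod} we have $M(A)\cong M\oplus M^*$, where $M$ is the slope one summand, free of rank $\dim A$ over $W(k)$, and $T_0^*(A)\cong M/pM$. Every element of $\End^\circ(A)$ commutes with $F$, hence preserves the slope decomposition of Theorem~\ref{Manin_th}; in particular $g$ (and all of $\HH$) respects $M\oplus M^*$, and the isomorphism $T_0^*(A)\cong M/pM$ is $g$-equivariant. Since $g$ acts $W(k)$-linearly on the free module $M$, the characteristic polynomial of $g$ on $T_0^*(A)$ is the reduction modulo $p$ of the characteristic polynomial $\chi_M(t)\in W(k)[t]$ of $g$ on $M$. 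Thus it suffices to prove $\chi_M(t)=\Phi_n(t)^{\dim A/\phi(n)}$.

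Next I would bring in the central simple structure. Set $E=\mathrm{Frac}\,W(k)$, a field. Base change of a central simple $\QQ$-algebra along the field extension $\QQ\subset E$ is again central simple, so $\HH\otimes_\QQ E$ is central simple over $E$, hence simple Artinian with a unique simple module $S$, every module over it being a direct sum of copies of $S$. The action of $\HH$ on $M(A)$ is $W(k)$-linear and commutes with $F$, so $M(A)\otimes\QQ$ is an $\HH\otimes_\QQ E$-module and $M\otimes\QQ$ is a submodule. Consequently $M(A)\otimes\QQ\cong S^{\oplus c}$ and $M\otimes\QQ\cong S^{\oplus a}$ for some integers $a\le c$. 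Finally I would feed in rigidity: by Lemma~\ref{eigenvalues} applied to $\langle g\rangle$ of order $n$, the module $M(A)\otimes\QQ$ is free over $\QQ(\zeta_n)\otimes E$ with $\zeta_n$ acting as $g$, so the characteristic polynomial of $g$ on $M(A)$ equals $\Phi_n^{2\dim A/\phi(n)}$. Writing this as $\det(t-g\mid S)^{c}$ and comparing multiplicities of the roots over $\overline{E}$, I conclude that $\det(t-g\mid S)$ is itself a power of $\Phi_n$; therefore $\chi_M=\det(t-g\mid S)^{a}$ is a power of $\Phi_n$ as well. Comparing degrees forces $\chi_M=\Phi_n^{\dim A/\phi(n)}$, and reducing modulo $p$ gives the desired congruence.

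The crux is the descent in the last step. Rigidity only tells us that the eigenvalues of $g$ are balanced over the primitive $n$-th roots on the \emph{whole} module $M(A)$; there is no reason for the slope one part $M$ by itself to be balanced, and over an endomorphism algebra with center strictly larger than $\QQ$ (for instance $\HH=\QQ(\zeta_n)$) the conclusion indeed fails. It is precisely the hypothesis that $\HH$ is central simple over $\QQ$, so that $\HH\otimes_\QQ E$ is simple with a single isotype $S$, that propagates the balance from $M(A)$ to $S$ and hence to $M$. I expect that verifying this isotypicity, together with the compatibility of the slope decomposition with the $\HH$-action, will be the main point; the remaining steps are formal.
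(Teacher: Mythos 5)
Your proof is correct, and its skeleton is the same as the paper's: both reduce via Proposition~\ref{SSDmod} to the slope-one summand $M$ with $T_0^*(A)\cong M/pM$, and both use that every module over a scalar extension of the central simple algebra $\HH$ is isotypic, which is what forces the characteristic polynomial on the slope-one part to be balanced. The difference lies in where the cyclotomic input comes from. The paper never invokes rigidity on the Dieudonn\'e module: it observes that the unique irreducible $\QQ$-representation $V$ of $\HH$ is, through $\QQ[G]^\rig\to\HH$, a $G$-representation without fixed points, so the characteristic polynomial of $g$ on $V$ is a power of $\Phi_n$ over $\QQ$ from the outset, and then identifies $M\otimes\QQ_p$ with copies of $V\otimes\QQ_p$. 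You instead obtain the power of $\Phi_n$ on all of $M(A)$ from rigidity (Lemma~\ref{eigenvalues}) and extract a $c$-th root, using separability of $\Phi_n$ in characteristic zero, to descend first to the simple module $S$ and then to $M\otimes\QQ\cong S^{\oplus a}$; this costs you a multiplicity-comparison step that the paper's route avoids. In exchange, your version is more scrupulous about the coefficient field: working over $E=\mathrm{Frac}\,W(k)$ is the right choice, since the polynomial whose reduction is the characteristic polynomial on the $k$-vector space $T_0^*(A)$ is that of $g$ as a $W(k)$-linear map, a point the paper's tensoring with $\QQ_p$ leaves implicit (and which needs care when $k$ is an infinite perfect field). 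Your closing remark that the statement fails for non-central $\HH$, e.g.\ $\HH=\QQ(\zeta_n)$ acting on an ordinary CM abelian variety where the CM type unbalances the eigenvalues on $M$, pinpoints exactly why the centrality hypothesis is needed.
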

\begin{proof}
Let $V$ be the irreducible representation of $\HH$ over $\QQ$. The induced representation of $G$ on $V$ is without fixed points;
therefore, the characteristic polynomial of any element $g\in G$ of order $n$ is equal to some power of the cyclotomic polynomial $\Phi_n^{r_g}$.
According to Proposition~\ref{SSDmod}, $M(A)\cong M\oplus M^*$ is a direct sum of Diedonn\'e modules. The action of $G$ on $M$ comes from the composition 
\[\QQ[G]^\rig\to \HH\otimes\QQ_p\to\End^\circ(A)\otimes\QQ_p\to\End(M\otimes\QQ_p).\]
It follows that $M\otimes\QQ_p$ is isomorphic to $(V\otimes\QQ_p)^{r_M}$ as a $\HH\otimes\QQ_p$-module, and the characteristic polynomial of the action of $g$ on $M\otimes\QQ_p$ is equal to $\Phi_n^{r_gr_M}$. 
According to Proposition~\ref{SSDmod}, we have the isomorphism $T^*(A)\cong M/pM$.
The proposition is proved.
\end{proof}

\begin{ex}\label{A_V}
Let $V$ be a $\QQ$-representation of $G$ without fixed points, and let $\HH=M(d,\QQ)$, where $d=\dim V$. For any abelian variety $B$ over $k$ we get homomorphisms 
\[\QQ[G]^\rig\to M(d,\QQ)\to M(d,\QQ)\otimes\End^\circ(B)\cong\End^\circ(B^d),\]
According to Theorem~\ref{main_cor}, there exists an abelian variety $B_V$ of dimension $d\dim B$ with a rigid action of $G$. 
\end{ex}

\begin{thm}\label{small_groups}
There exists an abelian surface over $\FF_p$ with a rigid and symplectic action of $G$ in the following cases:
\begin{enumerate}
	\item $G$ is a cyclic group of order $2,3,4$, or $6$;
\item $p\neq 2$, and $G$ is $Q_8$, or $\SL_2(\FF_3)$;
\item $p>3$, and $G\cong Q_{12}$.
\end{enumerate}
\end{thm}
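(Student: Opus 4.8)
The plan is to deduce everything from Corollary~\ref{main_cor}: for each group $G$ in the list it suffices to produce one abelian surface $A$ over $\FF_p$ together with a homomorphism of $\QQ$-algebras $\QQ[G]^\rig\to\End^\circ(A)$, after which rigidity is automatic and only the symplectic condition remains to be secured. I would always look for $A$ isogenous to $E^2$ for a well-chosen elliptic curve $E/\FF_p$, so that $\End^\circ(A)\cong M(2,\End^\circ(E))$, and I would exploit that over the prime field exactly two kinds of $E$ are available from Theorem~\ref{th_curv}: an ordinary curve whose endomorphism algebra is a prescribed imaginary quadratic field $K$ in which $p$ splits, and the supersingular curve with $\End^\circ(E)\cong\QQ(\sqrt{-p})$ (which exists for every odd $p$). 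In either case $\End^\circ(E)$ is an imaginary quadratic field. The symplectic property would then be arranged at the level of Dieudonn\'e modules via Lemma~\ref{lem_on_Dmod} and the basis lemmas of this section.

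For the cyclic groups the algebra embedding is elementary. By Lemma~\ref{Q_rig} one has $\QQ[C_n]^\rig\cong\QQ(\zeta_n)$, which is $\QQ$ for $n=2$ and an imaginary quadratic field for $n=3,4,6$; in all cases it embeds into $M(2,\QQ)$ through its regular representation, hence into $\End^\circ(E^2)$ for any $E/\FF_p$. Taking $E$ supersingular makes $A\sim E^2$ non-ordinary, and the symplectic condition then follows from Lemma~\ref{sympl_basis2} when $\FF_p$ contains no primitive $n$-th root of unity (here $p\not\equiv1\bmod n$ forces $p\equiv-1\bmod n$ for $n\in\{3,4,6\}$, which is the condition of that lemma) and from Lemma~\ref{sympl_basis}(3) when it does; the case $n=2$ and every element of order $p$ are symplectic by Lemma~\ref{sympl_action}.

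The substantial cases are $Q_8$, $\SL_2(\FF_3)$ and $Q_{12}$, where by Lemma~\ref{Q_rig} and Theorem~\ref{rigid_actions} the rigid quotient is a \emph{definite} quaternion algebra over $\QQ$: namely $\QQ[Q_8]^\rig\cong\QQ[\SL_2(\FF_3)]^\rig\cong\HH_2$ and $\QQ[Q_{12}]^\rig\cong\HH_3$, ramified at $\{2,\infty\}$ and $\{3,\infty\}$ respectively. As in Proposition~\ref{alg_inj} and Corollary~\ref{Hp_hom}, the algebra $\HH_\ell$ with $\ell\in\{2,3\}$ admits a $\QQ$-algebra homomorphism into $M(2,K)\cong\End^\circ(E^2)$ if and only if the imaginary quadratic field $K=\End^\circ(E)$ splits $\HH_\ell$, i.e. if and only if $\ell$ is non-split in $K$. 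Once such an embedding is fixed, the action is rigid by Corollary~\ref{main_cor}, and it is automatically symplectic: for $Q_8$ and $Q_{12}$ this is Lemma~\ref{sympl_basis3} (the binary-dihedral relation forces eigenvalues $\zeta,\zeta^{-1}$ on the cotangent space), and for $\SL_2(\FF_3)$ with $p>3$ it is Lemma~\ref{sympl_basis4}, while the remaining order-$3$ elements when $p=3$ are symplectic by Lemma~\ref{sympl_action}.

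The main obstacle is therefore the purely arithmetic one of realizing the definite quaternion algebra over the \emph{prime} field $\FF_p$ rather than over $\FF_{p^2}$. Over $\FF_{p^2}$ one simply uses $\End^\circ(E)\cong\HH_p$ and Proposition~\ref{alg_inj} (this is Theorem~\ref{SSAVp2}), but over $\FF_p$ no elliptic curve has quaternionic endomorphism algebra, so one must split $\HH_\ell$ by the imaginary quadratic field $K=\End^\circ(E)$. The supersingular choice $K=\QQ(\sqrt{-p})$ works whenever $\ell$ is non-split there, which covers all $p$ except the congruence classes in which $\ell$ splits in $\QQ(\sqrt{-p})$ (for $\ell=2$ this is $p\equiv7\bmod 8$). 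For those residual $p$ I would instead take an \emph{ordinary} $E/\FF_p$ with CM by an auxiliary imaginary quadratic field $K$ chosen so that $p$ splits in $K$ (guaranteeing that $E$ exists over $\FF_p$) while $\ell$ stays non-split in $K$; the existence of such a $K$ is a two-congruence condition on the squarefree discriminant, satisfiable by quadratic reciprocity together with Dirichlet's theorem on primes in arithmetic progressions. Verifying that one of the two constructions is available for every admissible $p$ (all $p\neq2$ for $Q_8,\SL_2(\FF_3)$, and all $p>3$ for $Q_{12}$) is the crux of the argument; the symplectic reductions above are then routine.
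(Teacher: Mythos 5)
Your proposal is correct in substance, and although it shares the paper's skeleton --- reduce via Corollary~\ref{main_cor} to producing a $\QQ$-algebra homomorphism $\QQ[G]^\rig\to M(2,\End^\circ(E))\cong\End^\circ(E^2)$ for an elliptic curve $E$ over $\FF_p$, with the argument of Corollary~\ref{Hp_hom} supplying the quaternionic embeddings --- it genuinely diverges from the paper in both key technical steps. \emph{Choice of curves:} the paper works exclusively with \emph{ordinary} elliptic curves, writing down explicit Weil polynomials ($t^2-t+p$ or $t^2-4t+p$ for $\HH_2$; $t^2-3t+p$ or $t^2-t+p$ for $\HH_3$) whose endomorphism fields are non-split at $2$, resp.\ $3$; you instead use the supersingular curve (so $K=\QQ(\sqrt{-p})$) whenever $\ell\in\{2,3\}$ is non-split there, with an ordinary curve having CM by an auxiliary field, produced by reciprocity plus Dirichlet, as a fallback. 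Your version is less explicit but more robust: in fact the paper's choice $t^2-4t+p$ for $p\equiv 3\bmod 4$ has $2$ non-split in $\QQ(\sqrt{4-p})$ only when $p\equiv 7\bmod 8$ (for $p\equiv 3\bmod 8$ one has $4-p\equiv 1\bmod 8$, so $2$ splits), and it is precisely your supersingular option that covers those primes, so your two-pronged construction quietly repairs a slip in the paper. \emph{Symplecticity:} the paper gets it uniformly from Proposition~\ref{A_H}, which needs ordinarity but nothing else --- the characteristic polynomial of any $g\in G$ on $T_0^*(A)$ is a power of $\Phi_n$ modulo $p$, hence has determinant $1$ --- and this works verbatim even when $p$ divides $|G|$. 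You get it instead from the structure of rigid actions themselves: Lemma~\ref{sympl_basis3} for the binary dihedral groups, Lemma~\ref{sympl_basis4} for $\SL_2(\FF_3)$, and the Dieudonn\'e-module symplectic-submodule construction (Lemmata~\ref{sympl_basis2},~\ref{sympl_basis}, and~\ref{lem_on_Dmod}) for the cyclic groups. This has the bonus of working for supersingular surfaces, and so your proof additionally shows the examples for $Q_8$, $Q_{12}$, $\SL_2(\FF_3)$ can often be taken supersingular over $\FF_p$, which the paper's route does not give.

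One loose end in your cyclic case: Lemmata~\ref{sympl_basis2} and~\ref{sympl_basis} both assume $n$ prime to $p$, and Lemma~\ref{sympl_action} covers only elements of order exactly $p$, so the generator of $C_4$ over $\FF_2$, and of $C_6$ over $\FF_2$ or $\FF_3$, is not covered by anything you cite (the same remark applies to your attribution of the $n=2$ case to Lemma~\ref{sympl_action}; there one just notes $\det(-\id)=1$ on a surface). The patch is one line: in characteristic $p$ the group $\kk^*$ has no $p$-torsion, so for $C_4$ over $\FF_2$ the determinant of the generator, a root of unity of order dividing $4$, equals $1$; and a generator of $C_6$ is $-h$ with $h$ of order $3$, so its determinant equals $\det(h)$, which is $1$ by Lemma~\ref{sympl_basis2} (for $p=2$) or Lemma~\ref{sympl_action} (for $p=3$). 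This residual bookkeeping is exactly what the paper's combination of an ordinary curve with Proposition~\ref{A_H} makes invisible, and it is the main price you pay for avoiding that proposition.
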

\begin{proof}
Let $E$ be an ordinary elliptic curve over $\FF_p$, and let $G=C_n$, where $n\in\{3,4,6\}$.
The representation of $G$ on $V=\QQ^\rig[G]$ is a two-dimensional representation without fixed points. The abelian variety $E_V$ constructed in Example~\ref{A_V} is a surface, and, by Proposition~\ref{A_H}, the characteristic polynomial of the action of a generator on $T^*_0(A_V)$ is equal to $\Phi_n$. According to Lemma~\ref{sympl_action}, the action is rigid and symplectic. This proves part $(1)$.

If $G$ is not cyclic, we will construct an ordinary elliptic curve $E$ such that 
there exists a homomorphism from $\QQ^\rig[G]$ to $\End^\circ(E^2)\cong M(2,L)$, 
where $L=\End^\circ(E)$. Then, according to Theorem~\ref{main_cor}, in the isogeny class of $E^2$ there exists an abelian surface $A$ with a rigid action of $G$. If $G$ is $Q_8,Q_{12}$, or $\SL_2(\FF_3)$, then we use Proposition~\ref{A_H} with $\HH=\QQ[G]^\rig$ and find that the action on $A$ is symplectic. 

Let $G$ be $Q_8$ or $\SL_2(\FF_3)$. According to Corollary~\ref{Hp_hom}, if $L$ is not split at $2$, then there exists a homomorphism from $\QQ^\rig[G]\cong\HH_2$ to $M(2,L)$.  
We are going to apply Theorem~\ref{th_curv} and find an elliptic curve $E$ such that its endomorphism algebra $L$ is not split at $2$.
If $p\equiv 1\bmod 4$, then there exists an elliptic curve $E$ with the Weil polynomial $t^2-t+p$; 
the discriminant of this polynomial is $1-4p\equiv 5\bmod 8$. If $p\equiv 3\bmod 4$, then there exists an elliptic curve $E$ with the Weil polynomial $t^2-4t+p$; the discriminant of this polynomial is $16-4p\equiv 5\bmod 8$. In both cases $L$ is inert at $2$, if $p>2$.

If $G=Q_{12}$, then $\QQ^\rig[G]=\HH_3$. As before, we need an ordinary elliptic curve over $\FF_p$ such that its endomorphism algebra $L$ is not split at $3$. If $p\equiv 1\bmod 3$, then there exists an elliptic curve $E$ with the Weil polynomial $t^2-3t+p$; 
the discriminant of this polynomial is $9-4p\equiv 2\bmod 3$.  
If $p\equiv 2\bmod 3$, then there exists an elliptic curve $E$ with the Weil polynomial $t^2-t+p$; 
the discriminant of this polynomial is $1-4p\equiv 2\bmod 3$. In both cases $L$ is inert at $3$, if $p>3$.
\end{proof}

\begin{proof}[Proof of Theorem~\ref{main1}.]
The existence part follows from Theorems~\ref{SSAVp2} and~\ref{small_groups}. 
On the other hand, according to Theorem~\ref{Katsura_quotient}, the action of $G$ on $A$ is rigid and symplectic. Since $p$ does not divide the order of $G$, it follows from Theorem~\ref{rigid_actions} that the group $G$ belongs to the Katsura list from Theorem~\ref{Katsura_list}. 

If $G$ contains a cyclic subgroup of order $n\in\{5,8,12\}$, 
then, according to Corollary~\ref{Katsura}, the abelian surface $A$ is supersingular, and $\FF_{p^2}\subset k$.
In this situation, according to Theorem~\ref{SSAVp2}, we have $p\not\equiv\pm 1\bmod n$.
\end{proof}

\section{Traces of Frobenius}\label{SecZeta}
In this section, we use our methods to compute traces of Frobenius and zeta functions of generalized Kummer surfaces.

\subsection{Zeta functions of abelian varieties and generalized Kummer surfaces}
\label{s04}
Let $k=\fq$ be a finite field of order $q$, and let $X$ be an algebraic variety over $k$. 
Denote by $N_r$ the number of points on $\bar X$ defined over $\FF_{q^r}$. Then the zeta function of $X$ is the
formal power series
\[Z_X(t)=\exp(\sum_{r=1}^\infty
\frac{N_rt^r}{r}).\]

The zeta function of an abelian variety $A$ with the Weil polynomial $f_A=\prod_j (t-\pi_j)$ can be computed as follows:
\[Z_A(t)=\prod_{i=0}^{2\dim A}P_i(A,t)^{(-1)^{i+1}},\] where
\[P_i(A,t)=\prod_{j_1<\dots <j_i} (1-\pi_{j_1}\dots\pi_{j_i}t).\]

The zeta function of a Shioda supersingular $K3$ surface is given by the formula
\begin{equation}\label{zeta}
    Z_X(t)=\frac{1}{(1-t)P_2(qt)(1-q^2t)},
\end{equation}
where
\[
P_2(t)=\det(1-Ft|\NS(\XX)\otimes\QQ)=\prod_r\Phi_r(t)^{\lambda_r}
\]
is the characteristic polynomial of Frobenius automorphism on $\NS(\XX)$. 
Since the roots of this polynomial are roots of unity, we can write it as a product of cyclotomic polynomials $\Phi_r$.
Therefore, the zeta function is uniquely determined by $P_2$, and this polynomial is determined by the numbers $\lambda_r$. 
We denote the zeta function by $(1^{\lambda_1},\dots, r^{\lambda_r})$.

From the definitions it follows that the number of points on a supersingular $K3$ surface $X$ over $\FF_q$ is equal to \[1+q\Tr_X +q^2,\]
 where $\Tr_X$ is the trace of the Frobenius action on $\NS(\XX)$.

\subsection{Traces of Frobenius over $\FF_{p^{2r}}$.} 

The following lemma is well known.

\begin{lemma}\label{ZetaExCurve}
Let $X=X(A,G)$ be a generalized Kummer surface. Then 
\[\NS(\XX)\otimes\QQ\cong\oplus_E\QQ E\oplus\NS(\overline{A})^G\otimes\QQ,\]
where the sum is over the exceptional lines $E$ of the resolution of singularities of $A/G$.
\end{lemma}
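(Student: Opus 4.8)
The claim is a decomposition of the rational Néron–Severi group of a generalized Kummer surface $X = X(A,G)$ into the span of the exceptional curves plus the $G$-invariants of $\NS(\overline A)$. The plan is to relate $\NS(\overline X)$ to $\NS(\overline{A/G})$ via the minimal resolution $\pi:\overline X\to \overline{A/G}$, and then to relate $\NS(\overline{A/G})$ to $\NS(\overline A)^G$ via the quotient map $\rho:\overline A\to \overline{A/G}$. Since the statement is after tensoring with $\QQ$, I may freely invert the order of $G$ and use that all singularities of $A/G$ are rational double points (by Theorem~\ref{Katsura_quotient}, the action being rigid and symplectic with RDP singularities).

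**Step 1: Resolution contributes the exceptional classes.** First I would handle the minimal resolution $\pi:\overline X\to\overline{A/G}$. Because the singularities are rational double points (hence rational singularities), $\pi_*\OO_{\overline X}=\OO_{\overline{A/G}}$ and $R^1\pi_*\OO_{\overline X}=0$. The standard exact-sequence argument for a resolution of rational singularities gives
\[
\NS(\overline X)\otimes\QQ \cong \bigl(\textstyle\bigoplus_E \QQ\,E\bigr)\ \oplus\ \pi^*\bigl(\NS(\overline{A/G})\otimes\QQ\bigr),
\]
where the sum runs over the exceptional curves $E$ of $\pi$ and the two summands are orthogonal for the intersection form. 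The exceptional divisors are precisely the $(-2)$-curves in the ADE configurations recorded in Proposition~\ref{Sing}. This step is essentially formal once rationality of the singularities is in hand.

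**Step 2: The quotient identifies $\NS(\overline{A/G})\otimes\QQ$ with $\NS(\overline A)^G\otimes\QQ$.** Next I would show $\pi^*(\NS(\overline{A/G})\otimes\QQ)\cong \NS(\overline A)^G\otimes\QQ$. The finite quotient map $\rho:\overline A\to\overline{A/G}$ induces a pullback $\rho^*:\NS(\overline{A/G})\otimes\QQ\to \NS(\overline A)\otimes\QQ$ whose image lands in the $G$-invariants $\NS(\overline A)^G\otimes\QQ$, since $\rho$ is $G$-invariant. Conversely, for a $G$-invariant class one uses the transfer (push-pull) map: $\rho_*\rho^*=\deg(\rho)=|G|$ on $\NS(\overline{A/G})\otimes\QQ$, and $\rho^*\rho_* = \sum_{g\in G} g^*$ on $\NS(\overline A)\otimes\QQ$, which restricts to multiplication by $|G|$ on the invariant subspace. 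Since $|G|$ is invertible in $\QQ$, these formulas show $\rho^*$ is an isomorphism onto $\NS(\overline A)^G\otimes\QQ$ with inverse $\tfrac{1}{|G|}\rho_*$. Composing with the isomorphism from Step~1 yields the asserted decomposition.

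**Main obstacle.** The routine parts are the two averaging/transfer arguments; the only genuinely delicate point is the compatibility of Galois/Frobenius structure, since the lemma is implicitly used later to read off the Frobenius trace. The pullback $\pi^*$ and the averaged pullback $\tfrac{1}{|G|}\rho_*$ are defined over $k$ when the relevant maps are, so the isomorphism is Galois-equivariant; one must only check that passing to $\overline k$ and invoking the invariants does not disturb equivariance, which it does not because $G$ acts by $k$-morphisms and commutes with Frobenius. The one subtlety to verify carefully is that the exceptional summand $\bigoplus_E\QQ E$ is itself a Galois-submodule (Frobenius permutes the exceptional curves), which is exactly what Proposition~\ref{FActionOnSingGraph} analyzes; but for the present statement, which is only an abstract $\QQ$-vector space decomposition, it suffices that the summand is spanned by the exceptional classes, so this compatibility is not needed here and can be deferred.
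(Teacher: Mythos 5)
Your proposal is correct, but note that the paper itself offers no argument for this lemma: it is stated as ``well known'' and used directly, so there is no proof of record to compare against. Your two-step argument is the standard one and it works: Step 1 is Mumford's orthogonal decomposition for a resolution of a normal surface with negative-definite exceptional intersection matrix, and Step 2 is the transfer argument $\rho_*\rho^*=|G|$, $\rho^*\rho_*=\sum_{g\in G}g^*$, which identifies $\NS(\overline{A/G})\otimes\QQ$ with $\NS(\overline{A})^G\otimes\QQ$ after inverting $|G|$. Two small points are worth making explicit if you write this up. First, to define $\pi^*$ and $\rho^*$ on Weil divisor classes of the singular quotient you should invoke $\QQ$-factoriality of $A/G$ (automatic for quotients of a smooth surface by a finite group), or equivalently work on the smooth locus and take closures; ``rational double points'' alone is not the property being used for the splitting, negative definiteness of the exceptional lattice is. Second, your formula $\rho^*\rho_*=\sum_g g^*$ could in principle acquire ramification corrections, and $|G|$ need not be prime to $p$ in the general setting of the lemma; what saves the formula is precisely the rigidity of the action, which forces the fixed locus of every nontrivial $g\in G$ to be finite, so $\rho$ is \'etale in codimension one and no inertia multiplicities appear. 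With those two remarks your argument is complete, and your closing observation is also right: the statement as an abstract $\QQ$-vector space decomposition needs no Frobenius equivariance, although the maps you construct are in fact Galois-equivariant, which is what Lemma~\ref{zeta_lemma} and Proposition~\ref{FActionOnSingGraph} exploit later.
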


First, we compute the zeta function of the quotient $A/G$.

\begin{lemma}\label{zeta_lemma}
Let $A$ be an abelian surface over $\FF_q$ with the Weil polynomial $f(t)=(t^2 +\eps q)^2$ with a rigid action of a group $G$, where $\eps=\pm 1$. 
Denote the characteristic polynomial of the Frobenius action on $H^2(\Bar A,\QQ_2)^G$ by $h(t)$.
\begin{itemize}
	\item If $G$ is cyclic of order $3,4$, or $6$, then $h(t)=(t-q)^2(t+q)^2$.
	\item If $G=Q_8$, $Q_{12}$, or $\SL_2(\FF_3)$, then $h(t)=(t+\eps q)^2(t-\eps q)$. 
\end{itemize}
\end{lemma}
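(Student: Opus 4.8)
The plan is to compute $h(t)$ entirely from the commuting actions of Frobenius $F$ and of $G$ on $H^1(\bar A,\QQ_2)$, using that $H^2(\bar A,\QQ_2)=\wedge^2 H^1(\bar A,\QQ_2)$. Since $G$ acts through endomorphisms defined over $\FF_q$, the operator $F$ commutes with $G$, so it preserves $H^2(\bar A,\QQ_2)^G$ and $h(t)$ is its characteristic polynomial there. As invariants commute with base change, I will pass to $\bar\QQ_2$ and compute $(\wedge^2 V)^G$ for $V=H^1(\bar A,\QQ_2)\otimes\bar\QQ_2$. Because $F$ is semisimple with characteristic polynomial $(t^2+\eps q)^2$, its minimal polynomial on $V$ is the squarefree $t^2+\eps q$; hence $F^2=-\eps q$ and $F$ has eigenvalues $\alpha,-\alpha$, each of multiplicity $2$, where $\alpha^2=-\eps q$. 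Everything then follows by reading off the $G$-module structure of $V$ from the embedding $\QQ[G]^{\rig}\hookrightarrow\End^\circ(A)$ supplied by the rigid action.

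For $G\in\{Q_8,Q_{12},\SL_2(\FF_3)\}$ we have $\QQ[G]^{\rig}=\HH_2$ or $\HH_3$, and $\HH_r\otimes\bar\QQ_2\cong M_2(\bar\QQ_2)$, so $V\cong S\otimes U$ with $S$ the $2$-dimensional irreducible $G$-representation and $U$ a $2$-dimensional multiplicity space. Since $G\subset\SL(S)$, the determinant of $S$ is trivial, so $\wedge^2 S$ is the trivial representation and $S\cong S^\vee$. Commuting with $M_2(\bar\QQ_2)$, Frobenius acts as $\mathrm{id}_S\otimes F_U$, and matching multiplicities forces $F_U$ to have eigenvalues $\alpha,-\alpha$. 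From $\wedge^2(S\otimes U)=(\wedge^2 S\otimes\mathrm{Sym}^2 U)\oplus(\mathrm{Sym}^2 S\otimes\wedge^2 U)$, taking $G$-invariants kills the second summand because $(\mathrm{Sym}^2 S)^G=0$: indeed $\dim(S\otimes S)^G=\dim\End_G(S)=1$ by Schur, and this line is already $\wedge^2 S$. Thus $(\wedge^2 V)^G=\wedge^2 S\otimes\mathrm{Sym}^2 U$, on which $F$ acts through $\mathrm{Sym}^2 F_U$ with eigenvalues $\alpha^2,-\alpha^2,\alpha^2$, that is $-\eps q,\eps q,-\eps q$. This gives $h(t)=(t+\eps q)^2(t-\eps q)$.

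For $G=C_n$ with $n\in\{3,4,6\}$ we have $\QQ[G]^{\rig}=\QQ(\zeta_n)$, so $V=V^{\zeta}\oplus V^{\zeta^{-1}}$ splits into the two $2$-dimensional $g$-eigenspaces, both $F$-stable. Since $n\geq 3$ the characters $\zeta^{\pm2}$ carried by $\wedge^2 V^{\zeta^{\pm1}}$ are nontrivial, whence $(\wedge^2 V)^G=V^{\zeta}\otimes V^{\zeta^{-1}}$. If $F$ has both eigenvalues $\alpha,-\alpha$ on each factor, this tensor product carries the eigenvalues $\{-\eps q,\eps q,\eps q,-\eps q\}$, and $h(t)=(t-q)^2(t+q)^2$, as claimed. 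The whole computation therefore hinges on showing that $F$ is \emph{not} scalar on $V^{\zeta}$.

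This non-scalarness is the main obstacle. By rigidity (Lemma~\ref{eigenvalues}) both $g$-eigenvalues $\zeta,\zeta^{-1}$ and both $F$-eigenvalues $\alpha,-\alpha$ occur with multiplicity two, so the joint eigenvalue distribution leaves only two patterns: either $F$ has both eigenvalues on each $V^{\zeta^{\pm1}}$ (the desired case), or $F$ is scalar on each, which happens precisely when $g$ is central in $\End^\circ(A)$, i.e. $g\in K_A=\QQ(F)$. To exclude the second pattern I use the cotangent space $T_0^*(A)=M(A)/FM(A)$, on which $F$ acts as $0$; hence every element of $K_A=\QQ\oplus\QQ F$ acts as a rational scalar. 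If $g\in K_A$ it would act on $T_0^*(A)$ as some $a\in\QQ$ with $a^n=1$, forcing $a=\pm1$, so that a power of $g$ of order prime to $p$ acts trivially on $T_0^*(A)$ --- contradicting Lemma~\ref{Image_in_SL}. Therefore $g\notin K_A$, the first pattern holds, and $h(t)=(t-q)^2(t+q)^2$. The delicate point is thus localized entirely in this cotangent-space argument distinguishing the two admissible eigenvalue distributions; the rest is bookkeeping.
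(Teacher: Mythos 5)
Your treatment of $Q_8$, $Q_{12}$, and $\SL_2(\FF_3)$ is correct and complete (it supplies exactly the computation the paper dismisses with ``it is not hard to check''), and in the cyclic case your reduction is also sound: the only question is whether the generator $g$ can act as a scalar on each Frobenius eigenspace, and by Tate's theorem ($\End_F(V_\ell)\cong\End^\circ(A)\otimes\QQ_\ell$, whose center is $K_A\otimes\QQ_\ell$) this happens exactly when $g\in K_A$. You are in fact more careful here than the paper, whose proof silently assumes this pattern away when it asserts $\dim(V_1\otimes V_2)^G=2$. The genuine gap is your final cotangent-space step. Writing $g=a+bF$ with $a,b\in\QQ$, the conclusion that $g$ acts on $T_0^*(A)=M(A)/FM(A)$ as the rational scalar $a$ is valid only when $a$ and $b$ are $p$-integral; otherwise $bF$ does not carry $M(A)$ into the submodule defining $T_0^*(A)$. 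In the supersingular situation this integrality fails in the worst way: when $\sqrt q\in\ZZ$ and $\eps=+1$, the element $g=F/\sqrt q\in K_A$ has order $4$ and acts on $T_0^*(A)$ as a primitive fourth root of unity, not as the scalar $0$.

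Moreover, no argument can close this gap, because the pattern you are trying to exclude actually occurs under the stated hypotheses, so the cyclic case of the lemma is false as written. Take $p\equiv 3\bmod 4$ and $q=p^2$. By Theorem~\ref{th_curv} there is a supersingular elliptic curve $E$ over $\FF_q$ with $f_E(t)=t^2+q$, so $\End^\circ(E)\cong\QQ(i)$, and by Corollary~\ref{main_cor} we may assume $i\in\End(E)$, whence $F_E=\pm p\,i$. Let $A=E^2$ with $C_4$ acting diagonally through $i$: the action is rigid (the eigenvalues of $i$ on $V_\ell(A)$ are $\pm i$), the Weil polynomial is $(t^2+q)^2$, i.e.\ $\eps=+1$, the generator $g=\pm F/p$ is central, and a direct computation gives $h(t)=(t-q)^4\neq(t-q)^2(t+q)^2$. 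What saves the paper is that the lemma is only ever invoked with $\eps=-1$ (Proposition~\ref{C4} and the $Q_8$ case of Theorem~\ref{SSZeta2}); there $K_A\cong\QQ(\sqrt q)$ or $\QQ\times\QQ$ is totally real, hence contains no roots of unity of order greater than $2$, and your reduction ``scalar pattern $\Leftrightarrow g\in K_A$'' already rules out the bad case with no need for the cotangent-space argument or Lemma~\ref{Image_in_SL}. So the correct repair is to keep your structure, add the hypothesis $\eps=-1$ (or, more generally, that $K_A$ contains no primitive $n$-th root of unity), and delete the false integrality claim.
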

\begin{proof}
Let $V=H^1(\Bar A,\QQ_2)\otimes\QQ(\sqrt{-\eps q})$. 
Then $V=V_1\oplus V_2$, where $F$ acts as $(-1)^s\sqrt{-\eps q}$ on $V_s$, and $s\in\{1,2\}$. 
Since the action of $G$ commutes with the Frobenius action, both $V_1$ and $V_2$ are $G$-invariant, and
\[\wedge^2(V)^G=\wedge^2 V_1\oplus \wedge^2 V_2\oplus (V_1\otimes V_2)^G.\]

If $G$ is a cyclic group, then $\dim (V_1\otimes V_2)^G=2$. This proves (1). 
In case (2) it is not hard to check that $\dim (V_1\otimes V_2)^G=1$.  
\end{proof}

\begin{prop}\label{C4}
Assume that $q\equiv 1\bmod 4$.
Then there exist supersingular generalized Kummer surfaces $K(A,C_4)$ over $\FF_q$
with zeta functions $(1^{15},2^7)$ and $(1^{20},2^2)$. 
In both cases, the Weil polynomial of $A$ is equal to $(t^2-q)^2$.
\end{prop}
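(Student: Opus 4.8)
The plan is to produce both surfaces from a single isogeny class of supersingular abelian surfaces, distinguishing the two zeta functions only by the Galois action on the $2$-torsion. First I would fix a supersingular abelian surface $A$ over $\FF_q$ with Weil polynomial $(t^2-q)^2$ carrying a rigid symplectic action of $C_4$. By Corollary~\ref{main_cor} such an action exists in the isogeny class as soon as $\QQ(\zeta_4)=\QQ(i)$ embeds into $\End^\circ(A)$; for the isogeny class at hand the endomorphism algebra is $\HH_p\times\HH_p$ when $\sqrt q\in\ZZ$ and $\HH_\infty(\QQ(\sqrt p))$ when $\sqrt q\notin\ZZ$, and $\QQ(i)$ embeds (into $\HH_\infty(\QQ(\sqrt p))$ always, and into $\HH_p\times\HH_p$ exactly when $p$ does not split in $\QQ(i)$), the action being symplectic by the symplectic-basis arguments underlying Theorem~\ref{SSAVp2}. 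Since $q\equiv 1\bmod 4$ forces $p$ odd and $p\nmid 4$, Theorem~\ref{Katsura_quotient} together with Proposition~\ref{Sing} shows $X=X(A,C_4)$ is a $K3$ surface with geometric singularities $4A_3+6A_1$, i.e.\ $4\cdot 3+6=18$ exceptional curves; as $A$ is supersingular, $\NS(\overline A)\otimes\QQ_\ell=H^2(\overline A,\QQ_\ell)$ and the $C_4$-invariants are $4$-dimensional (Lemma~\ref{zeta_lemma}), so $\rk\NS(\overline X)=18+4=22$ and $X$ is Shioda supersingular. Hence formula~(\ref{zeta}) applies and it suffices to determine the Frobenius action on $\NS(\overline X)\otimes\QQ$.

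Next I would split this action using Lemma~\ref{ZetaExCurve}: $\NS(\overline X)\otimes\QQ\cong\bigoplus_E\QQ E\oplus\NS(\overline A)^{C_4}\otimes\QQ$, an $18$-dimensional exceptional part and a $4$-dimensional abelian part. The abelian part is an isogeny invariant, hence the same for every form: by Lemma~\ref{zeta_lemma} with $\eps=-1$ the characteristic polynomial of Frobenius on $H^2(\overline A,\QQ_2)^{C_4}$ is $(t-q)^2(t+q)^2$, and after the Tate twist identifying $H^2(\overline A)(1)$ with $\NS(\overline A)\otimes\QQ$ the eigenvalues become $1,1,-1,-1$; thus this part always contributes $(1^2,2^2)$ and trace $0$. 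Everything therefore reduces to the permutation of the $18$ exceptional curves.

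For the exceptional part I would use that $q\equiv 1\bmod 4$ forces $\zeta_4=i\in\FF_q$, so by Proposition~\ref{FActionOnSingGraph}(1) the Frobenius action on the graph of each $A_{n/r-1}$ singularity is trivial; consequently Frobenius acts on $\bigoplus_E\QQ E$ purely by permuting the $18$ curves according to how it permutes the four $A_3$-points and the six $A_1$-points. The $A_3$-points form the order-$4$ subgroup $V_0=\ker(g-1)\subset A[2]$ of $C_4$-fixed $2$-torsion (Lemma~\ref{sing_lemma}), while the $A_1$-points are the six $C_4$-orbits in $A[2]\setminus V_0$; writing $s$ for the number of nonzero points of $V_0$ fixed by $F$ and $t$ for the number of $A_1$-orbits fixed by $F$, the trace on the exceptional part equals $3(1+s)+t$. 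The two zeta functions then correspond to two choices of the $\ZZ_2[F,g]$-module $A[2]$ within the isogeny class, realized by choosing suitable $F$- and $g$-stable lattices via Lemmata~\ref{lem_on_Tate_module} and~\ref{Diedonne_module}: a form with full rational $2$-torsion gives $F=\id$ on $A[2]$, hence $s=3$, $t=6$, trace $18$, and zeta function $(1^{20},2^2)$; a non-split form for which $F$ restricts to a transvection on $V_0$ while fixing exactly two of the six $A_1$-orbits gives $s=1$, $t=2$, trace $8$, and zeta function $(1^{15},2^7)$.

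The main obstacle is the construction and analysis of the second, non-split form. Here $F$ is semisimple with rational eigenvalues $\pm\sqrt q$ (both odd), so on $A[2]$ it is a unipotent involution commuting with the unipotent involution $g$, where $\operatorname{rank}(g-1)=2$. The key point is that although $A$ splits up to isogeny as $E_+\times E_-$, an $F$- and $g$-stable \emph{lattice} need not split, and for a non-split lattice $F$ can restrict to a nontrivial transvection meeting $V_0$. I would make this explicit by exhibiting the two commuting square-zero operators $F-1$ and $g-1$ on $A[2]\cong\FF_2^4$ with $(F-1)|_{V_0}\neq 0$, check that such a pair is realized by an actual lattice (hence by an abelian surface in the isogeny class still carrying the $C_4$-action, by Lemma~\ref{lem_on_Tate_module}), and then carry out the $\FF_2$-linear bookkeeping of the six $C_4$-orbits to confirm that precisely two are Frobenius-fixed, i.e.\ $t=2$. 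Assembling the exceptional and abelian contributions then yields the two zeta functions, and in both cases the Weil polynomial of $A$ is $(t^2-q)^2$ by construction.
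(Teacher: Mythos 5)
Your outline tracks the paper's proof closely: the same entry point (Corollary~\ref{main_cor} together with an embedding $\QQ(\zeta_4)\hookrightarrow\End^\circ(A)$, made symplectic via Lemma~\ref{sympl_basis2}), the same splitting of $\NS(\XX)\otimes\QQ$ from Lemma~\ref{ZetaExCurve} with the abelian part contributing $(1^2,2^2)$ by Lemma~\ref{zeta_lemma}, the same use of $\zeta_4\in\FF_q$ with Proposition~\ref{FActionOnSingGraph}, and correct orbit bookkeeping: $s=3$, $t=6$ gives $(1^{20},2^2)$ and $s=1$, $t=2$ gives $(1^{15},2^7)$, matching the paper's count of $18$ rational lines in the first case and of $8$ rational lines plus $5$ conjugate pairs in the second.

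The gap is the step you explicitly postpone: ``check that such a pair is realized by an actual lattice.'' That is the substance of the proof, and your proposal offers no mechanism for it; prescribing commuting involutions on $\FF_2^4$ and then asking for a $\ZZ_2[F,g]$-stable lattice inverts the logic, since which mod-$2$ reductions occur is an arithmetic question. The paper's device is concrete: split $V_2(A)=V_1\oplus gV_1$ into two-dimensional $F$-invariant subspaces, apply~\cite[Proposition 3.5]{Ry} inside $V_1$ to obtain an $F$-stable lattice $T$ with $F\equiv\id\bmod 2$ and another lattice $T'$ on which $F$ is nontrivial of order two modulo $2$, and then pass to $T\oplus gT$ and $T'\oplus gT'$, which are automatically $g$-stable because $g^2=-1$; Lemma~\ref{lem_on_Tate_module} converts these into the surfaces $B$ and $B'$. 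Note in particular that $q\equiv 1\bmod 4$ is not only needed to make $\zeta_4$ rational, which is the only way you use it: it is exactly the condition for your ``split'' form to exist. Indeed, $T_2(A)$ is a free $\ZZ_2[\zeta_4]$-module on which $F$ is $\ZZ_2[\zeta_4]$-linear with $F^2=q$, so $F\equiv\id\bmod 2$ forces $F=1+2M$ with $M$ integral, hence $M^2+M+(1-q)/4=0$ and $q\equiv 1\bmod 4$. So the deferred ``check'' has genuine content, and without it neither surface has been constructed. (A side remark: your observation that for $\sqrt q\in\ZZ$ one has $\End^\circ(A)\cong\HH_p\times\HH_p$, so that $\QQ(\zeta_4)$ embeds only if $p\not\equiv 1\bmod 4$, is sharper than the paper's blanket claim $\End^\circ(A)\cong\HH_\infty(\QQ(\sqrt p))$; but having made it, you should conclude that for $q$ an even power of a prime $p\equiv 1\bmod 4$ the construction cannot even begin with this Weil polynomial, rather than proceed as if the embedding always exists.)
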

\begin{proof}
Let $A'$ be an abelian surface with the Weil polynomial $(t^2-q)^2$.
    According to Lemma~\ref{trivial_H}, there exists a homomorphism from $\QQ[\zeta_4]$ to
    \[\End^\circ(A)\cong\HH_\infty(\QQ(\sqrt{p}).\]
    According to Theorem~\ref{main_cor} and Lemma~\ref{sympl_basis2}, there exists an isogeny from $A'$ to an abelian surface $A$ with a rigid and symplectic action of $C_4$. Denote by $g\in\End(A)$ the image of a generator of $C_4$. 
    
Since $F$ and $g$ commute, the vector space $V_2(A)=V_1\oplus V_2$ is a sum of $2$-dimensional $F$-invariant subspaces such that $g(V_1)=V_2$. According to~\cite[ Proposition 3.5]{Ry}, 
there exists an $F$-invariant $\ZZ_2$-submodule $T'\subset V_1$ 
such that the action of $F$ on $T'/2T'$ is non-trivial, but the action of $F^2$ is trivial.
Moreover, if $q\equiv 1\bmod 4$, then there exists an $F$-invariant $\ZZ_2$-submodule $T\subset V_1$ such that $F$ acts on $T/2T\cong(\ZZ/2\ZZ)^2$ as identity. 

By Lemma~\ref{lem_on_Tate_module}, there exists an abelian surface $B$ with a $2$-isogeny $A\to B$ and such that $T\oplus gT\cong T_2(B)$. In particular, there exists a rigid and symplectic action of $C_4$ on $B$. We have $B[2](\FF_q)\cong(\ZZ/2\ZZ)^{16}$ and, according to 
Proposition~\ref{Sing}, the singularities of $A/G$ are $4A_3+6A_1$; therefore, since 
$q\equiv 1\bmod 4$, according to Lemma~\ref{zeta_lemma} and Proposition~\ref{FActionOnSingGraph}, the zeta function of the generalized Kummer surface $K(B,C_4)$ is $(1^{20},2^2)$.

In the same way, there exists an abelian surface $B'$ with a rigid and symplectic action of $G$ such that $T'\oplus gT'\cong T_2(B')$. A straightforward computation shows that on $B'/G$ there are exactly two $F$-invariant singular points of type $A_3$ and two $F$-invariant singular points of type $A_1$. Moreover, since the action of $F^2$ is trivial on $B'[2]$, other singular points 
are of degree $2$ over $\FF_q$. According to Proposition~\ref{FActionOnSingGraph}, there are 
$8$ $F$-invariant exceptional lines, and $5$ pairs of lines such that Frobenius acts non-trivially.
It follows that the zeta function of $K(B',C_4)$ is $(1^{15},2^7)$.
\end{proof}

\begin{thm}\label{SSZeta1}
Assume that $k=\FF_q$, where $q=p^{2r}$, and $p>2$.
There exists a generalized Kummer surface $X$ over $k$ that corresponds to an abelian surface $A$ with the following parameters.
\begin{center}
\begin{longtable}{|c|c|c|c|c|}
\hline 
$\Tr_X$& $Z_X(t)$  & $G$ & $p$ & $f_A(t)$ \\
\hline 
$22$ & $1^{22}$	& $C_2$ & $p>2$ & $(t\pm\sqrt{q})^4$ \\
$18$ & $1^{20},2^2$	& $C_4$ & $p>2$ & $(t^2-q)^2$ \\
$14$ & $1^{18},2^4$	& $C_2$ & $p>2$ & $(t^2-q)^2$ \\
$10$ & $1^{14},2^4,4^4$	& $C_2$ & $p>2$ & $(t^2+q)(t\pm\sqrt{q})^2$ \\
$8$ & $1^{15},2^7$	& $C_4$ & $p>2$ & $(t^2-q)^2$ \\
$6$ & $1^{14},2^8$	& $C_2$ & $p>2$ & $(t^2\pm q)^2$  \\
$4$ & $1^{10},3^12$ & $C_2$ & $p>2$ & $(t^2\pm\sqrt{q}t+q)^2$ \\
$2$ & $1^{12},2^{10}$ & $C_2$ & $p>2$ & $(t^2-q)^2$ \\
$0$ & $1^6,2^4,3^8,6^4$ & $C_2$ & $p\not\equiv 1\bmod 12$ & $t^4-qt^2+q^2$ \\
\hline
\end{longtable}
\end{center}
\end{thm}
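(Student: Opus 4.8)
The plan is to work through the table row by row. For each row I construct a supersingular abelian surface $A$ over $\FF_q$ with the prescribed Weil polynomial $f_A$ and a rigid, symplectic action of the indicated group $G$, and then read off the normalized Frobenius action on $\NS(\XX)\otimes\QQ$ from the decomposition of Lemma~\ref{ZetaExCurve},
\[\NS(\XX)\otimes\QQ\cong\bigoplus_E\QQ E\ \oplus\ \NS(\Bar A)^G\otimes\QQ.\]
Writing $F'=F/q$ for the Frobenius normalized to act by roots of unity, so that $\Tr_X=\tr(F'\mid\NS(\XX)\otimes\QQ)$ and the symbol $(1^{\lambda_1},\dots)$ records the cyclotomic factorization of the characteristic polynomial of $F'$, it suffices to compute $F'$ separately on the span of the exceptional curves and on $\NS(\Bar A)^G$ and to add the two contributions. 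The two rows with $G=C_4$ (traces $18$ and $8$) are precisely Proposition~\ref{C4}; all remaining rows have $G=C_2$, so $A/G$ is a classical Kummer surface with $16A_1$ singularities by Proposition~\ref{Sing}, the sixteen exceptional curves are indexed by $A[2](\kk)$, and $\NS(\Bar A)^{C_2}=\NS(\Bar A)$ has rank $6$ because $A$ is supersingular.

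For the lattice summand I proceed as follows. Existence of a supersingular $A$ with the listed $f_A$, and for the non-cyclic rows with a rigid symplectic $G$-action, follows from Theorems~\ref{th_curv} and~\ref{th_surf} together with Theorem~\ref{SSAVp2} and Corollary~\ref{main_cor}; the conditions on $p$ in the last column are exactly those under which the relevant Weil polynomial is realized. Since $A$ is supersingular, $\NS(\Bar A)\otimes\QQ_2\cong H^2(\Bar A,\QQ_2)=\wedge^2 H^1(\Bar A,\QQ_2)$, so the eigenvalues of $F'$ on $\NS(\Bar A)^{C_2}$ are the normalized products $\pi_i\pi_j/q$ with $i<j$ of the Weil numbers, which I read off directly from $f_A$; for the $Q_8,Q_{12},\SL_2(\FF_3)$ and the cyclic cases the $G$-invariant part is supplied instead by Lemma~\ref{zeta_lemma}. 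This pins down the cyclotomic type of the six-dimensional lattice contribution in every row.

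For the exceptional summand the Frobenius permutes the sixteen curves through its action on $A[2](\kk)\cong\FF_2^4$, and Proposition~\ref{FActionOnSingGraph} identifies the induced action on each singularity graph; the cyclotomic factors of the permutation character are then determined by the orbit lengths of the reduced Frobenius $\Bar F\in\GL_4(\FF_2)$, whose characteristic polynomial is $f_A\bmod 2$. Thus in the maximal-trace rows one wants $\Bar F=\id$, so that all sixteen curves are rational, whereas, for example, in the trace-$0$ row one has $f_A\equiv(t^2+t+1)^2\bmod 2$ and the two conjugacy classes of $\Bar F$ with this characteristic polynomial—the semisimple one of order $3$ and the regular one of order $6$—produce genuinely different orbit patterns (one fixed point and five $3$-orbits, versus one fixed point, one $3$-orbit and two $6$-orbits); only the order-$6$ class yields the factors contributing the $3^8,6^4$ demanded by the table.

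The main obstacle is therefore not the cohomological bookkeeping but the \emph{realization} of the prescribed conjugacy class of $\Bar F$ on $A[2]$: the isogeny class fixes only $f_A\bmod 2$, not the $\GL_4(\FF_2)$-class of $\Bar F$, which depends on the chosen $F$-stable lattice. I would resolve this exactly as in Proposition~\ref{C4}: using the $\ell=2$ case of Lemma~\ref{lem_on_Tate_module} and the description of $F$-invariant $\ZZ_2$-lattices from~\cite{Ry}, I exhibit inside each isogeny class abelian surfaces whose $2$-torsion carries each required $\Bar F$—full rational $2$-torsion, a prescribed number of fixed points together with $2$-orbits, or the regular order-$6$ element in the $\Phi_3^2$ case. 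Feeding the resulting orbit data into Proposition~\ref{FActionOnSingGraph} and combining with the lattice contribution computed above reproduces the zeta function of each row, which completes the construction.
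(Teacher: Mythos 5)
Your proposal is correct, and for the two $C_4$ rows it is literally the paper's proof (both reduce to Proposition~\ref{C4}). The divergence is in the $C_2$ rows: there the paper's proof consists of citing Theorem~\ref{th_surf} together with~\cite[Theorem 7.1]{Ry}, i.e.\ the author's earlier classification of zeta functions of classical Kummer surfaces, whereas you rebuild that classification inside the present paper's framework: Lemma~\ref{ZetaExCurve} and Proposition~\ref{Sing} to split $\NS(\XX)\otimes\QQ$ into the sixteen exceptional classes plus the rank-$6$ lattice $\NS(\Bar A)$, the isomorphism $\NS(\Bar A)\otimes\QQ_\ell\cong\wedge^2H^1(\Bar A,\QQ_\ell)$ (up to twist) for the lattice eigenvalues, the permutation action of $\Bar F$ on $A[2](\kk)$ for the exceptional part, and Lemma~\ref{lem_on_Tate_module} plus the $F$-stable $\ZZ_2$-lattice constructions of~\cite{Ry} (exactly as in Proposition~\ref{C4}) to realize the required Frobenius action on $A[2]$. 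This is the same mathematics that the citation encapsulates; what your route buys is self-containedness, and it makes explicit the real content of that citation, namely that the $\GL_4(\FF_2)$-class of $\Bar F$ is an invariant of the surface, not of its isogeny class. (One slip: Lemma~\ref{zeta_lemma} does not cover $C_2$, nor is it needed there, since $\NS(\Bar A)^{C_2}=\NS(\Bar A)$; your $\wedge^2H^1$ computation already handles those rows.)

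Two refinements. First, in the rows where you place the ``main obstacle'' (traces $4$ and $0$, where $f_A\equiv(t^2+t+1)^2\bmod 2$) there is in fact nothing to choose: there $\ZZ_2[F]$ equals the maximal order $\ZZ_2[\zeta_3]$, resp.\ $\ZZ_2[\zeta_{12}]$, which is a complete discrete valuation ring, so every $F$-stable lattice gives the same class of $\Bar F$ --- semisimple of order $3$ for $(t^2\pm\sqrt{q}t+q)^2$ and of order $6$ for $t^4-qt^2+q^2$ --- and these are precisely the classes the table demands. The genuine lattice freedom, and hence the need for constructions as in~\cite[Proposition 3.5]{Ry}, occurs only for $f_A=(t^2-q)^2$, where $\ZZ_2[F]$ is non-maximal and one must produce $\Bar F$ with $16$, $8$, or $4$ fixed points on $A[2]$ to get traces $14$, $6$, $2$ respectively. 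Second, a caveat you inherit from the table itself: your assertion that the $p$-column lists exactly the conditions under which $f_A$ is realized fails in the trace-$10$ row, because by Theorem~\ref{th_curv} an elliptic curve over $\FF_{p^{2r}}$ with Weil polynomial $t^2+q$ exists only when $p\not\equiv 1\bmod 4$, and a surface with $f_A=(t^2+q)(t\pm\sqrt{q})^2$ must split up to isogeny as a product of elliptic curves with these two Weil polynomials; so that row (in the paper's proof just as in yours) actually requires $p\not\equiv 1\bmod 4$ rather than $p>2$.
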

\begin{proof}
    The results for $G=C_2$ follow from Theorem~\ref{th_surf}, and~\cite[Theorem 7.1]{Ry}, and for $G=C_4$ from Proposition~\ref{C4}.
\end{proof}
 
\subsection{Traces of Frobenius over $\FF_{p^{2r+1}}$.} 
In this section $k=\FF_{p^{2r+1}}$ is an odd extension of $\FF_p$.
We start this section with a nonexistence result over $k$.

\begin{thm}~\cite{Ar}
Suppose $p\neq 2$. If a supersingular $K3$ surface $X$ is defined over $\FF_q$ and Frobenius acts trivially on $\NS(\XX)$, then $p^2$ divides $q$.
\end{thm}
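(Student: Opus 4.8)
The plan is to translate the hypothesis into a statement about the full second cohomology and then feed it into the Artin--Tate formula. Since $X$ is supersingular it is Shioda supersingular, so $\NS(\XX)$ has rank $22$ and the cycle class isomorphism $(*)$ holds; in particular the geometric Frobenius acts on $H^2(\XX,\QQ_\ell)$ as $q$ times its action on $\NS(\XX)$. Thus, if Frobenius acts trivially on $\NS(\XX)$, all $22$ eigenvalues on $H^2(\XX,\QQ_\ell)$ equal $q$, so the characteristic polynomial of Frobenius on $H^2(\XX,\QQ_\ell)$ is $(1-qt)^{22}$ (in the notation of Section~\ref{s04} this is $P_2(qt)$ with $P_2(t)=(1-t)^{22}$, and $\Tr_X=22$). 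Moreover $\NS(X)=\NS(\XX)$ is then a torsion-free lattice of rank $22$.

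Next I would invoke the Artin--Tate formula, which is available because a supersingular $K3$ surface satisfies the Tate conjecture and has finite Brauer group. For a $K3$ surface $\chi(\OO_X)=2$ and $\Pic^0$ is trivial, so the exponent of $q$ occurring in the formula is $\chi(\OO_X)-1=1$. Evaluating at $t=1/q$, the limit $\lim_{t\to 1/q}(1-qt)^{-22}\det(1-tF\mid H^2(\XX,\QQ_\ell))$ equals $1$, and since $\NS(X)$ is torsion-free the $|\NS(X)_{\mathrm{tors}}|^2$ term disappears, so the formula collapses to
\[q=|\Br(X)|\cdot|\operatorname{disc}\NS(\XX)|.\]

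The conclusion follows from two structural inputs on the right-hand side. First, the order of the Brauer group of a smooth projective surface over a finite field is a perfect square whenever it is finite, since it carries a nondegenerate alternating pairing. Second, by Artin's analysis~\cite{Ar} the discriminant of $\NS(\XX)$ of a supersingular $K3$ surface equals $-p^{2\sigma_0}$ for an integer $1\le\sigma_0\le 10$, so $|\operatorname{disc}\NS(\XX)|=p^{2\sigma_0}$ is again a perfect square. Hence $q=p^n$ is a perfect square, i.e.\ $n$ is even and $p^2\mid q$.

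The main obstacle is marshalling these inputs correctly rather than any single computation: one must pin down the exact power of $q$ in the Artin--Tate formula (here $\alpha(X)=1$), check torsion-freeness of $\NS(\XX)$ so that the torsion term vanishes, and justify both that $|\Br(X)|$ is a square and that $\operatorname{disc}\NS(\XX)$ is an even power of $p$; the hypothesis $p\neq 2$ is what guarantees these facts hold in their clean form. As an independent sanity check, which already excludes the case $q=p$, one can argue crystalline-theoretically: the hypothesis forces the crystalline Frobenius $\phi$ on $H^2_{\mathrm{cris}}(X/W(\FF_q))$ to satisfy $\phi^n=p^n$, so its reduction $\bar\phi$ modulo $p$ is nilpotent, whereas Mazur's comparison of the Hodge and Newton filtrations shows $\bar\phi$ has rank $1$; these are compatible only for $n\ge 2$.
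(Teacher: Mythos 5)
Your argument is correct in outline, but there is nothing in the paper to compare it against: the statement is quoted from Artin's paper and the paper gives no proof, only the citation \cite{Ar}. What you have written is essentially a reconstruction of Artin's original argument, and you correctly identified the role of the hypothesis $p\neq 2$: it is exactly what makes the full Artin--Tate formula (Milne's theorem, including the $p$-part) available. Your bookkeeping in that formula ($\alpha(X)=\chi(\OO_X)-1=1$, torsion-freeness of $\NS$, the limit equal to $1$) is right, and the identity $q=|\Br(X)|\cdot|\mathrm{disc}\,\NS(\XX)|$ is the correct output.

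Three refinements. First, your opening step ``supersingular $\Rightarrow$ Shioda supersingular'' is the Tate conjecture for supersingular $K3$ surfaces, a deep theorem (Charles, Maulik, Madapusi Pera) that you invoke silently; note that the paper itself only asserts the \emph{converse} implication, and it only ever applies the present theorem to Shioda supersingular surfaces, so you should either flag that input explicitly or simply prove the statement for Shioda supersingular $X$, which is all that is needed (both your main argument and your final one require rank $22$, to identify $H^2$ with $\NS\otimes\QQ_\ell$, resp.\ to get $\phi^n=p^n$ on crystalline cohomology). Second, you prove more than is asked at the cost of an extra input: for the stated conclusion $p^2\mid q$ you do not need $|\Br(X)|$ to be a perfect square at all, since $q=|\Br(X)|\cdot p^{2\sigma_0}$ with $\sigma_0\ge 1$ and $|\Br(X)|\ge 1$ already forces $q\ge p^2$; squareness of $\Br$ is needed only for the stronger claim that $q$ is an even power of $p$ (and, incidentally, your justification via the skew-symmetric pairing is unproblematic here precisely because $|\Br(X)|$ is a power of the odd prime $p$, so the classical issue at $2$ does not arise). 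Third, your closing ``sanity check'' is in fact a complete second proof, not a partial one: since $q=p^n$, the assertion $p^2\mid q$ is exactly the exclusion of $n=1$, and the crystalline argument (Mazur's theorem gives $\mathrm{rank}\,\bar\phi=1$, while $\phi=p$ would give $\bar\phi=0$) does exclude it, avoiding the Artin--Tate machinery entirely.
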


\begin{corollary}
If $X$ is a Shioda supersingular $K3$ surface over $k=\FF_{p^{2r+1}}$, then $\Tr_X\neq 22$.
\end{corollary}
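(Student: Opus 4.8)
The plan is to show that the extremal value $\Tr_X=22$ can occur only when the Frobenius acts as the identity on $\NS(\XX)$, and then to contradict this via the theorem of Artin quoted just above.

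First I would examine the eigenvalues of the Frobenius on $\NS(\XX)\otimes\QQ$. Since $X$ is Shioda supersingular, $\rk\NS(\XX)=22$, and by the isomorphism $(*)$ the cycle class map identifies $\NS(\XX)\otimes\QQ_\ell$ with the relevant Tate twist of $H^2(\XX,\QQ_\ell)$, so that after normalization the eigenvalues of Frobenius all have absolute value $1$. Equivalently, $\Gal(\kk/k)\cong\hat\ZZ$ acts on the finitely generated abelian group $\NS(\XX)$ through a finite quotient, whence the Frobenius acts as an automorphism of finite order and its $22$ eigenvalues $\alpha_1,\dots,\alpha_{22}$ are roots of unity. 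Then $\Tr_X=\sum_i\alpha_i$ is a sum of numbers on the unit circle, so $\re\alpha_i\le 1$ forces $\Tr_X\le 22$, with equality if and only if every $\alpha_i=1$.

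Next I would upgrade ``all eigenvalues equal $1$'' to ``the Frobenius is the identity on $\NS(\XX)$.'' Here finiteness of the order is essential: an automorphism of finite order is semisimple, and a semisimple automorphism all of whose eigenvalues equal $1$ is the identity. Hence $\Tr_X=22$ forces the Frobenius to act trivially on the whole group $\NS(\XX)$, not merely on $\NS(\XX)\otimes\QQ$. This reduction is, in my view, the only substantive point in the argument, and the care it requires is precisely the normalization that makes the eigenvalues roots of unity together with the semisimplicity that converts unipotence into triviality.

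Finally I would invoke Artin's theorem. Under the standing hypothesis $p\neq 2$, a supersingular $K3$ surface over $\FF_q$ on which the Frobenius acts trivially on $\NS(\XX)$ forces $q$ to be an even power of $p$ (a perfect square). But $q=p^{2r+1}$ is an odd power of $p$, so $\sqrt{q}\notin\ZZ$, contradicting the conclusion of the theorem. Therefore the case $\Tr_X=22$ cannot arise over $k=\FF_{p^{2r+1}}$, and $\Tr_X\neq 22$. Granting the cited theorem, this concluding step is immediate; the whole weight of the corollary rests on the passage from the maximal trace to a trivial Galois action on the Neron--Severi lattice.
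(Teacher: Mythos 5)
Your proof is correct and follows essentially the same route as the paper: the corollary is stated there as an immediate consequence of the Artin theorem quoted just above it, and your spelled-out chain --- Frobenius acts on $\NS(\XX)$ with finite order, so $\Tr_X=22$ on the rank-$22$ lattice forces all eigenvalues to be $1$ and hence (by semisimplicity) a trivial action, whence Artin's theorem gives $p^2\mid q$, contradicting $q=p^{2r+1}$ --- is exactly the intended deduction. The only caveat, which you share with the paper, is that the hypothesis $p\neq 2$ is inherited from the quoted form of Artin's theorem.
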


\begin{corollary}
If $q$ is an odd power of $p$, then in the product
$P_2(t)=\prod_r\Phi_r(t)^{\lambda_r}$ there exists an even $r$ such that $\lambda_r>0$.
\end{corollary}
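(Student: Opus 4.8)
The plan is to translate the statement into a claim about the \emph{order} of the Frobenius automorphism acting on $\NS(\XX)$ and then to reduce it to the preceding corollary by a base-change trick. Let $F$ denote the Frobenius action on $\NS(\XX)$. Since $\NS(\XX)$ is finitely generated, all of its classes are represented by divisors defined over a single finite extension of $k$; hence the $\Gal(\kk/k)$-action factors through a finite quotient, and $F$ has finite order $N$. In characteristic $0$ a finite-order automorphism is semisimple, so the eigenvalues of $F$ are exactly the primitive roots of unity recorded by the factorization $P_2(t)=\prod_r\Phi_r(t)^{\lambda_r}$, and $N$ is the least common multiple of the indices $r$ with $\lambda_r>0$. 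Consequently there is an even $r$ with $\lambda_r>0$ if and only if $N$ is even, and it therefore suffices to prove that $N$ is even.

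I would argue by contradiction, assuming $N$ is odd. Write $q=p^m$; by hypothesis $m$ is odd. Consider the base change $X_{\FF_{q^N}}$. Base change does not affect $\NS(\XX)$ or its rank, so $X_{\FF_{q^N}}$ is again a Shioda supersingular $K3$ surface. Its Frobenius is $F^N$, which is the identity on $\NS(\XX)$ because $F$ has order $N$; hence $\Tr_{X_{\FF_{q^N}}}=\rk\NS(\XX)=22$.

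Now the key observation is that $\FF_{q^N}=\FF_{p^{mN}}$ is still an odd power of $p$, since $m$ and $N$ are both odd. The preceding corollary therefore applies to $X_{\FF_{q^N}}$ and gives $\Tr_{X_{\FF_{q^N}}}\neq 22$, contradicting the computation above. Hence $N$ is even, and so some even $r$ satisfies $\lambda_r>0$.

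I expect the substance of the argument to lie entirely in the first paragraph: once one recognizes that the assertion is equivalent to the parity of the order of $F$, the only thing to verify is that passing from $q$ to $q^N$ with $N$ odd keeps us over an odd power of $p$, after which the preceding corollary does all the work. Accordingly I anticipate no serious obstacle beyond stating the finite-order and semisimplicity properties of $F$ cleanly; the genuine nonexistence input over odd powers of $p$ has already been isolated in the corollary to which I am reducing.
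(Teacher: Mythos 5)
Your proposal is correct and takes essentially the same route as the paper: under the negation, the Frobenius has odd order $N$ on $\NS(\XX)$, and base-changing to $\FF_{q^N}$ (still an odd power of $p$) yields a supersingular $K3$ surface whose Frobenius acts trivially on $\NS(\XX)$, which is impossible. The only cosmetic difference is that the paper derives the contradiction directly from the Artin theorem, while you route it through the preceding corollary ($\Tr_X\neq 22$), which is just that theorem restated in terms of the trace.
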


\begin{proof}
Suppose that for all even $r$ we have $\lambda_r=0$. 
There exists an odd $m$ such that $F^m$ acts trivially on $\NS(\XX)$.
We get a contradiction to the Artin Theorem for the surface $X\otimes_{\FF_q}\FF_{q^m}$.
\end{proof}

\begin{corollary}
Let $X$ is a supersingular $K3$ surface over $k$ with $\Tr_X=21$. Then the zeta function of $X$ is equal to $(1^{20},6^2)$.
\end{corollary}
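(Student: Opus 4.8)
The plan is to reduce the statement to a short Diophantine analysis of the exponents appearing in $P_2(t)=\prod_r\Phi_r(t)^{\lambda_r}$. Two numerical constraints are available. First, since $X$ is Shioda supersingular, $\rk\NS(\XX)=22$, so reading off the degree of $P_2$ gives
\[\sum_r\lambda_r\,\phi(r)=22.\]
Second, the Frobenius eigenvalues on $\NS(\XX)\otimes\QQ$ are roots of unity, and each factor $\Phi_r$ contributes the primitive $r$-th roots of unity, whose sum is $\mu(r)$; hence the trace is
\[\Tr_X=\sum_r\lambda_r\,\mu(r).\]

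First I would set $\Tr_X=21$ and subtract the two identities to obtain
\[\sum_r\lambda_r\bigl(\phi(r)-\mu(r)\bigr)=22-21=1.\]
The crux is then an elementary estimate for the gap $\phi(r)-\mu(r)$: it is a non-negative integer, it vanishes exactly for $r=1$, it equals $1$ exactly for $r=6$, and it is at least $2$ for every other $r\ge 2$. I would prove this by cases on $\mu(r)\in\{-1,0,1\}$: the equation $\phi(r)-\mu(r)=1$ forces $\phi(r)\le 2$, which leaves only the finitely many $r\in\{1,2,3,4,6\}$, and a direct check singles out $r=6$ (where $\phi(6)=2$ and $\mu(6)=1$).

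With this in hand the conclusion is immediate. The left-hand side of the displayed relation is a sum of non-negative integers equal to $1$, so exactly one index $r\ge 2$ can occur, and it must be the unique index of gap one, namely $r=6$, with $\lambda_6=1$ and $\lambda_r=0$ for all other $r\ge 2$. Substituting back into the degree relation gives $\lambda_1=22-\lambda_6\,\phi(6)=20$. Thus $P_2(t)=\Phi_1(t)^{20}\Phi_6(t)$, which is the asserted zeta function $(1^{20},6^2)$. This is consistent with the preceding corollary, which requires some even $r$ with $\lambda_r>0$ (here $r=6$), and it avoids the excluded value $\Tr_X=22$, which would need $\lambda_1=22$, i.e.\ trivial Frobenius action, contradicting the Artin theorem over an odd power of $p$.

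The only genuine content is the gap estimate, and the single point demanding care is verifying that no index other than $6$ realizes the minimal gap $1$; this reduces to the finite check over $r$ with $\phi(r)\le 2$, after which the rest is bookkeeping.
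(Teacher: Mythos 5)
Your proof is correct. The paper states this corollary without proof, treating it as immediate from the fact that $P_2$ has degree $22$ and is a product of cyclotomic polynomials; your degree/trace bookkeeping via the gap $\phi(r)-\mu(r)$, which is nonnegative, zero only at $r=1$, and equal to $1$ only at $r=6$, is precisely the argument being left implicit (and, as a side benefit, it uses neither the odd-power hypothesis on $q$ nor the two preceding corollaries).
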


\begin{question}
Is it true that $\Tr_X\neq 21$, and $\Tr_X\neq 19$ for any supersingular $K3$ surface over $\FF_q$, where $q$ is an odd power of $p$? 
\end{question}

We now prove an analog of Theorem~\ref{SSAVp2} over a finite field of odd degree.

\begin{thm}\label{SSAVp}
Let $k=\FF_{p^{2r+1}}$. Assume that the order of $G$ is greater than $2$ and is not divisible by $p$. There exists a supersingular abelian surface with the Weil polynomial $f$ and a rigid action of a group $G$ if and only if $p$, $f$, and $G$ satisfy one of the following conditions.
\begin{enumerate}
\item If $f(t)=t^4+q^2$, then $G=C_4$.
\item If $f(t)=t^4\pm qt^2+q^2$, then $G=C_3$, or $G=C_6$.
\item If $f(t)=(t^2-q)^2$, then $G$ and $p$ satisfy the following conditions:
\begin{center}
\begin{longtable}{|c|c|}
\hline 
$G$  &  $p$  \\
\hline 
$C_3$, $C_4$, $C_6$ & $p>2$  \\    
$Q_8$  &  $p\not\equiv 1\bmod 8$      \\  
$Q_{12}$ & $p\not\equiv 2\bmod 3$   \\
$\SL_2(\FF_3)$  &  $p\not\equiv 1\bmod 8$          \\  
\hline
\end{longtable}
\end{center}

\item If $f(t)=(t^2+q)^2$, then $G$ and $p$ satisfy the following conditions:

\begin{center}
\begin{longtable}{|c|c|}
\hline 
$G$  &  $p$  \\
\hline 
$C_3$, $C_4$, $C_6$ & $p>2$  \\    
$Q_8$  &  $p\not\equiv  -1\bmod 8$    \\  
$Q_{12}$ & $p\not\equiv 1\bmod 3$   \\
$\SL_2(\FF_3)$  &  $p\not\equiv -1\bmod 8$    \\  
\hline
\end{longtable}
\end{center}
\item $f(t)=(t^2\pm 3^r+q)^2$, and $p=3$; in this case, $G$ is isomorphic to $C_4$, $C_8$, or $Q_8$;
\item $f(t)=(t^2\pm 2^r+q)^2$, and $p=2$; in this case, $G\cong C_3$.
\end{enumerate}
In all these cases, there exists an abelian surface with a rigid and symplectic action of the group $G$.
\end{thm}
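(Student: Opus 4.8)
The plan is to run the argument of Theorem~\ref{SSAVp2} again, but over the odd-degree field $k=\FF_{p^{2r+1}}$, where the supply of endomorphism algebras is genuinely different. By Corollary~\ref{main_cor}, a supersingular abelian surface in a prescribed isogeny class admits a rigid $G$-action if and only if there is a homomorphism of $\QQ$-algebras $\QQ[G]^\rig\to\End^\circ(A)$, and the algebras $\QQ[G]^\rig$ are exactly those tabulated in Theorem~\ref{rigid_actions}. So I would first list all supersingular isogeny classes over $\FF_{p^{2r+1}}$ by means of Theorems~\ref{th_curv} and~\ref{th_surf} and compute $\End^\circ(A)$ in each. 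The decisive difference from the even-degree case is that $\sqrt q\notin\ZZ$: there is no supersingular elliptic curve with $\End^\circ\cong\HH_p$, so $\End^\circ(A)$ is never $M(2,\HH_p)$. Instead it is either a quartic CM field (the simple surfaces $f=t^4+q^2$ and $f=t^4\pm qt^2+q^2$), the matrix algebra $M(2,\QQ(\sqrt{-p}))$ (when $A\sim E^2$ with $f=(t^2+q)^2$), or the quaternion algebra $\HH_\infty(\QQ(\sqrt p))$ (the simple surface $f=(t^2-q)^2$).

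Given $\End^\circ(A)$, the classification reduces to a finite list of embedding questions. When $\End^\circ(A)$ is a field, only cyclic $G$ can occur, and $C_n$ acts precisely when $\QQ(\zeta_n)$ is a subfield; here I would identify the quartic field explicitly (for instance $f=t^4+q^2$ yields $\QQ(\zeta_4,\sqrt{2p})$ and $f=t^4+qt^2+q^2$ yields $\QQ(\sqrt{-3},\sqrt p)$) and read off the admissible $n$. For the two eight-dimensional algebras I would use the central-simple-algebra toolkit: an embedding of a quadratic or quartic field into $M(2,F)$ reduces to the standard subfield criterion (a quartic field embeds iff it contains the center $F$), while for the quaternionic summands $\HH_2,\HH_3,\HH_\infty(\cdots)$ of $\QQ[G]^\rig$ I would invoke Corollary~\ref{Hp_hom} together with Theorems~\ref{trivial_inv} and~\ref{trivial_H}. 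The upshot is that every entry in the tables of cases $(3)$ and $(4)$ records a splitting condition of an auxiliary prime in a real or imaginary quadratic field: $\HH_2$ (hence $Q_8$ and $\SL_2(\FF_3)$) embeds iff $2$ is non-split in $\QQ(\sqrt p)$, respectively $\QQ(\sqrt{-p})$, and $\HH_3$ (hence $Q_{12}$) iff $3$ is non-split. Translating these into congruences on $p$ modulo $8$ and $3$ produces the tables, the sign of $q$ in $f=(t^2\mp q)^2$ being exactly what switches between the real field $\QQ(\sqrt p)$ and the imaginary field $\QQ(\sqrt{-p})$.

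The special primes $p\in\{2,3\}$ of cases $(5)$ and $(6)$ are where I expect the real work to lie. For these, the supersingular elliptic building blocks acquire the exceptional Weil polynomials with $b=\pm\sqrt{pq}$ from Theorem~\ref{th_curv}(2)(d), so extra isogeny classes appear and their endomorphism algebras must be computed individually and matched against Theorem~\ref{rigid_actions}; moreover, since $\FF_{p^2}\not\subset k$ when the degree is odd, I cannot descend to $\FF_{p^2}$ as in Corollary~\ref{GisSLsubgroup}, so every surface has to be produced directly over $\FF_{p^{2r+1}}$. Once the rigid actions are determined, the symplectic refinement proceeds as in the proof of Theorem~\ref{SSAVp2}: starting from a rigid action I would pass, via a $p$-isogeny and Lemma~\ref{lem_on_Dmod}, to a symplectic Dieudonn\'e submodule, using Lemmas~\ref{sympl_basis},~\ref{sympl_basis2},~\ref{sympl_basis3} and~\ref{sympl_basis4} to exhibit the required symplectic basis. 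The congruence hypotheses in the statement are exactly what force the eigenvalue configuration (namely $\zeta'=\zeta^{-1}$, together with $p\not\equiv\pm1\bmod n$ where a quartic cyclotomic field is involved) under which such a basis exists.
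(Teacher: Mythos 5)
Your proposal follows essentially the same route as the paper's own proof: reduce via Corollary~\ref{main_cor} to embeddings of $\QQ[G]^\rig$ (Theorem~\ref{rigid_actions}) into the endomorphism algebras of the supersingular isogeny classes enumerated by Theorems~\ref{th_curv} and~\ref{th_surf}, identify the quartic CM fields explicitly for the cyclic cases, decide the quaternionic cases by Corollary~\ref{Hp_hom} together with Theorems~\ref{trivial_inv} and~\ref{trivial_H} (and the subfield-contains-center criterion to exclude $C_5$, $C_8$, $C_{12}$), and upgrade rigid to rigid-and-symplectic via Lemma~\ref{lem_on_Dmod} and Lemmata~\ref{sympl_basis},~\ref{sympl_basis2},~\ref{sympl_basis3},~\ref{sympl_basis4}. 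One remark: your splitting conditions, translated correctly, give $p\not\equiv 1\bmod 3$ for $Q_{12}$ in case $(3)$ and $p\not\equiv 2\bmod 3$ in case $(4)$, which agrees with the paper's proof text (``$3$ does not split in $\QQ(\sqrt{p})$'', resp.\ in $\QQ(\sqrt{-p})$) but is swapped relative to the printed tables in the statement --- an apparent typo in the paper, not a flaw in your argument.
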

\begin{proof}
Suppose that $G$ acts on a supersingular surface $A$ over $k$. 
The surface $A$ is either simple or isogenous to a product of two elliptic curves, say $E_1$ and $E_2$. If $E_1$ and $E_2$ are not isogenous, and the action of $G$ is rigid, there are homomorphisms \[\QQ[G]^\rig\to\End^\circ(E_i)\] for $i\in\{1,2\}$. 
According to Theorem~\ref{th_curv}, there are three possibilities for $\End^\circ(E_i)$:
\begin{itemize}
    \item $\QQ(\sqrt{-p})$;
    \item $\QQ(\zeta_3)$, and $p=3$;
    \item $\QQ(\zeta_4)$, and $p=2$.
\end{itemize}
Therefore, $G$ is abelian, and according to Corollary~\ref{c240}, $G$ is cyclic.
According to Lemma~\ref{eigenvalues}, we have \[\QQ[G]^\rig\cong\QQ(\zeta_n),\] where $n$ is the order of $G$; therefore, either $G$ is trivial or $p$ divides $n$. It follows that $E_1$ and $E_2$ are isogenous, and \[\End^\circ(A)\cong M(2,\End^\circ(E_1)).\]

According to Theorem~\ref{th_surf}, if $A$ is simple, then $\End^\circ(A)$ belongs to the following list:
\begin{itemize}
\item $\HH_\infty(\QQ(\sqrt{p}))$;
\item $\QQ(\zeta_8\sqrt{p})$; 
\item $\QQ(\zeta_6\sqrt{p})$;
\item $\QQ(\zeta_{12}\sqrt{p})$;
\item $\QQ(\sqrt{5},\sqrt{-10-10\sqrt{5}})$, and $p=5$;
\item $\QQ(\sqrt{3},\sqrt{-4-2\sqrt{3}})$, and $p=2$.
\end{itemize}

It is straightforward to check that in the last two cases, $\End^\circ(A)$ does not contain a cyclotomic field. We proved that the Weil polynomial $f=f_A$ of $A$ is one of the polynomials from cases $(1)-(6)$. 

Let $A$ be an abelian variety with the Weil polynomial $f$. 
In the first two cases $L=\End^\circ(A)$ is a field: 
\begin{enumerate}
\item $L=\QQ(\zeta_8\sqrt{p})=\QQ(\zeta_4,\sqrt{2p})$, where $p\neq 2$; 
\item $L=\QQ(\zeta_6\sqrt{p})=\QQ(\zeta_3,\sqrt{p})$ or $L=\QQ(\zeta_{12}\sqrt{p})=\QQ(\zeta_3,\sqrt{3p})$, where $p\neq 3$;
\end{enumerate}
Clearly, $G$ is cyclic of order $4$ in case $(1)$, and of order $3$ or $6$ in case $(2)$.
According to Theorem~\ref{main_cor}, in these cases there exists an abelian surface with a rigid action of $G$. By Lemmata~\ref{sympl_basis}, and~\ref{sympl_basis2}, there exists an isogeny to an abelian surface with a rigid and symplectic action of $G$.

In case $(3)$ we have \[\HH=\End^\circ(A)\cong \HH_\infty(\QQ(\sqrt{p})).\] 
If $G$ contains a cyclic subgroup of order $n\in\{5,8,12\}$, then there exists a homomorphism from $\QQ(\zeta_n)$ to $\HH$. Dimension counting shows that the image contains $\QQ(\sqrt{p})$, therefore $p$ divides $n$. A contradiction. According to Theorem~\ref{rigid_actions}, the group
$G$ is either cyclic of order $3,4,6$ or isomorphic to $Q_8$, $Q_{12}$, or $\SL_2(\FF_3)$. 

We are going to construct rigid actions for these groups.
According to Theorem~\ref{trivial_H}, there are homomorphisms from $\QQ[\zeta_3]$ and $\QQ[\zeta_4]$ to $\HH$. By Lemma~\ref{sympl_basis2}, there exists an isogeny from $A$ to an abelian surface with a rigid and symplectic action of $G$.
According to Theorem~\ref{main_cor}, there is an action of $Q_8$ or $\SL_2(\FF_3)$ on some variety in the isogeny class of $A$ if and only if there is a homomorphism of $\HH_2$ to $\HH$. 
By Corollary~\ref{Hp_hom}, there is such a homomorphism if and only if $2$ does not split in $\QQ(\sqrt{p})$. In the same way, one proves that $Q_{12}$ acts on some variety in the isogeny class of $A$ if and only if $3$ does not split in $\QQ(\sqrt{p})$. According to Lemmata~\ref{sympl_basis3}, and~\ref{sympl_basis4} these actions are symplectic.

In case $(4)$ the situation is similar:  \[\HH=\End^\circ(A)\cong M_2(\QQ(\sqrt{-p})).\]
In particular, the argument from the proof of case $(3)$ gives the same list of groups.
According to Theorem~\ref{main_cor} and Theorem~\ref{trivial_H}, there are homomorphisms from $\QQ[\zeta_3]$ and $\QQ[\zeta_4]$ to $\HH$. We use Corollary~\ref{main_cor} and Corollary~\ref{Hp_hom} again and find that there is an action of $Q_8$ or $\SL_2(\FF_3)$ on some variety in the isogeny class of $A$ if and only if $2$ does not split in $\QQ(\sqrt{-p})$, and $Q_{12}$ acts on some variety in the isogeny class of $A$ if and only if $3$ does not split in $\QQ(\sqrt{-p})$. 

The cases $(5)$ and $(6)$ can be treated in the same way. 
\end{proof}


\begin{thm}\label{SSZeta2}
Assume that $k=\FF_q$, where $q=p^{2r+1}$, and $p>2$.
There exists a Kummer surface $X$ over $k$ corresponding to an abelian surface $A$ with the following parameters.
\begin{center}
\begin{longtable}{|c|c|c|c|c|}
\hline 
$\Tr_X$& $Z_X(t)$  & $G$ & $p$ & $f_A(t)$ \\
\hline 
$20$ & $1^{21},2$	& $Q_8$ & $p\equiv 3(4)$ & $(t^2-q)^2$ \\
$18$ & $1^{20},2^2$	& $C_4$ & $p\equiv 1(4)$ & $(t^2-q)^2$ \\
$18$ & $1^{20},2^2$	& $C_2$ & $p\equiv 3(4)$ & $(t^2+q)^2$ \\
$14$ & $1^{18},2^4$	& $C_2$ & $p\equiv 1(4)$ & $(t^2-q)^2$ \\
$10$ & $1^{16},2^6$	& $C_2$ & $p\equiv 3(4)$ & $(t^2+q)^2$ \\
$8$ & $1^{15},2^7$	& $C_4$ & $p\equiv 1(4)$ & $(t^2-q)^2$ \\
$6$ & $1^{14},2^8$	& $C_2$ & $p>2$ & $(t^2+q)^2$  \\
$2$ & $1^{12},2^{10}$	& $C_2$ & $p>2$ & $(t^2-q)^2$  \\
$0$ & $1^6,2^4,3^8,6^4$ & $C_2$ & $p>2$ & $t^4-qt^2+q^2$ \\
\hline
\end{longtable}
\end{center}
\end{thm}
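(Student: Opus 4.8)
The plan is to split the nine rows into three families and handle each separately. The rows with $G=C_2$ are ordinary Kummer surfaces, so for each I would exhibit a supersingular surface over $k$ with the prescribed Weil polynomial and then quote the classification of Kummer zeta functions. Existence of the relevant $A$ follows from Theorem~\ref{th_surf}: when $f_A=(t^2-q)^2$ one takes the simple surface of case~2(b), and when $f_A=(t^2+q)^2$ or $f_A=t^4-qt^2+q^2$ one takes a surface isogenous to $E^2$ for a supersingular elliptic curve $E$ with $f_E=t^2+q$ (Theorem~\ref{th_curv}). The zeta function of $X(A,C_2)$ in each case is then read off from~\cite[Theorem 7.1]{Ry}. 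The two rows with $G=C_4$ require $p\equiv 1\bmod 4$, whence $q=p^{2r+1}\equiv 1\bmod 4$, and they follow verbatim from Proposition~\ref{C4}, which already outputs $(1^{20},2^2)$ and $(1^{15},2^7)$ for $K(A,C_4)$ with $f_A=(t^2-q)^2$. Only the first row, $G=Q_8$ with $p\equiv 3\bmod 4$, needs a new construction; this is the surface with zeta function $(1^{21},2)$ and $\Tr_X=20$ announced in the introduction.

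For that row I would begin with the simple supersingular surface $A$ over $k=\FF_q$ with $f_A=(t^2-q)^2$, so that $\End^\circ(A)\cong\HH_\infty(\QQ(\sqrt p))$ by Theorem~\ref{th_surf}. Since $p\equiv 3\bmod 4$ forces $p\not\equiv 1\bmod 8$, the prime $2$ is non-split in $\QQ(\sqrt p)$, so Corollary~\ref{Hp_hom} and Theorem~\ref{SSAVp}(3) furnish a rigid and symplectic action of $Q_8$ on a surface isogenous to $A$ (the symplectic structure being supplied by Lemma~\ref{sympl_basis3}). Using Lemma~\ref{lem_on_Tate_module} I would further replace $A$ by a $2$-isogenous surface for which the $2$-adic Tate module is a maximal order; concretely $\End^\circ(A)\otimes\QQ_2\cong M(2,L_2)$ with $L_2=\QQ_2(\sqrt p)$ the ramified quadratic field, while $\QQ[Q_8]^\rig\otimes\QQ_2\cong D$ is the quaternion division algebra over $\QQ_2$, sitting inside $M(2,L_2)$ with centraliser $L_2$. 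Hence $V_2(A)\cong D$ as a left $D$-module, $T_2(A)\cong\OO_D$, and the Frobenius $F=\sqrt q$ lies in $L_2\subset D$ and acts by right multiplication.

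The heart of the matter is the description of $A[2]=\OO_D/2\OO_D=\OO_D/\mathcal P^2$, where $\mathcal P$ is the maximal ideal of $\OO_D$ and $\OO_D/\mathcal P\cong\FF_4$. A one-line reduced-norm computation gives $\ord_{\mathcal P}(\sqrt q-1)=\ord_{\mathcal P}(g-1)=1$ for every order-$4$ element $g\in Q_8$ (indeed $\Nrd(\sqrt q-1)=1-q$ and $\Nrd(g-1)=2$, both of $2$-adic valuation $1$ when $p\equiv 3\bmod 4$). Consequently the kernel of left multiplication by the $g-1$ and the kernel of right multiplication by $\sqrt q-1$ both equal $\mathcal P/\mathcal P^2$, so the $Q_8$-fixed $2$-torsion and the $\FF_q$-rational $2$-torsion coincide and consist of four points. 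Thus $N(Q_8)=4$, and by Proposition~\ref{Sing} the singular configuration of $A/Q_8$ is $4D_4+3A_1$ with all four $D_4$-points $\FF_q$-rational. Reducing $\sqrt q$ modulo $\mathcal P^2$ identifies it with an order-$4$ element $g_0\in Q_8$, so $F$ acts on $A[2]$ as right multiplication by $g_0$; since conjugation by any unit of $\OO_D/\mathcal P^2$ carries $g_0$ into $Q_8\bmod\mathcal P^2$, this right multiplication stabilises every left $Q_8$-orbit, and the three $A_1$-points are $\FF_q$-rational too. I expect this $2$-adic bookkeeping — matching the rational and the $Q_8$-fixed torsion and proving $F$ fixes every orbit — to be the main obstacle.

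With all $19$ exceptional lines defined over $\FF_q$, the argument of Proposition~\ref{FActionOnSingGraph}(2) shows that (as $p>2$) the Frobenius is trivial on each of the four $D_4$ graphs, and the single line above each rational $A_1$-point is fixed because $-1\in\FF_q$; together they contribute $\Phi_1^{19}$ to $P_2$. By Lemma~\ref{ZetaExCurve} the complementary summand of $\NS(\overline X)\otimes\QQ$ is $\NS(\overline A)^{Q_8}\otimes\QQ$, on which Lemma~\ref{zeta_lemma} (with $\eps=-1$) gives Frobenius characteristic polynomial $(t-q)^2(t+q)$, a normalised contribution $\Phi_1^2\Phi_2$. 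Therefore $P_2=\Phi_1^{21}\Phi_2$, the zeta function of $X$ is $(1^{21},2)$, and $\Tr_X=21-1=20$. Finally, since $A$ is supersingular Proposition~\ref{Sing} gives $\rk\NS(X)=22$ and Theorem~\ref{Katsura_quotient} guarantees that $X$ is a $K3$ surface, so $X$ is Shioda supersingular and the computation via~(\ref{zeta}) is legitimate.
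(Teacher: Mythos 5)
Your proof has the same skeleton as the paper's: the $C_2$ rows are delegated to \cite[Theorem 7.1]{Ry}, the two $C_4$ rows to Proposition~\ref{C4}, and the $Q_8$ row is obtained by producing a $2$-isogenous surface $B$ on whose $2$-torsion every Frobenius orbit is contained in a $Q_8$-orbit, after which Lemma~\ref{zeta_lemma}, Proposition~\ref{FActionOnSingGraph} and Lemma~\ref{ZetaExCurve} yield $(1^{21},2)$ and $\Tr_X=20$. The difference is in how $B$ is built. The paper takes $T\subset T_2(A)$ free over the commutative order $R=\ZZ_2[i,y]$, $y=(1+iF)/2$ (integral precisely because $q\equiv 3\bmod 4$), so that $2y\equiv 0$ forces $F\equiv i$ on $B[2]$, i.e.\ Frobenius coincides on the $2$-torsion with an element of $Q_8$ itself. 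You instead take $T_2(B)\cong\OO_D$, the maximal order of $D=\HH_2\otimes\QQ_2$, and show that $F$ acts as right multiplication by a unit whose image in $(\OO_D/\mathcal{P}^2)^*$ lies in $1+\mathcal{P}/\mathcal{P}^2$, which is exactly the image of $Q_8$; normality of that subgroup then makes every left $Q_8$-orbit Frobenius-stable. Both routes work, and your valuation computation ($\Nrd(g-1)=2$, $\Nrd(\sqrt q-1)=1-q$, both of valuation $1$) is correct. Your version buys more explicit information: it pins down $N(Q_8)=4$, hence the configuration $4D_4+3A_1$, and the rationality of all seven singular points, which the paper leaves implicit; it also sidesteps the delicate point in the paper's construction that the generator of the free $R$-module must be chosen so that $T$ is $j$-invariant (the paper's ``Note that $T$ is $G$ invariant'' needs exactly this). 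Two small repairs: Proposition~\ref{FActionOnSingGraph}(2) is stated only at the origin, so you should note that translation by a rational $Q_8$-fixed point is $Q_8$-equivariant before applying it to the other three $D_4$ points; and rank $22$ follows from your count $19+3$ together with supersingularity of $B$, not from Proposition~\ref{Sing} alone, which gives only $\geq 20$ for $Q_8$.

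One genuine error, though a harmless one: in the trace-$0$ row you assert that a surface with $f_A=t^4-qt^2+q^2$ can be taken isogenous to $E^2$ with $f_E=t^2+q$; that is impossible, since then $f_A$ would equal $(t^2+q)^2$. For $p\neq 3$ the polynomial $t^4-qt^2+q^2$ is irreducible, so $A$ is the simple surface of Theorem~\ref{th_surf} (case $e=1$, $(a_1,a_2)=(0,-q)$, $n$ odd); for $p=3$ it factors as $(t^2+3^{r+1}t+q)(t^2-3^{r+1}t+q)$ and $A$ is a product of two non-isogenous supersingular elliptic curves. The slip does not affect the theorem, because this row ultimately rests on \cite[Theorem 7.1]{Ry} exactly as in the paper, but the existence justification should be corrected.
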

\begin{proof}
If $G=C_2$, we apply~\cite[Theorem 7.1]{Ry}. 
If $p\equiv 1\bmod 4$, then, according to Proposition~\ref{C4}, there exists a supersingular surface $X$ with $\Tr_X=18$, and a supersingular surface with the trace equal to $8$.

Assume that $G=Q_8$, and $f_A(t)=(t^2-q)^2$.
If  $p\equiv 3(4)$, then, according to Theorem~\ref{SSAVp}, there exists an abelian surface $A$ with the Weil polynomial $f$ and with a rigid and symplectic action of $Q_8$. 
By Lemma~\ref{zeta_lemma}, the characteristic polynomial of Frobenius action on $H^2(\Bar A,\QQ_2)^{Q_8}$ is equal to $(t-q)^2(t+q)$.
We construct an abelian surface $B$ with a $2$-isogeny $B\to A$ and an action of $Q_8$ such that each orbit of the action of Frobenius on $B[2](\kk)$ 
is a subset of an orbit of $Q_8$. According to Proposition~\ref{FActionOnSingGraph} and Lemma~\ref{ZetaExCurve}, the zeta function of the generalized Kummer surface $K(B,Q_8)$ is equal to $(1^{21},2)$.  

The algebra $\End^\circ(A)$ is generated by $\HH_2\cong\QQ[Q_8]^\rig$ over its center $\QQ(\sqrt{p})$.
Let $i\in\HH_2$ be the image of an element of $Q_8$ of order $4$, then there is a relation $(iF)^2=-q$. 
Let $T$ be a free submodule of $T_2(A)$ over the algebra $R=\ZZ_2[i,y]$, where $y=(1+iF)/2$. 
Note that $T$ is $G$ invariant.
 According to Lemma~\ref{lem_on_Tate_module}, there exists $B$ such that $T_2(B)\cong T$. In particular, $F$ is equal to $i$ on $B[2](\kk)$.
\end{proof}

\end{document}